\renewcommand{\ge}{\geqslant}
\renewcommand{\le}{\leqslant}
\let\op=\llbracket
\let\cl=\rrbracket
\def\pv#1{\ensuremath{\mathsf{#1}}}
\def\Om#1#2{\ensuremath{\overline{\Omega}_{#1}{\pv{#2}}}}
\let\cal=\mathcal
\def\Cl#1{\ensuremath{\cal#1}}
\newcommand\malcev{\mathbin{\bigcirc\kern-8.5pt%
\raise1pt\hbox{\footnotesize$m$}\kern1pt}}
\def\variable{\underline{\ \ }}
\newtheorem{Thm}{Theorem}[section]
\newtheorem{Prop}[Thm]{Proposition}
\newtheorem{Lemma}[Thm]{Lemma}
\newtheorem{Cor}[Thm]{Corollary}
\begin{document}

\title[Representations, irreducibility, order
primitivity]{Representations of relatively free profinite semigroups,
  irreducibility, and order primitivity}

\author{J. Almeida}%
\address{CMUP, Dep.\ Matem\'atica, Faculdade de Ci\^encias, Universidade do
  Porto, Rua do Campo Alegre 687, 4169-007 Porto, Portugal}
\email{jalmeida@fc.up.pt}

\author{O. Kl\'ima}%
\address{Dept.\ of Mathematics and Statistics, Masaryk University,
  Kotl\'a\v rsk\'a 2, 611 37 Brno, Czech Republic}%
\email{klima@math.muni.cz}

\begin{abstract}
  We establish that, under certain closure assumptions on a
  pseudovariety of semigroups, the corresponding relatively free
  profinite semigroups freely generated by a non-singleton finite set
  act faithfully on their minimum ideals. As applications, we enlarge
  the scope of several previous join irreducibility results for
  pseudovarieties of semigroups, which turn out to be even join
  irreducible in the lattice of pseudovarieties of ordered semigroups,
  so that, in particular, they are not generated by proper
  subpseudovarieties of ordered semigroups. We also prove the stronger
  form of join irreducibility for the Krohn-Rhodes complexity
  pseudovarieties, thereby solving a problem proposed by Rhodes and
  Steinberg.
\end{abstract}

\keywords{pseudovariety, relatively free profinite semigroup, torsion,
  minimum ideal, group mapping semigroup, join irreducibility, ordered
  semigroup}

\makeatletter%
\@namedef{subjclassname@2010}{%
  \textup{2010} Mathematics Subject Classification}%
\makeatother

\subjclass[2010]{Primary 20M05, 20M07, 20M30; Secondary 20M35}

\maketitle

\tableofcontents

\section{Introduction}
\label{sec:intro}

Finite semigroups appear naturally in computer science as transition
semigroups of finite automata, which makes them into algebraic
recognition devices for regular languages. A more direct connection is
obtained by associating with a regular language its syntactic
semigroup, namely the quotient of the free semigroup on the underlying
alphabet in which two words are identified if they appear in the same
contexts with respect to the language. The syntactic semigroup is thus
naturally ordered by comparison of the contexts of words. The
properties of classes of regular languages that are captured by
algebraic properties of their syntactic semigroups, respectively of
their ordered syntactic semigroups, have been characterized in terms
of closure properties with respect to certain natural combinatorial
operators. Such classes of regular languages are known as varieties,
respectively positive varieties, of regular languages. The algebraic
counterparts are the so-called pseudovarieties of semigroups
\cite{Eilenberg:1976}, respectively pseudovarieties of ordered
semigroups \cite{Pin:1995a}, both characterized by natural algebraic
closure properties.

Thus, since the syntactic semigroup can be effectively computed, to
determine whether a given regular language belongs to a given variety
becomes a decision problem on the corresponding pseudovariety, namely
to determine whether a given finite semigroup belongs to it. Natural
operators on varieties of languages correspond to natural operators on
pseudovarieties of semigroups. But, such operators are often defined
in terms of generators, rather than by characteristic properties of
their members. The expression of a pseudovariety in terms of simpler
pseudovarieties involving those operators, besides having structural
significance, sometimes leads to decision procedures for the
membership problem. However, whether the existence of such procedures
may be inferred depends on the operators involved. In fact, the
membership problem for pseudovarieties admitting decompositions in
terms of several operators may be rather difficult
\cite{Henckell&Margolis&Pin&Rhodes:1991}, and even undecidable
\cite{Albert&Baldinger&Rhodes:1992,Auinger&Steinberg:2001b}.

One particularly simple operator on pseudovarieties is the join, in
the lattice of pseudovarieties. The existence of nontrivial join
decompositions, in the strict sense, or more generally of nontrivial
join covers, has been investigated by several authors. Some
pseudovarieties admit non-obvious join decompositions
\cite[Chapter~9]{Almeida:1994a}, whereas some have been shown not to
admit any nontrivial join covers
\cite{Margolis&Sapir&Weil:1995,Rhodes&Steinberg:2004,
  Rhodes&Steinberg:2009qt}. Basically, two approaches have been
devised to handle this problem: the syntactical approach, through
pseudoidentities, which may be used to define pseudovarieties
\cite{Reiterman:1982,Molchanov:1994,Pin&Weil:1996b}; and the
structural approach, through the investigation of special structural
properties of generators of the pseudovarieties, such as the
Kov\'acs-Newman property \cite[Section~7.4]{Rhodes&Steinberg:2009qt}.
Recently, we have improved the results
of~\cite{Margolis&Sapir&Weil:1995} using a variant of the syntactical
approach used in~\cite{Almeida&Klima:2011a}. In the present paper, we
combine the two approaches to obtain results that cover and improve
most of the previous join irreducibility results found in the
literature. We are also able to prove join irreducibility of the
Krohn-Rhodes complexity pseudovarieties, which solves part of
\cite[Problem~43]{Rhodes&Steinberg:2009qt}. Furthermore, our approach
yields yet a finer result: the pseudovarieties in question are in fact
join irreducible in the larger lattice of pseudovarieties of ordered
semigroups.

Pseudoidentities are formal equalities between members of relatively
free profinite semigroups. Relatively free profinite semigroups have a
rich and often mysterious structure. Like any semigroup with a minimum
ideal, they act by left and right multiplication on their minimum
ideals. A key property considered in~\cite{Rhodes&Steinberg:2009qt} in
the finite case, not just in connection with the join irreducibility
question, is that both such representations be faithful. A somewhat
weaker property, which has apparently not been considered before, and
is much easier to establish, is that the action of elements outside
the minimum ideal $K$ can be distinguished, among themselves and in
comparison with those of~$K$, by their action by multiplication on
each side of~$K$. Combined with an additional closure property
involving a certain Rees matrix extension, we show that this is enough
to prove join irreducibility of the pseudovariety in the lattice of
pseudovarieties of ordered semigroups. Alternatively, the 
assumption that the corresponding variety of languages is closed under
concatenation also leads to the same conclusion.

A key technique in this paper is thus to consider the left and right
actions of a profinite semigroup on its minimum ideal, that is, the
natural representation of the semigroup in the translational hull of
the minimum ideal. This combines the discrete and topological cases
considered, respectively
in~\cite[Section~5.5.1]{Rhodes&Steinberg:2009qt}
and~\cite[Chapter~4]{Carruth&Hildebrant&Koch:1986}. If both left and
right components of that natural representation are faithful, then the
minimum ideal is reductive and it follows that its translational hull
is a profinite semigroup.

We further establish the faithfulness of both representations for
relatively free profinite semigroups on several pseudovarieties. On
the other hand, we show that a profinite semigroup for which both
representations are faithful admits no nontrivial closed partial order
compatible with multiplication. An application is that, if, for the
finitely generated free profinite semigroup over a pseudovariety on an
arbitrarily large number of generators, both representations are
faithful, then the pseudovariety is not generated by any proper
subpseudovariety of ordered semigroups. To establish such a property
was in fact the original motivation for the present work. Although
this property is a consequence of join irreducibility in the lattice
of pseudovarieties of ordered semigroups, the result opens up the
potential range of applications, as it requires no closure properties
on the pseudovariety, unlike our results on join irreducibility.

A summary of the main applications of our techniques and related
problems which are left open is given in a table at the end of
Section~\ref{sec:irreducibility}.

\section{Preliminaries}
\label{sec:prelims}

This paper owes much to the book \cite{Rhodes&Steinberg:2009qt}, which
facilitated the access and further improved many key ideas in finite
semigroup theory which were previously dispersed through many research
papers. Another basic reference in the area is~\cite{Almeida:1994a}.
The reader is referred to those books for undefined notions and
notation, as well as general background in the area.

Throughout this paper, (locally) compact spaces are assumed to be
Hausdorff.

\subsection{Some pseudovarieties and operations on them}
\label{sec:pvs-and-operations}

For the reader's convenience, the following is a list of
pseudovarieties of semigroups that play a role in this paper. Each
item in the list is described by a characteristic property of its
elements as well as by a basis or bases of pseudoidentities.
\begin{description}
\item[\pv S] all, $\op x=x\cl$.
\item[\pv I] trivial, $\op x=y\cl$.
\item[\pv{Sl}] semilattices, $\op x^2=x, xy=yx\cl$.
\item[\pv N] nilpotent, $\op x^\omega=0\cl$.
\item[\pv D] definite, $\op xy^\omega=y^\omega\cl$.
\item[$\pv D_n$] definite of degree $n$, $\op xy_1\cdots y_n=y_1\cdots
  y_n\cl$.
\item[\pv K] reverse definite, $\op x^\omega y=x^\omega\cl$.
\item[$\pv K_n$] reverse definite of degree $n$, $\op x_1\cdots x_ny=x_1\cdots
  x_n\cl$.
\item[\pv{LI}] locally trivial, $\op x^\omega yx^\omega=x^\omega\cl$.
\item[\pv{LSl}] local semilattices, $\op x^\omega yx^\omega
  yx^\omega=x^\omega yx^\omega, x^\omega yx^\omega zx^\omega=x^\omega
  zx^\omega yx^\omega\cl$.
\item[\pv{LZ}] left zero, $\op xy=x\cl=\pv K_1$.
\item[\pv{RZ}] right zero, $\op xy=y\cl=\pv D_1$.
\item[\pv{RB}] rectangular bands, $\op x^2=x, xyx=x\cl$.
\item[\pv B] bands, $\op x^2=x\cl$.
\item[\pv A] aperiodic, $\op x^{\omega+1}=x^\omega\cl$.
\item[\pv G] groups, $\op x^\omega=1\cl$.
\item[$\pv{Ab}_n$] Abelian groups of exponent dividing $n$, %
  $\op x^n=1, xy=yx\cl$.
\item[\pv{ReG}] rectangular groups, $\op xy^\omega x^\omega=x\cl$.
\item[\pv{CS}] completely simple, $\op x(yx)^\omega=x\cl$.  
\item[\pv{CR}] completely regular, $\op x^{\omega+1}=x\cl$.
\item[\pv J] \Cl J-trivial, $\op (xy)^\omega=(xy)^\omega x=(yx)^\omega\cl$.
\item[\pv{DA}] regular \Cl D-classes are aperiodic subsemigroups,\newline
 $\op ((xy)^\omega x)^2=(xy)^\omega x\cl %
  =\op(xy)^\omega(yx)^\omega(xy)^\omega=(xy)^\omega, x^{\omega+1}=x^\omega\cl$.
\item[\pv{DO}] regular \Cl D-classes are rectangular groups,\newline
  $\op(xy)^\omega(yx)^\omega(yx)^\omega=(xy)^\omega\cl$.
\item[\pv{DS}] regular \Cl D-classes are subsemigroups,\newline %
  $\op ((xy)^\omega x)^{\omega+1}=(xy)^\omega x\cl %
  =\op((xy)^\omega(yx)^\omega(xy)^\omega)^\omega=(xy)^\omega\cl$.
\end{description}
It is well known that $\pv D=\bigcup_{n\ge1}\pv D_n$ and $\pv
K=\bigcup_{n\ge1}\pv K_n$. For a pseudovariety \pv H of groups,
$\bar{\pv H}$ denotes the pseudovariety of all finite semigroups all
of whose subgroups belong to~\pv H.

A pseudovariety of semigroups \pv V is said to be \emph{monoidal} if
it is generated by its monoids; equivalently, whenever a semigroup $S$
belongs to~\pv V, so does the smallest monoid $S^1$ containing~$S$.
This is the case, for example for the pseudovarieties \pv{DS},
\pv{DO}, \pv{CR}, and for those of the form~$\bar{\pv H}$.
Moreover, the intersection of monoidal pseudovarieties is again a
monoidal pseudovariety.

Recall that a \textrm{relational morphism} of semigroups is a relation
$\mu:S\to T$ whose domain is~$S$ and such that $\mu$ is a subsemigroup
of~$S\times T$. In particular, a homomorphism of semigroups is a
relational morphism. For a given pseudovariety of semigroups
\pv U, a
relational morphism $\mu:S\to T$ is a \emph{\pv
  U-relational morphism} if, for every idempotent $e\in T$, the
subsemigroup $\mu^{-1}(e)=\{s\in S:(s,e)\in\mu\}$ of~$S$ belongs
to~$\pv U$. A \emph{\pv U-homomorphism} is a homomorphism which is
also a \pv U-relational morphism.

The Mal'cev product $\pv U\malcev\pv V$ of the pseudovarieties \pv U
and \pv V may be defined as the pseudovariety generated by the finite
semigroups $S$ for which there is a \pv U-homomorphism $S\to T$ into
some $T\in\pv V$. Equivalently, %
$\pv U\malcev\pv V$ consists of all finite semigroups $S$ for which
there is some \pv U-relational morphism $S\to T$ into some $T\in\pv
V$.

For a semigroup $S$, let $S^I$ be the monoid that is obtained from $S$
by adding a new neutral element, even $S$ already has one. Note that,
if $S=S^1$, then $S^I$ is isomorphic to the subsemigroup
$S\times\{0\}\cup\{(1,1)\}$ of $S\times U_1$, where $U_1=\{0,1\}$ is a
semilattice under the usual product.

Let \pv U and \pv V be monoidal pseudovarieties and suppose that \pv U
contains \pv{Sl}. Suppose that $\mu:S\to T$ is a \pv U-relational
morphism into a semigroup $T\in\pv V$. Let $\nu:S^I\to T^1$ be the
relation given by $\nu=\mu\cup\{(I,1)\}$. Then, $\nu$~is a \pv
U-relational morphism into a semigroup from~\pv V. Hence, $S^I$
belongs to $\pv U\malcev\pv V$, which shows that this pseudovariety is
also monoidal.

It is well known that the Mal'cev product satisfies the following law
\cite[Exercise~2.3.20]{Rhodes&Steinberg:2009qt}:
\begin{equation*}
  \label{eq:Malcev-subassociativity}
  \pv U\malcev(\pv V\malcev\pv W) %
  \subseteq %
  (\pv U\malcev\pv V)\malcev\pv W.
\end{equation*}
In particular, if \pv U~is a fixed point of the operator %
$\pv U\malcev\variable$, then this operator is idempotent and so its
fixed points are precisely the pseudovarieties of the form $\pv
U\malcev\pv V$, where \pv V~is an arbitrary pseudovariety. Moreover,
since %
$\pv U\malcev\bigcap_{i\in I}\pv V_i %
\subseteq %
\bigcap_{i\in I}(\pv U\malcev\pv V_i) %
$, the set of fixed points of the operator $\pv U\malcev\variable$ is
then a complete meet subsemilattice of the lattice of all
pseudovarieties of semigroups.

Examples of pseudovarieties \pv U satisfying the equation %
$\pv U\malcev\pv U=\pv U$ of particular interest in this paper are \pv
A, \pv{DA}, \pv B, \pv D, \pv K, \pv{LI}, \pv{LZ}, \pv{RB}, and
\pv{RZ}, although several others in the above list have the same
property.

We adopt the following definition of semidirect product in the
semigroup setting. Given semigroups $S$ and $T$, and a monoid
homomorphism from $T^1$ into the monoid of endomorphisms of~$S$, the
associated \emph{semidirect product} $S*T$ consists of the set
$S\times T$ with the operation
$(s_1,t_1)(s_2,t_2)=(s_1\,\vphantom{|}^{t_1}s_2,t_1t_2)$, where
$\vphantom{|}^{t_1}s_2$ denotes the image of~$s_2$ under the
endomorphism of~$S$ corresponding to~$t_1$. The \emph{semidirect
  product} of the pseudovarieties of semigroups \pv V and~\pv W is the
pseudovariety generated by all semigroups of the form $S*T$ with
$S\in\pv V$ and $T\in\pv W$. This produces an associative operation on
pseudovarieties of semigroups but the reader is warned that it is not
the definition adopted by some authors.
See~\cite[Example~2.4.24]{Rhodes&Steinberg:2009qt} for a comparison
with the definition adopted in that book. With our definition, the
semidirect product of monoidal pseudovarieties is monoidal
\cite[Exercise~10.2.4]{Almeida:1994a}.

\subsection{The de Bruijn encoding}
\label{sec:deBruijn}

For a pseudovariety of semigroups \pv V and a finite set $A$, \Om AV
denotes the pro-\pv V semigroup freely generated by~$A$. Elements
of~\Om AV will in general be called \emph{pseudowords}.

Let \pv V be a pseudovariety containing $\pv D_n$. For a pseudoword
$w\in\Om AV$, denote by $\mathrm{t}_n(w)$ the longest suffix of~$w$ of
length $|w|$ at most~$n$. By looking at the natural projection $\Om
AV\to\Om AD_n$, one sees immediately that there is only one such
suffix, which justifies the notation. Dually, under the hypothesis
that \pv V contains $\pv K_n$, $\mathrm{i}_n(w)$ denotes the longest
prefix of~$w$ of length at most~$n$.

There is a convenient solution of the pseudoidentity problem for
pseudovarieties of the form~$\pv V*\pv D_n$
\cite[Section~10.6]{Almeida:1994a}, which we proceed to describe.
Denote by $A_k$ the set of all words of length~$k$ in~$A^+$ and by
$A_{\le k}$ all words of length at most~$k$. There is a unique
continuous mapping $\Phi_n:\Om AS\to(\Om{A_{n+1}}S)^1$ with the
following properties:
\begin{enumerate}[(a)]
\item\label{item:Phi-a} $\Phi_n(w)=1$ for every $w\in A_{\le n}$;
\item\label{item:Phi-b} $\Phi_n(w)=w$ for $w\in A_{n+1}$;
\item\label{item:Phi-c} $\Phi_n(uv) %
  =\Phi_n(u)\Phi_n(\mathrm{t}_n(u)\,v) %
  =\Phi_n(u\,\mathrm{i}_n(v))\Phi_n(v)$ %
  for all $u,v\in\Om AS$.
\end{enumerate}
For a word $w\in A^+$, $\Phi_n(w)$ is the word obtained by reading,
from left to right, the successive factors of~$w$ of length~$n+1$. In
case $n=0$, this is just the identity mapping. 
For $n>0$, the word
$\Phi_n(w)$ can thus be thought of as describing a path in the de
Bruijn graph of~$A$ of order~$n$, that is an element of the free
category on this graph. In general, for $n>0$, the pseudoword
$\Phi_n(w)$ can be viewed as an element of the free profinite category
on the same graph. Note that, for $n>0$, the mapping $\Phi_n$ is not a
homomorphism.

\begin{Thm}[{\cite[Theorem~10.6.12]{Almeida:1994a}}]
  \label{t:word-problem-for-V*Dn}
  Let\/ \pv V be a pseudovariety that contains some nontrivial monoid
  and let $n>0$. A pseudoidentity $u=v$ holds in the
  pseudovariety~$\pv V*\pv D_n$ if and only if\/
  $\mathrm{i}_n(u)=\mathrm{i}_n(v)$,
  $\mathrm{t}_n(u)=\mathrm{t}_n(v)$, and\/ \pv V~satisfies the
  pseudoidentity $\Phi_n(u)=\Phi_n(v)$.
\end{Thm}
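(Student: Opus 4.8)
The plan is to reduce the pseudoidentity problem for $\pv V*\pv D_n$ to three conditions by exhibiting a concrete description of the free pro-$(\pv V*\pv D_n)$ semigroup over $A$ as a subsemigroup of a product, and then checking which pairs of generators are separated there. First I would recall the standard construction: since $\pv V$ contains a nontrivial monoid, $\Om A{V*\pv D_n}$ can be realized inside $\Om{A_{n+1}}V^1 * \overline{\Omega}_A\pv{D_n}$, or more precisely inside a semidirect product built from the free profinite category on the de Bruijn graph. The key structural fact, which I would invoke from \cite[Section~10.6]{Almeida:1994a}, is that the canonical projection $\op A\cl^+ \to \pv V * \pv D_n$ extends continuously to a map on $\Om AS$ whose separating information on a pseudoword $w$ is precisely the triple $(\mathrm{i}_n(w),\, \Phi_n(w)\text{-image in }\Om{A_{n+1}}V,\, \mathrm{t}_n(w))$. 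The forward implication is then immediate: if $u=v$ holds in $\pv V*\pv D_n$, then since $\pv D_n\subseteq \pv V*\pv D_n$ we get $\mathrm{i}_n(u)=\mathrm{i}_n(v)$ and $\mathrm{t}_n(u)=\mathrm{t}_n(v)$ (these are exactly the invariants read off by $\pv K_n$ and $\pv D_n$, both contained in $\pv D_n$ since $n>0$), and since $\pv V$ is (via the de Bruijn encoding) a ``quotient factor'' of $\pv V*\pv D_n$, the pseudoidentity $\Phi_n(u)=\Phi_n(v)$ must hold in $\pv V$.

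For the converse — the substantive direction — I would argue that any $S\in\pv V*\pv D_n$ satisfies $u=v$ whenever the three conditions hold. It suffices to treat $S=T*D$ with $T\in\pv V$ and $D\in\pv D_n$, and by a routine coordinate-wise argument to evaluate a continuous homomorphism $\varphi:\Om A{V*\pv D_n}\to S$ on $u$ and on $v$ and compare. Writing $\varphi(w)=(\tau(w),\delta(w))$, the second coordinate $\delta(w)\in D$ depends only on $\mathrm{t}_n(w)$ by the defining pseudoidentity $\op xy_1\cdots y_n = y_1\cdots y_n\cl$ of $\pv D_n$, so the hypothesis $\mathrm{t}_n(u)=\mathrm{t}_n(v)$ handles it. The first coordinate, by the semidirect product multiplication and property~(\ref{item:Phi-c}) of $\Phi_n$, can be expanded as a product indexed by the length-$(n+1)$ factors of $w$ — formally, $\tau(w)$ is obtained by applying the homomorphic extension of an assignment $A_{n+1}\to T$ (depending on the action of $D$) to the pseudoword $\Phi_n(w)$, after a bounded correction at the left end governed by $\mathrm{i}_n(w)$. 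Since $\pv V$ satisfies $\Phi_n(u)=\Phi_n(v)$ and $\mathrm{i}_n(u)=\mathrm{i}_n(v)$, the two first coordinates agree, completing the proof.

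The main obstacle is making the informal phrase ``$\tau(w)$ is a homomorphic image of $\Phi_n(w)$'' fully precise in the profinite setting: one must check that the assignment $A_{n+1}\to T$ extracted from $\varphi$ is well-defined independently of context, that property~(\ref{item:Phi-c}) genuinely corresponds to the multiplication in the semidirect product, and that everything passes to the profinite completion by density and continuity of $\Phi_n$. Here I would lean on the already-established properties (\ref{item:Phi-a})--(\ref{item:Phi-c}) of $\Phi_n$ and on the fact that $\Phi_n$ is continuous, so that it is enough to verify the correspondence on words $w\in A^+$ — where it is the elementary observation that reading the successive $(n+1)$-letter windows of $w$ is exactly how a $\pv D_n$-action threads through a left-to-right product — and then extend by continuity. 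Once the word case is settled, the profinite case is a formal consequence, and the three-condition criterion follows.
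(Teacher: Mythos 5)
A preliminary remark: the paper does not prove this statement at all --- it is imported verbatim from \cite[Theorem~10.6.12]{Almeida:1994a} --- so there is no in-paper argument to compare yours against; I can only measure your sketch against the standard proof. Your ``if'' direction is that standard proof and is correct in outline: reduce to a generator $T*D$ with $T\in\pv V$ and $D\in\pv D_n$, note that the $D$-coordinate of $\varphi(w)$ depends only on $\mathrm t_n(w)$, and that the $T$-coordinate factors as a left correction determined by $\mathrm i_n(w)$ times the image of $\Phi_n(w)$ under the continuous homomorphism $(\Om{A_{n+1}}S)^1\to T^1$ sending $b_1\cdots b_{n+1}$ to ${}^{\delta(b_1\cdots b_n)}\tau(b_{n+1})$; verifying this on $A^+$ and extending by continuity of $\mathrm i_n$, $\mathrm t_n$, $\Phi_n$ is legitimate.

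The ``only if'' direction, however, contains two genuine gaps. First, your justification of $\mathrm i_n(u)=\mathrm i_n(v)$ is false as stated: $\pv K_n$ is \emph{not} contained in $\pv D_n$ (already $\pv K_1=\pv{LZ}\not\subseteq\pv{RZ}=\pv D_1$). What is actually needed is $\pv K_n\subseteq\pv V*\pv D_n$, and this is precisely the point at which the hypothesis that \pv V contains a nontrivial monoid must be used, via an explicit ``delay'' construction $M*D$ with $M$ a nontrivial monoid of~\pv V recording the first $n$ letters; your sketch never deploys that hypothesis. Second, the assertion that $\pv V$ satisfies $\Phi_n(u)=\Phi_n(v)$ because \pv V is ``a quotient factor of $\pv V*\pv D_n$ via the de Bruijn encoding'' is circular: the ``key structural fact'' you propose to invoke from \cite[Section~10.6]{Almeida:1994a} --- that the separating information of $\pv V*\pv D_n$ on a pseudoword is exactly the triple $(\mathrm i_n(w),\Phi_n(w),\mathrm t_n(w))$ --- \emph{is} the theorem being proved. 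What is required is a construction: for every $T\in\pv V$ and every continuous homomorphism $\psi:\Om{A_{n+1}}S\to T$ with $\psi(\Phi_n(u))\ne\psi(\Phi_n(v))$, one must exhibit a member of $\pv V*\pv D_n$ separating $u$ from $v$, typically a wreath-product-like semigroup $T^{D^1}*D$ with $D=\Om AD_n$ (which is finite) and a carefully chosen generating map whose first coordinate, evaluated at $1\in D^1$, computes $\psi(\Phi_n(w))$; one must also handle the fact that $T$ need not be a monoid and \pv V need not be monoidal, so that the values of the functions $D^1\to T^1$ on arguments of length less than $n$ do not take the resulting semigroup outside $\pv V*\pv D_n$. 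Without these two constructions the forward implication is unproved.
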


Note that, if \pv V contains the pseudovariety $\pv{RB}$, then the
assumption that \pv V~satisfies $\Phi_n(u)=\Phi_n(v)$ implies that, either
$u,v\in A_{\le n}$, or $\Phi_n(u)$ and $\Phi_n(v)$ start and end with
the same letters, which automatically guarantees the other two
conditions in the theorem, namely $\mathrm{i}_n(u)=\mathrm{i}_n(v)$ and
$\mathrm{t}_n(u)=\mathrm{t}_n(v)$. By
Theorem~\ref{t:word-problem-for-V*Dn}, the pseudovariety \pv{RB} is
contained in~$\pv{Sl}*\pv D_1$, and the latter is contained in many of
the pseudovarieties in which we are interested in this paper which,
moreover, satisfy no nontrivial identities. For this reason, we will
usually omit reference to the conditions
$\mathrm{i}_n(u)=\mathrm{i}_n(v)$ and
$\mathrm{t}_n(u)=\mathrm{t}_n(v)$ when applying
Theorem~\ref{t:word-problem-for-V*Dn}.
The assumption $\pv{Sl}*\pv D_1\subseteq V$ also
gives the inclusion 
$\pv{Sl}\subseteq \pv V$ which implies that \pv V contains 
a nontrivial monoid.

Another observation regarding Theorem~\ref{t:word-problem-for-V*Dn},
which is formulated below as Lemma~\ref{l:relative-conditions-for-V*Dn}, is
that, if $\pv V*\pv D_n=\pv V$ and \pv V contains~\pv{Sl}, then the
mapping $\Phi_n$~induces a function $\Phi_n^\pv V:\Om
AV\to(\Om{A_{n+1}}V)^1$ that also satisfies properties
\eqref{item:Phi-a}--\eqref{item:Phi-c}. 
We clarify some technicalities before we state the lemma formally.
First,  the equality
$\pv V*\pv D_n=\pv V$ implies  $\pv D_n\subseteq \pv V$
and we may assume that $A_{\le n}\subseteq\Om AV$.
Further, for $w\in A_{n+1}\subseteq  \Om AS$ and 
$u\in \Om AS$ such that $w=u$ holds in $\pv V*\pv D_n=\pv V$,
the pseudoidentity $\Phi_n(u)=\Phi_n(w)$ also holds in \pv V 
by Theorem~ \ref{t:word-problem-for-V*Dn}.
Since $\Phi_n(w)=w\in A_{n+1}$ and $\pv D_2\subseteq \pv V$
we get  $\Phi_n(u)=w$ in $\Om {A_{n+1}}S$, which entails the equality $u=w$.
Altogether, we may assume that $A_{\le n+1}$ is embedded in $\Om AV$
and $\pi^{-1}(w)=\{w\}$ for $w\in A_{\le n+1}$ 
and the natural projection $\pi: \Om AS \rightarrow \Om AV$.

\begin{Lemma}\label{l:relative-conditions-for-V*Dn}
 Let $n>0$ and consider a pseudovariety \pv V that contains %
  $\pv V*\pv D_n$ and~\pv{Sl}. Then there exists a continuous function
$\Phi_n^{\pv V}$ such that the following diagram commutes, where the
vertical arrows are the natural projections:
\begin{equation}
  \label{eq:Phi-nV}
  \vcenter{
    \vbox{
      \xymatrix{
        \Om AS %
        \ar[r]^(.4){\Phi_n} %
        \ar[d]_\pi %
        & %
        (\Om{A_{n+1}}S)^1 %
        \ar[d]_{\sigma_n} %
        \\ %
        \Om AV %
        \ar[r]^(.4){\Phi_n^{\pv V}} %
        & %
        (\Om{A_{n+1}}V)^1. %
      }}}
\end{equation}
Moreover,  the following properties hold:
\begin{enumerate}[(a)]
\item\label{item:PhiV-a} $\Phi_n^{\pv V}(w)=1$ for every $w\in A_{\le n}$;
\item\label{item:PhiV-b} $\Phi_n^{\pv V}(w)=w$ for $w\in A_{n+1}$;
\item\label{item:PhiV-c} $\Phi_n^{\pv V}(uv) %
  =\Phi_n^{\pv V}(u)\Phi_n^{\pv V}(\mathrm{t}_n(u)\,v) %
  =\Phi_n^{\pv V}(u\,\mathrm{i}_n(v))\Phi_n^{\pv V}(v)$ %
  for all $u,v\in\Om AV$.
\end{enumerate}
\end{Lemma}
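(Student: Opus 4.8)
The plan is to obtain $\Phi_n^{\pv V}$ as the factorization of $\sigma_n\circ\Phi_n$ through the natural projection $\pi\colon\Om AS\to\Om AV$. First I would verify that $\sigma_n\circ\Phi_n$ is constant on the fibres of $\pi$; a unique set map $\Phi_n^{\pv V}\colon\Om AV\to(\Om{A_{n+1}}V)^1$ with $\Phi_n^{\pv V}\circ\pi=\sigma_n\circ\Phi_n$ then exists, which is exactly the commutativity of \eqref{eq:Phi-nV}, and its continuity is automatic: $\pi$ is a continuous surjection between compact Hausdorff spaces, hence a closed map and so a quotient map, so a map out of $\Om AV$ is continuous as soon as its composite with $\pi$ is, and $\sigma_n\circ\Phi_n$ is a composite of continuous maps.

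Before that, I would record some consequences of the hypotheses. Since $\pv V\subseteq\pv V*\pv D_n\subseteq\pv V$, we have $\pv V*\pv D_n=\pv V$; as noted just before the statement, this forces $\pv D_n\subseteq\pv V$, so the suffix operation $\mathrm{t}_n$ is defined on $\Om AV$. One also gets $\pv K_n\subseteq\pv V$: by Theorem~\ref{t:word-problem-for-V*Dn}, any pseudoidentity valid in $\pv V*\pv D_n$ has both sides with the same prefix of length at most~$n$, and this is precisely the condition for it to be valid in~$\pv K_n$; hence $\pv K_n\subseteq\pv V*\pv D_n=\pv V$ and the prefix operation $\mathrm{i}_n$ is defined on $\Om AV$ as well. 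Finally, since $\mathrm{t}_n$ (resp.\ $\mathrm{i}_n$) of a pseudoword depends only on its image in the free pro-$\pv D_n$ (resp.\ pro-$\pv K_n$) semigroup, and those projections factor through $\pi$, both $\mathrm{t}_n$ and $\mathrm{i}_n$ commute with~$\pi$.

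For the factorization itself: if $\pi(u)=\pi(v)$, i.e.\ $u=v$ holds in $\pv V=\pv V*\pv D_n$ (and $\pv V$ contains a nontrivial monoid because $\pv{Sl}\subseteq\pv V$, so Theorem~\ref{t:word-problem-for-V*Dn} applies), that theorem gives that $\pv V$ satisfies $\Phi_n(u)=\Phi_n(v)$, which says exactly that $\sigma_n(\Phi_n(u))=\sigma_n(\Phi_n(v))$, establishing the constancy on fibres and, by the opening paragraph, the existence, continuity and commutativity asserted in the statement. It remains to transfer properties (a)--(c) of $\Phi_n$ to $\Phi_n^{\pv V}$ along the commutative square, using the identification of $A_{\le n+1}$ with its image in $\Om AV$ from the remarks preceding the statement: for $w\in A_{\le n}$ one gets $\Phi_n^{\pv V}(w)=\sigma_n(\Phi_n(w))=\sigma_n(1)=1$; for $w\in A_{n+1}$ one gets $\Phi_n^{\pv V}(w)=\sigma_n(\Phi_n(w))=\sigma_n(w)=w$, since $\sigma_n$ maps a generator to the corresponding generator; and, for $u,v\in\Om AV$ with arbitrary lifts $\tilde u,\tilde v\in\Om AS$,
\[
  \Phi_n^{\pv V}(uv)=\sigma_n\bigl(\Phi_n(\tilde u)\,\Phi_n(\mathrm{t}_n(\tilde u)\,\tilde v)\bigr)=\Phi_n^{\pv V}(u)\,\Phi_n^{\pv V}\bigl(\mathrm{t}_n(u)\,v\bigr),
\]
using that $\sigma_n$ is a homomorphism, property (c) of $\Phi_n$, the commuting square and the fact that $\mathrm{t}_n$ commutes with $\pi$; the second factorization in (c) is obtained symmetrically, with $\mathrm{i}_n$ in place of $\mathrm{t}_n$. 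I do not expect a genuine obstacle here, as the argument is a diagram chase built on Theorem~\ref{t:word-problem-for-V*Dn}; the only points needing care are bookkeeping ones, namely keeping the identifications of $A_{\le n+1}$ inside $\Om AS$, $\Om AV$, $\Om{A_{n+1}}S$ and $\Om{A_{n+1}}V$ mutually compatible, and the harmless passage between the free profinite semigroups $\Om{A_{n+1}}S$, $\Om{A_{n+1}}V$ and their monoid versions $(\Om{A_{n+1}}S)^1$, $(\Om{A_{n+1}}V)^1$.
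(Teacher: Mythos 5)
Your argument is correct and follows essentially the same route as the paper's proof: factor $\sigma_n\circ\Phi_n$ through $\pi$ via constancy on fibres (which is exactly what Theorem~\ref{t:word-problem-for-V*Dn} supplies), obtain continuity from the quotient-map property of a continuous surjection between compact Hausdorff spaces, and transfer (a)--(c) along the commuting square using surjectivity of $\pi$. You also make explicit a point the paper leaves implicit, namely that $\pv K_n\subseteq\pv V$ so that $\mathrm{i}_n$ is defined on $\Om AV$ and commutes with $\pi$; this is a genuine (if small) piece of bookkeeping needed for the second equality in~(c), and it is handled correctly.
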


\begin{proof}
  If $u,v\in\Om AS$ are such that $\pi(u)=\pi(v)$, then the
  pseudoidentity $u=v$ holds in~$\pv V=\pv V*\pv D_n$. By
  Theorem~\ref{t:word-problem-for-V*Dn}, it follows that so does the
  pseudoidentity $\Phi_n(u)=\Phi_n(v)$, whence the equality
  $\sigma_n(\Phi_n(u))=\sigma_n(\Phi_n(v))$ holds. Thus, there is a
  function $\Phi_n^\pv V$ such that the diagram commutes. It is
  continuous because so are $\sigma_n$, $\Phi_n$, and $\pi$, and \Om
  AS is compact. The verification of properties~\eqref{item:Phi-a}
  and~\eqref{item:Phi-b} for~$\Phi_n^\pv V$ is immediate, while
  property~\eqref{item:Phi-c} follows from the commutativity of the
  diagram~\eqref{eq:Phi-nV} and the fact that $\pi$~is surjective.
\end{proof}

Although the function $\Phi_n^\pv V$ is not a homomorphism, we may
prove the following consequence of
Theorem~\ref{t:word-problem-for-V*Dn}, which states that $\Phi_n^\pv
V$ provides a rather convenient means of encoding \Om AV
in~\Om{A_{n+1}}V, which we call the \emph{de Bruijn encoding}.

\begin{Thm}
  \label{t:Phi-vs-Green}
  Let $n>0$ and consider a pseudovariety \pv V that contains %
  $\pv V*\pv D_n$ and~\pv{Sl}. Then the mapping $\Phi_n^\pv V$ is
  injective on $\Om AV\setminus A_{\le n}$. Moreover, for $u,v\in\Om
  AV\setminus A_{\le n}$ and any of Green's equivalence relations \Cl
  K, $u$ and $v$ are \Cl K-related in~\Om AV if and only if so are
  $\Phi_n^\pv V(u)$ and $\Phi_n^\pv V(v)$ in~$\Om{A_{n+1}}V$.
\end{Thm}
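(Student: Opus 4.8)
The plan is to prove injectivity first and then the two directions of the statement on Green's relations, the forward direction being a formal consequence of property~\eqref{item:PhiV-c} of Lemma~\ref{l:relative-conditions-for-V*Dn} and the backward one resting on a ``decoding'' of $\Phi_n^{\pv V}$. For injectivity I would first observe that $\pv{RB}\subseteq\pv{Sl}*\pv D_1\subseteq\pv V*\pv D_1\subseteq\pv V*\pv D_n=\pv V$ (using $\pv{Sl}\subseteq\pv V$ and $\pv D_1\subseteq\pv D_n$). Given $u,v\in\Om AV\setminus A_{\le n}$ with $\Phi_n^{\pv V}(u)=\Phi_n^{\pv V}(v)$, lift them to $\tilde u,\tilde v\in\Om AS\setminus A_{\le n}$: the commutative square~\eqref{eq:Phi-nV} shows that $\pv V$ satisfies $\Phi_n(\tilde u)=\Phi_n(\tilde v)$, and since $\pv{RB}\subseteq\pv V$ and $\tilde u,\tilde v\notin A_{\le n}$ the remark following Theorem~\ref{t:word-problem-for-V*Dn} also gives $\mathrm i_n(\tilde u)=\mathrm i_n(\tilde v)$ and $\mathrm t_n(\tilde u)=\mathrm t_n(\tilde v)$; Theorem~\ref{t:word-problem-for-V*Dn} then forces $\tilde u=\tilde v$ in $\pv V*\pv D_n=\pv V$, that is, $u=v$. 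For the forward implication with $\Cl K\in\{\Cl R,\Cl L,\Cl J\}$ (the cases $\Cl H=\Cl R\cap\Cl L$ and $\Cl D=\Cl R\circ\Cl L$ then following), from $u\mathrel{\Cl K}v$ I would write either $u=v$, or $u=xvy$ and $v=x'uy'$ with $x,y,x',y'$ trivial or in $\Om AV$ according to the sidedness of $\Cl K$, and expand by~\eqref{item:PhiV-c}, for instance $\Phi_n^{\pv V}(xvy)=\Phi_n^{\pv V}(x\,\mathrm i_n(v))\,\Phi_n^{\pv V}(v)\,\Phi_n^{\pv V}(\mathrm t_n(v)y)$, to see that $\Phi_n^{\pv V}(u)$ and $\Phi_n^{\pv V}(v)$ generate the same one- or two-sided ideal of $(\Om{A_{n+1}}V)^1$.

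For the backward implications I would regard $A_{n+1}$ as the edge set of the de~Bruijn graph on the vertex set $A_n$, the edge $a_1\cdots a_{n+1}$ running from $a_1\cdots a_n$ to $a_2\cdots a_{n+1}$, and call $p\in(\Om{A_{n+1}}V)^1$ a \emph{path} if every length-$2$ factor $bc$ occurring in $p$ satisfies $\mathrm t_n(b)=\mathrm i_n(c)$; here ``occurring in $p$'' is read off in the finite continuous quotient $\overline{\Omega}_{A_{n+1}}(\pv{Sl}*\pv D_1)$ of $\Om{A_{n+1}}V$ (available because $\pv{Sl}*\pv D_1\subseteq\pv V$), which by Theorem~\ref{t:word-problem-for-V*Dn} records exactly the first letter, the last letter, and the set of length-$2$ factors, so that the set of paths is clopen. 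Two facts are then needed: (i) $\Phi_n^{\pv V}(w)$ is a path for every $w\in\Om AV\setminus A_{\le n}$ --- immediate for $w\in A^{\ge n+1}$ and inherited by limits since it is detected in a finite semigroup; and (ii) the \emph{decoding lemma}: if $p\ne1$ is a path with first edge $d$ and last edge $g$, then $p=\Phi_n^{\pv V}\bigl(\mathrm i_n(d)\cdot\rho_n^{\pv V}(p)\bigr)=\Phi_n^{\pv V}\bigl(\lambda_n^{\pv V}(p)\cdot\mathrm t_n(g)\bigr)$, where $\lambda_n^{\pv V},\rho_n^{\pv V}\colon\Om{A_{n+1}}V\to\Om AV$ are the continuous homomorphisms induced by the substitutions $a_1\cdots a_{n+1}\mapsto a_1$ and $a_1\cdots a_{n+1}\mapsto a_{n+1}$ (well defined, since a pseudoidentity valid in $\pv V$ stays valid under substitution). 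I would prove (ii) by noting that both of its sides are continuous in $p$ and agree on the dense set of finite paths, where the formula is the classical recovery of a walk in the de~Bruijn graph from the first, resp.\ last, letters of its successive edges together with its final, resp.\ initial, vertex.

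Granting (i) and (ii), suppose $\Phi_n^{\pv V}(u)=\Phi_n^{\pv V}(v)\,\alpha$ with $\alpha\in(\Om{A_{n+1}}V)^1\setminus\{1\}$ (the case $\alpha=1$ being injectivity). Let $\ell$ be the last edge of $\Phi_n^{\pv V}(v)$ and $d$ the first edge of $\alpha$. A limit argument from finite words gives $\mathrm t_n(\ell)=\mathrm t_n(v)$, and since $\ell d$ occurs as a length-$2$ factor of the path $\Phi_n^{\pv V}(u)=\Phi_n^{\pv V}(v)\alpha$ we obtain $\mathrm i_n(d)=\mathrm t_n(\ell)=\mathrm t_n(v)$; as $\alpha$ is again a path, (ii) yields $\alpha=\Phi_n^{\pv V}\bigl(\mathrm t_n(v)\cdot\rho_n^{\pv V}(\alpha)\bigr)$, so~\eqref{item:PhiV-c} gives $\Phi_n^{\pv V}\bigl(v\cdot\rho_n^{\pv V}(\alpha)\bigr)=\Phi_n^{\pv V}(v)\,\alpha=\Phi_n^{\pv V}(u)$, whence $u=v\cdot\rho_n^{\pv V}(\alpha)$ by injectivity. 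Thus $\Phi_n^{\pv V}(u)\le_{\Cl R}\Phi_n^{\pv V}(v)\Rightarrow u\le_{\Cl R}v$; the symmetric argument with $\lambda_n^{\pv V}$ and $\mathrm t_n(g)$ handles $\le_{\Cl L}$, and applying both decodings to a factorization $\Phi_n^{\pv V}(u)=\gamma\,\Phi_n^{\pv V}(v)\,\delta$ handles $\le_{\Cl J}$. Combined with the forward half, this proves the equivalence for $\Cl K\in\{\Cl R,\Cl L,\Cl H,\Cl J\}$; for $\Cl K=\Cl D$ I would invoke that profinite semigroups, being compact, are stable, so that $\Cl D=\Cl J$ in both $\Om AV$ and $\Om{A_{n+1}}V$.

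The hard part will be the decoding lemma (ii), and with it the identification of ``being a path'' as exactly the closed condition that cuts out $\Phi_n^{\pv V}\bigl(\Om AV\setminus A_{\le n}\bigr)$: one must check that the elementary decoding of finite de~Bruijn walks really does survive the passage to arbitrary pseudowords, so that a one- or two-sided multiple of $\Phi_n^{\pv V}(v)$ lying in the image of $\Phi_n^{\pv V}$ is recognized as $\Phi_n^{\pv V}$ applied to a one- or two-sided multiple of $v$. Once this dictionary between factorizations in $\Om{A_{n+1}}V$ and in $\Om AV$ is in place, everything else reduces to routine manipulation with property~\eqref{item:PhiV-c} and appeals to injectivity.
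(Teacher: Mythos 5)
Your proof is correct and takes essentially the same route as the paper's: injectivity reduces to Theorem~\ref{t:word-problem-for-V*Dn} via the commuting square~\eqref{eq:Phi-nV}, the forward Green-relation implication is read off from property~\eqref{item:PhiV-c} of Lemma~\ref{l:relative-conditions-for-V*Dn}, and the backward implication rests on recognizing a right (or left, or two-sided) multiple of $\Phi_n^{\pv V}(v)$ as a de~Bruijn path and decoding it back into~\Om AV. The only real divergence is in how explicit the decoding is: where the paper merely asserts that the length-$2$-factor condition ``is precisely the condition that characterizes membership in the image of the function~$\Phi_n^{\pv V}$'' and pulls $t=\Phi_n^{\pv V}(\mathrm t_n(u)w)$ out of the air, you actually construct the inverse via the substitution homomorphisms $\lambda_n^{\pv V},\rho_n^{\pv V}$ and prove the decoding identity by a clean density/continuity argument over finite walks; this is a modest but genuine gain in rigor, at the cost of some extra bookkeeping.
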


\begin{proof}
  Given $w,z\in\Om AS\setminus A_{\le n}$, 
 since the diagram~\eqref{eq:Phi-nV} commutes,
  the equality $\Phi_n^\pv V(\pi(w))=\Phi_n^\pv V(\pi(z))$ is
  equivalent to the pseudoidentity $\Phi_n(w)=\Phi_n(z)$ being valid
  in~\pv V. By Theorem~\ref{t:word-problem-for-V*Dn}, this in turn is
  equivalent to the pseudoidentity $w=z$ being valid in $\pv V*\pv
  D_n=\pv V$, that is $\pi(w)=\pi(z)$. Hence, the restriction of
  $\Phi_n^\pv V$ to $\Om AS\setminus A_{\le n}$ is
  injective.

  The statement about Green's equivalence relations is handled
  similarly for all of them. Consider, for instance the \Cl
  R-ordering.
  
  Suppose that $u\ge_\Cl Rv$ in $\Om AV$, 
  which means that there is some
  $w\in(\Om AV)^1$ such that $uw=v$. Applying $\Phi_n^\pv V$ and
  taking into account property~\eqref{item:Phi-c}, we
  obtain $\Phi_n^\pv V(v)=\Phi_n^\pv V(u)\Phi_n^\pv
  V(\mathrm{t}_n(u)w)$, which shows that $\Phi_n^\pv V(u)\ge_\Cl
  R\Phi_n^\pv V(v)$ in $(\Om{A_{n+1}}V)^1$.

  Conversely, suppose that $\Phi_n^\pv V(u)\ge_\Cl R\Phi_n^\pv V(v)$,
  that is $\Phi_n^\pv V(u)\,t=\Phi_n^\pv V(v)$ for some
  $t\in(\Om{A_{n+1}}V)^1$. Recall that $u,v\in\Om AV\setminus A_{\le
    n}$. Since \pv V contains $\pv{Sl}*\pv D_1$, the pseudowords
  $\Phi_n^\pv V(u)\,t$ and $\Phi_n^\pv V(v)$ must have exactly the
  same factors of length~$2$. From the definition of~$\Phi_n^\pv V$,
  it follows that all factors of length~$2$ of~$\Phi_n^\pv V(u)\,t$
  must be of the form $(ax)(xb)$, where $x\in A_n$ and $a,b\in A$.
  Since this is precisely the condition that characterizes membership
  in the image of the function~$\Phi_n^\pv V$, it follows that
  $t=\Phi_n(\mathrm{t}_n(u)w)$ for some $w\in\Om AV$. In view of
  property~\eqref{item:Phi-c} of the function~$\Phi_n^\pv V$, it
  follows that $\Phi_n^\pv V(v)=\Phi_n^\pv V(u)\,t=\Phi_n^\pv V(uw)$.
  Since $\Phi_n^\pv V$ is injective by the first part of the proof, we
  deduce that $uw=v$, which shows that $u\ge_\Cl Rv$.
\end{proof}

\subsection{Content and related functions}
\label{sec:content-0-1}

Let $S$ be a topological semigroup. For a subset $X$ of~$S$, denote by
$\overline{\langle X\rangle}$ the closed subsemigroup generated
by~$X$. For a finite set $A$, we say that $S$~is \emph{$A$-generated}
if there is a mapping $\varphi:A\to S$ such that
$\overline{\langle\varphi(A)\rangle}=S$. Usually, the generating
function $\varphi$ will be understood from the context and not
mentioned explicitly. Moreover, whenever we use a letter $a\in A$ to
represent an element of~$S$, we really mean the element~$\varphi(a)$.

We say that the $A$-generated profinite semigroup $S$~has a
\emph{content function} if the natural projection $\Om AS\to\Om A{Sl}$
factorizes through the unique extension of~$\varphi$ to a continuous
homomorphism $\hat\varphi:\Om AS\to S$. Equivalently, for subsets $B$
and $C$ of~$A$, if $s\in S$ belongs to both $\overline{\langle
  B\rangle}$ and $\overline{\langle C\rangle}$, then $B=C$. Then, for
each $s\in S$, the unique subset $B$ of~$A$ such that
$s\in\overline{\langle B\rangle}$ is denoted $c(s)$ and is called the
\emph{content} of~$s$.

Suppose that $S$ has a content function. For $s\in S$, we denote by
$0(s)$ the set of all $t\in S^1$ such that there is a factorization
$s=tas'$ with $c(s)=c(t)\uplus\{a\}$. The set of all such $a\in A$ is
also denoted $\bar0(s)$. Dually, the set of all $t\in S^1$ such that
there is a factorization $s=s'at$ with $c(s)=c(t)\uplus\{a\}$ is
denoted $1(s)$, and $\bar1(s)$~is defined similarly.
Following~\cite[Section~3]{Almeida&Trotter:1999a}, we say that
$S$\emph{~has $0$, $\bar 0$, $1$, and $\bar1$ functions} if,
respectively, each of the sets $0(s)$, $\bar0(s)$, $1(s)$, and
$\bar1(s)$ is a singleton for every $s\in S$. Such singleton sets will
be identified with their unique elements. Note that if $S$~has $0$ and
$\bar0$ functions then, by iterating these functions, we conclude
that, for $s\in S$, the order in which generators occur in~$s$ for the
first time from left to right is well determined, and so are the
prefixes determined by those first occurrences.

Let $f$ be one of the functions content, $0$, $\bar0$, $1$, or
$\bar1$. We say that a pseudovariety \pv V \emph{has the function $f$}
if so does the semigroup \Om AV for every finite alphabet~$A$.

There are many pseudovarieties \pv V which have content,
$0$, and $\bar0$ functions. A sufficient condition is given
in~\cite[Proposition~3.5]{Almeida&Trotter:1999a}: it suffices that the
pseudovariety \pv V contain~\pv{Sl} and be closed under taking right
Rhodes expansions (cut down to generators). In turn, a simple
sufficient condition for a pseudovariety \pv V to be closed under
right Rhodes expansions is that $\pv{LZ}\malcev\pv V=\pv V$
\cite{Reilly:1990}. Dually, \pv V is closed under left Rhodes
expansions if $\pv{RZ}\malcev\pv V=\pv V$. Obvious sufficient
conditions for the conjunction of the conditions $\pv{LZ}\malcev\pv
V=\pv V$ and $\pv{RZ}\malcev\pv V=\pv V$ are that $\pv{RB}\malcev\pv
V=\pv V$ or $\pv B\malcev\pv V=\pv V$. Iterating alternately right and left 
Rhodes expansions on a finite semigroup $S$, the process stops (up to
isomorphism) in a finite number of steps. The resulting semigroup is
known as the \emph{Birget expansion} of~$S$
\cite{Birget:1984,Birget&Rhodes:1984}.

\subsection{Equidivisibility and complexity}
\label{sec:equidivisibility}

A semigroup $S$~is said to be \emph{equidivisible} if, whenever
$s,t,u,v$ are elements of~$S$ such that $st=uv$, there exists some
$w\in S^1$ such that either $u=sw$ and $wv=t$, or $uw=s$ and $v=wt$.
This notion was introduced in~\cite{McKnight&Storey:1969}, as a
generalization of free semigroup. A pseudovariety of semigroups \pv V
is also said to be \emph{equidivisible} if \Om AV is equidivisible for
every finite set~$A$.

It is easy to show that every equidivisible pseudovariety containing
\pv{Sl} has $0$, $\bar0$, $1$, and $\bar1$ functions.

A sufficient condition for equidivisibility has been explicitly given
in~\cite{Almeida&ACosta:2007a}. We say that a pseudovariety \pv V is
\emph{closed under concatenation} if the variety of regular languages
corresponding to it according to Eilenberg's 
correspondence \cite{Pin:1986;bk} enjoys
that property, that is, if $K$ and $L$ are languages over the same
finite alphabet whose syntactic semigroups belong to~\pv V, then so
does the syntactic semigroup of the language $KL$. It is proved
in~\cite[Lemma~4.8]{Almeida&ACosta:2007a} that every pseudovariety
closed under concatenation is equidivisible.
In particular, $\pv S$ is  equidivisible.

The \emph{closure under concatenation} of a pseudovariety \pv V is the
smallest pseudovariety closed under concatenation that contains~\pv V.
It may be described as $\pv A\malcev\pv V$
\cite{Straubing:1979a,Chaubard&Pin&Straubing:2006}. Hence, a
pseudovariety \pv V is closed under concatenation if and only if it
satisfies the equation %
$\pv A\malcev\pv V=\pv V$.

For a pseudovariety \pv V containing \pv N, the property of being
closed under concatenation also has a very simple and useful
topological formulation. Namely, it is equivalent to the
multiplication of~\Om AV being an open mapping for every finite
set~$A$ \cite[Lemma~2.3]{Almeida&ACosta:2007a}.

A familiar class of examples of pseudovarieties closed under
concatenation is given by the pseudovarieties of the form $\bar{\pv
  H}$, where \pv H~is an arbitrary pseudovariety of groups. It is
indeed a simple exercise to check that %
$\pv A\malcev\bar{\pv H}=\bar{\pv H}$. Note also that $\bar{\pv H}*\pv
A=\bar{\pv H}$.

Another example is given by the Krohn-Rhodes complexity
pseudovarieties $\pv C_n$, which are extensively studied
in~\cite[Chapter~4]{Rhodes&Steinberg:2009qt}. They are defined
recursively by $\pv C_0=\pv A$ and $\pv C_{n+1}=\pv C_n*\pv G*\pv A$.
By~\cite[Corollary~4.9.4]{Rhodes&Steinberg:2009qt}, the equality %
$\pv A\malcev\pv C_n=\pv C_n$ holds for every $n\ge0$. Another
property of interest for the purposes of this paper is that the
complexity pseudovarieties $\pv C_n$ are monoidal
by~\cite[Proposition~4.3.14]{Rhodes&Steinberg:2009qt}. Thus, we have
the following result, which we state here for later reference.

\begin{Prop}
  \label{p:Cn-properties}
  Let \pv H be a pseudovariety of groups, $n\ge0$, and \pv V be one of
  the pseudovarieties $\bar{\pv H}$ and $\pv C_n$. Then \pv V is
  monoidal and closed under concatenation, it has content, $0$,
  $\bar0$, $1$, and $\bar1$ functions, and the equalities %
  $\pv V*\pv D=\pv D\malcev\pv V=\pv K\malcev\pv V=\pv V$ %
  hold.\qed
\end{Prop}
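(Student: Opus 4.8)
The plan is to derive every clause from facts already recorded in this section, the recurring tool being that \pv V is closed under concatenation, i.e.\ that $\pv A\malcev\pv V=\pv V$: for $\bar{\pv H}$ this is the identity $\pv A\malcev\bar{\pv H}=\bar{\pv H}$ noted above, and for $\pv C_n$ it is \cite[Corollary~4.9.4]{Rhodes&Steinberg:2009qt}. Monoidality needs no work: for $\bar{\pv H}$ it was observed right after the list of pseudovarieties, and for $\pv C_n$ it is \cite[Proposition~4.3.14]{Rhodes&Steinberg:2009qt}. I would also record at the outset that $\pv{Sl}\subseteq\pv V$ in both cases, since semilattices are aperiodic with trivial subgroups, hence lie in $\bar{\pv H}$, while $\pv{Sl}\subseteq\pv A=\pv C_0\subseteq\pv C_n$.

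The Mal'cev-product identities follow at once from $\pv A\malcev\pv V=\pv V$. Indeed $\pv{LZ}$, $\pv{RZ}$, $\pv D$, and $\pv K$ are all aperiodic ($\pv{LZ}$ and $\pv{RZ}$ consist of bands, while the members of $\pv D$ and $\pv K$ satisfy $x^{\omega+1}=x^\omega$), so monotonicity of the Mal'cev product in its first argument gives $\pv U\malcev\pv V\subseteq\pv A\malcev\pv V=\pv V$ for $\pv U\in\{\pv{LZ},\pv{RZ},\pv D,\pv K\}$, while $\pv V\subseteq\pv U\malcev\pv V$ via the identity homomorphism. This yields $\pv D\malcev\pv V=\pv K\malcev\pv V=\pv V$ directly. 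From $\pv{LZ}\malcev\pv V=\pv{RZ}\malcev\pv V=\pv V$ and \cite{Reilly:1990} together with its dual, \pv V is closed under both right and left Rhodes expansions; since also $\pv{Sl}\subseteq\pv V$, \cite[Proposition~3.5]{Almeida&Trotter:1999a} and its mirror image supply the content, $0$, $\bar0$, $1$, and $\bar1$ functions. (One could instead obtain the last four functions by noting, via \cite[Lemma~4.8]{Almeida&ACosta:2007a}, that \pv V is equidivisible.)

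There remains the identity $\pv V*\pv D=\pv V$. As $\pv D\subseteq\pv A$ and the semidirect product is monotone, it is enough to check $\pv V*\pv A=\pv V$, the inclusion $\pv V\subseteq\pv V*\pv D$ being trivial. For $\bar{\pv H}$ this is the identity $\bar{\pv H}*\pv A=\bar{\pv H}$ recorded above. For $\pv C_n$ I would argue by induction on $n$: the base case reads $\pv C_0*\pv A=\pv A*\pv A=\pv A$, and the step is $\pv C_{n+1}*\pv A=(\pv C_n*\pv G*\pv A)*\pv A=\pv C_n*\pv G*(\pv A*\pv A)=\pv C_n*\pv G*\pv A=\pv C_{n+1}$, using associativity of $*$ and, again, $\pv A*\pv A=\pv A$. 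That last equality is elementary: in a semidirect product $S*T$ of aperiodic semigroups the powers $(s,t)^k$ have their $T$-coordinate equal to $t^k$, which stabilizes at some $t^m$, after which the $S$-coordinate takes the form $p_m\cdot({}^{t^m}s)^{k-m}$ and so is eventually constant because $S$ is aperiodic; hence $(s,t)^{N+1}=(s,t)^N$ for large $N$, so $S*T$ is aperiodic.

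The one point that calls for genuine verification rather than bookkeeping is this last identity $\pv A*\pv A=\pv A$ (equivalently $\pv C_n*\pv A=\pv C_n$), and even there the only subtlety is to confirm that the powers stabilize at an idempotent, so that one really gets $x^{\omega+1}=x^\omega$ and not merely a torsion bound; everything else reduces to the cited closure criteria and the elementary containments of $\pv{Sl}$, $\pv{LZ}$, $\pv{RZ}$, $\pv D$, $\pv K$ in $\pv A$.
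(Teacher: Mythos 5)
Your proof is correct and fills in exactly the chain of observations the paper leaves implicit behind the \qed: monoidality and $\pv A\malcev\pv V=\pv V$ are quoted facts; the Mal'cev-product equalities and the Rhodes-expansion closures follow by monotonicity from $\pv{LZ},\pv{RZ},\pv D,\pv K\subseteq\pv A$; the content and $0,\bar0,1,\bar1$ functions then come from \cite[Proposition~3.5]{Almeida&Trotter:1999a} and its dual; and $\pv V*\pv D=\pv V$ reduces to $\pv A*\pv A=\pv A$, which the paper also uses (as $\pv A*\pv D=\pv A$) in the proof of Corollary~\ref{c:DAm.VstarA-properties}. Your explicit verification of $\pv A*\pv A=\pv A$ and the induction for $\pv C_n*\pv A=\pv C_n$ is the only step that goes beyond bookkeeping, and it is right.
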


\subsection{Letter cancelation}
\label{sec:letter-cancelation}

We say that an $A$-generated topological semigroup $S$ is \emph{right
  letter cancelative} if, for every generator $a\in A$ and all $s,t\in
S$, if $sa=ta$ then $s=t$. The pseudovariety \pv V is said to be
\emph{right letter cancelative} if so is each semigroup \Om AV for
every finite alphabet~$A$. Equivalently, if \pv V satisfies the
pseudoidentity $ua=va$ over a finite alphabet $A$, where $a\in A$,
then it also satisfies the pseudoidentity $u=v$. The dual notion of
right letter cancelative is \emph{left letter cancelative}, whose
precise definition for a topological semigroup and for a pseudovariety
is left to the reader.

The following result assumes familiarity with Eilenberg's
correspondence between pseudovarieties of semigroups and varieties of
languages. The proof can be considered an exercise in the theory of
profinite semigroups, but is included for the sake of completeness.
The reader may wish to recall that the variety of languages \Cl V
corresponding to a pseudovariety of semigroups \pv V associates with a
finite alphabet $A$ the set $\Cl V(A)$ of all \pv V-recognizable
subsets of~$A^+$, that is subsets that can be recognized by
homomorphisms from $A^+$ into semigroups from~\pv V. The proof below
uses mainly the fact that the topological space \Om AV is the Stone
dual of the Boolean algebra $\Cl V(A)$
\cite[Theorem~3.6.1]{Almeida:1994a}, a fact that is referred in the
proof simply as ``Stone duality''. This duality may be expressed as
follows, where $\iota:A^+\to\Om AV$ is the natural homomorphism: a
language $L\subseteq A^+$ belongs to $\Cl V(A)$ if and only if
$\overline{\iota(L)}$ is open in~\Om AV and
$\iota^{-1}\bigl(\overline{\iota(L)}\bigr)=L$; furthermore, the sets
$\overline{\iota(L)}$ suffice to separate points of~\Om AV. Moreover,
in case \pv V contains \pv N, the mapping $\iota$ is injective, the
induced topology on $A^+$ is discrete, and the condition
$\iota^{-1}\bigl(\overline{\iota(L)}\bigr)=L$ is superfluous
\cite[Theorem~2.12]{Almeida&Costa:2015hb}. Furthermore, the clopen
sets of the form $\overline{\iota(L)}$ are sufficient to separate
points of~\Om AV, and so they generate the topology of~\Om AV.

\begin{Prop}\label{p:cancelation-in-terms-of-languages}
  Let \pv V be a pseudovariety of semigroups, \Cl V be the
  corresponding variety of languages. Then, the following conditions
  are equivalent:
  \begin{enumerate}
  \item\label{item:p:cancelation-in-terms-of-languages-1} for every
    finite alphabet $A$ and every letter $a\in A$, $L\in\Cl V(A)$
    implies $La\in\Cl V(A)$;
  \item\label{item:p:cancelation-in-terms-of-languages-2} the
    pseudovariety \pv V contains \pv D and it is right letter
    cancelative;
  \item\label{item:p:cancelation-in-terms-of-languages-3} the
    pseudovariety \pv V contains \pv{RZ} and it is right letter
    cancelative.
  \end{enumerate}
\end{Prop}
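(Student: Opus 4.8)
The plan is to establish the cycle
$(\ref{item:p:cancelation-in-terms-of-languages-1})\Rightarrow
(\ref{item:p:cancelation-in-terms-of-languages-2})\Rightarrow
(\ref{item:p:cancelation-in-terms-of-languages-3})\Rightarrow
(\ref{item:p:cancelation-in-terms-of-languages-1})$, working throughout in the
Stone-dual picture: for $L\in\Cl V(A)$ the set $\overline{\iota(L)}$ is clopen
in~\Om AV, such clopen sets separate the points of~\Om AV, and
$\iota^{-1}(\overline{\iota(L)})=L$. The key elementary observation is that
$\iota(La)=\iota(L)\,a$, so that, writing $\rho_a\colon\Om AV\to\Om AV$ for the
(continuous) right translation $x\mapsto xa$ and using compactness of~\Om AV,
one gets $\overline{\iota(La)}=\overline{\iota(L)}\,a=\rho_a(\overline{\iota(L)})$;
moreover $\rho_a$ is injective, for every $a$ and every~$A$, precisely when
\pv V is right letter cancelative, and then $\rho_a$ is a homeomorphism onto
the closed set $\Om AV\,a$. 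I shall also use that a clopen subset of~\Om AV
equals the closure of its intersection with the dense subsemigroup
$\iota(A^+)$, and that $\pv{RZ}\subseteq\pv V$ amounts to the existence of a
continuous surjective homomorphism $\ell\colon\Om AV\to\Om A{RZ}=A$ sending
each word to its last letter.

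For $(\ref{item:p:cancelation-in-terms-of-languages-1})\Rightarrow
(\ref{item:p:cancelation-in-terms-of-languages-2})$, I would first show that
$\Cl D(A)\subseteq\Cl V(A)$, building the needed languages up step by step.
As the trivial semigroup belongs to~\pv V we have $A^+\in\Cl V(A)$; hence
$A^+b\in\Cl V(A)$ for every $b\in A$; hence the language $A\subseteq A^+$ of
single letters equals $A^+\setminus\bigcup_{b\in A}A^+b\in\Cl V(A)$; hence
$Aa\in\Cl V(A)$; hence its left quotient $a^{-1}(Aa)=\{a\}\in\Cl V(A)$, since
varieties of languages are closed under quotients. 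Appending letters one at a
time now puts every singleton, hence every finite language, and every $A^+w$,
into $\Cl V(A)$; so $A^*w=\{w\}\cup A^+w\in\Cl V(A)$ for every $w\in A^+$, and
as such languages generate $\Cl D(A)$ as a Boolean algebra we obtain
$\pv D\subseteq\pv V$. For right letter cancelativity, suppose $ua=va$
in~\Om AV and fix $L\in\Cl V(A)$. Since $La\in\Cl V(A)$, the clopen set
$\rho_a^{-1}(\overline{\iota(La)})$ meets $\iota(A^+)$ exactly in $\iota(L)$,
because for $w\in A^+$ one has $\iota(wa)\in\overline{\iota(La)}$ iff
$wa\in\iota^{-1}(\overline{\iota(La)})=La$ iff $w\in L$ (using that $A^+$ is
cancellative); hence it equals $\overline{\iota(L)}$. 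Now $u\in\overline{\iota(L)}$
forces $va=\rho_a(u)\in\overline{\iota(La)}$, so
$v\in\rho_a^{-1}(\overline{\iota(La)})=\overline{\iota(L)}$, and symmetrically;
thus $u$ and $v$ lie in the same basic clopen sets, whence $u=v$.

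The implication $(\ref{item:p:cancelation-in-terms-of-languages-2})\Rightarrow
(\ref{item:p:cancelation-in-terms-of-languages-3})$ is immediate, since
$\pv{RZ}=\pv D_1\subseteq\pv D\subseteq\pv V$. For
$(\ref{item:p:cancelation-in-terms-of-languages-3})\Rightarrow
(\ref{item:p:cancelation-in-terms-of-languages-1})$, assume
$\pv{RZ}\subseteq\pv V$ and \pv V right letter cancelative, and fix
$L\in\Cl V(A)$ with $|A|\ge2$ and a letter~$a$ (the cases $|A|\le1$, where
\Om AV is empty or monogenic, being immediate). A density argument in~\Om AV
shows that every element of~\Om AV whose last letter is~$a$ is either
$\iota(a)$ or a right multiple of~$a$; that is, $\ell^{-1}(a)=\Om AV\,a\cup\{\iota(a)\}$,
and since $\ell^{-1}(a)$ is clopen and $\Om AV\,a$ is closed, $\Om AV\,a$ is
clopen in~\Om AV in either case. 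Right letter cancelativity makes $\rho_a$ a
homeomorphism onto $\Om AV\,a$, so $\overline{\iota(La)}=\rho_a(\overline{\iota(L)})$
is clopen in $\Om AV\,a$, hence in~\Om AV; it then remains to check
$\iota^{-1}(\overline{\iota(La)})=La$, where the only point to settle is the
exclusion of a length-one word. If $\iota(a)\in\overline{\iota(La)}\subseteq\Om AV\,a$,
say $\iota(a)=sa$ with $s\in\Om AV$, then for each generator $c$ the equality
$csa=ca$ and cancelativity give $cs=c$, whence $ws=w$ for all $w\in A^+$ and,
by density and continuity, $s$ is a right identity of~\Om AV; but then $\ell(s)$
would be a right identity of the right zero semigroup $\Om A{RZ}=A$, which is
impossible when $|A|\ge2$. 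Hence $\iota(a)\notin\overline{\iota(La)}$, and any
other word $w$ in $\iota^{-1}(\overline{\iota(La)})$ factors as $w'a$ with
$\iota(w')a=sa$ for some $s\in\overline{\iota(L)}$, so by cancelativity
$\iota(w')=s\in\overline{\iota(L)}$, whence $w'\in L$ and $w=w'a\in La$. By
Stone duality, $La\in\Cl V(A)$.

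The step I expect to cause the most trouble is this last one: showing that
$\Om AV\,a$ is clopen --- not merely closed --- and that the generator
$\iota(a)$ is never a right multiple of itself (the ``right identity''
obstruction) is exactly where both hypotheses of
$(\ref{item:p:cancelation-in-terms-of-languages-3})$ are genuinely used, and it
is also the place where the degenerate alphabets $|A|\le1$ need separate
attention. Everything else amounts to routine bookkeeping with Stone duality
and with the density of $\iota(A^+)$ in~\Om AV.
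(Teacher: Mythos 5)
Your proposal is correct and follows essentially the same route as the paper's proof: the same implication cycle $(\ref{item:p:cancelation-in-terms-of-languages-1})\Rightarrow(\ref{item:p:cancelation-in-terms-of-languages-2})\Rightarrow(\ref{item:p:cancelation-in-terms-of-languages-3})\Rightarrow(\ref{item:p:cancelation-in-terms-of-languages-1})$, and the same use of Stone duality in both nontrivial implications; where the paper argues with convergent sequences you work with clopen sets and the map $\ell$, but these are equivalent formulations. You are in fact somewhat more thorough than the paper in the step $(\ref{item:p:cancelation-in-terms-of-languages-3})\Rightarrow(\ref{item:p:cancelation-in-terms-of-languages-1})$: after applying the last-letter map, the paper factors each word $w_n$ of its approximating sequence as $w_n=v_na$ with $v_n\in A^+$, silently passing over the possibility that $w_n=a$ for infinitely many~$n$ (i.e., that the limit is $\iota(a)$); your explicit exclusion of $\iota(a)\in\overline{\iota(La)}$ via the ``right identity'' argument (cancel $a$ in $\iota(ca)=\iota(c)sa$, project along $\ell$, impossible for $|A|\ge2$) is exactly what is needed to close that case and to justify $\iota^{-1}\bigl(\overline{\iota(La)}\bigr)=La$ rather than assume it. Your step-by-step construction showing $\pv D\subseteq\pv V$ in $(\ref{item:p:cancelation-in-terms-of-languages-1})\Rightarrow(\ref{item:p:cancelation-in-terms-of-languages-2})$ is also just a fleshed-out version of the paper's terse ``iterate $L\mapsto La$''. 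The one spot you should not wave away is $|A|\le1$: it is quick, but it is not immediate --- for $A=\{a\}$, extend a recognizing morphism for $L$ to the alphabet $\{a,b\}$, apply the $|A|\ge2$ case to get $La\in\Cl V(\{a,b\})$, and restrict the recognizer back to $\{a\}^+$; a sentence to that effect is warranted.
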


\begin{proof}
  $\eqref{item:p:cancelation-in-terms-of-languages-1}\Rightarrow
  \eqref{item:p:cancelation-in-terms-of-languages-2}$ Let $A$~be a
  finite alphabet and let $u,v\in\Om AV$ and $a\in A$ be such that
  $ua=va$. Assuming
  \eqref{item:p:cancelation-in-terms-of-languages-1}, we show that the
  inequality $u\ne v$ leads to a contradiction.

  Let $\iota:A^+\to\Om AV$ be the natural homomorphism. Assuming that
  $u\ne v$, by Stone duality there is a \pv V-recognizable language
  $L\subseteq A^+$ such that $\overline{\iota(L)}$ contains $u$ but
  not $v$. Consider a sequence of words $(v_n)_n$ from~$A^+$ such that
  $\lim\iota(v_n)=v$. It follows that $\lim\iota(v_na)=va=ua$. By
  hypothesis, the language $La$ belongs to~$\Cl V(A)$, which, by Stone
  duality, entails that the set $\overline{\iota(La)}$~is open and
  $\iota^{-1}(\overline{\iota(La)})=La$. Since
  $ua\in\overline{\iota(La)}$, the words $v_na$~must belong to~$La$
  for all sufficiently large~$n$. Hence, $v_n$~lies in $L$ for all
  sufficiently large~$n$, so that $v\in\overline{\iota(L)}$, which
  contradicts the assumption that~$v\notin L$.

  Hence, \pv V is right letter cancelative. That \pv V contains \pv D
  also follows from~\eqref{item:p:cancelation-in-terms-of-languages-1}
  can be seen by iterating the operations $L\mapsto La$ on $A^+$,
  since the variety of languages corresponding to~\pv D consists of
  all languages that are finite Boolean combinations of languages of
  the form %
  $A^*u=A^+u\cup a^{-1}A^+u$ ($a\in A$), with $u$ a finite word.

  $\eqref{item:p:cancelation-in-terms-of-languages-3}\Rightarrow
  \eqref{item:p:cancelation-in-terms-of-languages-1}$ Suppose that
  $\pv V\supseteq\pv{RZ}$ and that \pv V is right letter cancelative.
  Let $L$ be a language from~$\Cl V(A)$. We show that the condition
  $La\notin\Cl V(A)$ leads to a contradiction. By Stone duality,
  $\overline{\iota(L)}$~is an open subset of~\Om AV but
  $\overline{\iota(La)}=\overline{\iota(L)}a$~is not.
  The latter condition implies that there is $u\in\overline{\iota(L)}$
  such that $ua=\lim w_n$ for a sequence $(w_n)_n$ of words in
  $A^+\setminus La$. Since \pv V contains~\pv{RZ}, the function
  $\mathrm{t}_1$ is well defined on~\Om AV and it is continuous.
  Hence, we may assume that, for all~$n$, there is a factorization
  $w_n=v_na$. By compactness of~\Om AV, we may further assume that the
  sequence $(v_n)_n$ converges to some $v\in\Om AV$. It follows that
  $ua=\lim w_n=\lim v_na=va$ which, since \Om AV is assumed to be
  right letter cancelative, yields the equality $u=v$. As $u$ was
  chosen as an element of the open set $\overline{\iota(L)}$ and $\lim
  v_n=v=u$, we deduce that $v_n\in L$, whence also $w_n\in La$ for all
  sufficiently large $n$, which contradicts the choice of the
  sequence~$(w_n)_n$.
\end{proof}

The language closure property
\eqref{item:p:cancelation-in-terms-of-languages-1} of
Proposition~\ref{p:cancelation-in-terms-of-languages} is not necessary
for right letter cancelativity. For example, using the structure
theorem for \Om AJ \cite[Theorem~8.2.8]{Almeida:1994a}, one may show
that \pv J is right letter cancelative.

In view of Proposition~\ref{p:cancelation-in-terms-of-languages}, an
obvious sufficient condition for a pseudovariety to be both left and
right cancelative is that the corresponding variety of languages be
closed under concatenation. For our purposes, we need an alternative
sufficient condition, which can be obtained by taking into account some
results from~\cite{Straubing:1985}.

\begin{Prop}
  \label{p:cancellation}
  Let \pv V be a nontrivial monoidal pseudovariety of semigroups such
  that $\pv V*\pv D=\pv V$. Then \pv V~is both left and right letter
  cancelative.
\end{Prop}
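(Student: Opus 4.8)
The plan is to verify the language‑theoretic closure conditions of Proposition~\ref{p:cancelation-in-terms-of-languages} and of its left--right dual: for every finite alphabet $A$ and $a\in A$, that $L\in\Cl V(A)$ implies $La\in\Cl V(A)$ and $aL\in\Cl V(A)$. By that proposition the first gives right letter cancelativity (and $\pv D\subseteq\pv V$) and, dually, the second gives left letter cancelativity (and $\pv K\subseteq\pv V$). First I would record some consequences of the hypotheses. Since $\pv D=\pv I*\pv D\subseteq\pv V*\pv D=\pv V$, we have $\pv D,\pv{RZ}\subseteq\pv V$; as $\pv V$ is monoidal and $U_1$ divides $R^1$ for a two‑element right‑zero semigroup~$R$, also $\pv{Sl}\subseteq\pv V$, so $\pv{RB}\subseteq\pv{Sl}*\pv D_1\subseteq\pv V$ (using the remark after Theorem~\ref{t:word-problem-for-V*Dn}); and every nilpotent semigroup satisfies $xy^\omega=y^\omega$, so $\pv N\subseteq\pv D\subseteq\pv V$. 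In particular $\pv N\subseteq\pv V$, so $\pv V$‑recognizability coincides with clopenness in $\Om AV$; and $\pv V*\pv D_n=\pv V$ for all $n\ge1$, so $\Phi_n^{\pv V}$ is available (Lemma~\ref{l:relative-conditions-for-V*Dn}).

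By Theorem~\ref{t:word-problem-for-V*Dn} combined with $\pv V*\pv D_n=\pv V$, two elements of $\Om AV$ agree as soon as they have the same $\mathrm{i}_n$, the same $\mathrm{t}_n$, and the same image under $\Phi_n^{\pv V}$; hence, for each fixed $n\ge1$, the Boolean algebra $\Cl V(A)$ is generated by the languages $\mathrm{i}_n^{-1}(\alpha)$, $\mathrm{t}_n^{-1}(\beta)$ (for $\alpha,\beta\in A_n$) and $\Phi_n^{-1}(M)$ with $M\in\Cl V(A_{n+1})$ --- the language form of Theorem~\ref{t:word-problem-for-V*Dn}, essentially \cite{Straubing:1985}; note $\Phi_n^{-1}(M)\in\Cl V(A)$ because $\Phi_n^{\pv V}$ is continuous and $\pv N\subseteq\pv V$. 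Now $L\mapsto La$ and $L\mapsto aL$ commute with Boolean operations inside $A^+a$, resp.\ $aA^+$ (cancellation in the free monoid), and $A^+a,aA^+\in\Cl V(A)$, so it is enough to treat the three kinds of generator. The prefix and suffix generators are immediate: e.g.\ $a\cdot\mathrm{i}_n^{-1}(\alpha)=\mathrm{i}_{n+1}^{-1}(a\alpha)$, and $\mathrm{i}_n^{-1}(\alpha)\cdot a=\mathrm{i}_n^{-1}(\alpha)\cap\mathrm{t}_1^{-1}(a)\cap(A^+\setminus A_{\le n})$, all in $\Cl V(A)$; the other cases are similar.

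For the generator $\Phi_n^{-1}(M)$, property~\eqref{item:PhiV-c} gives, for $w\in\Om AV\setminus A_{\le n}$, the identities $\Phi_n^{\pv V}(wa)=\Phi_n^{\pv V}(w)\,(\mathrm{t}_n(w)\,a)$ and $\Phi_n^{\pv V}(aw)=(a\,\mathrm{i}_n(w))\,\Phi_n^{\pv V}(w)$, where $\mathrm{t}_n(w)\,a$ and $a\,\mathrm{i}_n(w)$ are letters of the alphabet $A_{n+1}$. Consequently ``$\Phi_n^{\pv V}(w)\in M$'' is equivalent to ``$\Phi_n^{\pv V}(wa)\in M\cdot A_{n+1}$'', resp.\ ``$\Phi_n^{\pv V}(aw)\in A_{n+1}\cdot M$'', which yields $\bigl(\Phi_n^{-1}(M)\setminus A_{\le n}\bigr)\cdot a=\Phi_n^{-1}(M\cdot A_{n+1})\cap A^+a\cap(A^+\setminus A_{\le n})$ and dually for left multiplication. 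Since $\Phi_n^{-1}(M\cdot A_{n+1})$ and $\Phi_n^{-1}(A_{n+1}\cdot M)$ are again generators of $\Cl V(A)$, everything reduces to showing that $\Cl V(B)$ is closed, for every finite alphabet~$B$, under the operations $N\mapsto N\cdot B$ and $N\mapsto B\cdot N$ of concatenation with the \emph{full} alphabet. If $N=\psi^{-1}(P)$ with $\psi\colon B^+\to S\in\pv V$, then $N\cdot B$ is recognized by a wreath product of $S^1\in\pv V$ (monoidality) with a right‑zero semigroup on~$B$, hence by a semigroup in $\pv V*\pv D_1=\pv V$; symmetrically $B\cdot N$ is recognized by a wreath product of $S^1$ with a left‑zero semigroup on~$B$, a semigroup in $\pv V*\pv K_1$.

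I expect the main obstacle to be the last step on the left‑hand side: one needs $\pv V*\pv K_1=\pv V$, and this is \emph{not} obtained from left--right duality because the hypothesis $\pv V*\pv D=\pv V$ is not itself left--right symmetric. This is precisely where the results of \cite{Straubing:1985} are invoked: for a monoidal pseudovariety~$\pv V$, the equality $\pv V*\pv D=\pv V$ forces $\pv V=\pv{LW}$ for a local pseudovariety of monoids~$\pv W$ (so that $\pv{LW}=\pv W*\pv D=\pv W*\pv K$), whence $\pv V*\pv K_1\subseteq\pv V*\pv K=\pv V$. Granting this, $\Cl V$ is closed under $L\mapsto La$ and $L\mapsto aL$, and Proposition~\ref{p:cancelation-in-terms-of-languages} together with its dual finishes the proof; the remaining bookkeeping (short words, the boundary case $|w|=n$, and the verification that the two wreath products lie in $\pv V*\pv D_1$, resp.\ $\pv V*\pv K_1$) is routine, absorbed by the containments $\pv N,\pv D,\pv{RB}\subseteq\pv V$.
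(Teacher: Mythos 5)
Your proposal is correct in its broad strokes and reaches the same conclusion by the same general strategy as the paper --- verify the language closure conditions of Proposition~\ref{p:cancelation-in-terms-of-languages} and its dual --- but it implements this in a more hands-on way. The paper simply cites \cite[Corollary~3.3]{Straubing:1985} to obtain $\pv V*\pv{LI}=\pv V$, then \cite[Lemma~9.8]{Straubing:1985} to conclude that $L\in\Cl V(A)$ implies $La\in\Cl V(A)$, and finally dualizes using $(\pv V*\pv{LI})^\rho=\pv V^\rho*\pv{LI}$ from \cite[Proposition~4.4]{Straubing:1985} to get the left version for free. You instead reconstruct the substance of Lemma~9.8 by decomposing $\Cl V(A)$, via Theorem~\ref{t:word-problem-for-V*Dn} and Stone duality, into the generators $\mathrm{i}_n^{-1}(\alpha)$, $\mathrm{t}_n^{-1}(\beta)$, and $\Phi_n^{-1}(M)$, and then pushing $La$ and $aL$ through that decomposition using property~\eqref{item:PhiV-c} of $\Phi_n^{\pv V}$; at the last step you reduce to $\pv V*\pv D_1\subseteq\pv V$ and $\pv V*\pv K_1\subseteq\pv V$, and you correctly identify the second containment as the place where the left--right asymmetry of the hypothesis must be broken. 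Your route is longer, but it has the side benefit of making visible, within the paper's own de Bruijn machinery, exactly why the closure works.

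One remark on the final invocation of Straubing: the assertion that $\pv V*\pv D=\pv V$ with $\pv V$ monoidal forces $\pv V=\pv{LW}$ for a \emph{local} pseudovariety of monoids~$\pv W$ is stronger than what you need and is not what \cite[Corollary~3.3]{Straubing:1985} literally gives. That result yields $\pv V*\pv{LI}=\pv V$, from which $\pv V*\pv K\subseteq\pv V*\pv{LI}=\pv V$ (since $\pv K\subseteq\pv{LI}$) follows immediately; this is all you need for $\pv V*\pv K_1\subseteq\pv V$. I would drop the detour through locality and quote the containment directly, which is also what makes the comparison with the paper's own dualization argument transparent: both proofs hinge on $\pv V*\pv{LI}=\pv V$, and that identity is left--right symmetric because $\pv{LI}^\rho=\pv{LI}$.
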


\begin{proof}  
  By \cite[Corollary~3.3]{Straubing:1985}, the equality %
  $\pv V*\pv{LI}=\pv V$ holds. Suppose that the syntactic semigroup
  $S(L)$ of the language $L\subseteq A^+$ belongs to~\pv V and let
  $a\in A$ be a letter. By \cite[Lemma~9.8]{Straubing:1985},
  $S(La)$~also belongs to~\pv V. We may therefore apply
  Proposition~\ref{p:cancelation-in-terms-of-languages}
  to deduce that \pv V~is right letter cancelative.

  To complete the proof, we show that the dual $\pv V^\rho$ of~\pv V,
  which is clearly also nontrivial and monoidal, is again such that
  $\pv V^\rho*\pv D=\pv V^\rho$. Indeed, from the equality $\pv
  V*\pv{LI}=\pv V$ we obtain $\pv V^\rho=(\pv V*\pv{LI})^\rho=\pv
  V^\rho*\pv{LI}$, where the second equality is given
  by~\cite[Proposition~4.4]{Straubing:1985}.
\end{proof}

\subsection{Basic factorizations}
\label{sec:basic-factorizations}

In this section, we consider a strengthening of the property of a
pseudovariety to have content and $0$ and $\bar0$ (or $1$ and $\bar1$)
functions.

By a \emph{left basic factorization} of an element
$s$ of a semigroup $S$ with a content function~$c$, we mean a
factorization of the form $s=s_0as_1$ with $s_0,s_1\in S^1$ such that
$c(s)=c(s_0)\uplus\{a\}$. In such a factorization, the generator
$a$~is said to be the \emph{marker} and $s_1$ the \emph{remainder}. We
say $S$ has \emph{unique left basic factorizations} if, for any two
left basic factorizations $s=s_0as_1$ and $s=t_0bt_1$ of the same
element, we have $s_0=t_0$, $a=b$, and $s_1=t_1$. Given a generator
$a$ and an element $s$ of a semigroup $S$ with unique left basic
factorizations, the first occurrence of $a$, from left to right, as a
factor of~$s$ can be located by iterated left basic factorization on
the left factor until it is found as a marker. The factor that follows
it is called the \emph{absolute remainder} of $a$ in~$s$. Iterating
this procedure on the absolute remainders, one may successively locate
first occurrences of the letters of any word $u=a_1\cdots a_r$ on the
generators for which there is a factorization $s=s_0a_1s_1\cdots
a_rs_r$ of a given element of~$S$. This is called the
\emph{left-greedy occurrence} of $u$ as a subword in~$s$, and $s_r$~is
called its \emph{remainder}.

We say that a pseudovariety \pv V has \emph{unique left basic
  factorizations} if, for every finite alphabet $A$, \Om AV has unique
left basic factorizations.

The definition of \emph{right basic factorizations} and the property
of having \emph{unique right basic factorizations} are the left-right
duals of the above notions.

Combining the unilateral version
of~\cite[Proposition~3.4]{Almeida:1996c} with the characterization of
pseudovarieties whose corresponding varieties of languages are closed
under deterministic product \cite{Pin:1980b}, we obtain the following
sufficient condition for uniqueness of left basic factorizations at
the pseudovariety level and its dual.

\begin{Prop}
  \label{p:unique-lbf}
  Let\/ \pv V be a monoidal pseudovariety containing \pv{Sl}. If\/ \pv
  V satisfies the equation $\pv D\malcev\pv V=\pv V$, then \pv V has
  unique left basic factorizations. Dually, if\/ \pv V satisfies the
  equation $\pv K\malcev\pv V=\pv V$, then \pv V has unique right
  basic factorizations.
\end{Prop}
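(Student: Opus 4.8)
The plan is to derive both (dual) statements by combining Pin's syntactic description of the varieties of languages closed under deterministic concatenation products \cite{Pin:1980b} with the profinite consequence of such closure recorded (in a two-sided form) in \cite[Proposition~3.4]{Almeida:1996c}. I would establish the assertion on unique left basic factorizations and then transfer it to the right-handed assertion by passing to the dual pseudovariety. Note first that, since $\pv{Sl}\subseteq\pv V$, the semigroup \Om AV has a content function for every finite alphabet~$A$, so that left and right basic factorizations are indeed defined.

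Recall that, for languages $L_0,\dots,L_k\subseteq A^+$ and letters $a_1,\dots,a_k\in A$, the marked product $L_0a_1L_1\cdots a_kL_k$ is \emph{left deterministic} when every one of its words factors through the markers in a unique way that, moreover, results from scanning the word from the left and placing each marker at the first admissible position; the notion of \emph{right deterministic} product is dual. By Pin's characterization \cite{Pin:1980b} --- where one uses that \pv V is monoidal and contains \pv{Sl}, so that the Eilenberg correspondence between \pv V and \Cl V behaves well with respect to these operations --- the equation $\pv D\malcev\pv V=\pv V$ is equivalent to \Cl V being closed under left deterministic products, and, dually, the equation $\pv K\malcev\pv V=\pv V$ is equivalent to \Cl V being closed under right deterministic products. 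Thus, under the hypothesis $\pv D\malcev\pv V=\pv V$, the variety \Cl V is closed under left deterministic products.

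It then remains to feed this closure property into the unilateral form of \cite[Proposition~3.4]{Almeida:1996c}, which yields that \Om AV has unique left basic factorizations for every finite~$A$. A self-contained version of that implication follows the pattern of the proof of Proposition~\ref{p:cancelation-in-terms-of-languages}: given $s\in\Om AV$ with $c(s)=B$ and two left basic factorizations $s=s_0as_1=t_0bt_1$, pick a sequence of words $w_m\to s$ with $c(w_m)=B$; each $w_m$ has the genuine, and hence unique, left basic factorization $w_m=p_mc_mq_m$ in which $p_mc_m$ is the shortest prefix of $w_m$ of content~$B$; using that the sets $\overline{\iota(L)}$ with $L\in\Cl V(A)$ separate points of \Om AV, together with the closure of \Cl V under the left deterministic marked products that isolate such shortest prefixes, one checks that these separating sets cannot distinguish the two given factorizations, whence $s_0=t_0$, $a=b$, and $s_1=t_1$. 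Finally, the right-handed statement is obtained by applying the left-handed one to the dual pseudovariety $\pv V^\rho$: it is again monoidal and contains \pv{Sl}; it satisfies $\pv D\malcev\pv V^\rho=\pv V^\rho$ precisely when $\pv K\malcev\pv V=\pv V$ (argue as in the proof of Proposition~\ref{p:cancellation}, using $\pv D^\rho=\pv K$ and the compatibility of the Mal'cev product with duality); and unique left basic factorizations for $\pv V^\rho$ are, via reversal, exactly unique right basic factorizations for~\pv V.

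The step I expect to be the main obstacle is the appeal to \cite{Pin:1980b} in the second paragraph: that reference is stated in terms of closure of a variety of languages under (one-sided) deterministic products, and one must check with care that this closure is controlled precisely by the Mal'cev equation $\pv D\malcev\pv V=\pv V$ (respectively $\pv K\malcev\pv V=\pv V$) rather than by some other condition, and that the standing hypotheses --- \pv V monoidal and $\pv{Sl}\subseteq\pv V$ --- are exactly what is needed both for that equivalence and for the subsequent passage to \Om AV. Once this identification is pinned down, the rest is the routine compactness-and-duality transfer already used elsewhere in this section.
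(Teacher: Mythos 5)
Your proposal reproduces the paper's own (unstated) proof: the proposition is presented without a displayed proof, and the sentence immediately preceding it says it is obtained by combining the unilateral form of \cite[Proposition~3.4]{Almeida:1996c} with Pin's characterization \cite{Pin:1980b} of pseudovarieties whose language varieties are closed under deterministic products, exactly the two ingredients you invoke, together with the routine dualization. Your added cautionary remark about checking that the Mal'cev equation $\pv D\malcev\pv V=\pv V$ (resp.\ $\pv K\malcev\pv V=\pv V$) is precisely the one matching left (resp.\ right) deterministic closure, and that monoidality plus $\pv{Sl}\subseteq\pv V$ suffice, is the right place to be careful, but the identification is indeed the one the paper relies on.
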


It can be easily checked that many familiar examples of
pseudovarieties \pv V satisfy the equation %
$\pv D\malcev\pv V=\pv V$. Two families of such examples are
registered in the following result.

\begin{Cor}
  \label{c:unique-lbf-egs}
  Let \pv H be an arbitrary pseudovariety of groups. Then the
  pseudovarieties $\pv{DO}\cap\bar{\pv H}$ and $\pv{DS}\cap\bar{\pv
    H}$ have unique left and right basic factorizations.\qed
\end{Cor}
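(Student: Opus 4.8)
The plan is to deduce the statement from Proposition~\ref{p:unique-lbf}. Writing $\pv V$ for either of the pseudovarieties $\pv{DO}\cap\bar{\pv H}$ and $\pv{DS}\cap\bar{\pv H}$, it suffices to check that $\pv V$ is monoidal, that $\pv{Sl}\subseteq\pv V$, and that $\pv V$ satisfies the two equations $\pv D\malcev\pv V=\pv V$ and $\pv K\malcev\pv V=\pv V$; Proposition~\ref{p:unique-lbf} will then yield, respectively, unique left and unique right basic factorizations. Monoidality is immediate from Subsection~\ref{sec:pvs-and-operations}, since each of $\pv{DS}$, $\pv{DO}$, and $\bar{\pv H}$ is monoidal and the intersection of monoidal pseudovarieties is monoidal. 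The inclusion $\pv{Sl}\subseteq\pv V$ holds because a semilattice is $\Cl D$-trivial, so that its singleton regular $\Cl D$-classes are trivial rectangular groups, whence $\pv{Sl}\subseteq\pv{DA}\subseteq\pv{DO}\subseteq\pv{DS}$, while a semilattice has only trivial subgroups, so $\pv{Sl}\subseteq\bar{\pv H}$ for every pseudovariety of groups $\pv H$.

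For the Mal'cev product equations I would use the inclusions $\pv D,\pv K\subseteq\pv{LI}\subseteq\pv A$. (Indeed, a definite semigroup satisfies $x^\omega yx^\omega=(x^\omega y)x^\omega=x^\omega$ by its defining law $\op xy^\omega=y^\omega\cl$, and dually for reverse definite semigroups, while setting $y=x$ in $\op x^\omega yx^\omega=x^\omega\cl$ gives $x^{\omega+1}=x^\omega$.) For any pseudovariety $\pv W$, the identity homomorphism of a member of $\pv W$ is both a $\pv D$-homomorphism and a $\pv K$-homomorphism, since the preimage of every idempotent is a singleton; hence $\pv W\subseteq\pv D\malcev\pv W$ and $\pv W\subseteq\pv K\malcev\pv W$. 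On the other hand, the Mal'cev product is monotone in its first argument, so $\pv D\malcev\pv W$ and $\pv K\malcev\pv W$ are both contained in $\pv{LI}\malcev\pv W$ and in $\pv A\malcev\pv W$. Using the equality $\pv A\malcev\bar{\pv H}=\bar{\pv H}$ recalled in Subsection~\ref{sec:pvs-and-operations} for $\pv W=\bar{\pv H}$, and the equalities $\pv{LI}\malcev\pv{DS}=\pv{DS}$ and $\pv{LI}\malcev\pv{DO}=\pv{DO}$ for $\pv W=\pv{DS}$ and $\pv W=\pv{DO}$, we obtain
\[
  \pv D\malcev\bar{\pv H}=\pv K\malcev\bar{\pv H}=\bar{\pv H},\qquad
  \pv D\malcev\pv{DS}=\pv K\malcev\pv{DS}=\pv{DS},\qquad
  \pv D\malcev\pv{DO}=\pv K\malcev\pv{DO}=\pv{DO}.
\]

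It remains to descend to the intersections. As noted in Subsection~\ref{sec:pvs-and-operations}, $\pv D\malcev(\pv W_1\cap\pv W_2)\subseteq(\pv D\malcev\pv W_1)\cap(\pv D\malcev\pv W_2)$, and likewise with $\pv K$. Taking $\pv W_1$ to be $\pv{DS}$ or $\pv{DO}$ and $\pv W_2=\bar{\pv H}$, and combining with the inclusion $\pv V\subseteq\pv D\malcev\pv V$ from the previous paragraph,
\[
  \pv V\ \subseteq\ \pv D\malcev\pv V\ \subseteq\ (\pv D\malcev\pv W_1)\cap(\pv D\malcev\bar{\pv H})\ =\ \pv W_1\cap\bar{\pv H}\ =\ \pv V,
\]
so $\pv D\malcev\pv V=\pv V$; the same computation with $\pv K$ in place of $\pv D$ gives $\pv K\malcev\pv V=\pv V$. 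Proposition~\ref{p:unique-lbf} now applies to $\pv V$ and the result follows.

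The only step that is not bookkeeping is the pair of equalities $\pv{LI}\malcev\pv{DS}=\pv{DS}$ and $\pv{LI}\malcev\pv{DO}=\pv{DO}$ (equivalently, their $\pv D$- and $\pv K$-forms used above), and I expect this to be the main point to justify carefully. It is a structural closure property of $\pv{DS}$ and $\pv{DO}$ that should be cited from the theory of these pseudovarieties (see~\cite{Almeida:1994a}), or else proved directly by showing that an $\pv{LI}$-relational morphism into a member of $\pv{DS}$ (respectively $\pv{DO}$) forces the source semigroup to satisfy the defining pseudoidentity $\op((xy)^\omega(yx)^\omega(xy)^\omega)^\omega=(xy)^\omega\cl$ of $\pv{DS}$ (respectively $\op(xy)^\omega(yx)^\omega(yx)^\omega=(xy)^\omega\cl$ of $\pv{DO}$); this reduces to a short computation lifting $\omega$-powers along the relational morphism and using that conjugate $\omega$-powers are $\Cl D$-equivalent.
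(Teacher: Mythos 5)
Your proposal is correct and follows essentially the same route as the paper, which positions the corollary as an immediate application of Proposition~\ref{p:unique-lbf} and leaves the verification of $\pv D\malcev\pv V=\pv V$ and $\pv K\malcev\pv V=\pv V$ to the reader as an easy check. The two facts you defer, $\pv{LI}\malcev\pv{DS}=\pv{DS}$ and $\pv{LI}\malcev\pv{DO}=\pv{DO}$, are indeed true and go through along the lines you sketch: given an $\pv{LI}$-homomorphism $\varphi\colon S\to T$ with $T$ in $\pv{DS}$ (resp.\ $\pv{DO}$) and $\Cl J$-equivalent idempotents $e,f$ of~$S$, one has $\varphi\bigl((efe)^\omega\bigr)=\varphi(e)$ (resp.\ $\varphi\bigl((ef)^\omega\bigr)=\varphi(ef)$), so the two elements lie in a common locally trivial fibre, and the identity $h u h=h$ valid there together with stability forces $ef\mathrel{\Cl J}e$ (resp.\ $ef=(ef)^\omega$), so no gap remains.
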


Another application of Proposition~\ref{p:unique-lbf} is obtained by
invoking Proposition~\ref{p:Cn-properties}.

\begin{Cor}
  \label{c:unique-bf-Cn}
  The pseudovarieties $\pv C_n$ have unique left and right basic
  factorizations and so do the pseudovarieties $\pv{DS}\cap\pv
  C_n$.\qed
\end{Cor}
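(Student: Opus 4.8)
The plan is to reduce this statement entirely to Proposition~\ref{p:unique-lbf}, whose hypotheses are: the pseudovariety is monoidal, contains \pv{Sl}, and satisfies $\pv D\malcev\pv V=\pv V$ (for unique left basic factorizations), respectively $\pv K\malcev\pv V=\pv V$ (for the right-hand dual). So the whole argument consists of checking these three conditions for $\pv C_n$ and then again for $\pv{DS}\cap\pv C_n$. For $\pv C_n$ itself, everything is already recorded: Proposition~\ref{p:Cn-properties} gives that $\pv C_n$ is monoidal and that $\pv D\malcev\pv C_n=\pv K\malcev\pv C_n=\pv C_n$; and $\pv{Sl}\subseteq\pv C_0=\pv A\subseteq\pv C_n$ since semilattices are aperiodic and the chain $\pv C_0\subseteq\pv C_1\subseteq\cdots$ is increasing. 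So Proposition~\ref{p:unique-lbf} applies directly and yields both unique left and unique right basic factorizations for each $\pv C_n$.

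For $\pv V=\pv{DS}\cap\pv C_n$ I would argue as follows. First, $\pv V$ is monoidal: it is the intersection of the two monoidal pseudovarieties $\pv{DS}$ and $\pv C_n$, and the excerpt explicitly notes that an intersection of monoidal pseudovarieties is monoidal. Second, $\pv{Sl}\subseteq\pv V$: semilattices lie in $\pv{DS}$ (they are certainly in \pv{DA}, hence in \pv{DS}) and in $\pv C_n$ as noted above, so they lie in the intersection. Third, and this is the only point needing a small argument, I must check $\pv D\malcev(\pv{DS}\cap\pv C_n)=\pv{DS}\cap\pv C_n$ and the dual with \pv K. The inclusion $\supseteq$ is automatic (every pseudovariety is contained in $\pv U\malcev\pv V$ via the identity homomorphism, here using $\pv I\subseteq\pv D$). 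For $\subseteq$ I would use that the operator $\pv D\malcev\variable$ distributes over intersections in the sense that $\pv D\malcev(\pv{DS}\cap\pv C_n)\subseteq(\pv D\malcev\pv{DS})\cap(\pv D\malcev\pv C_n)$ — this is the elementary monotonicity fact $\pv U\malcev\bigcap_i\pv V_i\subseteq\bigcap_i(\pv U\malcev\pv V_i)$ recalled in Section~\ref{sec:pvs-and-operations}. Then it suffices to know $\pv D\malcev\pv{DS}=\pv{DS}$ and $\pv D\malcev\pv C_n=\pv C_n$; the second is Proposition~\ref{p:Cn-properties}, and the first is a well-known property of \pv{DS} (it is one of the standard pseudovarieties closed under Mal'cev product on the left by \pv D; indeed $\pv D\subseteq\pv{LI}$ and $\pv{LI}\malcev\pv{DS}=\pv{DS}$). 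Combining, $\pv D\malcev\pv V\subseteq\pv{DS}\cap\pv C_n=\pv V$, as required, and symmetrically $\pv K\malcev\pv V=\pv V$ using $\pv K\malcev\pv{DS}=\pv{DS}$.

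The main (and really the only) obstacle is the bookkeeping of the identity $\pv D\malcev\pv{DS}=\pv{DS}$ (and its \pv K-dual): one must be sure this is a genuine known fact rather than something requiring its own proof. If the paper does not want to quote it, the safest route is to invoke $\pv D\subseteq\pv{LI}$, so $\pv D\malcev\pv{DS}\subseteq\pv{LI}\malcev\pv{DS}$, and then cite closure of \pv{DS} under Mal'cev product on the left by \pv{LI} from the standard references (e.g. Almeida's book or Rhodes–Steinberg). Once that is granted, the corollary follows with no computation. I would therefore write the proof in two short sentences: one disposing of $\pv C_n$ by a direct citation of Propositions~\ref{p:Cn-properties} and~\ref{p:unique-lbf}, and one disposing of $\pv{DS}\cap\pv C_n$ by the intersection argument above, again closing with Proposition~\ref{p:unique-lbf}.
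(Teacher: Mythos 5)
Your proof is correct and is exactly the bookkeeping the paper leaves implicit behind the \qed\ symbol: Proposition~\ref{p:Cn-properties} together with $\pv{Sl}\subseteq\pv A=\pv C_0\subseteq\pv C_n$ settles $\pv C_n$, and the meet case is handled via monoidality/monotonicity plus the equalities $\pv D\malcev\pv{DS}=\pv{DS}$, $\pv K\malcev\pv{DS}=\pv{DS}$, $\pv D\malcev\pv C_n=\pv K\malcev\pv C_n=\pv C_n$, just as for Corollary~\ref{c:unique-lbf-egs}. The only wrinkle, which you flagged, is the citation of $\pv D\malcev\pv{DS}=\pv{DS}$; rather than routing through $\pv{LI}\malcev\pv{DS}=\pv{DS}$ (which is not obviously easier to source), note that a direct check is short: if $\varphi:S\to T$ is a $\pv D$-morphism with $T\in\pv{DS}$ and $e,f$ are $\mathcal J$-equivalent idempotents of~$S$, then $\varphi(ef)\mathrel{\mathcal J}\varphi(e)$ in~$T$, so some $s\in S^1$ gives $\varphi(efs)=\varphi(e)$, and as $e$ and $efs$ then lie in a common $\pv D$-member $\varphi^{-1}(\varphi(e))$, whose idempotents are right zeros, $efs\cdot e=e$, whence $ef\mathrel{\mathcal J}e$ and $S\in\pv{DS}$.
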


Since the product of a letter by a language is always deterministic,
we also have the following immediate consequence of the results from
\cite{Pin:1980b}. Alternatively, one may easily show, by iterating on
the left factors left basic factorizations, that a pseudovariety with
unique left basic factorizations is left letter cancelative.

\subsection{Some special examples}
\label{sec:some-special-examples}

The aim of this subsection is to prove some auxiliary results which
provide examples of pseudovarieties for which a result in
Section~\ref{sec:wggm-sufficient-conditions} applies.

\begin{Prop}
  \label{p:DAmV-vs-starD}
  Let \pv V~be a monoidal pseudovariety containing \pv{Sl} such that
  $\pv V*\pv D=\pv V$. Then the pseudovariety $\pv W=\pv{DA}\malcev\pv
  V$~is such that $\pv W*\pv D=\pv W$.
\end{Prop}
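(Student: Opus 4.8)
The plan is to show that $\pv W=\pv{DA}\malcev\pv V$ is a fixed point of the operator $\variable*\pv D$ by reducing the claim to the analogous (known) fixed-point statements for the factors $\pv{DA}$ and $\pv V$, and exploiting the interaction between the Mal'cev product and the semidirect product by $\pv D$. Since $\pv V*\pv D=\pv V$, it is also true (cf.\ the proof of Proposition~\ref{p:cancellation}, via \cite[Corollary~3.3]{Straubing:1985}) that $\pv V*\pv{LI}=\pv V$, and in particular $\pv{D}\subseteq \pv{V}$, so $\pv V$ is monoidal, contains $\pv{Sl}$, and $\pv W=\pv{DA}\malcev\pv V$ is monoidal by the discussion in Section~\ref{sec:pvs-and-operations}. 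I would first record that $\pv{DA}*\pv D=\pv{DA}$: indeed $\pv{DA}$ is local (its global equals its local version), and more directly $\pv{DA}\malcev\pv{LI}=\pv{DA}$ is classical, whence $\pv{DA}*\pv{D}\subseteq \pv{DA}*\pv{LI}\subseteq \pv{DA}\malcev\pv{LI}=\pv{DA}$ using that $S*T$ maps onto $T$ with kernel in~$\pv{D}\subseteq\pv{LI}$ over idempotents. One direction of the desired equality is free: $\pv W\subseteq \pv W*\pv D$ always, since $S\cong S*\pv I$ sits inside $S*\pv D$.

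For the nontrivial inclusion $\pv W*\pv D\subseteq \pv W$, I would take a generating semigroup $S*T$ with $S\in\pv W$ and $T\in\pv D$, and a $\pv{DA}$-homomorphism $\psi\colon S\to R$ with $R\in\pv V$ witnessing $S\in\pv{DA}\malcev\pv V$. The idea is to build a $\pv{DA}$-relational morphism from $S*T$ into a member of $\pv V$. Composing the projection $\pi\colon S*T\to T$ with the inclusion $T\hookrightarrow T^{1}$ gives a homomorphism whose fibres over idempotents are contained in $S*\{1\}\cong S$ if $T$ has a content-type structure; more robustly, one uses that the kernel category (or derived semigroupoid) of $S*T\to T$ divides $S$. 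Then the composite relational morphism $S*T\to T$ followed by a $\pv V$-relational structure would need the fibres to land in $\pv{DA}$; since the fibres embed in $S$ and $S\in\pv{DA}\malcev\pv V$, I would instead push forward the map $\psi$ levelwise to obtain $S*T\to R*T$ (using that $\psi$ being a homomorphism induces a semidirect-product homomorphism when the $T$-action on $S$ is compatible — this is the technical point), and observe $R*T\in \pv V*\pv D=\pv V$. The resulting map $S*T\to R*T$ has fibres over idempotents dividing the corresponding fibres of $\psi$, which lie in $\pv{DA}$; hence it is a $\pv{DA}$-relational morphism into a member of $\pv V$, giving $S*T\in\pv{DA}\malcev\pv V=\pv W$, as desired.

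The main obstacle I anticipate is precisely the compatibility issue in pushing $\psi\colon S\to R$ forward to a map of semidirect products $S*T\to R*T$: the action of $T^{1}$ on $S$ need not descend through $\psi$ to an action on $R$, so one cannot literally form $R*T$ with the ``same'' action. The standard way around this is to replace $R*T$ by a larger semidirect product, e.g.\ $R^{T^{1}}*T$ where $T^{1}$ acts on the power $R^{T^{1}}$ by shifting coordinates (the wreath-product trick), and factor $\psi$ componentwise; this larger semigroup still lies in $\pv V*\pv D=\pv V$ because $R^{T^{1}}\in\pv V$ (as $\pv V$ is closed under finite powers) and the construction is a semidirect product over $T\in\pv D$. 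Verifying that the induced relational morphism $S*T\to R^{T^{1}}*T$ has the $\pv{DA}$-fibre property over idempotents — tracing through how an idempotent of the target constrains the fibre and bounding that fibre by a division of $S$ — is the calculation I would carry out carefully, but it is routine given the fixed-point hypotheses on $\pv V$ and the closure of $\pv{DA}$ under division and Mal'cev product with $\pv{LI}$.
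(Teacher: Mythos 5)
Your proposal takes a genuinely different route from the paper. The paper's proof is syntactic: it invokes the Basis Theorem for Mal'cev products to reduce to pseudoidentities $((uv)^\omega u)^2=(uv)^\omega u$ with $u=v=v^2$ in $\pv V$, applies the de Bruijn encoding $\Phi_n$ to both sides, uses $\pv V*\pv D_n=\pv V$ to transfer the hypotheses, and closes with right letter cancelativity of $\pv W$. Your approach is structural: you aim to show that if $S$ admits a $\pv{DA}$-homomorphism $\psi\colon S\to R$ with $R\in\pv V$ and $T\in\pv D$, then $S*T$ admits a $\pv{DA}$-relational morphism into $\pv V*\pv D=\pv V$, using the wreath-product enlargement $R^{T^1}*T$ to resolve the action-compatibility problem. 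Done carefully, this is correct and actually proves the stronger inclusion $(\pv{DA}\malcev\pv V)*\pv W\subseteq\pv{DA}\malcev(\pv V*\pv W)$ for an arbitrary pseudovariety $\pv W$ in the second slot, needing of $\pv{DA}$ only that $\pv{LI}\malcev\pv{DA}=\pv{DA}$; the paper's argument is more specific to $\pv D_n$ but sits entirely inside machinery it has already built and avoids all wreath-product bookkeeping.

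The one step you describe loosely is, however, not correct as written: the fibre of $\Psi\colon S*T\to R^{T^1}*T$, $(s,t)\mapsto((\psi({}^{u}s))_{u\in T^1},t)$, over an idempotent $(f,\bar t)$ is $\{(s,\bar t):\psi({}^{u}s)=f(u)\text{ for all }u\}$, whose multiplication pushed to the $S$-coordinate is the \emph{twisted} product $s_1\circ s_2=s_1\cdot{}^{\bar t}s_2$; this is not a subsemigroup of $S$, and it does not divide a fibre of $\psi$. What is true is that $s\mapsto{}^{\bar t}s$ is a surjective homomorphism from $(\{s:\psi({}^{u}s)=f(u)\ \forall u\},\circ)$ onto a subsemigroup of $\psi^{-1}(f(\bar t))\in\pv{DA}$, and over an idempotent $e$ of the image its fibre has multiplication $s_1\circ s_2=s_1 e$, so that any product of $\ge 2$ elements equals $(\text{first factor})\cdot e$ and in particular $x^\omega yx^\omega=x^\omega$; these fibres therefore lie in $\pv{LI}$, and $\Psi^{-1}(f,\bar t)\in\pv{LI}\malcev\pv{DA}=\pv{DA}$. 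You did gesture at Mal'cev product with $\pv{LI}$, so you had the right ingredient in mind, but the phrase ``bounding that fibre by a division of $S$'' suggests the wrong mechanism. One also needs the small preliminary step of replacing an arbitrary $\pv{DA}$-relational morphism on $S$ by an actual $\pv{DA}$-homomorphism via the canonical factorization together with a further wreath-product cover of $S*T$, but that is standard. With these points made precise the structural proof goes through.
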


\begin{proof}
  By Theorem~\ref{t:word-problem-for-V*Dn}, a pseudoidentity $u=v$
  holds in~$\pv W*\pv D_n$ if and only if \pv W satisfies the
  pseudoidentity $\Phi_n(u)=\Phi_n(v)$. On the other hand, by the
  Basis Theorem for Mal'cev products
  \cite[Theorem~4.1]{Pin&Weil:1996b}, \pv W~is defined by the
  pseudoidentities of the form $((uv)^\omega u)^2=(uv)^\omega u$,
  where $u$ and $v$ are pseudowords such that the pseudoidentities
  $u=v=v^2$ hold in~\pv V. Thus, to show that $\pv W*\pv D_n$ is
  contained in~\pv W, we assume that the pseudoidentities $u=v=v^2$
  hold in~\pv V and we need to prove that the pseudoidentity
  \begin{equation}
    \label{eq:DAmV-0}
    \Phi_n\bigl(((uv)^\omega u)^2\bigr) %
    =\Phi_n\bigl((uv)^\omega u\bigr)
  \end{equation}
  holds in~\pv W.
  As $\pv D\subseteq \pv V*\pv D=\pv V$, we must have 
  $u,v\in \Om AS\setminus A^+$.

  Since $\pv V*\pv D_n=\pv V$, from
  Theorem~\ref{t:word-problem-for-V*Dn} we deduce that
  $\mathrm{i}_n(u)=\mathrm{i}_n(v)$,
  $\mathrm{t}_n(u)=\mathrm{t}_n(v)$, and the pseudoidentities
  \begin{equation}
    \label{eq:DAmV-1}
    \Phi_n(u)=\Phi_n(v)=\Phi_n(v^2)
  \end{equation}
  hold in~\pv V. Consider the word
  $s=\Phi_n(\mathrm{t}_n(u)\mathrm{i}_n(u))$. Taking into account
  property~\eqref{item:Phi-c}
  of the function~$\Phi_n$, we may express
  $\Phi_n(v^2)$ as the product $\Phi_n(v)s\Phi_n(v)$. Multiplying on
  the right all sides of the pseudoidentities
  $\Phi_n(u)=\Phi_n(v)=\Phi_n(v^2)$ by $s$, we deduce that the
  pseudowords $u'=\Phi_n(u)s$ and $v'=\Phi_n(v)s$ are such that the
  pseudoidentities $u'=v'=(v')^2$ hold in~\pv V. Hence, the
  pseudoidentity %
  $((u'v')^\omega u')^2=(u'v')^\omega u'$ holds in~\pv W. Using again
  property~\eqref{item:Phi-c} of the function $\Phi_n$, we deduce
  that, for the pseudoidentities
  \begin{equation}
    \label{eq:DAmV-2}
    \Phi_n\bigl(((uv)^\omega u)^2\bigr)s %
    =\bigl(\Phi_n\bigl((uv)^\omega u\bigr)s\bigr)^2 %
    =\Phi_n\bigl((uv)^\omega u\bigr)s,
  \end{equation}
  the first is valid in every finite semigroup, while the second holds
  in~\pv W. Since %
  $\pv{LI}\malcev\pv W\subseteq\pv{DA}\malcev\pv W=\pv W$, we know
  from Proposition~\ref{p:unique-lbf} that \pv W is right
  letter cancelative. Hence, from the fact \pv W~satisfies the
  pseudoidentities \eqref{eq:DAmV-2}, it follows that \pv W~also
  satisfies the pseudoidentity \eqref{eq:DAmV-0}.
\end{proof}

\begin{Cor}
  \label{c:DAm.VstarA-properties}
  Let \pv V be a monoidal pseudovariety of semigroups. Then the
  pseudovariety $\pv W=\pv{DA}\malcev(\pv V*\pv A)$ has the following
  properties:
  \begin{enumerate}[(i)]
  \item\label{item:DAm.VstarA-1} \pv W is monoidal;
  \item\label{item:DAm.VstarA-2} \pv W is both left and right letter
    cancelative;
  \item\label{item:DAm.VstarA-3} $\pv W*\pv D=\pv W$;
  \item\label{item:DAm.VstarA-4} $\pv B\malcev\pv W=\pv W$.
  \end{enumerate}
\end{Cor}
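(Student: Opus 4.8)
The plan is to derive each of the four properties by pushing the hypothesis ``$\pv V$ is monoidal'' through the operators $*\pv A$ and $\pv{DA}\malcev\variable$, and then to reduce property~\eqref{item:DAm.VstarA-3} to Proposition~\ref{p:DAmV-vs-starD}. For~\eqref{item:DAm.VstarA-1}, recall from Subsection~\ref{sec:pvs-and-operations} that the semidirect product of monoidal pseudovarieties is monoidal, so $\pv V*\pv A$ is monoidal since both $\pv V$ and $\pv A$ are; and the Mal'cev product $\pv U\malcev\variable$ of a monoidal pseudovariety $\pv U\supseteq\pv{Sl}$ with a monoidal pseudovariety is again monoidal, as was observed there, so $\pv W=\pv{DA}\malcev(\pv V*\pv A)$ is monoidal. (One checks $\pv{Sl}\subseteq\pv{DA}$ directly from the pseudoidentities.)

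For~\eqref{item:DAm.VstarA-3}, the idea is to recognize that $\pv V*\pv A$ plays the role of the ``$\pv V$'' in Proposition~\ref{p:DAmV-vs-starD}. That proposition requires the inner pseudovariety to be monoidal, to contain $\pv{Sl}$, and to satisfy $\variable*\pv D=\variable$. The first is~\eqref{item:DAm.VstarA-1} applied to the trivial case, the second holds because $\pv{Sl}\subseteq\pv A*\pv A\subseteq\pv V*\pv A$ (or more simply $\pv{Sl}\subseteq\pv A\subseteq\pv V*\pv A$ when $\pv V$ is nontrivial; the trivial case $\pv V=\pv I$ gives $\pv I*\pv A=\pv A\supseteq\pv{Sl}$), and the crucial third point is the identity $(\pv V*\pv A)*\pv D=\pv V*\pv A$. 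For this last equality I would use associativity of the semidirect product together with $\pv A*\pv D=\pv A$: we have $(\pv V*\pv A)*\pv D=\pv V*(\pv A*\pv D)=\pv V*\pv A$. The inclusion $\pv A*\pv D\subseteq\pv A$ holds since $\pv D\subseteq\pv A$ and $\pv A*\pv A=\pv A$ (equivalently, $\pv A$ is closed under semidirect product, which is classical); the reverse inclusion is trivial. Then Proposition~\ref{p:DAmV-vs-starD} applied with $\pv V$ replaced by $\pv V*\pv A$ yields exactly $\pv W*\pv D=\pv W$.

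For~\eqref{item:DAm.VstarA-2}, once~\eqref{item:DAm.VstarA-1} and~\eqref{item:DAm.VstarA-3} are in hand, $\pv W$ is a nontrivial monoidal pseudovariety with $\pv W*\pv D=\pv W$, so Proposition~\ref{p:cancellation} gives immediately that $\pv W$ is both left and right letter cancelative. (Nontriviality of $\pv W$ is clear since $\pv W\supseteq\pv{DA}\supseteq\pv{Sl}$.) For~\eqref{item:DAm.VstarA-4}, the plan is to use the sub-associativity law for the Mal'cev product and the fact that $\pv{DA}$ is a fixed point of $\pv{DA}\malcev\variable$ (listed among the examples $\pv U$ with $\pv U\malcev\pv U=\pv U$). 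Since $\pv B\subseteq\pv{DA}$ — because $\pv B$ satisfies $x^2=x$, whence every regular $\Cl D$-class is a band and in particular aperiodic, and the link identities of $\pv{DA}$ hold in any band — we get $\pv B\malcev\pv W\subseteq\pv{DA}\malcev\pv W=\pv{DA}\malcev(\pv{DA}\malcev(\pv V*\pv A))\subseteq(\pv{DA}\malcev\pv{DA})\malcev(\pv V*\pv A)=\pv{DA}\malcev(\pv V*\pv A)=\pv W$; the reverse inclusion $\pv W\subseteq\pv B\malcev\pv W$ is automatic.

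The step I expect to require the most care is~\eqref{item:DAm.VstarA-3}, specifically the verification that $\pv V*\pv A$ inherits the hypotheses of Proposition~\ref{p:DAmV-vs-starD}; the delicate point there is the equality $\pv A*\pv D=\pv A$ (equivalently $\pv A*\pv A=\pv A$), which underlies $(\pv V*\pv A)*\pv D=\pv V*\pv A$, and the inclusion $\pv{Sl}\subseteq\pv V*\pv A$ in the degenerate case $\pv V=\pv I$. The inclusion $\pv B\subseteq\pv{DA}$ used in~\eqref{item:DAm.VstarA-4} deserves an explicit word as well, though it is standard. Everything else is a direct citation of earlier results.
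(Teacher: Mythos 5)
Your proof is correct, and for items \eqref{item:DAm.VstarA-1}, \eqref{item:DAm.VstarA-3}, and \eqref{item:DAm.VstarA-4} it is essentially identical to the paper's: monoidality via the closure remarks of Subsection~\ref{sec:pvs-and-operations}, the equality $(\pv V*\pv A)*\pv D=\pv V*\pv A$ via associativity and $\pv A*\pv D=\pv A$ followed by Proposition~\ref{p:DAmV-vs-starD}, and $\pv B\malcev\pv W=\pv W$ from $\pv B\subseteq\pv{DA}$ together with idempotency of $\pv{DA}\malcev\variable$. The only genuine divergence is item~\eqref{item:DAm.VstarA-2}: the paper derives cancelativity from $\pv{LI}\subseteq\pv{DA}$ and Proposition~\ref{p:unique-lbf} (unique left and right basic factorizations, which imply letter cancelativity by iteration), whereas you route it through Proposition~\ref{p:cancellation}, using the already-established properties \eqref{item:DAm.VstarA-1} and~\eqref{item:DAm.VstarA-3}. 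Both are valid; your route makes \eqref{item:DAm.VstarA-2} logically dependent on \eqref{item:DAm.VstarA-3} and on Straubing's language-theoretic results underlying Proposition~\ref{p:cancellation}, while the paper's is independent of \eqref{item:DAm.VstarA-3} and yields the slightly stronger conclusion that $\pv W$ has unique basic factorizations, not just cancelativity.
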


\begin{proof}
  By the remarks at the end of
  Subsection~\ref{sec:pvs-and-operations}, we obtain that both $\pv
  V*\pv A$ and \pv W are monoidal, the latter being
  property~\eqref{item:DAm.VstarA-1}. The pseudovariety $\pv V*\pv A$
  certainly contains \pv{Sl}, as so does \pv A. Moreover, as %
  $\pv{LI}\subseteq\pv{DA}$,
  property~\eqref{item:DAm.VstarA-2}~follows from
  Proposition~\ref{p:unique-lbf}. Since the semidirect product
  is associative and $\pv A*\pv D=\pv A$, we have %
  $(\pv V*\pv A)*\pv D=\pv V*\pv A$. Invoking
  Proposition~\ref{p:DAmV-vs-starD}, we obtain
  property~\eqref{item:DAm.VstarA-3}. Finally,
  property~\eqref{item:DAm.VstarA-4} follows from the inclusion %
  $\pv B\subseteq\pv{DA}$.
\end{proof}

\section{Translational representations}
\label{sec:translation-representations}

We introduce in this section certain representations of profinite
semigroups given by translational action on the minimum ideal. They
are explored in this paper to derive the applications in
Sections~\ref{sec:orderability} and~\ref{sec:irreducibility}. It is
hoped however that they will eventually also shed light on the
structure of relatively free profinite semigroups.

\subsection{The translational hull}
\label{sec:translational-hull}

This subsection is partly based
on~\cite[Chapter~4]{Carruth&Hildebrant&Koch:1986}.

For topological spaces $X$ and $Y$, denote by $C(X,Y)$ the space of
all continuous functions $X\to Y$. A net $(f_i)_i$ in $C(X,Y)$ is said
to \emph{converge continuously} to~$f\in C(X,Y)$ if, for every net
$(x_j)_j$ in~$X$ with limit $x$, the net
$\bigl(f_i(x_j)\bigr)_{(i,j)}$ converges to~$f(x)$. The following
lemma relates continuous convergence with convergence in the
compact-open topology. It is essentially the same
as~\cite[Lemma~4.1]{Carruth&Hildebrant&Koch:1986}.

\begin{Lemma}
  \label{l:continuous-convergence-in-locally-compact}
  Let $X$~be a locally compact space. Then a net $(f_i)_i$ converges
  continuously to $f$ in $C(X,Y)$ if and only if it converges to~$f$
  in the compact-open topology of~$C(X,Y)$.
\end{Lemma}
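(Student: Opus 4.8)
The plan is to prove the equivalence between continuous convergence and compact-open convergence of a net $(f_i)_i$ in $C(X,Y)$, where $X$ is locally compact. I would treat the two implications separately, the easier being that compact-open convergence implies continuous convergence, and the harder requiring local compactness in an essential way.

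First I would handle the direction ``compact-open $\Rightarrow$ continuous''. Suppose $(f_i)_i \to f$ in the compact-open topology, and let $(x_j)_j$ be a net in $X$ with $x_j \to x$. Fix a neighbourhood $W$ of $f(x)$ in $Y$; by continuity of $f$ there is an open neighbourhood $U$ of $x$ with $f(U) \subseteq W$. Since $X$ is locally compact, I may choose a compact neighbourhood $C$ of $x$ contained in $U$. The subbasic compact-open neighbourhood $N(C,W) = \{g \in C(X,Y) : g(C) \subseteq W\}$ is a neighbourhood of $f$, so $f_i \in N(C,W)$ eventually, say for $i \geq i_0$. Also $x_j \in C$ eventually, say for $j \geq j_0$, since $C$ is a neighbourhood of $x$. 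Then for $(i,j) \geq (i_0,j_0)$ in the product directed set, $f_i(x_j) \in f_i(C) \subseteq W$, proving $f_i(x_j) \to f(x)$.

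The main obstacle is the reverse direction, ``continuous $\Rightarrow$ compact-open''. Here I would argue by contradiction: suppose $(f_i)_i \to f$ continuously but not in the compact-open topology. Then there is a subbasic neighbourhood $N(C,W)$ of $f$ (with $C \subseteq X$ compact, $W \subseteq Y$ open, $f(C) \subseteq W$) such that the set of indices $i$ with $f_i \notin N(C,W)$ is cofinal; passing to that cofinal subnet, I may assume $f_i \notin N(C,W)$ for all $i$, i.e.\ for each $i$ there is a point $x_i \in C$ with $f_i(x_i) \notin W$. Now $(x_i)_i$ is a net in the compact set $C$, so it has a subnet $(x_{i_k})_k$ converging to some $x \in C$. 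Indexing the resulting net along the directed set of pairs so that continuous convergence applies to the original net $(f_i)$ evaluated along $(x_{i_k})$ — that is, reindexing so that we get a net $(f_i(x_{i_k}))$ — continuous convergence of $(f_i)$ to $f$ forces $f_i(x_{i_k}) \to f(x)$; in particular, along the diagonal, $f_{i_k}(x_{i_k}) \to f(x) \in W$. But $f_{i_k}(x_{i_k}) \notin W$ for all $k$, contradicting that $W$ is open (a net eventually leaving the open set $W$ cannot converge into it). The delicate point is the bookkeeping with subnets and the product directed set, to make sure continuous convergence (which quantifies over all nets $(x_j)_j$, with the double-index net $(f_i(x_j))_{(i,j)}$) can legitimately be specialized to the ``diagonal'' situation $f_{i_k}(x_{i_k})$; one standard way is to view $(x_i)_i$ itself, suitably reindexed along the subnet, as a net indexed by the same directed set that $(f_i)$ runs over, and then invoke the definition. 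I would also note that local compactness is only needed for the first implication, so the statement could be phrased with that asymmetry, but since the lemma as stated assumes it, no separate remark is required.

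This completes both directions and hence the proof of Lemma~\ref{l:continuous-convergence-in-locally-compact}; the argument is essentially that of \cite[Lemma~4.1]{Carruth&Hildebrant&Koch:1986}, adapted to the present notation.
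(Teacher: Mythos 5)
Your proof is correct, and it is the standard argument; the paper itself gives no proof of this lemma but simply cites \cite[Lemma~4.1]{Carruth&Hildebrant&Koch:1986}, so there is no internal proof to compare against. Your direction ``compact-open $\Rightarrow$ continuous'' is where local compactness is genuinely used (choosing a compact neighbourhood of $x$ inside $U$), and your remark that the converse direction needs no such hypothesis is also right.

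The one place worth tightening is precisely the ``delicate point'' you flag in the second direction. Two small things: (i) to extract the cofinal failure of a \emph{subbasic} neighbourhood $N(C,W)$ from failure of compact-open convergence, you are implicitly using that non-cofinal subsets of a directed set are closed under finite unions (so if $f$ has a basic neighbourhood $\bigcap_{m=1}^n N(C_m,W_m)$ with cofinal failure, some single $N(C_m,W_m)$ already fails cofinally); (ii) for the diagonal itself, the clean justification is that after passing to the subnet $(f_{i_k})_k$ and the net $(x_{i_k})_k$ both indexed by the same directed set $K$, the map $k\mapsto(k,k)$ is cofinal in $K\times K$, so $(f_{i_k}(x_{i_k}))_k$ is a bona fide subnet of the product net $(f_{i_{k'}}(x_{i_k}))_{(k',k)}$ and therefore inherits the limit $f(x)$. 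With those two observations spelled out, the contradiction with $f_{i_k}(x_{i_k})\notin W$ and $W$ open is immediate, and the argument is complete.
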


For a topological space $X$, the set $C(X,X)$ is a monoid under
composition, which is denoted $\Cl{T}_X^\ell$ or $\Cl{T}_X^r$
according to whether functions are taken to act and are composed on
the left or on the right, respectively. These function spaces are
endowed with the compact-open topology. In case $X$ is locally
compact, it follows from
Lemma~\ref{l:continuous-convergence-in-locally-compact} that
$\Cl{T}_X^\ell$ and $\Cl{T}_X^r$ are topological semigroups, in the
sense that multiplication is continuous. More generally, the
continuity of composition follows
from~\cite[Theorem~3.4.2]{Engelking:1989} which states that, for
topological spaces $X,Y,Z$, the mapping from $C(Y,Z)\times C(X,Y)$ to
$C(X,Z)$ given by the formula $(f,g)\mapsto f\circ g$ is continuous
under the assumption that $Y$ is locally compact.

Let $S$~be a topological semigroup. A \emph{left translation} of~$S$
is a mapping $\lambda\in \Cl{T}_S^\ell$ such that
$\lambda(st)=\lambda(s)t$ for all $s$ and $t$ in~$S$.
Dually, a
\emph{right translation} is a mapping $\rho\in \Cl{T}_S^r$ such that
$(st)\rho=s(t)\rho$ whenever $s,t\in S$. The \emph{inner left
  translation} of~$S$ determined by an element $s\in S$ is the mapping
$\lambda_s\in \Cl{T}_S^\ell$ defined by $\lambda_s(u)=su$. The
\emph{inner right translation} $\rho_s$ determined by $s$ is defined
dually. 

The mappings $\lambda\in \Cl{T}_S^\ell$ and $\rho\in \Cl{T}_S^r$ are
\emph{linked} if $s\,\lambda(t)=(s)\rho\,t$ for all $s,t\in S$. A
\emph{bitranslation} of~$S$ is a linked pair $(\lambda,\rho)$ in which
$\lambda$ is a left translation and $\rho$~is a right translation.
Note that, for $s\in S$, the pair $\omega_s=(\lambda_s,\rho_s)$ is a
bitranslation, which is called the \emph{inner bitranslation}
determined by~$s$. Note also that the pair in which both components
are the identity function on $S$ is a bitranslation. The
\emph{translational hull} $\Omega(S)$ of~$S$ consists of all
bitranslations of~$S$. Note that $\Omega(S)$~is a closed submonoid of
the product $\Cl{T}_S^\ell\times\Cl{T}_S^r$. In particular, if $S$ is
a locally compact semigroup, then $\Omega(S)$~is a topological monoid.
Its topology as a subspace of~$\Cl{T}_S^\ell\times\Cl{T}_S^r$ is the
compact-open topology. The space $\Omega(S)$ may also be viewed as a
space of continuous functions, namely as a subset of~$C(S,S\times S)$.

A semigroup $S$ is \emph{right reductive} if the \emph{canonical
  mapping} $S\to \Cl{T}_S^\ell$ sending each $s\in S$ to the inner
left translation $\lambda_s$ is injective. A \emph{left reductive}
semigroup is defined dually. The semigroup $S$ is \emph{reductive} if
it is both left and right reductive. We also say that $S$ is
\emph{weakly reductive} if the \emph{canonical mapping}
$S\to\Omega(S)$ sending each $s\in S$ to~$\omega_s$ is injective; its
image is an ideal of $\Omega(S)$
\cite[Corollary~1.11]{Hildebrant&Lawson&Yeager:1976}. Note that every
monoid is reductive.

\begin{Thm}[{\cite[Corollary~4.7 and
    Theorem~4.9]{Carruth&Hildebrant&Koch:1986}}]
  \label{t:TH-compact-reductive}
  Let $S$ be a compact reductive semigroup $S$. Then the compact-open
  and pointwise convergence topologies coincide on $\Omega(S)$ and
  $\Omega(S)$ is a compact semigroup.
\end{Thm}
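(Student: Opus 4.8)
The plan is to verify the two assertions of Theorem~\ref{t:TH-compact-reductive} --- coincidence of the topologies and compactness of $\Omega(S)$ --- by exploiting reductivity to identify $\Omega(S)$ with a well-behaved subspace of a compact function space. First I would observe that since $S$ is compact, it is in particular locally compact, so by the discussion preceding the statement $\Cl{T}_S^\ell$ and $\Cl{T}_S^r$ are topological semigroups under the compact-open topology, and $\Omega(S)$ is a closed submonoid of $\Cl{T}_S^\ell\times\Cl{T}_S^r$. To get compactness it therefore suffices to show that this product is compact, or at least that $\Omega(S)$ sits inside a compact subspace of it; here reductivity enters, since a left translation $\lambda$ is determined by the values $\lambda(s)$ and the map $\lambda\mapsto(\lambda(s))_{s\in S}$ embeds $\Cl{T}_S^\ell$ into $S^S$ --- but a priori only the \emph{pointwise} topology on $C(S,S)$ is the subspace topology from $S^S$, which is where the topology-comparison step becomes essential.

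The key step is thus to show that on the left-translation part the compact-open and pointwise topologies agree. Here I would use Lemma~\ref{l:continuous-convergence-in-locally-compact}: a net $(\lambda_i)_i$ of left translations converging pointwise to a function $\lambda$ has the property that, for every convergent net $(s_j)_j\to s$ in $S$, one has $\lambda_i(s_j)t = \lambda_i(s_j t)$ for all $t$, and one wants to upgrade pointwise convergence of the $\lambda_i$ to continuous convergence, which by the lemma is the same as compact-open convergence. The mechanism is that the limit function $\lambda$ is automatically again a left translation (passing to the limit in $\lambda_i(st)=\lambda_i(s)t$ using continuity of multiplication in $S$ and of $S\to\Cl T_S^\ell$), and then one shows $\lambda_i(s_j)\to\lambda(s)$ by a diagonal/subnet argument combined with the fact that left translations of a compact \emph{reductive} semigroup are severely constrained: each $\lambda_i$ has the form $\lambda_i = \lambda_{a_i}$ for suitable $a_i$ in (a compactification of) $S$, and inner translations converge continuously when their defining elements converge. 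Dually the same holds for right translations, and hence on $\Omega(S)\subseteq\Cl T_S^\ell\times\Cl T_S^r$ the compact-open and pointwise topologies coincide.

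Once the topologies are identified, compactness follows cleanly: under the pointwise topology $\Omega(S)$ is a subspace of $S^S\times S^S$, which is compact by Tychonoff since $S$ is compact, and $\Omega(S)$ is closed there because the defining conditions $\lambda(st)=\lambda(s)t$, $(st)\rho = s(t)\rho$, and the linking condition $s\,\lambda(t)=(s)\rho\,t$ are all closed conditions (each is an equality of two continuous functions of the relevant coordinates, using continuity of multiplication in $S$). Therefore $\Omega(S)$ is compact, and since we already know it is a topological semigroup in the compact-open topology --- which now coincides with the pointwise one --- it is a compact semigroup, as claimed.

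The main obstacle I anticipate is precisely the topology-comparison step: showing that compact-open convergence of bitranslations is \emph{implied by} pointwise convergence. The reverse implication is trivial, and the closedness/Tychonoff packaging is routine, but promoting pointwise convergence to continuous convergence genuinely uses both the compactness of $S$ and its reductivity, and the cleanest route is likely to reduce to the behaviour of inner translations (every translation of a compact reductive semigroup being inner relative to a suitable element) and to invoke Lemma~\ref{l:continuous-convergence-in-locally-compact} to translate continuous convergence into compact-open convergence. This is exactly the content distilled from \cite[Corollary~4.7 and Theorem~4.9]{Carruth&Hildebrant&Koch:1986}, so the proof will mostly amount to checking that the hypotheses of those results are met and transcribing the argument into the present notation.
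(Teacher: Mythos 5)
The paper does not actually prove this statement --- it is quoted verbatim from Carruth--Hildebrant--Koch --- so your proposal has to stand on its own. Your overall architecture is the right one (and is essentially theirs): realize $\Omega(S)$ as a pointwise-closed subset of $S^S\times S^S$, get compactness from Tychonoff, and separately show that pointwise convergence of bitranslations already implies compact-open convergence via Lemma~\ref{l:continuous-convergence-in-locally-compact}. But the mechanism you offer for the crucial topology-comparison step is wrong: it is not true that every left translation of a compact reductive semigroup is inner, i.e.\ of the form $\lambda_{a}$ for some $a$ in $S$ (or any ``compactification'' of it --- $S$ is already compact). The whole point of the translational hull is that it is in general much larger than the image of $s\mapsto\omega_s$; Proposition~\ref{p:TH-CS} exhibits the left translations of a compact Rees matrix semigroup as arbitrary pairs $(\varphi,\mu)$, far more than the inner ones even when the semigroup is reductive. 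So you cannot reduce the problem to convergence of inner translations, and as stated your key step fails.

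The correct use of reductivity goes through the \emph{linking} condition, and spelling it out also repairs a second gap in your closedness argument. Suppose $(\lambda_i,\rho_i)\to(\lambda,\rho)$ pointwise and $s_j\to s$ in $S$. For every $t\in S$, the linking condition gives $t\,\lambda_i(s_j)=(t)\rho_i\,s_j\to(t)\rho\,s=t\,\lambda(s)$ by continuity of multiplication in $S$. By compactness of $S$, every subnet of $\bigl(\lambda_i(s_j)\bigr)_{(i,j)}$ has a further subnet converging to some $y$, which then satisfies $ty=t\,\lambda(s)$ for all $t$; reductivity of $S$ (injectivity of $y\mapsto\rho_y$) forces $y=\lambda(s)$, so the whole net converges to $\lambda(s)$. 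This is exactly continuous convergence, hence compact-open convergence by Lemma~\ref{l:continuous-convergence-in-locally-compact}. The same computation with a constant index $i$ shows that the pointwise limit of a net of linked pairs is automatically \emph{continuous} --- a point your Tychonoff argument silently needs, since continuity is not a closed condition in the product topology on $S^S$: the algebraic identities $\lambda(st)=\lambda(s)t$, $(st)\rho=s(t)\rho$ and $s\,\lambda(t)=(s)\rho\,t$ only show that the set of linked pairs of \emph{arbitrary} functions is closed in $S^S\times S^S$. With these two repairs the proof goes through as you outlined.
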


Recall that a \emph{profinite semigroup} is a residually finite
compact semigroup. Equivalently, it is a compact zero-dimensional
semigroup \cite{Numakura:1957}. 
Since zero-dimensionality is preserved
by product \cite[Theorem~6.2.14]{Engelking:1989} and inherited by
subspaces, we obtain the following result.
\begin{Cor}
  \label{c:TH-profinite}
  If $S$ is a profinite reductive semigroup, then $\Omega(S)$~is a
  profinite semigroup.\qed
\end{Cor}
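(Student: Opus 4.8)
The plan is to read off the result from Theorem~\ref{t:TH-compact-reductive} together with the stability of zero-dimensionality under products and subspaces, exactly as the remark preceding the statement suggests. Since $S$ is profinite, it is in particular a compact reductive semigroup, so Theorem~\ref{t:TH-compact-reductive} already tells us that $\Omega(S)$ is a compact semigroup and, crucially, that the compact-open topology on $\Omega(S)$ (inherited from $\Cl{T}_S^\ell\times\Cl{T}_S^r$) coincides there with the topology of pointwise convergence. By the characterization of profinite semigroups as the compact zero-dimensional ones \cite{Numakura:1957}, it therefore suffices to prove that $\Omega(S)$ is zero-dimensional.

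For the latter, I would argue as follows. With the topology of pointwise convergence, $\Cl{T}_S^\ell=C(S,S)$ embeds as a topological subspace of the product space $S^S$, and likewise $\Cl{T}_S^r\hookrightarrow S^S$; hence, viewing a bitranslation $(\lambda,\rho)$ as the map $s\mapsto(\lambda(s),\rho(s))$, one sees that $\Omega(S)$ is a subspace of $(S\times S)^S$ carrying the pointwise topology. Now $S$ is zero-dimensional because it is profinite, so $S\times S$ is zero-dimensional as a product of two zero-dimensional spaces, and consequently the full power $(S\times S)^S$ is zero-dimensional by \cite[Theorem~6.2.14]{Engelking:1989}. Since zero-dimensionality is inherited by subspaces, $\Omega(S)$ is zero-dimensional. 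Combining this with the compactness supplied by Theorem~\ref{t:TH-compact-reductive}, $\Omega(S)$ is a compact zero-dimensional semigroup, that is, a profinite semigroup.

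I do not expect a genuine obstacle in this argument; the only delicate point is that the topology on $\Omega(S)$ under consideration must really be the pointwise one in order for the embedding into a product of copies of $S$ to be topological, and this is precisely what the reductivity hypothesis on $S$ provides via Theorem~\ref{t:TH-compact-reductive}. (For a non-reductive compact semigroup the compact-open topology on $\Omega(S)$ could be strictly finer, and the embedding of $\Cl{T}_S^\ell$ into $S^S$ would no longer be a homeomorphism onto its image, so the above reduction would break down.)
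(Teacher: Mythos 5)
Your argument is correct and is essentially the one the paper intends: use Theorem~\ref{t:TH-compact-reductive} for compactness and for the identification of the compact-open topology with the pointwise one, then realize $\Omega(S)$ as a subspace of a power of $S$ (i.e.\ of $(S\times S)^S\cong S^S\times S^S$) and invoke preservation of zero-dimensionality under products and subspaces together with Numakura's characterization of profinite semigroups. Your closing remark correctly isolates why reductivity is needed, namely so that the compact-open topology really is the pointwise one.
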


Given a locally compact closed ideal $I$ of a topological semigroup
$S$, the action of $S$ both on the left and on the right of $I$
determines a homomorphism $\omega^I=(\lambda^I,\rho^I):S\to\Omega(I)$,
which we call the \emph{$I$-representation} of $S$. Where convenient,
we may sometimes write $\omega^I(s)=(\lambda^I(s),\rho^I(s))$ instead
of $\omega^I_s=(\lambda^I_s,\rho^I_s)$ to denote the image of $s\in
S$ under the $I$-representation of~$S$.

\begin{Prop}
  \label{p:TH-continuity}
  For a topological semigroup $S$ and a locally compact closed ideal
  $I$, the $I$-representation of $S$ is continuous.
\end{Prop}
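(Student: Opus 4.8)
The plan is to check continuity of the two component maps $\lambda^I\colon S\to\Cl{T}_I^\ell$ and $\rho^I\colon S\to\Cl{T}_I^r$ separately; this suffices, since $\Omega(I)$ carries the subspace topology from $\Cl{T}_I^\ell\times\Cl{T}_I^r$ and the map $\omega^I$ corestricts into $\Omega(I)$. By left-right symmetry it is enough to treat $\lambda^I$, which sends $s\in S$ to the continuous self-map $\lambda^I_s\colon I\to I$, $t\mapsto st$, well defined because $I$ is an ideal.

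Since $I$~is locally compact, Lemma~\ref{l:continuous-convergence-in-locally-compact} allows one to replace convergence in the compact-open topology of $\Cl{T}_I^\ell=C(I,I)$ by continuous convergence. So I would take an arbitrary net $(s_d)_{d\in D}$ in $S$ converging to $s$, fix an arbitrary net $(t_e)_{e\in E}$ in $I$ converging to $t\in I$, and verify that the net $\bigl(\lambda^I_{s_d}(t_e)\bigr)_{(d,e)\in D\times E}=(s_dt_e)_{(d,e)}$ converges to $\lambda^I_s(t)=st$. This is immediate from joint continuity of the multiplication of~$S$: the product net $(s_d,t_e)_{(d,e)\in D\times E}$ converges to $(s,t)$ in $S\times S$, hence $s_dt_e\to st$ in~$S$; as all these points lie in the subspace~$I$, the convergence holds in~$I$ as well. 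Thus $\lambda^I_{s_d}\to\lambda^I_s$ continuously, hence in the compact-open topology, so $\lambda^I$ is continuous; dually so is $\rho^I$, and therefore so is $\omega^I$.

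The argument is short and there is no serious obstacle. The one point that must be handled with a little care is the passage from the compact-open topology on $\Omega(I)$ to the pointwise statement above, which is exactly what Lemma~\ref{l:continuous-convergence-in-locally-compact} provides and is the only place where local compactness of~$I$ is used. Everything else reduces to continuity of the multiplication of~$S$ evaluated along the product net indexed by $D\times E$, together with the observation that convergence of a net contained in~$I$ may be computed equivalently in~$I$ or in~$S$.
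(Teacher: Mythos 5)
Your proposal is correct and follows essentially the same route as the paper's proof: fix convergent nets in $S$ and in $I$, use joint continuity of multiplication to get continuous convergence of $\lambda^I_{s_d}$ (and dually $\rho^I_{s_d}$), and invoke Lemma~\ref{l:continuous-convergence-in-locally-compact} to translate this into convergence in the compact-open topology. Nothing further is needed.
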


\begin{proof}
  Let $(s_j)_j$ be a convergent net in~$S$ with limit $s$. For every
  convergent net $(u_k)_k$ in~$I$ with limit $u$, the net
  $(s_ju_k)_{j,k}$ converges to~$su$ in~$I$ and so the net
  $\bigl(\lambda^I_{s_j}(u_k)\bigr)_{j,k}$ converges
  to~$\lambda^I_s(u)$ in~$I$. Similarly, the net
  $\bigl((u_k)\rho^I_{s_j}\bigr)_{j,k}$ converges to~$(u)\rho^I_s$. By
  Lemma~\ref{l:continuous-convergence-in-locally-compact}, the net
  $(\omega^I_{s_j})_j=\bigl((\lambda^I_{s_j},\rho^I_{s_j})\bigr)_j$
  converges in~$\Omega(I)$ to $\omega^I_s=(\lambda^I_s,\rho^I_s)$.
  Hence, the function $\omega^I$ is continuous.
\end{proof}

\subsection{Actions on the minimum ideal}
\label{sec:actions}

Let $S$ be a profinite semigroup with a minimum ideal $K$.
Since $K$ is generated by any of its elements, it is a closed ideal.
By Proposition~\ref{p:TH-continuity}, the $K$-representation
$\omega^K=(\lambda^K,\rho^K):S\to\Omega(K)$ of~$S$ is a continuous
homomorphism. Note, that the restriction of $\omega^K$ to the ideal $K$ is faithful,
because $K$ is a completely simple semigroup.

Following
\cite[Definition~4.6.21]{Rhodes&Steinberg:2009qt}, we say that $S$~is
\emph{left mapping (LM)} if the representation $\lambda^K:S\to\Cl
T_K^\ell$ is faithful.
The definition of \emph{right mapping (RM)}
profinite semigroup is dual. If $S$~is both left and right mapping,
then $S$~is said to be \emph{generalized group mapping (GGM)}. A GGM
profinite semigroup whose minimum ideal is not aperiodic is also said
to be \emph{group mapping (GM)}, but we will not be doing this
distinction in this paper. We will also be interested in a weakening
of the GGM property which is easier to prove and powerful enough for
some applications. We say that $S$~is \emph{weakly generalized group
  mapping (WGGM)} if, for all distinct elements $u,v\in S$, either
$\lambda^K(u)\ne\lambda^K(v)$ and $(u)\rho^K\ne(v)\rho^K$, or both $u$
and $v$ belong to~$K$
(and, therefore, $\omega^K(u)\ne\omega^K(v)$).

Since the minimum ideal of a GGM profinite semigroup is a profinite
reductive semigroup, taking into account the results of
Subsection~\ref{sec:translational-hull}, we obtain the following
statement.

\begin{Thm}
  \label{t:TH-wggm}
  Let $S$ be a profinite semigroup with minimum ideal~$K$.
  \begin{enumerate}[(a)]
  \item\label{item:TH-wggm-1} If $S$ is WGGM then
    $\omega^K:S\to\Omega(K)$ is an embedding of topological
    semigroups.
  \item\label{item:TH-wggm-2} If $S$ is GGM then $\Omega(K)$ is a
    profinite semigroup.\qed
  \end{enumerate}
\end{Thm}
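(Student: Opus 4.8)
The plan is to assemble the two parts directly from results already in hand, treating the WGGM and GGM cases separately. For part~\eqref{item:TH-wggm-1}, I would first recall that $K$ is a closed ideal (as noted just before the statement, since it is generated by any one of its elements), so Proposition~\ref{p:TH-continuity} applies and gives that $\omega^K\colon S\to\Omega(K)$ is a continuous homomorphism. It then remains to verify injectivity. Here I would unwind the definition of WGGM: given distinct $u,v\in S$, either $\lambda^K(u)\neq\lambda^K(v)$ (so already the left components differ), or else $u,v\in K$, in which case the restriction of $\omega^K$ to $K$ is faithful because $K$ is completely simple, as remarked in Subsection~\ref{sec:actions}. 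Either way $\omega^K(u)\neq\omega^K(v)$, so $\omega^K$ is injective. A continuous injective homomorphism from a compact space to a Hausdorff space is a homeomorphism onto its image, so $\omega^K$ is an embedding of topological semigroups. (One should note that a profinite semigroup is in particular compact Hausdorff, and $\Omega(K)\subseteq\Cl{T}_K^\ell\times\Cl{T}_K^r$ is Hausdorff, so this last step is legitimate.)

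For part~\eqref{item:TH-wggm-2}, the GGM hypothesis means both $\lambda^K$ and $\rho^K$ are faithful, i.e.\ the canonical maps $K\to\Cl{T}_K^\ell$ and $K\to\Cl{T}_K^r$ are injective when restricted from $S$; but more to the point, $K$ itself is a reductive semigroup — indeed $\lambda^K$ faithful on all of $S$ forces in particular that no two distinct elements of $K$ induce the same inner left translation of $K$, and dually on the right. Thus $K$ is a profinite reductive semigroup, and Corollary~\ref{c:TH-profinite} gives at once that $\Omega(K)$ is a profinite semigroup. This is essentially a one-line deduction once the reductivity of $K$ is made explicit.

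The main point requiring a little care — and the only place I expect a subtlety — is the step in part~\eqref{item:TH-wggm-2} asserting that the GGM property of $S$ entails that $K$ is \emph{reductive} as a semigroup in its own right. The definitions of LM/RM are phrased as faithfulness of the representations of $S$ on $K$, not of $K$ on itself; so I would spell out that if $k_1,k_2\in K$ satisfy $\lambda^K(k_1)=\lambda^K(k_2)$ then in particular $k_1x=k_2x$ for all $x\in K$, and since $\lambda^K$ is faithful on $S$ and $k_1,k_2\in S$ we get $k_1=k_2$; this is exactly right reductivity of $K$, and dually for left reductivity. Everything else is a direct citation: Proposition~\ref{p:TH-continuity} for continuity, the completely simple structure of $K$ for faithfulness on $K$, compactness for the embedding, and Corollary~\ref{c:TH-profinite} for profiniteness of $\Omega(K)$.
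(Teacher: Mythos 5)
Your proof is correct and follows the same route the paper has in mind, which is why the paper marks this theorem with a bare \qed: continuity comes from Proposition~\ref{p:TH-continuity}, injectivity in part (a) is just unwinding the WGGM definition together with the remark that $\omega^K$ is faithful on the completely simple ideal $K$, compactness upgrades a continuous injection to an embedding, and part (b) is the observation that GGM forces $K$ itself to be reductive so that Corollary~\ref{c:TH-profinite} applies. Your care in deriving reductivity of $K$ from faithfulness of $\lambda^K$ and $\rho^K$ on all of $S$ is exactly the ``one-line deduction'' the paper is alluding to in the sentence that introduces the theorem.
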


Thus, if $S$~is GGM then $\Omega(K)$ is a profinite semigroup in which
$S$ embeds as a closed subsemigroup. There is also an embedding of $S$
in~$\Omega(K)$ under the assumption that $S$ is~WGGM, but then there
is no longer any guarantee that the topological semigroup
$\Omega(K)$~is profinite, and it may not even be a compact semigroup
as an example in Subsection~\ref{sec:TH-compact-simple} shows.

A pseudovariety of semigroups \pv V is \emph{GGM} (respectively
\emph{WGGM}) if, for every finite non-singleton set~$A$, the semigroup
\Om AV~is a GGM (respectively WGGM) semigroup. Trivially, every
pseudovariety of groups is~GGM. We also say that a pseudovariety of
semigroups \pv V is \emph{almost GGM} (respectively \emph{almost
  WGGM}) if, there are arbitrarily large finite alphabets $A$ such
that \Om AV~is GGM (respectively WGGM).

\subsection{The translational hull of a profinite completely simple
  semigroup}
\label{sec:TH-compact-simple}
Since the minimum ideal of a profinite semigroup is a profinite
completely simple semigroup, Theorem~\ref{t:TH-wggm} motivates the
study of the translational hull of profinite completely simple
semigroups. This subsection presents  some preliminary
observations.

We say that a completely simple semigroup has \emph{torsion} if it is
not a rectangular group. By a \emph{$2\times2$ maximal subsemigroup}
of a semigroup $S$ we mean a completely simple subsemigroup with
exactly two \Cl R-classes and two \Cl L-classes which is the union of
\Cl H-classes of~$S$. We say that a
completely simple semigroup $S$ has \emph{full torsion} if it is not a
single \Cl R-class nor a single \Cl L-class and every $2\times2$
maximal subsemigroup has torsion. Note that this condition is
equivalent to each of the following properties, where $e$ and $f$ are
arbitrary idempotents of~$S$:
\begin{itemize}
\item if $e$ and $f$ are neither \Cl R nor \Cl L-equivalent, then the
  product $ef$~is not idempotent;
\item if $ef$ is idempotent, then $ef\in\{e,f\}$.
\end{itemize}
A weaker notion is the following. We say that a completely simple
semigroup $S$ has \emph{plenty of torsion on the left} if, for every
pair of distinct \Cl R-equivalent idempotents $e$ and $f$, there is an
idempotent $g$ from the \Cl L-class of~$e$ such that $fg\ne e$. Note
that, if $S$ has full torsion, then every idempotent $g\ne e$ in the
\Cl L-class of~$e$ has that property.

It is well known that, for an element $s$ of a compact semigroup, the
closed subsemigroup generated by~$s$ contains a unique idempotent,
which we denote~$s^0$. Note that, by definition, it is the limit of
some net of (finite) powers of~$s$. We also denote by $s^{-1}$ the
inverse of~$ss^0$
in the maximal subgroup of~$S$ containing~$ss^0$. 
Thus, we
have $s^0=ss^{-1}=s^{-1}s$. In a profinite semigroup, the traditional
notation is $s^\omega$ instead of~$s^0$ and $s^{\omega-1}$ instead
of~$s^{-1}$, coming from he fact that the $\omega$-power was first
used in the theory of finite semigroups to represent the $n!$-powers
for sufficiently large~$n$.

\begin{Prop}
  \label{p:reduction}
  Let $S$ be a compact completely simple semigroup. Then the following
  hold:
  \begin{enumerate}[(a)]
  \item\label{item:reduction-a} if $u,v\in S$ and
    $\lambda_u=\lambda_v$, then $\lambda_{u^0}=\lambda_{v^0}$ and $u$
    and $v$~are \Cl R-equivalent;
  \item\label{item:reduction-b} if $u,v\in S$ are such that $u^0=v^0$
    and $\lambda_u=\lambda_v$, then $u=v$;
  \item\label{item:reduction-1} the semigroup $S$ is weakly reductive;
  \item\label{item:reduction-2} the canonical mapping
    $S\to\Cl{T}_S^\ell$ is injective if and only if $S$~has plenty of
    torsion on the left.
  \end{enumerate}
\end{Prop}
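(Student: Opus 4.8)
The four statements are naturally proved in the listed order, each feeding the next. For part~\eqref{item:reduction-a}, I would start from $\lambda_u=\lambda_v$, which by definition means $us=vs$ for all $s\in S$. The plan is first to extract the \Cl R-equivalence: since $S$ is completely simple, $u=ue$ and $v=vf$ for suitable idempotents, but more directly, applying $\lambda_u=\lambda_v$ to an element $s$ in the \Cl R-class of $u$ (say $s=u^{-1}$, so that $us=u^0$) gives $vu^{-1}=u^0$, and symmetrically $uv^{-1}=v^0$; together with completely simplicity this forces $u\,\Cl R\,v$ (they lie in the same \Cl L-class image under every right multiplication, hence share the same \Cl R-class since in a completely simple semigroup $\lambda_s$ is determined, up to the \Cl R-class of $s$, by that \Cl R-class). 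Then, to pass to idempotents, I would take powers: from $us=vs$ for all $s$ it follows by induction that $u^ns=v^ns$ for all $n\ge1$ and all $s$ (using $u^ns=u^{n-1}(us)=u^{n-1}(vs)=\cdots$, and on the other side exploiting that $u\,\Cl R\,v$ implies $u$ and $v$ act the same way on the left on their common \Cl R-class so that the rewriting closes up). Taking a convergent subnet of $(u^n)_n$ to $u^0$ and of the matching $(v^n)_n$ to $v^0$, continuity of multiplication yields $\lambda_{u^0}=\lambda_{v^0}$. The delicate point here is that the two idempotent limits must be taken compatibly; I would handle this by noting that $u$ and $v$ generate topologically the same monothetic subsemigroup closure pattern on each side because they are \Cl R-equivalent, or alternatively replace $v$ by $u^{-1}vu^0$-type conjugates to reduce to a single monothetic semigroup — this bookkeeping is the one genuinely fiddly step.

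For part~\eqref{item:reduction-b}, assume additionally $u^0=v^0=:e$. Since $u\,\Cl R\,v$ (from part~\eqref{item:reduction-a}) and they have the same idempotent power, $u$ and $v$ lie in the same maximal subgroup $H_e=eSe\cap(\text{\Cl H-class of }e)$; wait — more carefully, $u^0=v^0=e$ places $u,v$ in the \Cl H-class of $e$, hence in the group $H_e$. Now $\lambda_u=\lambda_v$ gives in particular $ue=ve$, i.e.\ $u=v$ since $e$ is the identity of $H_e$ and $u,v\in H_e$. So part~\eqref{item:reduction-b} is essentially immediate once \eqref{item:reduction-a} supplies the \Cl R-relation and we observe the common idempotent power pins down the \Cl H-class.

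Part~\eqref{item:reduction-1} (weak reductivity) asks that $s\mapsto\omega_s=(\lambda_s,\rho_s)$ be injective. Suppose $\omega_u=\omega_v$, so both $\lambda_u=\lambda_v$ and $\rho_u=\rho_v$. From the left equality and part~\eqref{item:reduction-a}, $u\,\Cl R\,v$ and $\lambda_{u^0}=\lambda_{v^0}$; dually from $\rho_u=\rho_v$ we get $u\,\Cl L\,v$. Hence $u\,\Cl H\,v$. I would then argue $u^0=v^0$: the idempotent in the \Cl H-class is unique, and $\lambda_u=\lambda_v$ with $u^0,v^0$ both idempotents \Cl R-related to $u$... actually the cleanest route is that $u\,\Cl H\,v$ already forces $u^0=v^0$ only if they are in the same group, which they are since an \Cl H-class containing $u$ and $u^0$ in its closure — here I would use that in a compact completely simple semigroup each \Cl H-class is a compact group with a single idempotent, so $u\,\Cl H\,v$ gives $u^0=v^0$ automatically. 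Then part~\eqref{item:reduction-b} yields $u=v$. The main obstacle in the whole proposition, as flagged, is the compatible-limits argument in~\eqref{item:reduction-a}; once that is secure, \eqref{item:reduction-b} and \eqref{item:reduction-1} are short.

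Finally, for part~\eqref{item:reduction-2}, the canonical map $S\to\Cl T_S^\ell$ is $u\mapsto\lambda_u$, and injectivity of this is precisely right reductivity. I would prove the biconditional with "plenty of torsion on the left" as follows. For the contrapositive of ($\Leftarrow$): if the map is not injective, pick $u\ne v$ with $\lambda_u=\lambda_v$; by \eqref{item:reduction-a}, $u\,\Cl R\,v$ and $\lambda_{u^0}=\lambda_{v^0}$, and by \eqref{item:reduction-b} (its contrapositive) $u^0\ne v^0$. So $e:=u^0$ and $f:=v^0$ are distinct \Cl R-equivalent idempotents with $\lambda_e=\lambda_f$, i.e.\ $eg=fg$ for every $g\in S$; in particular for every idempotent $g$ in the \Cl L-class of $e$ we get $fg=eg$, and $eg$ — with $g\,\Cl L\,e$ and $g$ idempotent — equals $e$ in a completely simple semigroup (since $e\,\Cl R\,eg$, $eg\,\Cl L\,g\,\Cl L\,e$, and $eg$ is idempotent being $e\cdot g$ with... one uses the Rees matrix normal form to see $eg=e$ when $g$ is an idempotent \Cl L-below, hence \Cl L-equal, to $e$). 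Thus $fg=e$ for all such $g$, violating "plenty of torsion on the left". Conversely, if plenty of torsion on the left fails, there are distinct \Cl R-equivalent idempotents $e,f$ with $fg=e$ for every idempotent $g\,\Cl L\,e$; using the Rees coordinates this rigidity propagates to $\lambda_e=\lambda_f$ on all of $S$ (an element $s$ factors through such an idempotent $g$ in its \Cl L-class, and left multiplication only sees the \Cl R-coordinate and the sandwich entry, which $e$ and $f$ leave in the same place), so the canonical map identifies $e\ne f$ and is not injective. The step needing care is the Rees-matrix computation showing the idempotent identity $eg=e$ for $g$ an idempotent in the \Cl L-class of $e$, and the converse propagation; I would phrase both uniformly by fixing a Rees matrix representation $K\cong\Cl M(G;I,\Lambda;P)$ and computing $\lambda$ on coordinates, which makes all these identities one-line matrix manipulations.
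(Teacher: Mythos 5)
Your proposal is essentially correct and, for parts~(a)--(c) and the forward direction of~(d), follows the same route as the paper. A few remarks on the differences and on one place where you see a gap that the paper skates over.

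In part~(a), the paper derives the two conclusions in the opposite order from yours: it first observes $u^n w = v^n w$ for all $n,w$ (a one-line induction that, contrary to what you suggest, needs no \Cl R-equivalence: $u^{n+1}w = u(u^n w) = v(u^n w) = v(v^n w) = v^{n+1}w$), concludes $\lambda_{u^0}=\lambda_{v^0}$, and then extracts $u\mathrel{\Cl R}v$ directly from $u = uu^0 = vu^0$ and $v = vv^0 = uv^0$. Your worry about ``compatible limits'' when passing from $\lambda_{u^n}=\lambda_{v^n}$ to $\lambda_{u^0}=\lambda_{v^0}$ is legitimate --- the paper is terse here --- but the workaround you sketch (conjugating $v$) is vaguer than needed; the clean fix is to pick a net $u^{n_i}\to u^0$, pass to a subnet so $v^{n_i}\to z\in\overline{\langle v\rangle}\subseteq H_v$, note $\lambda_z=\lambda_{u^0}$ is idempotent so $z^2z^{-1}=zz^{-1}$ in the compact group $H_v$, giving $z=v^0$. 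Part~(b) in the paper is the same one-liner you give: $u=uu^0=uv^0=vv^0=v$. Part~(c) is identical in both.

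Where you genuinely diverge from the paper is in the backward direction of~(d). The paper argues contrapositively from $e\ne f$ with $\lambda_e\ne\lambda_f$: pick $w$ with $ew\ne fw$, reduce (via Green's Lemma) to $w\in L_e$, set $t=(ew)^{-1}$ in $H_e$, and verify $g=wt$ is an idempotent in $L_e$ with $fg\ne e$. You instead take the failure of plenty of torsion on the left as the hypothesis and show $\lambda_e=\lambda_f$: for each $s$, factor $s=gs$ through the idempotent $g$ of $R_s\cap L_e$, so that $es=(eg)s=es$ and $fs=(fg)s=es$ by the hypothesis $fg=e$. Your computation is correct (and in fact can be phrased intrinsically without Rees coordinates, since $gs=s$ follows from $g$ being an idempotent \Cl R-equivalent to $s$ and $eg=e$ from $g$ being an idempotent \Cl L-equivalent to $e$). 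It is a somewhat more conceptual argument --- it makes it visible that plenty of torsion on the left is exactly the condition that separates the left actions of distinct \Cl R-equivalent idempotents --- whereas the paper's argument is a more explicit witness construction; both are short and valid.
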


\begin{proof}
  \eqref{item:reduction-a} Let $u$ and $v$ be two elements of~$S$ and
  suppose that $\lambda_u=\lambda_v$. We deduce that $u^nw=v^nw$ for
  every $w\in S$ and every positive integer~$n$. Hence, we have
  $\lambda_{u^0}=\lambda_{v^0}$. Since $u=uu^0=vu^0$ and
  $v=vv^0=uv^0$, it follows that $u$ and $v$ are \Cl R-equivalent.
  
  \eqref{item:reduction-b} Under the assumptions, we have
  $u=uu^0=uv^0=vv^0=v$.

  \eqref{item:reduction-1} Let $u$ and $v$ be two elements of~$S$ and
  suppose that $(\lambda_u,\rho_u)=(\lambda_v,\rho_v)$. By
  \eqref{item:reduction-a} and its dual, $u$ and $v$ lie in the same
  maximal subgroup of~$S$, that is $u^0=v^0$.
  By~\eqref{item:reduction-b}, it follows that $u=v$.

  \eqref{item:reduction-2} Suppose first that $S$ has plenty of
  torsion on the left. Let $u$ and $v$ be two elements of~$S$. Suppose
  that $\lambda_u=\lambda_v$. We claim that $u=v$.
  By~\eqref{item:reduction-a}, $u^0$ and $v^0$ are \Cl R-equivalent
  idempotents. If $u^0\ne v^0$ then, since $S$~has plenty of torsion,
  there is an idempotent $g$ in the \Cl L-class of~$u^0$ such that
  $v^0g\ne u^0=u^0g$, which contradicts the equality
  $\lambda_{u^0}=\lambda_{v^0}$ given by~\eqref{item:reduction-a}.
  Hence, the equality $u^0=v^0$ holds and so $u=v$
  by~\eqref{item:reduction-b}, which proves the claim.

  Conversely, assume that the canonical mapping is injective and
  suppose that $e$ and $f$ are two distinct \Cl R-equivalent
  idempotents. Then there is some $w\in S$ such that $ew\ne fw$. By
  Green's Lemma, it follows that $ewe\ne fwe$. Hence, we may assume
  that $w\mathrel{\Cl L}e$. Let $t$ be the inverse of~$ew$ in the
  maximal subgroup $H$ containing~$e$. Since $ew$ and $fw$ are
  distinct elements of~$H$, so are $e=ewt$ and $fwt$. Since
  $wt\mathrel{\Cl H}w$ by Green's Lemma, it suffices to observe that
  $wt$ is idempotent. Indeed, $wt\cdot wt=wte\cdot wt=wte=wt$.
\end{proof}

In particular, a profinite completely simple semigroup $S$ is
reductive if and only if it has plenty of torsion both on the left and
on the right. By Corollary~\ref{c:TH-profinite}, $\Omega(S)$~is then a
profinite semigroup.

In the case of a finite discrete completely 0-simple semigroup, the
structure of the translational hull has been described in
\cite[Chapter~7, Facts 2.14 and~2.15]{Krohn&Rhodes&Tilson:1968}. In
view of the translational representation results of
Sections~\ref{sec:wggm-sufficient-conditions}--\ref{sec:CR}, it seems
worthwhile to carry such results to the case of profinite completely
simple semigroups. The analogue of \cite[Chapter~7,
Fact~2.14]{Krohn&Rhodes&Tilson:1968} is the following result, which
only adds topological considerations. The topology we consider on a
Rees matrix semigroup $\Cl M(A,G,B;P)$ is the product tolopogy on $A\times
G\times B$.

\begin{Prop}
  \label{p:TH-CS}
  Let $S=\Cl M(A,G,B;P)$ be a Rees matrix semigroup where $A$ and $B$
  are compact zero-dimensional spaces, $G$~is a profinite group, and
  $P:B\times A\to G$ is a continuous function.
  \begin{enumerate}[(a)]
  \item\label{item:TH-CS-1} The left translations of~$S$ are the
    functions of the form $\lambda(a,g,b)=(\varphi(a),\mu(a)g,b)$,
    where $\varphi\in\Cl T_A^\ell$ and $\mu:A\to G$ is a continuous
    function.
  \item\label{item:TH-CS-2} The right translations of~$S$ are the
    functions of the form $(a,g,b)\rho=(a,g(b)\nu,(b)\psi)$, where
    $\psi\in\Cl T_B^r$ and $\nu:B\to G$
    is a continuous function.
  \item\label{item:TH-CS-3} If $\lambda$ is a left translation of~$S$
    given by $(\varphi,\mu)$ and $\rho$~is a right translation of~$S$
    given by $(\psi,\nu)$, then the pair $(\lambda,\rho)$ is linked if
    and only if the following equation holds for all $a\in A$ and
    $b\in B$:
    \begin{equation}
      \label{eq:link}
      (b)\nu\, P\bigl((b)\psi,a\bigr) %
      =P\bigl(b,\varphi(a)\bigr)\,\mu(a).
    \end{equation}
  \end{enumerate}
\end{Prop}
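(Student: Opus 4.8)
The plan is to treat the three parts in order, each by a direct computation exploiting the defining identities of a Rees matrix semigroup, and then to dispense with the topological conditions via continuity of the structure maps. Recall that in $S=\Cl M(A,G,B;P)$ the product is $(a_1,g_1,b_1)(a_2,g_2,b_2)=(a_1,\,g_1P(b_1,a_2)g_2,\,b_2)$. First I would prove \eqref{item:TH-CS-1}. Given a left translation $\lambda$, observe that for any $(a,g,b)$ we have $(a,g,b)=(a,1,b_0)(a,g,b)$ for a fixed $b_0$, so $\lambda(a,g,b)=\lambda(a,1,b_0)\cdot(a,g,b)$; writing $\lambda(a,1,b_0)=(\varphi(a),\mu(a),\beta(a))$ this yields $\lambda(a,g,b)=(\varphi(a),\,\mu(a)P(\beta(a),a)g,\,b)$. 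Now the $B$-coordinate of the output is forced to be $b$ for every $b$, which is consistent; and since $P(\beta(a),a)$ may be absorbed into $\mu(a)$ by renaming, we get the stated form $\lambda(a,g,b)=(\varphi(a),\mu(a)g,b)$. Conversely, any such $\lambda$ is checked to satisfy $\lambda(st)=\lambda(s)t$ by a one-line computation: both sides equal $(\varphi(a_1),\,\mu(a_1)g_1P(b_1,a_2)g_2,\,b_2)$. That $\varphi$ lies in $\Cl T_A^\ell$ and $\mu$ is continuous follows because $\lambda$ is continuous and the three coordinate projections of the Rees matrix semigroup are continuous; conversely, given continuous $\varphi,\mu$, the map $\lambda$ is continuous since multiplication in $G$ is continuous. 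Part \eqref{item:TH-CS-2} is the left–right dual and needs no separate argument beyond noting that the opposite semigroup of $\Cl M(A,G,B;P)$ is $\Cl M(B,G^{\mathrm{op}},A;P^{\mathrm{op}})$ where $P^{\mathrm{op}}(a,b)=P(b,a)^{-1}$ (or, more simply, transcribe the computation).

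For \eqref{item:TH-CS-3} I would simply expand the linking condition $s\,\lambda(t)=(s)\rho\,t$ for $s=(a_1,g_1,b_1)$, $t=(a_2,g_2,b_2)$. The left-hand side is $(a_1,g_1,b_1)(\varphi(a_2),\mu(a_2)g_2,b_2)=(a_1,\,g_1P(b_1,\varphi(a_2))\mu(a_2)g_2,\,b_2)$, while the right-hand side is $(a_1,g_1(b_1)\nu,(b_1)\psi)(a_2,g_2,b_2)=(a_1,\,g_1(b_1)\nu P((b_1)\psi,a_2)g_2,\,b_2)$. Equating the middle coordinates and cancelling $g_1$ on the left and $g_2$ on the right (legitimate, since $G$ is a group) gives exactly $(b_1)\nu\,P((b_1)\psi,a_2)=P(b_1,\varphi(a_2))\,\mu(a_2)$, which is \eqref{eq:link} with $b=b_1$, $a=a_2$. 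Since $a_1,b_2$ were arbitrary and the first and last coordinates always agree, the pair $(\lambda,\rho)$ is linked precisely when \eqref{eq:link} holds for all $a\in A$, $b\in B$.

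I do not anticipate a genuine obstacle here; the content is bookkeeping, and the only point requiring a modicum of care is the normalization in part \eqref{item:TH-CS-1}, namely extracting the reduced form $\lambda(a,g,b)=(\varphi(a),\mu(a)g,b)$ and verifying that the candidate function really is a left translation (and continuous) rather than merely that every left translation has that shape. One should also be mildly careful that $A$ and $B$ need not have idempotent elements in the Rees coordinates, so the ``fixed $b_0$'' trick above is the cleanest way to start; alternatively one evaluates $\lambda$ on products $(a,1,b)(a,g,b)$ directly without choosing a distinguished index. The topological assertions are immediate from continuity of multiplication in $G$ and continuity of the coordinate projections on the product space $A\times G\times B$, together with the hypothesis that $P$ is continuous, which is used (only) in part \eqref{item:TH-CS-3} to make sense of \eqref{eq:link} as an identity between continuous $G$-valued functions.
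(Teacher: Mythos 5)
The paper gives no proof of this proposition: it explicitly defers to the finite discrete case in Krohn, Rhodes, and Tilson, Chapter~7, Facts~2.14--2.15, and states only that the present version ``adds topological considerations.'' So your argument is supplying what the authors left implicit, and in substance it is correct; parts \eqref{item:TH-CS-2} and \eqref{item:TH-CS-3} are fine as written, and your final remarks on continuity are exactly what is needed.

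There is one genuine though easily repaired slip at the start of part \eqref{item:TH-CS-1}. The purported identity $(a,g,b)=(a,1,b_0)(a,g,b)$ is false in general: the right-hand side equals $(a,P(b_0,a)g,b)$, and this equals $(a,g,b)$ only when the $b_0$-row of the sandwich matrix is constantly $1$, a normalization you have not assumed. What the left-translation identity actually gives is $\lambda(a,P(b_0,a)g,b)=\lambda(a,1,b_0)\cdot(a,g,b)$, from which, substituting $g\mapsto P(b_0,a)^{-1}g$, one gets $\lambda(a,g,b)=\bigl(\varphi(a),\,\mu(a)P(\beta(a),a)P(b_0,a)^{-1}g,\,b\bigr)$; the extra $P$-terms are continuous in $a$ and absorb into a single continuous map $A\to G$, as you already propose, so your conclusion is unaffected. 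A base-point-free alternative worth noting: from $\lambda(st)=\lambda(s)t$ with $s=(a,g_1,b_1)$ fixed and $t$ arbitrary, one reads off directly that the $B$-coordinate of $\lambda(a,h,b)$ is $b$, that its $A$-coordinate is independent of $(h,b)$, and that the middle coordinate satisfies $L_2(a,h,b)=L_2(a,g_1,b_1)\,g_1^{-1}h$, which is already the asserted form $\mu(a)h$. One last small point: the opposite semigroup of $\Cl M(A,G,B;P)$ is $\Cl M(B,G^{\mathrm{op}},A;P')$ with $P'(a,b)=P(b,a)$, without the inverse you inserted; but as you say, transcribing the computation is just as quick and avoids the issue.
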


See also~\cite[Section~5.5.1]{Rhodes&Steinberg:2009qt} for the
connection with linear representations.

An extreme non-reductive case, nevertheless of interest, is that of a
rectangular band $S=A\times B$, where $A$ and $B$ are compact,
respectively left-zero and right-zero semigroups. It follows from
Proposition~\ref{p:TH-CS} that $\Omega(S)$ is isomorphic to the
product $\Cl T_A^\ell\times\Cl T_B^r$. Suppose for instance that $A$
is the usual realization of the Cantor set in the real line. Noting
that the intersections of $A$ with all intervals of the forms $[0,c]$
and $[c,1]$ ($c\in[0,1]\setminus A$) are open, if $(c_n)_n$ is a
sequence in $[0,1]\setminus A$ converging to~1, then the sequence of
characteristic functions of the subsets $[c_n,1]\cap A$ of~$A$, which
belong to $\Cl T_A^\ell$, converges pointwise to the characteristic
function of the subset $\{1\}$ of~$A$, which does not belong to~$\Cl
T_A^\ell$. Since convergence in the compact-open topology implies
pointwise convergence, it follows that the topological semigroup $\Cl
T_A^\ell$~is not compact, whence neither is $\Omega(S)$. The argument
can be easily extended to the case where $A$ or $B$ contains a
subspace homeomorphic to the Cantor set.

In particular, the previous paragraph shows that the assumption that
the semigroup $S$ is reductive cannot be dropped in the statement of
Corollary~\ref{c:TH-profinite}. Nevertheless, by
Proposition~\ref{p:TH-continuity}, a profinite rectangular band
$S$~embeds in~$\Omega(S)$ via the $S$-representation $\omega^S$ by
inner bitranslations and it follows from the results of
Section~\ref{sec:wggm-sufficient-conditions} that $\Omega(S)$ may
admit some much larger profinite subsemigroups containing
$\omega^S(S)$.

\section{Some sufficient conditions for WGGM}
\label{sec:wggm-sufficient-conditions}

In this section, we give some first examples of sufficient conditions
for a pseudovariety to be WGGM. Further examples of WGGM
pseudovarieties are given in Sections~\ref{sec:DS} and~\ref{sec:CR}.

The next result gives somewhat mild conditions under which the
elements of a relatively free profinite semigroup which do not belong
to the minimum ideal $K$ act faithfully on (the left of)~$K$.

Suppose that the subsemigroup of~\Om AV generated by $A$ is freely
generated by~$A$. As has already been observed in
Subsection~\ref{sec:letter-cancelation}, a simple sufficient condition
for this property to hold is that \pv V contain~\pv N. We then
identify the subsemigroup of~\Om AV generated by~$A$ with~$A^+$ and
call its elements \emph{finite words}; all other elements of~\Om AV
are said to be \emph{infinite}. For each $w\in\Om AV$, denote by
$F(w)$ the set of finite words that are factors of~$w$, which we also
call the \emph{finite factors} of~$w$.

Here and in the remainder of the paper, we will also use without
further comment the property that, for a pseudovariety \pv V
containing \pv{LSl}, the finite factors of a product $xy$, with
$x,y\in\Om AV$ are the finite factors of $x$, together with the finite
factors of~$y$, together with the words of the form $x'y'$, where $x'$
is a finite suffix of~$x$ and $y'$~is a finite prefix of~$y$
\cite[Lemma~8.2]{Almeida&Volkov:2006}.

\begin{Prop}
  \label{p:lm}
  Let $A$ be a non-singleton finite set and let \pv V be a monoidal
  pseudovariety of semigroups satisfying the following conditions:
  \begin{enumerate}[(i)]
  \item\label{item:lm-1} $\pv V*\pv D=\pv V$;
  \item\label{item:lm-2} the semigroup \Om AV has content, $0$, and
    $\bar0$ functions.
  \end{enumerate}
  Let $K$ be the minimum ideal of~$\Om AV$. If $u,v\in\Om AV$ are such
  that $\lambda^K(u)=\lambda^K(v)$, then either $u$ and $v$ are equal
  or they both belong to~$K$.
\end{Prop}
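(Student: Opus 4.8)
The plan is to suppose that $\lambda^K(u)=\lambda^K(v)$ but $u\neq v$ and $u\notin K$, and derive a contradiction by showing $u$ and $v$ cannot then be separated, forcing $u=v$. The main idea is to test the equality $uw=vw$ for a cleverly chosen $w\in K$ and to exploit the de Bruijn encoding $\Phi_n^{\pv V}$ of Lemma~\ref{l:relative-conditions-for-V*Dn} and Theorem~\ref{t:Phi-vs-Green}, which is available because $\pv V*\pv D=\pv V$ forces $\pv V*\pv D_n=\pv V$ for each $n$, and $\pv V$ monoidal containing $\pv D$ (hence $\pv{Sl}$) satisfies the hypotheses. First I would fix a large enough $n$ and compute $\Phi_n^{\pv V}(uw)$ and $\Phi_n^{\pv V}(vw)$ using property~\eqref{item:PhiV-c}: these split as $\Phi_n^{\pv V}(u)\,\Phi_n^{\pv V}(\mathrm{t}_n(u)w)$ and $\Phi_n^{\pv V}(v)\,\Phi_n^{\pv V}(\mathrm{t}_n(v)w)$ respectively. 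Since $u,v\notin A_{\le n}$ (here I use that $u\notin K$ together with condition~\eqref{item:lm-1}: elements of $A_{\le n}$ would be finite words, but the relevant obstruction is that $u\neq v$ while $u$ is infinite or at least long), the function $\Phi_n^{\pv V}$ is injective on those pseudowords, so it suffices to control the two prefix factors $\Phi_n^{\pv V}(u)$, $\Phi_n^{\pv V}(v)$.

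The key leverage is the content, $0$, and $\bar0$ functions of $\Om AV$ guaranteed by condition~\eqref{item:lm-2}. Because $\Om AV$ has content and $\bar0$, the order in which generators first occur from left to right in $u$ is well determined, and so are the successive left-greedy prefixes. The idea is: if $\lambda^K(u)=\lambda^K(v)$ then $uw=vw$ for every $w\in K$; choosing $w$ whose content is all of $A$ (e.g.\ $w$ of the form $(a_1\cdots a_{|A|})^\omega$ times an idempotent of $K$, or simply any $w\in K$, since $K$'s elements have full content when $|A|\geq 2$ and $\pv V$ is monoidal), the content of $uw$ equals $A$, and a left basic factorization of $uw$ at the first occurrence of each letter is controlled purely by $u$ for the initial segment. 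Concretely, I would argue that the $0$ and $\bar 0$ functions let us recover $\bar0(uw)$, $0(uw)$, etc., and iterating, the entire sequence of first-occurrence prefixes of $u$ is recoverable from $uw$; hence from $\lambda^K(u)=\lambda^K(v)$ we get that $u$ and $v$ have the same such prefix data. Combined with right letter cancelativity of $\pv V$ (which holds by Proposition~\ref{p:cancellation}, since $\pv V$ is nontrivial, monoidal, and $\pv V*\pv D=\pv V$), peeling off letters one at a time from the right of the common prefixes should let us whittle $u$ and $v$ down until we can conclude $u=v$ outright, contradicting the assumption.

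The delicate point, which I expect to be the main obstacle, is making the informal phrase ``the prefix data of $u$ is recoverable from $uw$'' precise in the profinite (non-free) setting: unlike in a free semigroup, $u$ may be infinite, so one cannot simply read off finitely many letters and cancel. The right formulation is probably inductive over the left-greedy occurrence of a word $a_1\cdots a_r$ enumerating $A$ in order of first appearance in $u$: write $u = u_0 a_1 u_1 \cdots a_r u_r$ as the left-greedy factorization, and similarly $v = v_0 a_1' v_1 \cdots$; the equalities of $0$, $\bar 0$ applied to $uw = vw$ (valid since $uw,vw\in K$ and $K$ inherits these functions, or rather since we compare inside $\Om AV$) force $a_i = a_i'$ and $u_i = v_i$ for $i<r$ and force $c(u_r a_{r+1}\cdots)=c(v_r\cdots)$; then, since after the last new letter the remainders $u_r w$ and $v_r w$ satisfy $u_r w = v_r w$ by right cancelativity applied to the common prefix $u_0a_1\cdots a_r$, and since $w$ ranges over all of $K$ while $u_r, v_r$ have full content or empty content, one reduces to the case where the common prefix is already everything. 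At that stage $u_r$ and $v_r$ act the same way on $K$ and have been stripped of all ``positional'' information, so one final application of right letter cancelativity — using that $K w = w$ for $w\in K$ and picking $w$ appropriately idempotent — yields $u_r = v_r$ and hence $u = v$. Getting the bookkeeping of this induction to terminate correctly, and handling the boundary case where $u$ or $v$ already lies in $K$ (so the ``finite words'' identification and the first-occurrence argument degenerate), is where the real care is needed.
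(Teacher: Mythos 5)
Your proposal is incomplete in a way that matters: the content, $0$, and $\bar0$ functions applied directly on $\Om AV$ can only track the first occurrence of each \emph{letter} from left to right, so the left‑greedy factorization argument you sketch exhausts its information the moment the content of the accumulated prefix becomes all of $A$. At that point you have said nothing at all about the ``rest'' of $u$ and $v$. But there are plenty of distinct pseudowords with full content that do not lie in $K$, and your argument gives no way to distinguish them. The contradiction you promise (``whittle $u$ and $v$ down until we can conclude $u=v$'') never actually materializes, and you acknowledge as much in your last paragraph without resolving it.

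The key idea you are missing is the real purpose of the de Bruijn encoding $\Phi_n^{\pv V}$: it is not just a bookkeeping device for splitting products, but a change of alphabet that turns length‑$(n+1)$ factors of $u$ into single \emph{letters} of $\Om{A_{n+1}}{V}$. Applying the content, $0$, and $\bar0$ machinery on $\Om{A_{n+1}}{V}$ (rather than on $\Om AV$) and using Theorem~\ref{t:Phi-vs-Green} to transfer Green‑relation information back through $\Phi_n^{\pv V}$, one can detect \emph{which finite words of length $n+1$} occur as factors of $u$, and where they first occur. Varying $n$, the paper uses this to prove first that $F(u)=F(v)$ and then, with right letter cancelativity from Proposition~\ref{p:cancellation}, that $F(u)=A^+$, which is precisely the characterization of membership in the minimum ideal $K$. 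Your proof never reaches the conclusion that $u$ and $v$ must have all finite words as factors, which is the essential content of the proposition; without the de Bruijn ``change of alphabet'' step, there is no mechanism available to you that can see factors longer than a single letter.

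A secondary but correctable slip: you invoke $w$ ``of full content'' as if picking such a $w$ were the issue, when in fact every element of $K$ automatically has full content; and you need to be careful that the functions $0$, $\bar0$ live on $\Om AV$ and are not directly ``inherited by $K$'' in the way your bookkeeping suggests. The paper's proof never descends into $K$ in that way; it stays in $\Om AV$ and $\Om{A_{n+1}}{V}$ throughout.
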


\begin{proof}
  Condition~\eqref{item:lm-1} implies, in particular, that \pv V
  contains the pseudovariety~\pv N, so that the free semigroup $A^+$
  can be viewed as a subsemigroup of~\Om AV, namely as the
  subsemigroup generated by~$A$. Since $A^+$ is dense in~\Om AV, a
  necessary and sufficient condition for an element $w$ of~\Om AV to
  belong to~$K$ is that $F(w)=A^+$.
 
  Suppose that $\lambda^K(u)=\lambda^K(v)$ with $u\ne v$. If $u,v\in
  A^+$ then, for an arbitrary $w\in K$, from the equality
  $\lambda^K(u)=\lambda^K(v)$ we obtain $uw=vw$ and thus, in view of
  the hypothesis~\eqref{item:lm-1} and
  Theorem~\ref{t:word-problem-for-V*Dn}, one of $u$ and $v$ must be a
  proper prefix of the other, say $v=uav'$ for some letter $a\in A$
  and some $v'\in A^*$. Then, for $b\in A\setminus\{a\}$, $ub$ is not
  a prefix of~$v$ and so we have $ubw\ne vbw$, which contradicts the
  assumption that $\lambda^K(u)=\lambda^K(v)$. Hence, at least one of
  pseudowords $u$ and $v$ is infinite.

  We claim that $F(u)=F(v)$. Suppose that there is a finite word $s$
  that is a factor of~$u$ but not of~$v$. Let $n=|s|$. Since the
  alphabet $A$~is not a singleton, we may choose a letter %
  $a\in A\setminus\{\mathrm{t}_1(s)\}$. There is some letter $b\in A$
  such that the word $sb$ occurs in the pseudoword $ua^n$. Now, choose
  a letter $c\in A\setminus\{b\}$, which again only requires the
  assumption that $A$~is not a singleton. For an arbitrary element $w$
  of~$K$, as $ua^nscw=va^nscw$, applying $\Phi_n^\pv V$ we obtain %
  $\Phi_n^\pv V(ua^nscw)=\Phi_n^\pv V(va^nscw)$. Since $s$~is not a
  factor of~$v$, the first occurrence of the ``letter'' $sc$
  in~$\Phi_n^\pv V(va^nscw)$ must occur in the factor $\Phi_n^\pv
  V(\mathrm{t}_n(v)a^nsc)$. Moreover, note that the only occurrence of~$s$ as a
  factor of~$va^ns$ is as its suffix. Indeed, we know that it does not
  occur in~$va^n$ because it does not occur in~$v$ and the last letter
  of~$s$ is not~$a$. Thus, all its occurrences must be found
  in~$a^ns$. However, if there is a factorization with $a^ns=xsy$ with
  the word $y$~nonempty, then the number of occurrences of the letter
  $\mathrm{t}_1(s)$ in $a^ns$ is the number of its occurrences in~$s$,
  whereas in $xsy$ it occurs at least that number plus one, as it
  occurs in~$y$. In particular, we conclude that the ``letter'' $sb$
  is not a factor of the prefix of~$\Phi_n^\pv V(va^nscw)$ preceding
  the first occurrence of the ``letter'' $sc$, while the corresponding
  property fails for~$\Phi_n^\pv V(ua^nscw)$, which contradicts the
  hypothesis~\eqref{item:lm-2} in view of Theorem~\ref{t:Phi-vs-Green}
  and the hypothesis~\eqref{item:lm-1}. Hence $F(u)=F(v)$.

  Next, suppose that $s\in A^+\setminus F(u)$. Choose again $a\in
  A\setminus\{\mathrm{t}_1(s)\}$ and let $n=|s|$. For an arbitrary
  $w\in K$, the equality $\Phi_{n-1}^\pv V(ua^nsw)=\Phi_{n-1}^\pv
  V(va^nsw)$ holds. Since, as in the preceding paragraph, the first
  occurrence of the factor $s$ on $ua^ns$ and $va^ns$ is found
  precisely in the suffix position, by the
  hypothesis~\eqref{item:lm-2} we deduce that $\Phi_{n-1}^\pv
  V(ua^ns)=\Phi_{n-1}^\pv V(va^ns)$. In view of the injectivity of the
  function $\Phi_{n-1}^\pv V$ on the set %
  $\Om AV\setminus A_{\le n-1}$, given by
  Theorem~\ref{t:Phi-vs-Green}, we deduce that $ua^ns=va^ns$. By
  Proposition~\ref{p:cancellation}, it follows that $u=v$, in
  contradiction with our initial assumption. This shows that
  $F(u)=F(v)=A^+$ and, therefore, that $u$ and $v$ belong to~$K$,
  which establishes the proposition.
\end{proof}

Combining Proposition~\ref{p:lm} with Proposition~\ref{p:reduction},
respectively parts \eqref{item:reduction-1}
and~\eqref{item:reduction-2}, we obtain the following results.

\begin{Thm}
  \label{t:wggm}
  Let $A$ be a non-singleton finite set and let \pv V be a monoidal
  pseudovariety of semigroups satisfying the following conditions:
  \begin{enumerate}[(i)]
  \item\label{item:wggm-1} $\pv V*\pv D=\pv V$;
  \item\label{item:wggm-2} the semigroup \Om AV has content,
    $0$, $\bar0$, $1$, and $\bar1$ functions.
  \end{enumerate}
  Then the semigroup \Om AV is WGGM.\qed
\end{Thm}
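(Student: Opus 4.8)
The plan is to deduce the theorem directly from Proposition~\ref{p:lm} and its left-right dual. Fix distinct elements $u,v\in\Om AV$ that do not both lie in the minimum ideal~$K$; by the definition of a WGGM semigroup, it then suffices to prove that $\lambda^K(u)\ne\lambda^K(v)$ and $(u)\rho^K\ne(v)\rho^K$. The remaining clause in that definition, namely that $\omega^K(u)\ne\omega^K(v)$ whenever $u,v\in K$, is not among the things that must be checked, but it is exactly what part~\eqref{item:reduction-1} of Proposition~\ref{p:reduction} supplies: the compact completely simple semigroup~$K$ is weakly reductive, so $\omega^K$ is injective on~$K$.

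First I would note that hypotheses \eqref{item:wggm-1} and~\eqref{item:wggm-2} contain hypotheses \eqref{item:lm-1} and~\eqref{item:lm-2} of Proposition~\ref{p:lm}: condition~\eqref{item:wggm-1} is literally condition~\eqref{item:lm-1}, while condition~\eqref{item:wggm-2}, which asks for the content, $0$, $\bar0$, $1$ and~$\bar1$ functions, subsumes the content, $0$ and~$\bar0$ functions required in~\eqref{item:lm-2}. Hence, if $\lambda^K(u)=\lambda^K(v)$, Proposition~\ref{p:lm} forces either $u=v$ or $u,v\in K$, contradicting the choice of~$u$ and~$v$; so $\lambda^K(u)\ne\lambda^K(v)$. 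The inequality $(u)\rho^K\ne(v)\rho^K$ is obtained in the same way from the left-right dual of Proposition~\ref{p:lm}.

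The one point that I expect to need a line of justification is the legitimacy of invoking that dual, that is, the fact that the hypotheses of Proposition~\ref{p:lm} are symmetric under reversal. Condition~\eqref{item:wggm-2} already lists the functions $0,\bar0$ together with their duals $1,\bar1$; and, for a nontrivial monoidal pseudovariety, condition~\eqref{item:wggm-1}, $\pv V*\pv D=\pv V$, is equivalent to $\pv V*\pv{LI}=\pv V$ (as used in the proof of Proposition~\ref{p:cancellation}), a self-dual property that consequently also holds for the dual pseudovariety~$\pv V^\rho$. Equivalently, one simply applies Proposition~\ref{p:lm} verbatim to~$\pv V^\rho$, whose free profinite semigroup on~$A$ is the reverse of~$\Om AV$. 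Apart from this bookkeeping, there is no real obstacle: the entire substance of the theorem is contained in Proposition~\ref{p:lm}.
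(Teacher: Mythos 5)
Your proof is correct and follows essentially the route the paper itself takes: Theorem~\ref{t:wggm} is obtained by combining Proposition~\ref{p:lm}, its left-right dual, and Proposition~\ref{p:reduction}\eqref{item:reduction-1} (the latter only supplying the parenthetical remark in the definition of WGGM). The one step you flagged as needing justification—that the hypotheses of Proposition~\ref{p:lm} pass to the reversed pseudovariety $\pv V^\rho$—is dispatched exactly as you propose, and indeed the same dualization argument via $\pv V*\pv{LI}=\pv V$ already appears in the paper's proof of Proposition~\ref{p:cancellation}.
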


\begin{Thm}
  \label{t:lm}
  Let $A$ be a non-singleton finite set and let \pv V be a monoidal
  pseudovariety of semigroups satisfying the following conditions:
  \begin{enumerate}[(i)]
  \item\label{item:lm-1b} $\pv V*\pv D=\pv V$;
  \item\label{item:lm-2b} the semigroup \Om AV has content, $0$, and
    $\bar0$ functions;
  \item\label{item:lm-3} the minimum ideal of\/~\Om AV has plenty of
    torsion on the left.
  \end{enumerate}
  Then the semigroup \Om AV is~LM.\qed
\end{Thm}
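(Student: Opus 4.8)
The plan is to combine Proposition~\ref{p:lm} with part~\eqref{item:reduction-2} of Proposition~\ref{p:reduction}, exactly as the parenthetical phrase ``respectively parts \eqref{item:reduction-1} and~\eqref{item:reduction-2}'' already signals. Recall that by definition \Om AV is LM if the representation $\lambda^K\colon\Om AV\to\Cl T_K^\ell$ is faithful, i.e.\ injective. So I must show that $\lambda^K(u)=\lambda^K(v)$ implies $u=v$ for all $u,v\in\Om AV$.

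First I would invoke Proposition~\ref{p:lm}: hypotheses~\eqref{item:lm-1b} and~\eqref{item:lm-2b} are precisely its hypotheses~\eqref{item:lm-1} and~\eqref{item:lm-2} (and \pv V is monoidal), so if $\lambda^K(u)=\lambda^K(v)$ then either $u=v$ — in which case we are done — or both $u$ and $v$ lie in the minimum ideal~$K$. Thus it remains to treat the case $u,v\in K$. Now $K$ is a profinite completely simple semigroup, so by hypothesis~\eqref{item:lm-3} it has plenty of torsion on the left, and Proposition~\ref{p:reduction}\eqref{item:reduction-2} tells us that the canonical mapping $K\to\Cl T_K^\ell$, sending $s\mapsto\lambda_s$ (the inner left translation of~$K$ itself), is injective. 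Since for $s\in K$ the inner translation $\lambda_s$ on~$K$ coincides with $\lambda^K(s)=\lambda^K_s$ restricted to~$K$ (both send $w\mapsto sw$ for $w\in K$), the equality $\lambda^K(u)=\lambda^K(v)$ restricts to $\lambda_u=\lambda_v$ as self-maps of~$K$, whence $u=v$. This completes the argument.

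The only point requiring a word of care — and what I would flag as the one genuine thing to check rather than a triviality — is the identification of the two ``left translation'' maps: Proposition~\ref{p:reduction} is stated for a compact completely simple semigroup acting on itself, whereas $\lambda^K$ in Theorem~\ref{t:lm} is the $K$-representation of the ambient semigroup \Om AV. But these agree on the nose on the relevant domain: $\lambda^K(s)$ is by construction the left translation $w\mapsto sw$ of~$K$, and for $s\in K$ this is exactly the inner left translation $\lambda_s$ of the subsemigroup~$K$. So no real obstacle arises; the content of the theorem is entirely in Proposition~\ref{p:lm} and Proposition~\ref{p:reduction}, and the proof is a one-line assembly, which is presumably why the authors state it with a \texttt{\textbackslash qed} at the end of the statement and give the two-line justification in the displayed sentence preceding it.
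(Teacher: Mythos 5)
Your proof is correct and is exactly the assembly the paper intends, as signaled by the sentence preceding Theorems~\ref{t:wggm} and~\ref{t:lm}: Proposition~\ref{p:lm} reduces to the case $u,v\in K$, and then Proposition~\ref{p:reduction}\eqref{item:reduction-2}, together with the observation that $\lambda^K$ restricted to $K$ is the canonical map $K\to\Cl T_K^\ell$ by inner left translations, finishes it. The care you take in identifying the two notions of left translation is the right thing to note and is indeed the only point where one could stumble.
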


Note that a pseudovariety \pv V satisfying conditions
\eqref{item:lm-1} and~\eqref{item:lm-2} of Proposition~\ref{p:lm} must
contain the pseudovariety \pv K by
Theorem~\ref{t:word-problem-for-V*Dn}, and therefore also $\pv{LI}=\pv
K\vee\pv D$ (see, for instance,
\cite[Corollary~6.4.14]{Almeida:1994a}). In particular, for a
non-singleton finite set $A$, the minimum ideal $K$ of~\Om AV must be
a completely simple semigroup with uncountably many \Cl R and \Cl
L-classes. Since, as it is well known, the restriction of the natural
continuous homomorphism $\Om AV\to\Om A{}(\pv V\cap\pv G)$ to every
maximal subgroup of~$K$ is onto
\cite[Lemma~4.6.10]{Rhodes&Steinberg:2009qt}, the semigroup $K$~has
only trivial subgroups if and only if $\pv V\subseteq\pv A$. In case
$\pv V\subseteq\pv A$, the pseudovariety \pv V can only be GGM if it
is trivial.

In view of Proposition~\ref{p:Cn-properties},
we may apply Theorem~\ref{t:wggm} to obtain the following family of
examples of WGGM pseudovarieties. Except for the case of the
pseudovariety $\pv A=\bar{\pv I}$, this will be improved in
Section~\ref{sec:torsion}.

\begin{Cor}
  \label{c:barH-wggm}
  For every pseudovariety of groups \pv H, the pseudovariety %
  $\bar{\pv H}$~is WGGM.\qed
\end{Cor}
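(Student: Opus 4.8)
The plan is to obtain the statement as an immediate application of Theorem~\ref{t:wggm}. Fix a pseudovariety of groups~$\pv H$ and set $\pv V=\bar{\pv H}$. The first step is to verify the hypotheses of Theorem~\ref{t:wggm}: by Proposition~\ref{p:Cn-properties} (in the case $\pv V=\bar{\pv H}$), the pseudovariety $\bar{\pv H}$ is monoidal, it satisfies the equality $\bar{\pv H}*\pv D=\bar{\pv H}$, which is exactly condition~\eqref{item:wggm-1}, and for every finite alphabet~$A$ the semigroup $\Om A{\bar{\pv H}}$ has content, $0$, $\bar0$, $1$, and $\bar1$ functions, which is condition~\eqref{item:wggm-2}.

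The second step is simply to apply Theorem~\ref{t:wggm} to each non-singleton finite set~$A$, concluding that $\Om A{\bar{\pv H}}$ is WGGM. Since this holds for every non-singleton finite alphabet, the pseudovariety $\bar{\pv H}$ is WGGM by the very definition of a WGGM pseudovariety, which completes the argument.

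Given this structure, there is no genuine obstacle to overcome: all the substance has already been absorbed into Proposition~\ref{p:Cn-properties} and Theorem~\ref{t:wggm} (and, behind the latter, Proposition~\ref{p:lm} together with parts~\eqref{item:reduction-1}--\eqref{item:reduction-2} of Proposition~\ref{p:reduction}). The only point that warrants a moment's care is the bookkeeping of which clauses of Proposition~\ref{p:Cn-properties} supply which hypotheses of Theorem~\ref{t:wggm}; everything else is purely formal.
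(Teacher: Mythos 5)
Your proposal is correct and follows exactly the route the paper takes: Proposition~\ref{p:Cn-properties} supplies monoidality, the equation $\bar{\pv H}*\pv D=\bar{\pv H}$, and the content, $0$, $\bar0$, $1$, $\bar1$ functions, and then Theorem~\ref{t:wggm} applied to each non-singleton $A$ gives the conclusion.
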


Further examples of WGGM pseudovarieties can be obtained by combining
Theorem~\ref{t:wggm} with Corollary~\ref{c:DAm.VstarA-properties}.

\begin{Cor}
  \label{c:DAm.VstarA-wggm}
  If \pv V is a monoidal pseudovariety of semigroups, then the
  pseudovariety $\pv{DA}\malcev(\pv V*\pv A)$ is WGGM.\qed
\end{Cor}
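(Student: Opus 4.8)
The plan is to deduce this from Theorem~\ref{t:wggm} applied to the pseudovariety $\pv W=\pv{DA}\malcev(\pv V*\pv A)$, so that everything reduces to checking that $\pv W$ satisfies the hypotheses of that theorem: that $\pv W$ is monoidal with $\pv W*\pv D=\pv W$, and that, for a non-singleton finite alphabet~$A$, the semigroup $\Om AW$ has content, $0$, $\bar0$, $1$, and $\bar1$ functions. Corollary~\ref{c:DAm.VstarA-properties} already records the first two facts, namely that $\pv W$ is monoidal and that $\pv W*\pv D=\pv W$, as well as the additional equality $\pv B\malcev\pv W=\pv W$, which is what I would use to obtain the remaining hypothesis.

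First I would note that $\pv W$ contains~$\pv{Sl}$: indeed $\pv{Sl}\subseteq\pv{DA}$ (semilattices being $\Cl J$-trivial, hence aperiodic, with trivial regular $\Cl D$-classes) and $\pv{DA}\subseteq\pv{DA}\malcev(\pv V*\pv A)=\pv W$ by monotonicity of the Mal'cev product. Since $\pv{LZ},\pv{RZ}\subseteq\pv B$, the equality $\pv B\malcev\pv W=\pv W$ from Corollary~\ref{c:DAm.VstarA-properties} is exactly one of the ``obvious sufficient conditions'' discussed in Subsection~\ref{sec:content-0-1} for both $\pv{LZ}\malcev\pv W=\pv W$ and $\pv{RZ}\malcev\pv W=\pv W$ to hold, and these in turn make $\pv W$ closed under taking right and left Rhodes expansions (cut down to generators). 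By~\cite[Proposition~3.5]{Almeida&Trotter:1999a} and its dual, a pseudovariety containing~$\pv{Sl}$ and closed under right, respectively left, Rhodes expansions has content, $0$ and $\bar0$, respectively $1$ and $\bar1$, functions; hence $\pv W$ has all five functions, and in particular so does $\Om AW$.

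Both hypotheses of Theorem~\ref{t:wggm} are thus satisfied by~$\pv W$, so that theorem yields that $\Om AW$ is WGGM for every non-singleton finite set~$A$, which is precisely the assertion that $\pv W$ is WGGM. Essentially all the work is already carried by Theorem~\ref{t:wggm} (through Propositions~\ref{p:lm} and~\ref{p:reduction}) and by Corollary~\ref{c:DAm.VstarA-properties}; here the only point requiring a moment's thought is the passage from $\pv B\malcev\pv W=\pv W$, through closure of~$\pv W$ under one-sided Rhodes expansions, to the existence of the content, $0$, $\bar0$, $1$ and $\bar1$ functions, and I do not foresee any genuine difficulty there. It may also be worth recalling explicitly that $\pv W\supseteq\pv A\supseteq\pv N$, so that $A^+$ embeds into $\Om AW$, as is used tacitly throughout Section~\ref{sec:wggm-sufficient-conditions}.
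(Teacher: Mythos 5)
Your proof is correct and follows exactly the route the paper intends: combine Theorem~\ref{t:wggm} with Corollary~\ref{c:DAm.VstarA-properties}, passing from $\pv B\malcev\pv W=\pv W$ to closure under one-sided Rhodes expansions and thence (via the Almeida--Trotter criterion recalled in Subsection~\ref{sec:content-0-1}) to the existence of the content, $0$, $\bar0$, $1$, and $\bar1$ functions. You have simply written out the bridging details that the paper leaves to the reader when it states this corollary with a \qed.
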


Both Corollary~\ref{c:barH-wggm} and the following result may be
viewed as particular cases of Corollary~\ref{c:DAm.VstarA-wggm}.

\begin{Cor}
  \label{c:wggm-Cn}
  For every pseudovariety of groups \pv H, the pseudovarieties $\pv
  C_n\cap\bar{\pv H}$~are WGGM.\qed
\end{Cor}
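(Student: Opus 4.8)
The plan is to deduce Corollary~\ref{c:wggm-Cn} directly from the more general Corollary~\ref{c:DAm.VstarA-wggm} by exhibiting each pseudovariety $\pv C_n\cap\bar{\pv H}$ in the form $\pv{DA}\malcev(\pv V*\pv A)$ for a suitable monoidal pseudovariety~$\pv V$. The natural candidate is $\pv V=\pv C_n\cap\bar{\pv H}$ itself, so the first step is to show that
\[
  \pv{DA}\malcev\bigl((\pv C_n\cap\bar{\pv H})*\pv A\bigr)
  =\pv C_n\cap\bar{\pv H}.
\]
Since the right-hand side is obviously contained in the left-hand side (because $\pv C_n\cap\bar{\pv H}$ contains $\pv{DA}$, being closed under concatenation and containing a nontrivial monoid by Proposition~\ref{p:Cn-properties}, and because $\pv V\subseteq\pv V*\pv A$ and $\pv V\subseteq\pv{DA}\malcev\pv V$), the work lies in the reverse inclusion.

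First I would check that $\pv W=\pv C_n\cap\bar{\pv H}$ is monoidal. By Proposition~\ref{p:Cn-properties}, $\pv C_n$ is monoidal, and $\bar{\pv H}$ is monoidal by the discussion in Subsection~\ref{sec:pvs-and-operations}; since the intersection of monoidal pseudovarieties is monoidal, $\pv W$ is monoidal. Next I would establish $\pv W*\pv A=\pv W$. Here I would use the factorizations $\pv C_n*\pv A=\pv C_n$ (which follows from $\pv C_{n+1}=\pv C_n*\pv G*\pv A$ for $n\ge1$, together with $\pv C_0=\pv A=\pv A*\pv A$; more directly, Proposition~\ref{p:Cn-properties} gives $\pv A\malcev\pv C_n=\pv C_n$, and $\pv C_n*\pv A\subseteq\pv A\malcev\pv C_n$ since $\pv A*\pv B\subseteq\pv A\malcev\pv B$ always holds, while the reverse inclusion $\pv C_n\subseteq\pv C_n*\pv A$ is trivial) and $\bar{\pv H}*\pv A=\bar{\pv H}$ (noted explicitly in the excerpt). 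The intersection case $(\pv C_n\cap\bar{\pv H})*\pv A$ requires a short argument: one has $(\pv C_n\cap\bar{\pv H})*\pv A\subseteq(\pv C_n*\pv A)\cap(\bar{\pv H}*\pv A)=\pv C_n\cap\bar{\pv H}$, using that $*$ is monotone in its first argument. Finally, the same two displayed equalities combine to give $\pv A\malcev\pv W=\pv W$: indeed $\pv A\malcev(\pv C_n\cap\bar{\pv H})\subseteq(\pv A\malcev\pv C_n)\cap(\pv A\malcev\bar{\pv H})=\pv C_n\cap\bar{\pv H}$, again because $\pv A\malcev\bar{\pv H}=\bar{\pv H}$ and $\pv A\malcev\pv C_n=\pv C_n$ by Proposition~\ref{p:Cn-properties}.

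With these in hand, the inclusion $\pv{DA}\malcev\bigl((\pv C_n\cap\bar{\pv H})*\pv A\bigr)\subseteq\pv C_n\cap\bar{\pv H}$ follows by noting that $\pv{DA}\subseteq\pv A$ (aperiodicity of $\pv{DA}$), so that
\[
  \pv{DA}\malcev\bigl((\pv C_n\cap\bar{\pv H})*\pv A\bigr)
  =\pv{DA}\malcev(\pv C_n\cap\bar{\pv H})
  \subseteq\pv A\malcev(\pv C_n\cap\bar{\pv H})
  =\pv C_n\cap\bar{\pv H}.
\]
Thus $\pv C_n\cap\bar{\pv H}$ is exactly of the form covered by Corollary~\ref{c:DAm.VstarA-wggm} with $\pv V=\pv C_n\cap\bar{\pv H}$, and the WGGM conclusion is immediate. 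I expect the only genuinely delicate point to be the bookkeeping around intersections interacting with the operators $*\pv A$ and $\pv A\malcev\variable$: these operators are not in general compatible with binary meets, so one must lean on the specific facts $\pv A\malcev\pv C_n=\pv C_n$, $\pv A\malcev\bar{\pv H}=\bar{\pv H}$, $\pv C_n*\pv A=\pv C_n$, $\bar{\pv H}*\pv A=\bar{\pv H}$ supplied in the excerpt, and use only the one-sided containments $\pv U\malcev(\pv V_1\cap\pv V_2)\subseteq(\pv U\malcev\pv V_1)\cap(\pv U\malcev\pv V_2)$ and $(\pv V_1\cap\pv V_2)*\pv A\subseteq(\pv V_1*\pv A)\cap(\pv V_2*\pv A)$, which is all that the direction of the argument actually needs. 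The statement about $\bar{\pv H}$ being WGGM (Corollary~\ref{c:barH-wggm}) is then recovered as the special case $n$ large enough that $\pv C_n\supseteq\bar{\pv H}$, or alternatively directly, consistent with the remark that both earlier corollaries are instances of Corollary~\ref{c:DAm.VstarA-wggm}.
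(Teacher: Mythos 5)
Your proof is correct and takes essentially the same route as the paper: the paper explicitly notes that Corollary~\ref{c:wggm-Cn} is a particular case of Corollary~\ref{c:DAm.VstarA-wggm}, and your choice $\pv V=\pv C_n\cap\bar{\pv H}$, together with the verifications that this $\pv V$ is monoidal and that $\pv{DA}\malcev(\pv V*\pv A)=\pv V$ (via the fixed-point identities $\pv V*\pv A=\pv V$ and $\pv A\malcev\pv V=\pv V$, each handled through the meet by the one-sided inclusions you quote), is exactly the intended instantiation. One small slip: the parenthetical alternative justification of $\pv C_n*\pv A\subseteq\pv A\malcev\pv C_n$ via ``$\pv A*\pv B\subseteq\pv A\malcev\pv B$ always holds'' does not actually yield that inclusion (under either reading of the symbols it lands on $\pv C_n*\pv A\subseteq\pv C_n\malcev\pv A$ or on $\pv A*\pv C_n\subseteq\pv A\malcev\pv C_n$, not what is needed); fortunately your primary derivation of $\pv C_n*\pv A=\pv C_n$ directly from $\pv C_{n+1}=\pv C_n*\pv G*\pv A$ and $\pv A*\pv A=\pv A$ is sound, so this does not affect the argument.
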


The families of examples in Corollaries~\ref{c:barH-wggm}
and~\ref{c:wggm-Cn}, along with many other examples, can also be
obtained by applying the next theorem.

\begin{Thm}
  \label{t:wggm-equidiv}
  Let \pv V be an equidivisible pseudovariety containing \pv{LSl}.
  Then \pv V~is WGGM.
\end{Thm}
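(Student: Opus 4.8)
The plan is to show directly that the left representation $\lambda^K$ of $S = \Om AV$ is faithful, and dually for $\rho^K$, so that $S$ is GGM and in particular WGGM. Since $S$ is equidivisible and contains $\pv{Sl}$ (as $\pv{LSl}\supseteq\pv{Sl}$), it has $0$, $\bar0$, $1$, and $\bar1$ functions, as noted in Subsection~\ref{sec:equidivisibility}; and containing $\pv{LSl}$ it also has the finite-factor product description recalled before Proposition~\ref{p:lm}. The key obstacle is that we cannot invoke Proposition~\ref{p:lm} directly, because equidivisibility does not obviously give $\pv V*\pv D=\pv V$; so we must re-do the heart of that argument using equidivisibility in place of the de Bruijn encoding.

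First I would reduce, exactly as in the proof of Proposition~\ref{p:lm}, to the following claim: if $u,v\in S$ satisfy $\lambda^K(u)=\lambda^K(v)$, then either $u=v$ or $F(u)=F(v)=A^+$ (the latter forcing $u,v\in K$). The case $u,v\in A^+$ is handled as before: picking $w\in K$, the equality $uw=vw$ together with equidivisibility of $S$ forces one of $u,v$ to be a prefix of the other, and then testing against a letter $b$ distinct from the relevant continuation letter yields a contradiction. For the general case, suppose some finite word $s$ of length $n$ lies in $F(u)\setminus F(v)$. As in Proposition~\ref{p:lm}, choose $a\in A\setminus\{\mathrm t_1(s)\}$, a letter $b$ with $sb$ occurring in $ua^n$, and a letter $c\neq b$; then for $w\in K$ we have $ua^nscw=va^nscw$. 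The crucial point — which replaces the $\Phi_n^\pv V$ machinery — is that in $va^ns$ the only occurrence of $s$ is as a suffix (the same counting argument with the letter $\mathrm t_1(s)$ works verbatim, since it uses only that $A^+$ embeds, guaranteed by $\pv N\subseteq\pv{LSl}\subseteq\pv V$), whereas in $ua^ns$ the word $sb$ occurs. Now apply equidivisibility repeatedly to the equation $ua^nscw=va^nscw$ to factor out a common left part up to the first occurrence of the marked block determined by $sc$: since the $0$ and $\bar0$ functions and the finite-factor description let us locate first occurrences of finite factors uniquely, matching the positions of the first occurrence of $sc$ on the two sides yields that the prefixes before that occurrence must have the same finite factors; but $sb\in F$ of the left prefix and $sb\notin F$ of the right prefix, a contradiction. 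Hence $F(u)=F(v)$.

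Next, to rule out $s\in A^+\setminus F(u)$ (hence establishing $F(u)=F(v)=A^+$), I would argue as in the last paragraph of Proposition~\ref{p:lm}, again replacing the injectivity of $\Phi_{n-1}^\pv V$ and Proposition~\ref{p:cancellation} by equidivisibility together with the $0,\bar0$ functions: from $ua^nsw=va^nsw$ for $w\in K$, and the fact that the first occurrence of $s$ on each side is precisely the suffix of $ua^ns$, respectively $va^ns$, equidivisibility peels off the matching suffix $sw$ (and then the block $a^n$), giving $u=v$, contrary to hypothesis. This shows $u,v\in K$. Having established that $\lambda^K$ separates all elements outside $K$ from each other and from $K$, and being faithful on $K$ since $K$ is completely simple, we conclude $\lambda^K$ is injective, i.e.\ $S$ is LM; the dual argument (using $1,\bar1$ and the left-right dual of equidivisibility, which is immediate) gives RM. Therefore $\Om AV$ is GGM for every non-singleton $A$, in particular WGGM, and $\pv V$ is WGGM. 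The main obstacle is organizing the equidivisibility applications cleanly so that the ``first occurrence of a finite factor'' is genuinely pinned down on both sides — here the $0,\bar0,1,\bar1$ functions and the $\pv{LSl}$ finite-factor description do the work that $\Phi_n^\pv V$ and Theorem~\ref{t:Phi-vs-Green} did in Proposition~\ref{p:lm}.
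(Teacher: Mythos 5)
Your proposal has two problems: one fatal wrong claim at the end, and one significant unresolved gap in the middle.

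The fatal problem is the final step: ``being faithful on $K$ since $K$ is completely simple, we conclude $\lambda^K$ is injective.''  This is false.  A completely simple semigroup is only \emph{weakly} reductive; the pair $\omega^K=(\lambda^K,\rho^K)$ is faithful on $K$, but $\lambda^K$ alone need not be.  A rectangular band with two $\Cl R$-classes already gives a counterexample, and Proposition~\ref{p:reduction}\eqref{item:reduction-2} shows that faithfulness of $\lambda^K$ on $K$ is exactly equivalent to plenty of torsion on the left, which is precisely what the paper does \emph{not} have under mere equidivisibility — that is the whole reason Theorem~\ref{t:wggm-equidiv} asserts only WGGM, and why Section~\ref{sec:torsion} exists.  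If the intermediate lemma you aim for (``$\lambda^K(u)=\lambda^K(v)$ and $u\ne v$ implies $u,v\in K$'') were nailed down, you would indeed get WGGM from it and its dual together with weak reductivity of $K$; you would \emph{not} get GGM or LM.

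The unresolved gap is exactly the one you flag yourself.  Equidivisibility only matches a split $pq=rs$ by producing a $w$ with $p=rw$ or $r=pw$; it does not, without extra work, locate ``the first occurrence of a finite factor'' on both sides.  The paper handles this by explicitly replacing $s$ with $bsba^{|s|+2}$ so that $s$ has no nontrivial self-overlap, then using the clopen ideal $\overline{A^*sA^*}$ (this is where $\pv{LSl}$ enters) to write $u=\lim x_n s y_n$ with $s\notin F(x_n)$, and finally applies equidivisibility once to $xsyscw=vscw$ to force a contradiction.  Your sketch keeps the letter-counting choice $a\ne\mathrm t_1(s)$ from Proposition~\ref{p:lm} but never arranges self-non-overlap of $s$, never uses the clopen decomposition $\overline{A^*sA^*}$, and replaces the $\Phi_n^{\pv V}$ bookkeeping with an unsubstantiated ``apply equidivisibility repeatedly to factor out a common left part''; the $0,\bar0$ functions locate first occurrences of \emph{letters} in the content, not of finite words, so they do not do the work you claim.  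In short: the paper's proof is a direct, self-contained equidivisibility argument built around a single self-non-overlapping witness word; your proposal is an attempt to port the $\Phi_n$-based proof of Proposition~\ref{p:lm}, and the port is neither completed nor correctly concluded.
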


\begin{proof}
  The proof is similar to that of Proposition~\ref{p:lm}. Let $A$~be a
  non-singleton alphabet. We consider distinct elements $u$ and~$v$
  of~\Om AV, not both in the minimum ideal $K$, and assume that
  $\lambda(u)=\lambda(v)$. Since membership in~$K$ is characterized by
  having all finite words as factors, there is some word $s\in A^+$
  that is not a factor of at least one of $u$ and~$v$. Without loss of
  generality, we may as well assume that $s\notin F(v)$. Moreover,
  since every word containing $s$ as factor also has the same
  property, we may replace $s$ by $bsba^{|s|+2}$, where $a$ and $b$
  are distinct letters from~$A$, thereby guaranteeing the additional
  property that $s$~has no nontrivial overlap with itself. For the
  remainder of the proof, $w$ denotes an arbitrary element of~$K$.

  Suppose first that $s$ is also not a factor of~$u$. Since
  $\lambda(u)=\lambda(v)$, we deduce that $usw=vsw$. By
  equidivisibility, the $s$ on the left must match that on the right,
  so that $u=v$, in contradiction with the initial assumption. Hence,
  $s\in F(u)$ and we may assume that $u\in K$.

  Since \pv V contains~\pv{LSl}, the ideal $(\Om AV)^1s(\Om
  AV)^1=\overline{A^*sA^*}$ is a clopen subset of~\Om AV. Taking also
  into account that $(\Om AV)^1$~is compact, it follows that there are
  convergent sequences of words $(x_n)_n$ and $(y_n)_n$ such that
  $u=\lim x_nsy_n$ and $s$~is not a factor of~$x_n$. Let $x=\lim x_n$
  and $y=\lim y_n$. Since $u\in K$, we must have $y\ne1$. Choose $c\in
  A\setminus\{\mathrm{i}_1(y)\}$. From the equality
  $\lambda(u)=\lambda(v)$, we obtain $xsyscw=uscw=vscw$. By
  equidivisibility, the first indicated occurrences of $s$ in $xsyscw$
  and $vscw$ must match, and thus $c$ should be the first letter
  of~$y$, which it is not. This contradiction completes the proof of
  the theorem.
\end{proof}

Taking into account the discussion in
Subsection~\ref{sec:equidivisibility}, we deduce the following result.

\begin{Cor}
  \label{c:wggm-AmV}
  Every pseudovariety closed under concatenation is WGGM.\qed
\end{Cor}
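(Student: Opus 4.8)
The plan is to derive Corollary~\ref{c:wggm-AmV} directly from Theorem~\ref{t:wggm-equidiv}, so all that is needed is to check that the hypotheses of that theorem are satisfied by any pseudovariety closed under concatenation. Let \pv V be such a pseudovariety. First I would recall, from Subsection~\ref{sec:equidivisibility}, that \pv V being closed under concatenation means $\pv A\malcev\pv V=\pv V$, and that by \cite[Lemma~4.8]{Almeida&ACosta:2007a} every pseudovariety closed under concatenation is equidivisible; thus \pv V~is equidivisible, which is one of the two hypotheses of Theorem~\ref{t:wggm-equidiv}.

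It remains to verify that \pv V contains \pv{LSl}. Here I would argue as follows. Since $\pv A\malcev\pv V=\pv V$ and \pv{Sl}\ is a nontrivial aperiodic monoid, the pseudovariety \pv V contains a nontrivial semigroup whenever \pv V is itself nontrivial; the degenerate case $\pv V=\pv I$ may be treated separately, but in that case $\pv A\malcev\pv I=\pv A\neq\pv I$, so the only pseudovariety closed under concatenation that fails to contain a nontrivial monoid does not exist — more carefully, $\pv A\malcev\pv V=\pv V$ forces $\pv A\subseteq\pv V$, since $\pv A=\pv A\malcev\pv I\subseteq\pv A\malcev\pv V=\pv V$. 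In particular $\pv{Sl}\subseteq\pv A\subseteq\pv V$. Now $\pv{LSl}$ is generated by $\pv{Sl}$ together with the locally trivial semigroups; more directly, one has $\pv{LSl}=\pv{Sl}\malcev\pv{LI}\subseteq\pv A\malcev\pv A=\pv A\subseteq\pv V$ using that $\pv{LI}\subseteq\pv A$ and that $\pv A$ is closed under Mal'cev product on both sides (the equality $\pv A\malcev\pv A=\pv A$ is among the examples listed in Subsection~\ref{sec:pvs-and-operations}). Hence $\pv{LSl}\subseteq\pv V$.

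With both hypotheses of Theorem~\ref{t:wggm-equidiv} in place — equidivisibility of \pv V and the containment $\pv{LSl}\subseteq\pv V$ — that theorem yields at once that \pv V~is WGGM, completing the proof. The only point that requires any care is the containment $\pv{LSl}\subseteq\pv V$, since one must be sure that $\pv A\malcev\pv V=\pv V$ really forces $\pv A\subseteq\pv V$ (which it does, via $\pv A=\pv A\malcev\pv I$) and that $\pv{LSl}$ is aperiodic, i.e.\ $\pv{LSl}\subseteq\pv A$; the latter is immediate from the defining pseudoidentity $\op x^{\omega+1}=x^\omega\cl$ of \pv A, which is satisfied by every local semilattice. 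Everything else is a direct appeal to results already established.

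\begin{proof}
  If \pv V is closed under concatenation, then $\pv A\malcev\pv V=\pv V$ by the discussion in Subsection~\ref{sec:equidivisibility}, and \pv V is equidivisible by \cite[Lemma~4.8]{Almeida&ACosta:2007a}. Moreover, from $\pv A=\pv A\malcev\pv I\subseteq\pv A\malcev\pv V=\pv V$ we deduce $\pv A\subseteq\pv V$, and since $\pv{LSl}$ is aperiodic (every local semilattice satisfies $\op x^{\omega+1}=x^\omega\cl$), we have $\pv{LSl}\subseteq\pv A\subseteq\pv V$. Thus \pv V~satisfies the hypotheses of Theorem~\ref{t:wggm-equidiv}, and therefore \pv V~is WGGM.
\end{proof}
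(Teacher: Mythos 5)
Your proof is correct and follows exactly the route the paper intends: closure under concatenation gives equidivisibility (via \cite[Lemma~4.8]{Almeida&ACosta:2007a}) and $\pv A\malcev\pv V=\pv V$ forces $\pv A\subseteq\pv V$, hence $\pv{LSl}\subseteq\pv V$, so Theorem~\ref{t:wggm-equidiv} applies. The paper leaves these verifications implicit (``taking into account the discussion in Subsection~\ref{sec:equidivisibility}''); you have merely spelled them out, and the details you supply (in particular $\pv{LSl}\subseteq\pv A$ via $\op x^{\omega+1}=x^\omega\cl$) are sound.
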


Since the pseudovarieties of the form $\bar{\pv H}$ and $\pv C_n$ are
closed under concatenation, Corollaries \ref{c:barH-wggm}
and~\ref{c:wggm-Cn} may also be obtained as particular cases of
Corollary~\ref{c:wggm-AmV}.

\section{Torsion}
\label{sec:torsion}

Our goal is to prove that certain important pseudovarieites of
semigroups are GGM. For this purpose, we want to apply
Theorem~\ref{t:lm}. While property~\eqref{item:lm-1b} of
Theorem~\ref{t:lm} is already formulated in terms of closure
conditions on the pseudovariety \pv V, properties~\eqref{item:lm-2b}
and~\eqref{item:lm-3} are structural properties of the semigroup \Om
AV, which renders the application of Theorem~\ref{t:lm} difficult. For
property~\eqref{item:lm-2b}, this is not so serious, since we have
already indicated mild conditions that imply it. We proceed to
establish sufficient conditions for property~\eqref{item:lm-3} to
hold, which will allow us to show that many pseudovarieties of
interest satisfy it.

A basic tool to achieve our aim is the semigroup construction
presented in Subsection~\ref{sec:construction}, which in the school of
John Rhodes is known as the \emph{synthesis} construction, which is
used as a tool to build arbitrary (finite or infinite) semigroups
essentially from groups \cite{Rhodes&Allen:1973,Rhodes:1986}. The name
refers to the synthesis of the Rees matrix semigroup construction with
the Krohn-Rhodes Prime Decomposition Theory. In the \emph{synthesis}
theory, the top component in our construction (the semigroup $S$) is
taken to be a group and the other component (the semigroup $T$) grows
successively by the iteration of the construction. In the present
paper, it is rather that other component which plays a special role,
being taken from an atom in the lattice of pseudovarieties of
semigroups, while the top component may be chosen arbitrarily in the
pseudovariety. This construction has recently been used
in~\cite{Almeida&Klima:2011a}, also in connection with irreducibility
properties, and in~\cite{Diekert&Kufleitner&Weil:2011} in a rather
different context.

\subsection{A semigroup construction}
\label{sec:construction}

We follow closely the introduction of the construction given
in~\cite{Almeida&Klima:2011a}. Let $S$ and $T$ be semigroups and let
$f:S^1\to T^1$ be an arbitrary function. The set
$$M(S,T,f)=S\uplus S^1\times T^1\times S^1$$
is a semigroup for the multiplication defined by the following
formulas for all $s,s'\in S$, $s_i,s'_i\in S^1$, $t,t'\in T^1$:
\begin{align*}
  s\cdot s' &= ss' \\
  s\cdot (s_1,t,s_2) &= (ss_1,t,s_2) \\
  (s_1,t,s_2)\cdot s &= (s_1,t,s_2s) \\
  (s_1,t,s_2)\cdot (s'_1,t',s'_2) &= (s_1,tf(s_2s'_1)t',s'_2).
\end{align*}

Given two pseudovarieties of semigroups \pv U and \pv V, we denote by
$\pv U\bullet\pv V$ the pseudovariety generated by all semigroups of
the form $M(S,T,f)$, with $S\in\pv U$ and $T\in\pv V$. As $S$ is a
subsemigroup of $M(S,T,f)$, \pv U is contained in $\pv U\bullet\pv V$.
On the other hand, taking $S=\{1\}$ and $f(1)=1\in T^1$, we obtain a
semigroup $M(S,T,f)$ whose subsemigroup $S^1\times T\times S^1$ is
isomorphic with~$T$, whence \pv V is also contained in $\pv
U\bullet\pv V$.

As observed in \cite[Lemma~3.1]{Almeida&Klima:2011a}, all subgroups
of~$M(S,T,f)$ are isomorphic to subgroups of either $S$ or~$T$.
The following is an immediate corollary of this observation.

\begin{Cor}
  \label{c:closure-under-Rees-matrix-extensions}
  The equation $\bar{\pv H}\bullet\bar{\pv H}=\bar{\pv H}$ holds for
  every pseudovariety of groups \pv H.\qed
\end{Cor}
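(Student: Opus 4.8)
The statement to prove is Corollary~\ref{c:closure-under-Rees-matrix-extensions}: that $\bar{\pv H}\bullet\bar{\pv H}=\bar{\pv H}$ for every pseudovariety of groups~\pv H. The plan is to verify the two inclusions separately. The inclusion $\bar{\pv H}\subseteq\bar{\pv H}\bullet\bar{\pv H}$ is already recorded in the paragraph preceding the statement (since $S$ embeds in $M(S,T,f)$, taking $T$ trivial suffices), so the content lies entirely in the reverse inclusion $\bar{\pv H}\bullet\bar{\pv H}\subseteq\bar{\pv H}$.

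For the reverse inclusion, recall that $\bar{\pv H}$ is by definition the pseudovariety of all finite semigroups all of whose subgroups lie in~\pv H, and that pseudovarieties of this form are closed under taking subsemigroups, quotients, and finite direct products. Since $\bar{\pv H}\bullet\bar{\pv H}$ is \emph{generated} by the semigroups $M(S,T,f)$ with $S,T\in\bar{\pv H}$, and $\bar{\pv H}$ is closed under the pseudovariety operations, it suffices to show that each such $M(S,T,f)$ itself belongs to $\bar{\pv H}$ — that is, that every subgroup of $M(S,T,f)$ lies in~\pv H. But this is precisely what is handed to us: the observation quoted from \cite[Lemma~3.1]{Almeida&Klima:2011a} says that every subgroup of $M(S,T,f)$ is isomorphic to a subgroup of either $S$ or~$T$. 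Since $S,T\in\bar{\pv H}$, all their subgroups lie in~\pv H, hence so do all subgroups of $M(S,T,f)$, and therefore $M(S,T,f)\in\bar{\pv H}$. Taking the generated pseudovariety, $\bar{\pv H}\bullet\bar{\pv H}\subseteq\bar{\pv H}$.

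Combining the two inclusions gives the desired equality. There is essentially no obstacle here: the corollary is a direct bookkeeping consequence of the cited subgroup lemma together with the definition of $\bar{\pv H}$, which is exactly why it is stated as a corollary rather than a theorem. If I were writing this out in full, the only point requiring a word of care is to note explicitly that $M(S,T,f)$ is a \emph{finite} semigroup when $S$ and $T$ are finite (immediate from the description of its underlying set as $S\uplus S^1\times T^1\times S^1$), so that it is a legitimate candidate for membership in a pseudovariety, and that closure of $\bar{\pv H}$ under the class operators is what lets us pass from the generators to the generated pseudovariety.
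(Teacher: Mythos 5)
Your proof is correct and is exactly the argument the paper intends: the corollary is stated as an immediate consequence of the quoted subgroup lemma, and you have unwound that implication in the only reasonable way, checking both inclusions and using closure of~$\bar{\pv H}$ under the pseudovariety operations to pass from the generators $M(S,T,f)$ to the generated pseudovariety. Nothing is missing.
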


Here are a few other simple yet useful observations.

\begin{Prop}
  \label{p:bullet-group}
  Let \pv H be a pseudovariety of groups. If\/ \pv V is a
  pseudovariety contained in $\pv{DS}\cap\bar{\pv H}$, respectively
  $\pv{CR}\cap\bar{\pv H}$, then so is~$\pv V\bullet\pv H$.
\end{Prop}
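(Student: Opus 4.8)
The plan is to analyze the subgroups and the regular $\Cl D$-class structure of $M(S,T,f)$ when $S\in\pv V$ with $\pv V\subseteq\pv{DS}\cap\bar{\pv H}$ (resp.\ $\pv V\subseteq\pv{CR}\cap\bar{\pv H}$) and $T\in\pv H$. First I would recall from \cite[Lemma~3.1]{Almeida&Klima:2011a} that every subgroup of $M(S,T,f)$ embeds into a subgroup of $S$ or of $T$; since $S\in\bar{\pv H}$ and $T\in\pv H$, all such subgroups lie in $\pv H$, so $M(S,T,f)\in\bar{\pv H}$ regardless of which case we are in. This reduces both statements to showing that $M(S,T,f)$ lies in $\pv{DS}$ (resp.\ $\pv{CR}$) whenever $S$ does.

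For the $\pv{DS}$ case, I would identify the $\Cl D$-classes of $M(S,T,f)$. The subset $S$ is an ideal, and on it the $\Cl D$-relation is that of $S$, so those $\Cl D$-classes are regular subsemigroups exactly when the corresponding ones in $S$ are; since $S\in\pv{DS}$, each regular $\Cl D$-class of $S$ is a subsemigroup. The remaining elements lie in $S^1\times T^1\times S^1$, and one computes that two such elements are $\Cl D$-related precisely according to their $S^1$-coordinates (in the Green sense of $S^1$) together with the $\Cl D$-class of the middle $T^1$-coordinate being maximal; a regular $\Cl D$-class outside $S$ forces the middle coordinate to be regular (hence, with $T$ a group, to lie in the group part) and then the product formula $(s_1,tf(s_2s'_1)t',s'_2)$ shows the class is closed under multiplication, using that $S\in\pv{DS}$ to control the $f(s_2s'_1)$ terms when the $S^1$-coordinates collapse into a common regular $\Cl D$-class of $S$. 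Thus every regular $\Cl D$-class of $M(S,T,f)$ is a subsemigroup, i.e.\ $M(S,T,f)\in\pv{DS}$. Combined with the subgroup computation, $\pv V\bullet\pv H\subseteq\pv{DS}\cap\bar{\pv H}$.

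For the $\pv{CR}$ case, I would instead verify directly that $M(S,T,f)$ is completely regular, i.e.\ a union of groups, equivalently that it satisfies $\op x^{\omega+1}=x\cl$. Elements of $S$ are completely regular since $S\in\pv{CR}$. For an element $x=(s_1,t,s_2)$, compute $x^2=(s_1,tf(s_2s_1)t,s_2)$ and more generally the powers $x^n$, whose middle coordinate is an alternating product in $T$ of the $t$'s and the idempotent-powers of $f(s_2s_1)$; since $S\in\pv{CR}$ the element $s_2s_1$ of $S$ is completely regular so $f(s_2s_1)\in T$ behaves well, and since $T$ is a group the whole middle coordinate lies in $T$ and the sequence $x^n$ is eventually periodic with $x^{\omega+1}=x$. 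Hence $M(S,T,f)\in\pv{CR}$, and with the subgroup bound we conclude $\pv V\bullet\pv H\subseteq\pv{CR}\cap\bar{\pv H}$.

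The main obstacle I expect is the bookkeeping in the $\pv{DS}$ case: one must show that when two triples $(s_1,t,s_2)$ and $(s'_1,t',s'_2)$ lie in the same regular $\Cl D$-class of $M(S,T,f)$, their product again does, and this hinges on showing that the ``correction factor'' $f(s_2s'_1)$ cannot push the middle coordinate out of the group part of $T$ — which is automatic here because $T$ is a group, so in fact the genuine content of the argument is the Green-relation analysis of the triples and the reduction of the $S^1$-coordinates to a single regular $\Cl D$-class of $S$, where $S\in\pv{DS}$ is used. A cleaner route, if available, is to exhibit $M(S,T,f)$ (modulo its ideal $S$) as a quotient of a Rees matrix semigroup over $T$ with structure group in $\pv H$, glued to $S$; then $\pv{DS}$-membership follows from the standard closure of $\pv{DS}$ under ideal extensions by completely simple semigroups together with $S\in\pv{DS}$.
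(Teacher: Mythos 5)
Your plan has the right general shape but contains a genuine error in describing the structure of $M(S,T,f)$: you assert that ``the subset $S$ is an ideal.''  It is not.  From the multiplication rules, $s\cdot(s_1,t,s_2)=(ss_1,t,s_2)$ lands in $S^1\times T^1\times S^1$, so $S$ is merely a complementary subsemigroup, while $S^1\times T^1\times S^1$ is the ideal.  Your last paragraph repeats the error (``modulo its ideal $S$'').  The subsequent description of Green's relations on the triples is also off: you try to track $\Cl D$-classes via the $S^1$-coordinates and the ``maximality'' of the middle $T^1$-coordinate, but once $T=G$ is a group, $T^1=G$ and the entire ideal $S^1\times G\times S^1$ is a single completely simple $\Cl D$-class --- it is exactly the Rees matrix semigroup $\Cl M(S^1,G,S^1;P)$ with $P(s_2,s_1')=f(s_2s_1')$, as the multiplication formula shows.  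This is the crux the paper's proof uses: $M(S,G,f)$ is the disjoint union of the subsemigroup $S$ with a completely simple ideal whose maximal subgroups are copies of $G$.  All three containments ($\bar{\pv H}$, $\pv{DS}$, $\pv{CR}$) then fall out at once: the subgroups are those of $S$ or of $G$, the regular $\Cl D$-classes of $M(S,G,f)$ inside $S$ coincide with those of $S$ (because the complement of $S$ is an ideal, so $M^1xM^1\cap S=S^1xS^1$ for $x\in S$), and the ideal is a single regular $\Cl D$-class that is itself a subsemigroup.

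Your $\pv{CR}$ paragraph, by contrast, is essentially correct and does not depend on the wrong ideal claim: you directly compute $x^n=(s_1,(tf(s_2s_1))^{n-1}t,s_2)$ for $x=(s_1,t,s_2)$, and since $G$ is a group this gives $x^{\omega+1}=x$; combined with $S\in\pv{CR}$ the union-of-groups property holds.  So the $\pv{CR}$ half could survive as written, but the $\pv{DS}$ half as drafted has no proof.  The fix --- recognizing that when $T$ is a group the ideal is already completely simple --- shortcuts all the casework you anticipate in your ``main obstacle'' paragraph and reduces the whole proposition to a one-line observation.
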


\begin{proof}
  When $G\in\pv H$ and $f:S^1\to G$, the construction $M(S,G,f)$ gives
  a semigroup which is the disjoint union of~$S$ with a completely
  simple semigroup with maximal subgroups isomorphic to~$G$.
\end{proof}

Consider the Rees matrix semigroup %
$K_p=\Cl M(I,\mathbb{Z}/p\mathbb{Z},I, %
\left[
  \begin{smallmatrix}
    0&0\\
    0&1
  \end{smallmatrix}
\right] %
)$, %
where $I$ stands for the set $\{0,1\}$, $p$ is an arbitrary prime, and
we adopt additive notation for the group $\mathbb{Z}/p\mathbb{Z}$.
Note that $K_p$ is generated by the idempotents $(0,0,1)$ and
$(1,0,0)$.

\begin{Lemma}
  \label{l:bullet-torsion-in-CS}
  Suppose that \pv V is a pseudovariety of semigroups satisfying the
  condition $\pv V\bullet\pv{Ab}_p=\pv V$ for a prime~$p$. Then \pv V
  contains the monoid $K_p^1$.
\end{Lemma}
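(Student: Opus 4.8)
The plan is to produce a concrete semigroup $S$ together with a function $f:S^1\to (\pv{Ab}_p)^1$ so that $M(S,T,f)$, with $T$ a suitable Abelian group of exponent $p$, contains a copy of $K_p^1$ as a subsemigroup; then the closure assumption $\pv V\bullet\pv{Ab}_p=\pv V$ forces $K_p^1\in\pv V$. The natural first attempt is to take $S$ as small as possible; since the Rees coordinates of $K_p$ are indexed by $I=\{0,1\}$, a two-element left-zero or right-zero semigroup is the obvious candidate for producing the $\Cl R$- and $\Cl L$-coordinates, and the group $\mathbb Z/p\mathbb Z$ must come from the $T$-component, which lies in $\pv{Ab}_p$. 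So I expect to take $T=\mathbb Z/p\mathbb Z$ (or a small power of it) and $S$ a two-element band, and then choose $f$ to recover the structure matrix $\left[\begin{smallmatrix}0&0\\0&1\end{smallmatrix}\right]$.

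First I would write down the multiplication in $M(S,T,f)$ restricted to the set $S^1\times T^1\times S^1$ and compare it with the Rees multiplication in $K_p$, whose product is $(i,g,j)(k,h,l)=(i,\,g+P(j,k)+h,\,l)$ with $P$ the given $2\times2$ matrix over $\mathbb Z/p\mathbb Z$. In $M(S,T,f)$ the product of $(s_1,t,s_2)$ and $(s_1',t',s_2')$ is $(s_1,\,tf(s_2s_1')t',\,s_2')$, so I want to match $s_2s_1'$ against the pair $(j,k)\in I\times I$ and have $f(s_2s_1')$ play the role of $P(j,k)$. Taking $S$ to be a right-zero semigroup on $\{0,1\}$, the product $s_2s_1'$ equals $s_1'$, which only sees the column index and cannot produce a genuine $2\times 2$ matrix; taking $S$ left-zero, $s_2s_1'=s_2$, which again sees only one coordinate. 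This is the main obstacle: a single two-element band does not let $f$ separate the four entries of $P$. The fix I anticipate is to enlarge $S$ — e.g.\ take $S=I\times I$ a rectangular band (or a suitable $3$- or $4$-element band) so that the product $s_2 s_1'$ of the ``right coordinate of the first'' with the ``left coordinate of the second'' retains exactly the information $(j,k)$ needed to read off $P(j,k)$; then set $f$ on $S$ to be $P$ itself, landing in $\mathbb Z/p\mathbb Z\in\pv{Ab}_p$, and $f(1)=0$. One then checks that the subset $\{(s_1,t,s_2): s_1,s_2\text{ range over the appropriate coordinates},\ t\in\mathbb Z/p\mathbb Z\}$ is a subsemigroup isomorphic to $K_p$, and that adjoining $1=(1,0,1)$-type behaviour (or rather the element of $S^1\times T^1\times S^1$ with $t=1$ acting as identity, together with noting $M(S,T,f)$ has an identity when $S$ is chosen with care) yields $K_p^1$; since by hypothesis $M(S,T,f)\in\pv V\bullet\pv{Ab}_p=\pv V$ and $\pv V$ is closed under subsemigroups, $K_p^1\in\pv V$.

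I would then double-check the idempotent/identity bookkeeping: $K_p^1$ is $K_p$ with a new identity adjoined, so I must make sure the copy I extract inside $M(S,T,f)$ is really $K_p$ (not $K_p^1$ or $K_p^0$) and that an identity element is available — either because $M(S,T,f)$ itself is a monoid for the chosen $S$, or by replacing $S$ with $S^1$ and verifying the construction still lands in the right place (here the monoidality remarks about $\pv U\bullet\pv V$-type constructions and the fact that $\pv{Ab}_p$ is nontrivial help). The only genuinely delicate point, beyond this routine verification, is the choice of $S$ making $f$ able to encode $P$ faithfully; once that is pinned down the rest is a direct computation with the four product formulas defining $M(S,T,f)$, and the conclusion $K_p^1\in\pv V$ is immediate from the closure hypothesis and closure of pseudovarieties under taking subsemigroups.
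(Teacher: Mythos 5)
There is a genuine gap. When you propose to take $S=I\times I$ a rectangular band (or some other small band) as the left component of the construction $M(S,T,f)$, you need $S\in\pv V$ to invoke the hypothesis $\pv V\bullet\pv{Ab}_p=\pv V$, since $\pv V\bullet\pv{Ab}_p$ is generated by the semigroups $M(S,T,f)$ with $S\in\pv V$ and $T\in\pv{Ab}_p$. Nothing in the hypotheses guarantees that $\pv V$ contains any rectangular band: the only semigroups you can be sure lie in $\pv V$ a priori are those in $\pv{Ab}_p\subseteq\pv V\bullet\pv{Ab}_p=\pv V$. Your correct observation that a two\mbox{-}element left-zero or right-zero component loses one of the two coordinates of the structure matrix $P$ is precisely the obstacle, but enlarging $S$ to a band is a step outside the pseudovariety you are allowed to draw from.

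The paper's proof sidesteps this by taking $S$ to be the group $G=\mathbb Z/p\mathbb Z$ itself, which certainly belongs to $\pv V$. It then works inside the triples $(s_1,t,s_2)$ where $s_1,s_2$ range not over a band but over the two specific elements $\{0,1\}\subseteq G$, and chooses $g:G\to G$ with $g(2)=1$ and $g$ identically $0$ elsewhere; the middle coordinate of a product is then $t+g(s_2+s_1')+t'$, and $g(s_2+s_1')$ equals $1$ exactly when $s_2=s_1'=1$, reproducing the structure matrix $\left[\begin{smallmatrix}0&0\\0&1\end{smallmatrix}\right]$. The identity of $K_p^1$ is supplied not by any global identity of $M(G,G,g)$ but by the element $0$ of the copy of $G$ in the disjoint union $M(G,G,g)=G\uplus G^1\times G^1\times G^1$, which acts trivially on all these triples. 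This trick falls apart for $p=2$, since then $2=0$ in $G$ and $g$ cannot distinguish $s_2=s_1'=1$ from $s_2=s_1'=0$; the paper handles $p=2$ separately with $S=G\times G$ and $h(1,1)=1$, so that the two coordinates of the product $s_2s_1'\in G\times G$ keep $s_2$ and $s_1'$ apart. Your plan could in principle be rescued by first bootstrapping: $M(\{1\},\mathbb Z/p\mathbb Z,f)$ contains the two\mbox{-}element semilattice $U_1$ as a subsemigroup, so $U_1\in\pv V$, and then $M(U_1,\mathbb Z/p\mathbb Z,f')$ with $f'\equiv 0$ yields a rectangular band in $\pv V$ that you could use as $S$ in a third application; but this is an extra layer the proposal does not spell out, and once you have it you still must carry out the Rees matrix verification you describe only in outline.
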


\begin{proof}
  Let $G=\mathbb{Z}/p\mathbb{Z}$. Then, $G$ belongs to $\pv{Ab}_p$ and,
  therefore, also to~\pv V. In case
  $p\ne2$, consider the mapping $g:G\to G$ that sends $2$ to~$1$ and
  every other element to~$0$. Then the subsemigroup
  $\{0\}\cup\{0,1\}\times G\times\{0,1\}$ of~$M(G,G,g)$ is the usual
  representation of the Rees matrix semigroup $K_p$ with an identity
  element adjoined, which shows that $K_p^1\in\pv V$. In case $p=2$,
  we consider the mapping %
  $h:G\times G\to G$ which sends $(1,1)$ to~$1$ and every other
  element to~$0$. Then, it is easily verified that the subsemigroup
  $\{(0,0)\}\cup\{(0,0),(1,0)\}\times G\times\{(0,0),(0,1)\}$
  of~$M(G\times G,G,h)$ is isomorphic with the monoid~$K_2^1$.
\end{proof}

\subsection{Some combinatorial lemmas}
\label{sec:combinatorics}

Before stating and proving the result that accomplishes the
requirement of plenty of torsion in the minimum ideal of
Theorem~\ref{t:lm}, we prove some auxiliary combinatorial results.

\begin{Lemma}
  \label{l:stretch}
  Let $A$ be a non-singleton finite alphabet and let \pv V be a
  pseudovariety containing \pv{LSl}. Let $x,y,z\in\Om AV$ be arbitrary
  pseudowords, and suppose that $s\in A^+$ is a word such that
  $\mathrm{t}_2(s)$ is a square and $s$ is not a factor of~$x$. Then,
  there exists a word~$r\in A^+$ such that $xr\notin\{y,z\}$ and the
  only occurrence of $s$ as a factor of~$xrs$ is as a suffix.
\end{Lemma}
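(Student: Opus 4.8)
The goal is, given a pseudoword $x$ not containing $s$ as a factor, to append a word $r$ so that (i) the extended pseudoword $xr$ avoids the two forbidden values $y$ and~$z$, and (ii) the first (indeed only) appearance of $s$ in $xrs$ is the trailing copy. The natural strategy is to build $r$ in two independent stages, each safeguarding one of the two requirements, and to exploit that $A$ is non-singleton to gain enough freedom.

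First I would arrange (ii). Write $s = s'\,t^2$ where $t = \mathrm{t}_1(t^2) = \mathrm{t}_1(s)$, using the hypothesis that $\mathrm{t}_2(s)$ is a square (so $\mathrm{t}_2(s) = tt$ for a single letter $t$). Pick a letter $a \in A \setminus \{t\}$; this is where the non-singleton hypothesis enters. I would take $r$ of the form $r = r_0\, a^{|s|}$ for a word $r_0$ to be chosen in the next step. Then in $xrs = x\,r_0\,a^{|s|}\,s$, any occurrence of $s$ as a factor must, by a letter-counting argument exactly like the ones used in the proof of Proposition~\ref{p:lm}, lie entirely within the suffix $a^{|s|}s$: it cannot meet $x\,r_0\,a^{|s|}$ to the left because $s \notin F(x)$ forces any occurrence crossing into $x$ to be impossible, and any occurrence inside $a^{|s|}s$ that is not the suffix would, by comparing the number of occurrences of the letter $t$, give a contradiction — precisely the argument ``if $a^{|s|}s = \xi s \eta$ with $\eta$ nonempty, then $t$ occurs in $\eta$, so it occurs strictly more often in $\xi s \eta$ than in $a^{|s|}s$''. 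So clause (ii) holds regardless of the choice of $r_0$, as long as $r_0$ ends in a letter that does not create a new occurrence together with $a^{|s|}$; since $a \ne t = \mathrm{t}_1(s)$, no occurrence of $s$ can begin inside $r_0$ and run into $a^{|s|}s$ unless it is already contained in $a^{|s|}s$, so any $r_0$ works.

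Next I would arrange (i), choosing $r_0$. Here the point is simply that $\Om AV$ for $A$ non-singleton is large — in fact, since \pv V contains \pv{LSl}, it contains \pv N and hence $A^+$ embeds, and distinct finite words of the same length give distinct elements of $\Om AV$. So the map $A^* \to \Om AV$, $w \mapsto x\,w\,a^{|s|}$ is injective on words, and its image is infinite; thus there are infinitely many words $r_0$ for which the values $x\,r_0\,a^{|s|}$ are pairwise distinct, and at most two of them can equal $y$ or $z$. Concretely, taking $r_0 \in \{b, bb, bbb\}$ for a fixed letter $b \in A$ already yields three candidates $x\,b^k\,a^{|s|}$ ($k=1,2,3$) that are pairwise distinct (they have distinct finite suffixes when read off far enough, using that \pv V contains \pv{LSl} so that finite suffixes are well determined), so at least one of them avoids both $y$ and $z$. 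Fix such an $r_0$ and set $r = r_0\,a^{|s|}$; then $xr = x\,r_0\,a^{|s|} \notin \{y,z\}$, which is clause (i).

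Finally I would just check the two clauses simultaneously: with $r = r_0 a^{|s|}$ as chosen, clause (i) is immediate from the choice of $r_0$, and clause (ii) was verified above for every choice of $r_0$. The only real obstacle is making the letter-counting argument for (ii) airtight when the occurrence of $s$ could in principle straddle the boundary between $x r_0$ and $a^{|s|}s$: this is handled by noting that $\mathrm{t}_1(s) = t \ne a$, so the last letter of any occurrence of $s$ ending inside the block $a^{|s|}$ would have to be $a$, which is impossible; hence every occurrence of $s$ in $xrs$ ends inside $s$ itself, and then the square suffix $t^2$ forces, by the occurrence-count of $t$, that the occurrence is the full suffix. I expect this boundary bookkeeping — rather than anything about the profinite structure — to be the fiddly part, but it is entirely in the spirit of the corresponding passages in the proof of Proposition~\ref{p:lm}.
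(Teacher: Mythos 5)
Your second step --- choosing $r_0$ so that $x\,r_0\,a^{|s|}\notin\{y,z\}$ --- contains a genuine gap. The map $w\mapsto x\,w\,a^{|s|}$ is \emph{not} injective on $A^*$: injectivity of $A^+\hookrightarrow\Om AV$ concerns finite words only, and left multiplication by the (possibly infinite) pseudoword $x$ can collapse distinct words. For instance, if \pv V is aperiodic (say $\pv V=\pv{LSl}$ itself) and $x$ ends in $b^\omega$, then $xb=xb^2=xb^3$, so your three candidates $xba^{|s|}$, $xb^2a^{|s|}$, $xb^3a^{|s|}$ all coincide and may well equal $y$. Your fallback claim that they ``have distinct finite suffixes when read off far enough'' is also false: for $j\le3$ one has $\mathrm{t}_{|s|+4}(xb^ja^{|s|})=\mathrm{t}_{4-j}(x)\,b^j\,a^{|s|}$, and if $\mathrm{t}_3(x)=b^3$ these all equal $b^4a^{|s|}$, with the same collapse at every greater length. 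The root of the problem is that your candidates differ only in the \emph{length} of the block of $b$'s, so the distinguishing positions sit at variable depth and can be absorbed by~$x$. The paper's proof avoids exactly this by taking $r\in\{b(ab)^k,\ b(ab)^kb,\ b(ab)^kb^2\}$: these end in $bab$, $abb$, $bbb$ respectively, so the three pseudowords $xr$ have pairwise distinct images under $\mathrm{t}_3$ (well defined because $\pv D\subseteq\pv{LSl}\subseteq\pv V$), the distinguishing window lying entirely inside the appended word. Any repair of your argument must likewise make the candidates differ within a window of fixed length at the right end of~$r$, while preserving the no-spurious-occurrence property.

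Your first step, by contrast, is a legitimately different route from the paper's: you pad with a block $a^{|s|}$ of a letter $a\ne\mathrm{t}_1(s)$ and count occurrences of the letter $\mathrm{t}_1(s)$, whereas the paper pads with a word avoiding the square $\mathrm{t}_2(s)$ and counts occurrences of that square; notably, your version of this step does not even use the hypothesis that $\mathrm{t}_2(s)$ is a square, and the counting inside $a^{|s|}s$ is correct. It does, however, have a loose end of its own: you must also exclude occurrences of $s$ ending inside $r_0$ or straddling $x$ and $r_0$, which requires $b\ne\mathrm{t}_1(s)$ --- a condition you never impose. For $|A|=2$ this bites: the only choices are $b=a$, which makes all candidates powers of $a$ and leaves the distinctness problem fully intact, or $b=\mathrm{t}_1(s)$, for which, writing $t=\mathrm{t}_1(s)$ and taking $s=at^2$ with $x$ having finite suffix $a$, the word $s$ occurs in $xr_0$ straddling the boundary as $a\cdot t^2$.
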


\begin{proof}
  Let $a\in A$ be the letter such that $\mathrm{t}_2(s)=a^2$ and
  choose a letter $b\in A\setminus\{a\}$. Consider the words
  $r_1=b(ab)^k$, $r_2=b(ab)^kb$, and $r_3=b(ab)^kb^2$, where
  $k\ge|s|/2$. As $\pv D\subseteq\pv{LSl}\subseteq\pv V$, the
  pseudowords $xr_i\in\Om AV$ ($i=1,2,3$) are distinct, for so are
  their suffixes of length~3. Hence, at least one of them, say $xr_i$,
  is different from both $y$ and $z$; let $r=r_i$. We claim that the
  only occurrence of $s$ as a factor of~$xrs$ is as a suffix.

  Suppose that there is an occurrence of~$s$ as a factor of~$xrs$
  other than as a suffix. Since $s$~is not a factor of~$x$, any
  occurrence of~$s$ as a factor of~$xrs$ must be obtained as a factor
  of some $us$, where $u$ is a finite suffix of $xr$. We may therefore
  take such $u\ne1$ as short as possible, so that $s$~is a prefix
  of~$us$. In particular, there exists a nonempty word $v$ such that
  the equality $us=sv$ holds in~$A^+$.

  Note that, by construction, the word $r$~ends with the letter~$b$.
  If $v$~is a letter, then it is the last letter of~$s$, namely~$a$.
  Since $us$ and $sv$ have the same number of occurrences of the
  letter~$a$ and $|u|=|v|$, it follows $u=a$, which contradicts the
  fact that $u$~is a suffix of~$xr$. Hence, $v$~has length at least
  two and, therefore, $a^2$~is a suffix of~$v$. Since $us=sv$, the
  words $us$ and $sv$ have the same number of occurrences of the
  factor $a^2$. As $u$~ends with the letter~$b$, it follows that
  $a^2$~is a factor of~$u$. By the choice of~$r$ and as $u$~is a
  suffix of~$xr$, every occurrence of $a^2$ in~$u$ must come from~$x$.
  Thus, there is a factorization $u=x'r$, where $x'$~is a suffix
  of~$x$. Since $|u|\ge|r|>|s|$ and $us=sv$, we conclude that $s$~is a
  prefix of~$u=x'r$. As every occurrence of~$a^2$ in~$u$ comes
  from~$x'$, we deduce that $s$~is actually a prefix of~$x'$, whence a
  factor of~$x$, in contradiction with the hypothesis of the lemma.
  This establishes the claim.
\end{proof}

The next lemma may be viewed as a connectivity property of the de
Bruijn graphs on alphabets with at least two letters.

\begin{Lemma}
  \label{l:deBruijn-connectivity}
  Let $w\in A^*$ be a word and suppose that $a,b\in A$ are two
  distinct letters. Then there is a word $t\in A^*$ such that the word
  $wt$ has only one occurrence of the factor $aw$, namely as a suffix,
  and no occurrence of~$bw$.
\end{Lemma}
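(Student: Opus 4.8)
The plan is to construct the word $t$ explicitly by "padding" $w$ on the right with a long block of a single letter so that the only way the pattern $aw$ can reappear is by overlapping with the suffix we have just written, and then to rule out such an overlap by a letter-counting argument. Concretely, if $n=|w|$, I would take $t = c^{\,n+1}$ for a suitably chosen letter $c\in\{a,b\}$; the point of choosing $c$ among $\{a,b\}$ (rather than an arbitrary letter) is that the argument must treat the degenerate case $w=1$ uniformly, and more importantly that we need to keep $bw$ from being created, so the trailing block must not be able to serve as an initial segment of $bw$ unless $w$ itself begins that way.

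First I would handle the trivial case $w=\varepsilon$, where $aw=a\ne b=bw$ and $t=b$ works immediately. For $w\ne\varepsilon$, write $n=|w|\geq1$. I would distinguish the first letter of $w$: if $w$ starts with a letter different from $a$, set $c=a$ and $t=a^{\,n+1}$; the word $wt=wa^{n+1}$ then contains $aw$ only as required (any other occurrence of $aw$ would have to start inside $w$ or at the boundary, but since the suffix block is $a^{n+1}$ and $|aw|=n+1$, the only occurrence with any letters in the block is forced to be the suffix, while an occurrence entirely inside $w$ is impossible because it would require $w$ to contain the factor $aw$ of length $n+1>n=|w|$). The occurrence $bw$ cannot appear at all because $bw$ would have to begin with $b$ then continue with $w$, but in $wa^{n+1}$ every length-$(n+1)$ factor either lies inside $w\cdot a^{n}$ — where, reading the $b$, the next $n$ letters would have to be $w$, impossible since $w$ contains no factor of the form (letter)$\cdot w$ — or lies in the pure block $a^{n+1}$, which contains no $b$. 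The remaining case is when $w$ starts with $a$; here I would instead take $c=b$, i.e. $t=b^{\,n+1}$, and argue symmetrically: $wb^{n+1}$ has $bw$ nowhere (the trailing block has no $a$ after its initial segment, so cannot be followed by $w$, and $w$ itself is too short), and it has $aw$ only as — wait, this needs care, since $w$ starts with $a$, so $aw$ could occur at the start of $w$ if $w = aw'$ with $w'$ a prefix... the precise bookkeeping is that $aw$ occurs as a factor of $wb^{n+1}$ exactly at the positions where $w$ has a "period" compatible with $aw$, and I would need an occurrence-count on the letter $b$ (which appears in $w$ some fixed number of times but appears $n+1\geq|w|+1$ times in the tail) to conclude that the suffix is the only occurrence of $aw$, if any; if $aw$ in fact does not occur as a suffix here I would append one more copy appropriately, or rather I would pad with $b$'s and then a single $w$-free adjustment.

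The main obstacle I anticipate is exactly the bookkeeping in the second subcase, where $w$ begins with $a$: then $aw$ is a genuine near-overlap with $w$ itself, and one must be careful that the required occurrence of $aw$ "as a suffix" of $wt$ actually exists while no spurious occurrence does. I expect the cleanest remedy is to choose $t$ of the form $b^{\,k}$ with $k$ large (say $k=|w|+2$), use the fact that $b$ occurs strictly more times in the tail than anywhere a short overlap could accommodate, and then, if $aw$ fails to occur as a suffix of $wb^k$, observe that it is enough to take $t=b^k a w$ — but since $aw$ contains no $b$-heavy tail, one rechecks that this final appended block of $w$ does not create a second $aw$ nor any $bw$; this is again a finite, mechanical overlap check. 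Throughout, the only tool needed is elementary combinatorics on words — counting occurrences of a distinguished letter and using $|aw|=|bw|=|w|+1$ — and no hypothesis on any pseudovariety is used, so the lemma is purely about free words.
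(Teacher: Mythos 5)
Your proposal has a genuine gap at its core: the primary construction does not place $aw$ as a suffix of $wt$. With $t=a^{n+1}$ and $w$ not starting with $a$, the last $n+1$ letters of $wa^{n+1}$ are $a^{n+1}\ne aw$; in fact $aw$ need not be a factor of $wa^{n+1}$ at all (e.g.\ $w=ba$: $aw=aba$ is not a factor of $baaaa$). You noticed the analogous defect in the other subcase but not here, and the fallback you sketch, $t=b^k a w$, also fails in corner cases: for $w=a$ and $k=2$ one gets $wt=abbaa$, which contains $bw=ba$ starting at position $3$. The base case $w=\varepsilon$ with $t=b$ is also wrong, since then $wt=b$ has no occurrence of $aw=a$ at all.

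The paper avoids all of this with one extra idea. It considers $wa^m w$ with $m=|w|$: a count of the letter $b$ shows $bw$ is not a factor of $wa^m$, and any occurrence of $bw$ in $wa^m w$ reaching into the final copy of $w$ would have its initial $b$ landing inside the $a^m$-block, which is also impossible; meanwhile $aw$ plainly is a factor (the last $a$ of the block followed by the trailing $w$). One then takes $wt$ to be the shortest prefix of $wa^m w$ containing $aw$; minimality forces the unique occurrence of $aw$ in $wt$ to be its suffix. The trailing copy of $w$ is exactly what your construction is missing — it is what guarantees $aw$ occurs at all — and the ``shortest prefix'' trick replaces the overlap case analysis you anticipated needing. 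Your letter-counting instinct for excluding $bw$ matches the first step of the paper's argument, but the mechanism for making $aw$ a suffix is absent.
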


\begin{proof}
  By counting the number of occurrences of the letter $b$, we see that
  no word of the form $wa^m$ admits $bw$~as a factor. Then,
  for $m=|w|$, $bw$ is not a factor of $wa^mw$
  but clearly $aw$~is. Hence, the shortest prefix of~$wa^mw$ that
  admits $aw$~as a factor is a word of the required form $wt$.
\end{proof}

\subsection{Torsion accomplished}
\label{sec:torsion-accomplished}

We now come to the announced sufficient closure conditions on a
pseudovariety \pv V for \Om AV to have a minimum ideal with plenty of
torsion, provided $|A|\ge 2$.

\begin{Thm}
  \label{t:plenty-of-torsion}
  Let $A$ be a non-singleton finite set and let \pv V be a monoidal
  pseudovariety of semigroups satisfying the following conditions:
  \begin{enumerate}[(i)]
  \item\label{item:plenty-of-torsion-1} $\pv V*\pv D=\pv V$;
  \item\label{item:plenty-of-torsion-2} the semigroup \Om AV has
    content, $0$ and $\bar0$ functions;
  \item\label{item:plenty-of-torsion-3} $\pv V\bullet\pv{Ab}_p=\pv V$
    for some prime~$p$.
  \end{enumerate}
  Then the minimum ideal of\/~\Om AV has plenty of torsion on the
  left.
\end{Thm}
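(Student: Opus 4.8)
The plan is to prove the equivalent statement that the canonical map $K\to\Cl T_K^\ell$ is injective. By Proposition~\ref{p:reduction}, part~\eqref{item:reduction-2} this is precisely ``plenty of torsion on the left'', and by parts \eqref{item:reduction-a}--\eqref{item:reduction-b} of that proposition it suffices to check injectivity on idempotents; moreover two distinct idempotents with equal inner left translations are necessarily $\Cl R$-equivalent. So one is reduced to showing: \emph{for any two distinct $\Cl R$-equivalent idempotents $e,f\in K$ there is some $w\in K$ with $ew\ne fw$}. Write $K=\Cl M(X,G_K,Y;P_K)$ in Rees coordinates, with $X$ indexing the $\Cl R$-classes and $Y$ the $\Cl L$-classes; let $a_0\in X$ be the common $\Cl R$-class of $e,f$ and $b_e,b_f\in Y$ their $\Cl L$-classes, which are distinct because distinct $\Cl R$-equivalent idempotents lie in distinct $\Cl H$-classes, hence in distinct $\Cl L$-classes. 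Normalizing the section so that $P_K(b_e,a_0)=P_K(b_f,a_0)=1$, one has $e=(a_0,1,b_e)$, $f=(a_0,1,b_f)$, and for $w=(a_1,h,b_1)$ one computes $ew=(a_0,P_K(b_e,a_1)h,b_1)$ and $fw=(a_0,P_K(b_f,a_1)h,b_1)$; hence $ew\ne fw$ for some $w$ exactly when the rows $P_K(b_e,\cdot)$ and $P_K(b_f,\cdot)$ differ somewhere, which in arbitrary Rees coordinates says they are \emph{non-proportional}. Thus the theorem reduces to: distinct $\Cl L$-classes of $K$ have non-proportional rows. I would deduce this by producing, for the given pair, a continuous surjection $\psi:\Om AV\to N$ onto a finite $N\in\pv V$ under which $\psi(e),\psi(f)$ are distinct $\Cl R$-equivalent idempotents of the minimum ideal $K_N$ of $N$ lying in non-proportional rows of the structure matrix of $K_N$; then the inner left translations of $K_N$ determined by $\psi(e)$ and $\psi(f)$ differ, so one picks $w_N\in K_N$ with $\psi(e)w_N\ne\psi(f)w_N$, lifts it through the surjection $\psi|_K:K\twoheadrightarrow K_N$ to some $w\in K$, and obtains $ew\ne fw$.

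The construction of $N$ is where hypothesis~\eqref{item:plenty-of-torsion-3} enters. By Lemma~\ref{l:bullet-torsion-in-CS} it yields $K_p^1\in\pv V$, but more importantly the equality $\pv V\bullet\pv{Ab}_p=\pv V$ allows forming, for every finite $S\in\pv V$ and every map $\gamma:S^1\to\mathbb Z/p\mathbb Z$, the synthesis semigroup $N=M(S,\mathbb Z/p\mathbb Z,\gamma)$, which then lies in $\pv V$. Its minimum ideal is a Rees matrix semigroup $\Cl M(S^1,\mathbb Z/p\mathbb Z,S^1;Q)$ with $Q(s,s')=\gamma(ss')$, so the row indexed by $s\in S^1$ is the function $s'\mapsto\gamma(ss')$, and two rows $s_1,s_2$ are non-proportional exactly when $s'\mapsto\gamma(s_1s')-\gamma(s_2s')$ is non-constant. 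For $S$ I would take a de~Bruijn-type transition quotient of $\Om AV$ of sufficiently large order, recording in addition which members of a suitable finite list of factors have already occurred; since $\pv V*\pv D=\pv V$ and $\pv V$ contains $\pv{Sl}$ (from the content function), it contains $\pv{LSl}=\pv{Sl}*\pv D$ and such a transition semigroup lies in $\pv V$.

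The heart of the argument is to extract, from the bare inequality $e\ne f$ of idempotents in distinct $\Cl L$-classes of $K$, a finite word $s$ witnessing the difference ``on the right'', and to convert it into the cocycle $\gamma$. Since $e$ and $f$ are separated by some finite quotient of $\Om AV$, there is such an $s$ whose clopen occurrence ideal $\overline{A^*sA^*}$ (clopen because $\pv{LSl}\subseteq\pv V$), together with the first‑occurrence and prefix data furnished by the content, $0$ and $\bar0$ functions, distinguishes $e$ from $f$; using Lemma~\ref{l:stretch} one replaces $s$ by a word whose length‑$2$ suffix is a square and which has no nontrivial self-overlap, and using Lemma~\ref{l:deBruijn-connectivity} one controls precisely where $s$, and a competing pattern, can reappear along paths of the de~Bruijn graph. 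One then chooses $\gamma$ so that the $\mathbb Z/p\mathbb Z$-coordinate of $\psi$ records, modulo $p$, an occurrence count of $s$ governed by the state of $S$, and defines $\psi$ on the generators so as to carry simultaneously the $S$-transition and the appropriate $\mathbb Z/p\mathbb Z$-increment. Because $\psi(K)$ is an ideal of the image, $\psi(e)$ and $\psi(f)$ automatically land in its minimum ideal, and the no-overlap property of $s$ together with de~Bruijn connectivity — used exactly as in those two combinatorial lemmas — guarantee both that $\psi$ is a well-defined homomorphism into a finite semigroup and that $\psi(e)$ and $\psi(f)$ end up in rows whose $\gamma$-difference is non-constant.

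I expect this last step to be the main obstacle: pinning down the word $s$ and the counting window so that the induced $\mathbb Z/p\mathbb Z$-statistic is a genuine continuous function on $\Om AV$ — which is exactly what the combinatorial lemmas are needed for, to exclude stray occurrences of $s$ — and then verifying that $e$ and $f$ actually differ in that statistic, i.e.\ that a difference of $\Cl L$-classes is seen by mod-$p$ counting of a factor; this is where nontriviality of the $p$-group supplied by hypothesis~\eqref{item:plenty-of-torsion-3} is indispensable, and where the failure of the conclusion for aperiodic pseudovarieties would show up. By comparison the remaining ingredients are routine: passing from finite words to arbitrary pseudowords by compactness and continuity, using that $\pv V*\pv D=\pv V$ makes $\pv V$ letter cancelative (Proposition~\ref{p:cancellation}) when handling limits, the Rees-coordinate bookkeeping of the first paragraph, and the final lifting step.
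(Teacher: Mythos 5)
Your high-level plan is the same as the paper's: reduce via Proposition~\ref{p:reduction}\eqref{item:reduction-2} (and \eqref{item:reduction-a},\eqref{item:reduction-b}) to separating two distinct $\Cl R$-equivalent idempotents $e,f\in K$ by some right multiplier $w\in K$, and obtain that separation by mapping into a synthesis semigroup $M(S,\mathbb Z/p\mathbb Z,\gamma)\in\pv V$ via the de~Bruijn encoding, using Lemmas~\ref{l:stretch} and~\ref{l:deBruijn-connectivity}. That much is correct and is the route the paper takes. But you have correctly identified, and then left open, exactly the step that carries all the mathematical content. Two concrete omissions make the proposal fall short of a proof.

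First, you have not separated the two regimes the paper's argument handles differently, namely whether $\mathrm{t}_k(e)=\mathrm{t}_k(f)$ holds for every $k$ or fails for some $k$. These require quite different constructions. If the tails eventually differ, no occurrence-counting cocycle is needed at all: one maps via $\Phi_n^{\pv V}$ into $\Om{A_{n+1}}V$ for the right $n$ and then into $K_p^1$ (obtained from hypothesis~\eqref{item:plenty-of-torsion-3} through Lemma~\ref{l:bullet-torsion-in-CS}), sending the two distinguishing length-$k$ words to the idempotents $(0,0,1)$ and $(1,0,0)$; Lemma~\ref{l:deBruijn-connectivity} is what supplies the suitable finite right-multipliers in that case. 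If all tails agree, one genuinely has to count occurrences of a carefully-chosen factor $s$ modulo $p$, and that is where Lemma~\ref{l:stretch} and the synthesis $M(S,\mathbb Z/p\mathbb Z,\xi)$ enter. Your sketch conflates the two lemmas into a single vague "governed by the state of $S$" picture.

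Second, and more seriously, in the case where the tails all agree you assert without argument that a finite word $s$ exists "whose clopen occurrence ideal ... distinguishes $e$ from $f$" in the needed sense. This is not automatic; the paper establishes it by a nontrivial compactness argument (if for every finite $s$ one could factor $e=u_ssz_s$, $f=v_ssz_s$ with $s\notin F(\mathrm{t}_{|s|-1}(s)z_s)$, a convergent subnet would produce a common $\Cl L$-class, contradicting $e\ne f$). You also do not pin down the mod-$p$ computation itself — the factorizations $u_0=s_2u_4$ across block boundaries, the role of $r$ from Lemma~\ref{l:stretch} in making $\bar su_2rs_0$ avoid the excluded values in the finite quotient $S$, and the explicit computation of $\psi(urs)$ versus $\psi(vrs)$ in $M(S,\mathbb Z/p\mathbb Z,\xi)$. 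You honestly flag this as the "main obstacle", which it is; as written, the proposal is a correct roadmap but not a proof, because the combinatorial heart that makes the $\mathbb Z/p\mathbb Z$-statistic both well-defined and discriminating is left unconstructed.
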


\begin{proof}
  Let $p$~be a prime verifying
  condition~\eqref{item:plenty-of-torsion-3}. Also, let $K$ be the
  minimum ideal of~\Om AV and let $u$ and $v$ be distinct \Cl R-equivalent
  idempotents of~$K$. We claim that
  $\lambda^K(u)\ne\lambda^K(v)$. In view of
  Proposition~\ref{p:reduction}, this is sufficient to establish the
  theorem.

  \smallskip

  Suppose first that the following condition holds:
  \begin{equation}
    \label{eq:case-1}
    \mathrm{t}_k(u)=\mathrm{t}_k(v)
    \mbox{ for every } k\ge1.
  \end{equation}
  Since $u,v\in K$, we have $F(u)=F(v)=A^+$. Suppose that, for every
  word $s\in A^+\setminus A$, there are factorizations $u=u_ssz_s$ and
  $v=v_ssz_s$ such that $s\notin F(\mathrm{t}_{|s|-1}(s)z_s)$.
  Note
  that the set $A^+$ ordered by Green's relation $\ge_\Cl J$ is upper
  directed. By compactness of the space~$(\Om AV)^1$, the net
  $\bigl(u_s,v_s,s,z_s\bigr)_{s\in A^+}$ admits a convergent subnet,
  say with limit $(u',v',r,z)$. Since multiplication is continuous, we
  deduce the equalities $u=u'rz$ and $v=v'rz$. By construction,
  $F(r)=A^+$, and so $r$~belongs to~$K$, whence so does $rz$. Hence,
  the \Cl R-equivalent idempotents
  $u$ and $v$ are both \Cl L-equivalent to~$rz$, in contradiction with
  the assumption that they are distinct. Thus, there is
  some finite word $s\in A^+$ of length at least~2 such that, for all
  factorizations $u=u_1su_2$ and $v=v_1sv_2$ where $s$ is a factor
  of neither $\mathrm{t}_n(s)u_2$ nor $\mathrm{t}_n(s)v_2$, with
  $n=|s|-1$, the pseudowords $u_2$ and $v_2$ are infinite
  and $\mathrm{t}_n(s)u_2\ne\mathrm{t}_n(s)v_2$, which is equivalent
  to $u_2\ne v_2$ by Proposition~\ref{p:cancellation}. 
  Note that,
  if $s$ has this property, then so does every word of which $s$ is a
  factor. Hence, we may assume without loss of generality that $s$~has
  the form $s=bs'ba^{\ell+2}$ %
  where $a$ and $b$ are distinct letters from~$A$, $s'\in A^+$, and
  $\ell=|s'|$. The advantage of such a choice is that $s$ does not
  overlap with itself, which makes it easier to locate occurrences of
  $s$ in a pseudoword, and $s$~ends with the square of a letter, which
  allows us to invoke Lemma~\ref{l:stretch}. To simplify the notation,
  we let %
  $\bar s=\mathrm{t}_n(s)$ and %
  $\bar u=\mathrm{t}_n(u)$. Also consider the word
  $s_0=\mathrm{i}_n(s)$.

  Write $u=u_0su_3$ and $v=v_0sv_3$, where $s\notin F(u_0)\cup
  F(v_0)$. Since $u$ and $v$ are \Cl R-equivalent elements in~$K$,
  there is $w\in(\Om AV)^1$ such that $v=uw$. By
  Theorem~\ref{t:word-problem-for-V*Dn} and
  condition~\eqref{item:plenty-of-torsion-1}, we know that
  $\Phi_n^{\pv V}(v)=\Phi_n^{\pv V}(uw)$. Since $s$ has no overlap
  with itself and it is
  not a factor of either $u_0$ or $v_0$, by condition
  \eqref{item:plenty-of-torsion-2} the first occurrences of~$s$, as a
  letter from $A_n$, in~$\Phi_n^{\pv V}(u)$ and $\Phi_n^{\pv V}(v)$ from
  left to right as well as the prefixes that determine them must be
  the same. Hence,  we
  must have $\Phi_n^{\pv V}(u_0s_0)=\Phi_n^{\pv V}(v_0s_0)$ and, therefore, also
  $u_0s_0=v_0s_0$.
  Taking into account that $u$ and $v$~are
  idempotents, we obtain the following equalities:
  \begin{equation}
    \label{eq:expression-u,v}
    u = (u_0su_3u_1su_2)^\omega %
    \quad\mbox{and}\quad %
    v = (u_0sv_3v_1sv_2)^\omega.
  \end{equation}
  We are interested in counting, modulo~$p$, occurrences of
  pseudowords of the form $sts$ in~$uw$ and $vw$, where $s$~does not
  occur in~$t$ and $w\in K$ remains to be chosen. The occurrences of
  such factors in the sections $su_3u_1s$ and $sv_3v_1s$ of the
  expressions~\eqref{eq:expression-u,v}, pose no problem because of
  the exponents $\omega$. Since $s$~cannot be found as a factor of any
  of~$u_0,u_2,v_2$, what we have to worry about is the possible
  occurrence of~$s$ as a product $s_1s_2$ with $u_0=s_2u_4$
  $u_2=u_5s_1$, and similarly for~$v$. As $s$~does not overlap with
  itself, there can be at most one such factorization and, in view
  of~\eqref{eq:case-1}, there is one coming from $u$ if and only if
  the similar factorization comes from~$v$. In this case, we have
  factorizations $u_0=s_2u_4$, $u_2=u_5s_1$, and $v_2=v_5s_1$. If such
  a case does not occur, then we take $u_5=u_2u_0$ and $v_5=v_2u_0$.

  By Lemma~\ref{l:stretch}, there exists a finite word $r$ such that
  $\bar su_2r\notin\{\bar su_5,\bar sv_5\}$ and $s$~is not a factor
  of~$\bar su_2r$. Since
  $\mathrm{t}_n(u_2)=\mathrm{t}_n(u)=\mathrm{t}_n(v)=\mathrm{t}_n(v_2)$
  by~\eqref{eq:case-1}, we also know that
  $s$ is not a factor of~$\bar sv_2r$.
  By Proposition~\ref{p:cancellation}, we obtain
  \begin{equation}
    \label{eq:distinction}
    \bar su_2rs_0\notin\{\bar sv_2rs_0,\bar su_5s_0,\bar sv_5s_0\}.
  \end{equation}
  As $\Phi_n^\pv V$~is injective on the set %
  $\Om AV\setminus A_{\le n}$ by Theorem~\ref{t:Phi-vs-Green}, the
  non-membership condition~\eqref{eq:distinction} is preserved after
  applying this function. Hence, as $s$ is not a factor of any of the
  pseudowords in~\eqref{eq:distinction}, there is some semigroup $S$
  from~\pv V and some continuous homomorphism
  $\varphi:\Om{A_{n+1}\setminus\{s\}}V\to S$ such that the following
  condition holds:
  \begin{equation}
    \label{eq:S-distinction}
    \varphi\bigl(\Phi_n^\pv V(\bar su_2rs_0)\bigr) %
    \notin\{
    \varphi\bigl(\Phi_n^\pv V(\bar sv_2rs_0)\bigr), %
    \varphi\bigl(\Phi_n^\pv V(\bar su_5s_0)\bigr), %
    \varphi\bigl(\Phi_n^\pv V(\bar sv_5s_0)\bigr)\}.
  \end{equation}
  Consider the additive group $G=\mathbb{Z}/p\mathbb{Z}$ and the
  semigroup $M(S,G,\xi)$, where $\xi:S^1\to G$ maps
  $\varphi\bigl(\Phi_n^\pv V(\bar su_2rs_0)\bigr)$ to the
  generator~$1$ and every other element to the idempotent~$0$. Since
  $p$~verifies condition~\eqref{item:plenty-of-torsion-3}, the
  semigroup $M(S,G,\xi)$ belongs to~\pv V. We may therefore extend
  $\varphi$ to a continuous homomorphism $\psi:\Om{A_{n+1}}V\to
  M(S,G,\xi)$ by letting
  $$\psi(\alpha)=
  \begin{cases}
    (1,0,1) &\mbox{if } \alpha=s, \\
    \varphi(\alpha) &\mbox{if } \alpha\in A_{n+1}\setminus\{s\}.
  \end{cases}
  $$
  We claim that %
  \begin{equation}
    \psi\bigl(\Phi_n^\pv V(urs)\bigr)\ne%
    \psi\bigl(\Phi_n^\pv V(vrs)\bigr).
    \label{eq:claim-inequality}
  \end{equation}
  Let us first look at the consequences of this claim, postponing its
  proof until the next paragraph. Since the two sides of the
  inequality \eqref{eq:claim-inequality} fall in the same subgroup of
  the minimum ideal of~$M(S,G,\xi)$, Green's Lemma implies that
  $\psi\bigl(\Phi_n^\pv V(ursw)\bigr)\ne%
  \psi\bigl(\Phi_n^\pv V(vrsw)\bigr)$ for every $w\in\Om AV$, in
  particular for $w\in K$. Hence, we have %
  $\Phi_n^\pv V(ursw)\ne\Phi_n^\pv V(vrsw)$, whence $ursw\ne vrsw$
  Thus, the claim yields the inequality
  $\lambda^K(u)\ne\lambda^K(v)$ under the assumption that
  condition~\eqref{eq:case-1} holds. %

  To prove the claim (\ref{eq:claim-inequality}), we use the
  expressions \eqref{eq:expression-u,v} for~$u$ and~$v$. Taking into
  account the definition of~$\psi$ and how the multiplication
  in~$M(S,G,\xi)$ is defined, we may then compute
  \begin{align*}
    \psi\bigl(\Phi_n^\pv V(urs)\bigr) %
    & %
    = \psi\bigl(\Phi_n^\pv V((u_0su_3u_1su_2)^\omega rs)\bigr)
    \\
    & %
    = \psi\Bigl(
     \Phi_n^\pv V(u_0s_0)s %
    \bigl( %
     \Phi_n^\pv V(\bar su_3u_1s_0)s %
     \Phi_n^\pv V(\bar su_2u_0s_0)s %
    \bigr)^{\omega-1} %
    \\
    &\qquad\quad %
    \Phi_n^\pv V(\bar su_3u_1s_0)s %
    \Phi_n^\pv V(\bar su_2rs_0)s %
    \Bigr)
    \\
    &= %
    \bigl(\varphi(\Phi_n^\pv V(u_0s_0)),g+1,1\bigr),
  \end{align*}
  where
  $$g=
  \begin{cases}
    -\xi\bigl(\varphi(\Phi_n^\pv V(\bar su_4s_0))\bigr) %
    &\mbox{ if } s\in F(u_2u_0) \\
    0 %
    &\mbox{ otherwise.}
  \end{cases}
  $$
  Similarly, we obtain %
  $\psi\bigl(\Phi_n^\pv V(vrs)\bigr) %
  =\bigl(\varphi(\Phi_n^\pv V(u_0s_0)),g,1\bigr)$,
  where $g$ is also given by the above formula. Hence, we have
  $\psi\bigl(\Phi_n^\pv V(urs)\bigr)\ne\psi\bigl(\Phi_n^\pv
  V(vrs)\bigr)$, as was claimed.

  \smallskip
  
  It remains to treat the cases where \eqref{eq:case-1} fails. Let $k$
  be minimum such that $\mathrm{t}_k(u)\ne\mathrm{t}_k(v)$ and let
  $n=k-1$, $s=\mathrm{t}_n(u)=\mathrm{t}_n(v)$, $as=\mathrm{t}_k(u)$,
  and $bs=\mathrm{t}_k(v)$. In particular, $a$ and $b$ are distinct
  letters from~$A$. By Lemma~\ref{l:deBruijn-connectivity}, there are
  words $r,t\in A^*$ such that $sr$ has only one occurrence of $as$ as
  a factor, namely as a suffix, and none of~$bs$, and $st$ has only
  one occurrence of $bs$ as a factor, namely as a suffix, and none
  of~$as$.

  By Theorem~\ref{t:Phi-vs-Green}, the elements %
  $u'=\Phi_n^\pv V(u)$ and $v'=\Phi_n^\pv V(v)$ of~$\Om{A_k}V$ are \Cl
  R-equivalent but not \Cl L-equivalent. Since the monoid $K_p^1$
  belongs to~\pv V by Lemma~\ref{l:bullet-torsion-in-CS}, there is a
  continuous homomorphism $\varphi:\Om{A_k}V\to K_p^1$ that maps $as$
  to~$(0,0,1)$, $bs$ to~$(1,0,0)$, and every other element of~$A_k$ to
  the identity element~$1$. Since the pseudowords $u'$ and $v'$ end
  respectively with the letters $as$ and $bs$, their images
  $\varphi(u')$ and $\varphi(v')$ must be \Cl R-equivalent but not \Cl
  L-equivalent. More precisely, there exist $i\in\{0,1\}$ and
  $g,h\in\mathbb{Z}/p\mathbb{Z}$ such that $\varphi(u')=(i,g,1)$ and
  $\varphi(v')=(i,h,0)$. If $g=h$, then we obtain %
  \begin{align*}
    \varphi\bigl(\Phi_n^\pv V(ut)\bigr)& %
    = \varphi\bigl(\Phi_n^\pv V(u)\Phi_n^\pv V(st)\bigr) %
    = (i,g,1)(1,0,0)=(i,g+1,0),\\
    \varphi\bigl(\Phi_n^\pv V(vt)\bigr)&%
    = \varphi\bigl(\Phi_n^\pv V(v)\Phi_n^\pv V(st)\bigr) %
    = (i,g,0)(1,0,0)=(i,g,0).
  \end{align*}
  Similarly, in case $g\ne h$, we may calculate
  $$\varphi\bigl(\Phi_n^\pv V(ur)\bigr)=(i,g,1) %
  \quad\mbox{and}\quad %
  \varphi\bigl(\Phi_n^\pv V(vr)\bigr)=(i,h,1).$$
  In both cases, by Green's Lemma, we may then take any $w\in K$ to
  deduce that either $\varphi\bigl(\Phi_n^\pv
  V(utw)\bigr)\ne\varphi\bigl(\Phi_n^\pv V(vtw)\bigr)$ or %
  $\varphi\bigl(\Phi_n^\pv V(urw)\bigr)\ne\varphi\bigl(\Phi_n^\pv
  V(vrw)\bigr)$. %
  This shows that
  $utw\ne vtw$ or $urw\ne vrw$ and, therefore,
  $\lambda^K(u)\ne\lambda^K(v)$ which concludes the proof of the theorem.
\end{proof}

Combining Theorems~\ref{t:lm} and~\ref{t:plenty-of-torsion}, we obtain
the following result, which allows us to show that many
pseudovarieties of interest are~GGM.

\begin{Thm}
  \label{t:gm}
  Let \pv V be a monoidal pseudovariety of semigroups satisfying the
  following conditions:
  \begin{enumerate}[(i)]
  \item\label{item:gm-1} $\pv V*\pv D=\pv V$;
  \item\label{item:gm-2} the pseudovariety \pv V contains~\pv{Sl} and
    it is closed under Birget expansions;
  \item\label{item:gm-3} $\pv V\bullet\pv{Ab}_p=\pv V$ for some
    prime~$p$.
  \end{enumerate}
  Then the pseudovariety \pv V is~GGM.\qed
\end{Thm}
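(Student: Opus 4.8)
The plan is to fix a non-singleton finite alphabet $A$ and show that \Om AV is both LM and RM, hence GGM; since $A$ is arbitrary, \pv V is then GGM. The RM half will follow from the LM half by passing to the dual pseudovariety, so the real work concentrates on proving that \Om AV is LM, and this I would obtain by assembling Theorems~\ref{t:lm} and~\ref{t:plenty-of-torsion}.

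For the LM part I would verify the three hypotheses of Theorem~\ref{t:lm}. Condition~(i), $\pv V*\pv D=\pv V$, is hypothesis~\eqref{item:gm-1}. For condition~(ii) I would observe that closure of \pv V under Birget expansions entails closure under right Rhodes expansions cut to generators: the Birget expansion of an $A$-generated finite semigroup $S$ is built by iterating right and left Rhodes expansions, each surjecting, as an $A$-generated semigroup, onto the previous one, so it admits a surjective homomorphism onto the right Rhodes expansion of $S$ cut to $A$, which therefore lies in~\pv V as pseudovarieties are closed under homomorphic images; combined with $\pv{Sl}\subseteq\pv V$ from hypothesis~\eqref{item:gm-2}, \cite[Proposition~3.5]{Almeida&Trotter:1999a} then yields that \Om AV has content, $0$, and $\bar0$ functions. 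For condition~(iii), that the minimum ideal of \Om AV has plenty of torsion on the left, I would invoke Theorem~\ref{t:plenty-of-torsion}, whose hypotheses are exactly that \pv V be monoidal, that $\pv V*\pv D=\pv V$, that \Om AV have content, $0$, and $\bar0$ functions (just established), and that $\pv V\bullet\pv{Ab}_p=\pv V$ (hypothesis~\eqref{item:gm-3}). Theorem~\ref{t:lm} then gives that \Om AV is LM.

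For the RM part I would pass to the dual pseudovariety $\pv V^\rho$ and check that it again satisfies all the hypotheses of the present theorem, so that the LM conclusion of the previous paragraph applies to it for every non-singleton finite alphabet; since \Om AV is RM precisely when $\Om A{\pv V^\rho}$ is LM, this completes the argument. Here monoidality is self-dual; the identity $\pv V*\pv D=\pv V$ forces $\pv V^\rho*\pv D=\pv V^\rho$ by the argument already used in the proof of Proposition~\ref{p:cancellation}; the containment $\pv{Sl}\subseteq\pv V$ is self-dual because $\pv{Sl}$ is; closure under Birget expansions is self-dual because the Birget expansion is built symmetrically from right and left Rhodes expansions; and $\pv V\bullet\pv{Ab}_p=\pv V$ gives $\pv V^\rho\bullet\pv{Ab}_p=\pv V^\rho$, since an inspection of the four multiplication rules defining $M(S,T,f)$ shows that $M(S,T,f)^\rho\cong M(S^\rho,T^\rho,f)$ (swap the first and third coordinates of the triples) and $\pv{Ab}_p$ is a self-dual pseudovariety of groups.

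The step I expect to need the most care is the implication ``closed under Birget expansions $\Rightarrow$ closed under right (respectively left) Rhodes expansions cut to generators'': this is the one place where condition~\eqref{item:gm-2} does more work than the weaker structural hypotheses used in the WGGM results, and it rests on the definition of the Birget expansion as an iteration of Rhodes expansions together with the functoriality of Rhodes expansions under generator-preserving surjections. The remaining points---notably the identification $M(S,T,f)^\rho\cong M(S^\rho,T^\rho,f)$ underlying the self-duality of condition~\eqref{item:gm-3}---are short direct computations, and the theorem is otherwise a purely formal assembly of Theorems~\ref{t:lm} and~\ref{t:plenty-of-torsion} combined with duality.
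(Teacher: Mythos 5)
Your proof is correct and follows precisely the route the paper intends: the paper dispatches the statement with a \qed preceded only by the remark "Combining Theorems~\ref{t:lm} and~\ref{t:plenty-of-torsion}, we obtain the following result," and your argument is exactly the fleshed-out version of that combination---observing that closure under Birget expansions gives closure under both Rhodes expansions cut to generators (hence, with $\pv{Sl}\subseteq\pv V$, all of the content, $0$, $\bar0$, $1$, $\bar1$ functions via~\cite[Proposition~3.5]{Almeida&Trotter:1999a}), deducing LM from Theorems~\ref{t:plenty-of-torsion} and~\ref{t:lm}, and getting RM by checking that every hypothesis is preserved under left-right duality (the only delicate point being $\pv V^\rho*\pv D=\pv V^\rho$, which you handle correctly via the Straubing result already used in Proposition~\ref{p:cancellation}, and the anti-isomorphism $M(S,T,f)^\rho\cong M(S^\rho,T^\rho,f)$, which you verify correctly). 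No gaps.
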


Taking into account
Corollary~\ref{c:closure-under-Rees-matrix-extensions}, we may apply
Theorem~\ref{t:gm} to many familiar pseudovarieties, thus improving
Corollary~\ref{c:barH-wggm}.

\begin{Cor}
  \label{c:gm-barH}
  For every nontrivial pseudovariety of groups \pv H, the
  pseudovariety $\bar{\pv H}$ is~GGM.\qed
\end{Cor}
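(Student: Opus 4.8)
The plan is to verify the three hypotheses of Theorem~\ref{t:gm} for the pseudovariety $\pv V=\bar{\pv H}$; since $\bar{\pv H}$ is also monoidal, the corollary then follows immediately. Everything needed is already at hand.

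First, Proposition~\ref{p:Cn-properties} gives that $\bar{\pv H}$ is monoidal, contains~$\pv{Sl}$, and satisfies $\bar{\pv H}*\pv D=\bar{\pv H}$, which is condition~\eqref{item:gm-1}. The same proposition states that $\bar{\pv H}$ is closed under concatenation, that is, $\pv A\malcev\bar{\pv H}=\bar{\pv H}$; as $\pv B$ is aperiodic, monotonicity of the Mal'cev product (together with the trivial reverse inclusion) yields $\pv B\malcev\bar{\pv H}=\bar{\pv H}$, and hence, by the sufficient conditions recalled in Subsection~\ref{sec:content-0-1}, $\bar{\pv H}$ is closed under right and left Rhodes expansions, thus under Birget expansions. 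This establishes condition~\eqref{item:gm-2}.

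It remains to check condition~\eqref{item:gm-3}, and this is the only place where the hypothesis that $\pv H$ is nontrivial is used; it is also the main point of the argument. Fix a nontrivial group $G\in\pv H$; it has an element of prime order~$p$, so $\mathbb{Z}/p\mathbb{Z}\in\pv H$, and consequently $\pv{Ab}_p\subseteq\pv H\subseteq\bar{\pv H}$, the last inclusion because every subgroup of a group in~$\pv H$ again lies in~$\pv H$. Since the operator $\bullet$ is monotone in each of its arguments, directly from its definition, Corollary~\ref{c:closure-under-Rees-matrix-extensions} gives $\bar{\pv H}\bullet\pv{Ab}_p\subseteq\bar{\pv H}\bullet\bar{\pv H}=\bar{\pv H}$, while the reverse inclusion holds because $\pv U\subseteq\pv U\bullet\pv W$ for every pseudovariety~$\pv W$, as observed just after the definition of~$\bullet$ in Subsection~\ref{sec:construction}. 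Hence $\bar{\pv H}\bullet\pv{Ab}_p=\bar{\pv H}$, and Theorem~\ref{t:gm} applies to show that $\bar{\pv H}$ is GGM. The argument presents no real difficulty: when $\pv H$ is trivial, condition~\eqref{item:gm-3} genuinely fails — indeed $\bar{\pv I}=\pv A$ is not GGM, as $\pv A\bullet\pv{Ab}_p$ would contain a group of order~$p$ — so the nontriviality of~$\pv H$ is precisely what makes this last step work.
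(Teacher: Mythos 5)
Your verification of the three hypotheses of Theorem~\ref{t:gm} for $\bar{\pv H}$ is correct and is exactly the route the paper intends: Proposition~\ref{p:Cn-properties} and the Mal'cev-product discussion of Subsection~\ref{sec:content-0-1} supply conditions~(i) and~(ii), and Corollary~\ref{c:closure-under-Rees-matrix-extensions} together with monotonicity of~$\bullet$ supplies condition~(iii). Only your closing aside overreaches slightly: the observation that $\pv A\bullet\pv{Ab}_p$ contains a group of order~$p$ shows that hypothesis~(iii) fails for $\pv A$, not that $\pv A$ is not GGM---that stronger fact is recorded separately in the paragraph preceding Corollary~\ref{c:barH-wggm}, via Lemma~\ref{l:GGM-max-subgroups}---but this does not affect the main argument.
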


\section{WGGM for subpseudovarieties of
  \texorpdfstring{\pv{DS}}{DS}}
\label{sec:DS}

Many pseudovarieties of interest are contained in the pseudovariety
\pv{DS}. Although \pv{DS} can be easily seen to be closed under Birget
expansions and $\pv{DS}\bullet\pv G=\pv{DS}$ by
Proposition~\ref{p:bullet-group}, $\pv{DS}*\pv D\ne\pv{DS}$ since, for
instance, $\pv{Sl}*\pv D$ contains the aperiodic five-element Brandt
semigroup $B_2$ while \pv{DS} is precisely the largest pseudovariety
that does not contain~$B_2$. Thus, to establish that suitable
subpseudovarieties of~\pv{DS} are GGM, we have to develop an
alternative approach. In fact, we only manage to prove WGGM. The basic
idea is that, for every pseudovariety \pv V in the interval $[\pv
J,\pv{DS}]$, where \pv J is the pseudovariety of all finite \Cl
J-trivial semigroups, membership in the minimum ideal of~\Om AV is
characterized by the property of admitting all finite words as
subwords\footnote{ For a pseudoword $w\in\Om AV$ and a finite word
  $s\in A^+$, we say that $s$~is a \emph{subword} of~$w$ if there are
  factorizations $s=s_1\cdots s_n$ and $w=w_0s_1w_1\cdots s_nw_n$, where
$s_1,\dots ,s_n\in A$ and $w_0,\dots ,w_n\in (\Om AV)^1$.}
\cite[Theorems~8.1.7 and~8.1.10]{Almeida:1994a}. Since the
minimum ideal of~\Om AJ is trivial, we need a larger pseudovariety to
be able to start a program mimicking that developed in previous
sections. Since $\pv{Sl}\subseteq\pv J$, such a semigroup \Om AV has
automatically a content function.

In compensation for dropping the hypothesis $\pv V*\pv D=\pv V$, we
need to reinforce the hypothesis of having $0$ and $\bar0$ functions
with the stronger condition of uniqueness of left basic
factorizations.

\begin{Prop}
  \label{p:lm-DS}
  Let \pv V be a pseudovariety in the interval $[\pv J,\pv{DS}]$ and
  let $A$ be a non-singleton finite alphabet. If \pv V has unique left
  basic factorizations and $u,v\in\Om AV$ are such that
  $\lambda^K(u)=\lambda^K(v)$ then either $u$ and $v$ are equal or
  they both belong to the minimum ideal $K$.
\end{Prop}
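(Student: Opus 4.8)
The plan is to adapt the proof of Proposition~\ref{p:lm}, replacing throughout ``finite factor'' by ``finite subword'', the de~Bruijn encoding $\Phi_n^{\pv V}$ by iterated left basic factorizations, and letter cancelativity by uniqueness of left basic factorizations. Write $\mathrm{sub}(z)$ for the set of finite words that are subwords of $z\in\Om AV$. Since $\pv N\subseteq\pv J\subseteq\pv V$ the free semigroup $A^+$ embeds in $\Om AV$, and for each finite word $s=a_1\cdots a_k$ the language $A^*a_1A^*\cdots a_kA^*$ is piecewise testable, so ``$s\in\mathrm{sub}(z)$'' is a clopen condition on $\Om AV$; hence $\mathrm{sub}(xy)=\mathrm{sub}(x)\mathrm{sub}(y)$ for all $x,y\in\Om AV$, while $\pv V\subseteq\pv{DS}$ gives $z\in K$ if and only if $\mathrm{sub}(z)=A^+$ \cite[Theorems~8.1.7 and~8.1.10]{Almeida:1994a}. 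Because $\pv V$ has unique left basic factorizations, for every $z\in\Om AV$ and every $s=a_1\cdots a_k\in\mathrm{sub}(z)$ the \emph{left-greedy occurrence} of $s$ in $z$ --- the factorization $z=z_0a_1z_1\cdots a_kz_k$ obtained by locating, by iterated left basic factorizations, the successive first occurrences of $a_1,\dots,a_k$ in the successive remainders --- is uniquely determined by $z$, and it interacts with products greedily: in $z'z''$ the left factor $z'$ accounts for the longest prefix of $s$ lying in $\mathrm{sub}(z')$ and the remaining letters are read in $z''$. I assume $\lambda^K(u)=\lambda^K(v)$, i.e.\ $uw=vw$ for every $w\in K$, and must show $u=v$ or $\mathrm{sub}(u)=\mathrm{sub}(v)=A^+$. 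A preliminary observation I would record first: for every generator $a\in A$ one has $\lambda^K(a)\ne\mathrm{id}_K$ --- for if $ak=k$ for all $k\in K$, picking $b\in A\setminus\{a\}$ and $k\in K$ of the form $k=c_1\cdots c_r\,a\,k_0$ with $\{c_1,\dots,c_r\}=A\setminus\{a\}$ and $k_0\in K$, the equality $a\cdot(b^\omega kb^\omega)=b^\omega kb^\omega$ would force equal markers for the first left basic factorizations of its two sides, whereas the left side has marker a letter of $A\setminus\{a,b\}$ (or $b$, when $|A|=2$) and the right side has marker $a$.

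\emph{Step 1: $\mathrm{sub}(u)=\mathrm{sub}(v)$.} Suppose not; by symmetry there is $s=a_1\cdots a_k\in\mathrm{sub}(u)\setminus\mathrm{sub}(v)$, and let $j<k$ be the length of the longest prefix of $s$ in $\mathrm{sub}(v)$. Testing with $w=a_{j+1}k_0$ for $k_0\in K$: since $s\in\mathrm{sub}(u)$, the left-greedy occurrence of $s$ in $uw$ lies entirely inside $u$, while in $vw$ it reads $a_1\cdots a_j$ inside $v$, then --- its remainder being $a_{j+1}$-free --- the displayed $a_{j+1}$, then $a_{j+2},\dots,a_k$ inside $k_0$. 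When $k\ge j+2$, comparing these two occurrences of $uw=vw$ by uniqueness makes the factor ``$z_{j+1}$'' equal to a fixed piece of $u$ on one side and to the maximal $a_{j+2}$-free prefix of the arbitrary $k_0$ on the other; choosing $k_0$ once to begin with $a_{j+2}$ and once not yields a contradiction. This leaves the case that every word in $\mathrm{sub}(u)\setminus\mathrm{sub}(v)$ exceeds its longest prefix in $\mathrm{sub}(v)$ by exactly one letter; taking $s$ of minimal length there and running the same comparison with $k=j+1$, if the absolute remainder of $s$ in $u$ had nonempty content one could append a letter to $s$ and raise this defect to $2$, so that remainder must be empty --- whence $u=v\,a_{j+1}$ and the last factor of the occurrence gives $\lambda^K(a_{j+1})=\mathrm{id}_K$, excluded by the preliminary observation.

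\emph{Step 2.} Now $\mathrm{sub}(u)=\mathrm{sub}(v)$; if this set is $A^+$ then $u,v\in K$ and we are done, so suppose there is $s=a_1\cdots a_k\notin\mathrm{sub}(u)$, and let $j<k$ be the length of its longest prefix in $\mathrm{sub}(u)=\mathrm{sub}(v)$. Testing once more with $w=a_{j+1}k_0$, in both $uw$ and $vw$ the left-greedy occurrence of $a_1\cdots a_{j+1}$ reads $a_1\cdots a_j$ inside $u$, resp.\ $v$, with $a_{j+1}$-free remainder (by maximality of $j$), then the displayed $a_{j+1}$ with remainder $k_0$. Uniqueness of the occurrence in $uw=vw$ forces the prefixes up to the $a_j$-marker to coincide and the remainders after it to coincide, i.e.\ $u$ and $v$ have a common prefix $P$ with $u=P\bar u$, $v=P\bar v$ and $\bar u=\bar v$, so $u=v$.

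The step I expect to be the main obstacle is Step~1: besides the bookkeeping of the degenerate subcase above (which is precisely why the fact about $\lambda^K(a)$ is isolated beforehand), the entire argument rests on having the left-greedy occurrence machinery firmly in place for every $\pv V$ in $[\pv J,\pv{DS}]$ with unique left basic factorizations --- its uniqueness, its continuity, and above all its correct interaction with products --- so a substantial part of the work will be establishing those properties carefully before the combinatorics can be carried out.
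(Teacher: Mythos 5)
Your proof is correct and follows the same overall strategy as the paper's: characterize membership in $K$ by the set of finite subwords, show first that $u$ and $v$ have the same finite subwords, then that a common missing subword forces $u=v$, with uniqueness of left-greedy occurrences (and its interaction with products, which the paper also uses without detailed proof) doing all the work. Your Step~2 is essentially the paper's second paragraph. The one genuine divergence is the execution of Step~1: the paper takes $za$ of minimal length among the subwords of $u$ that are not subwords of $v$ and tests with $abw$, where $b$ is chosen different from the first letter of the remainder of the left-greedy occurrence of $za$ in $ua$ --- the appended $a$ guarantees that remainder is nonempty, so $b$ is well defined, and comparing the first letters of the final remainders of the two occurrences of $za$ in $uabw=vabw$ gives the contradiction in one stroke. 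That choice of test element makes your case split on the ``defect'' $k-j$ and your preliminary observation that $\lambda^K(a)\ne\mathrm{id}_K$ (which you prove correctly, via the markers of left basic factorizations of $b^\omega k b^\omega$ and $ab^\omega k b^\omega$) unnecessary, though your longer route is sound.
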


\begin{proof}
  Suppose that there is some finite subword of~$u$ that is not a
  subword of~$v$. Choose $za$ to be such a word of minimum length,
  with $a\in A$, so that $za$ is a subword of~$u$ but
  not of~$v$, while $z$ is a subword of~$v$. Since $|A|\ge2$, we may
  choose a letter $b\in A$ which is different from the first letter of
  the remainder in the left-greedy occurrence of $za$ as a subword
  in~$ua$.

  Let $w$ be any element of the minimum ideal $K$ of~\Om AV. Since
  $za$ is not a subword of~$v$, but $z$~is, the remainder of the left
  greedy occurrence of $za$ in~$vabw$ is $bw$. On the other hand,
  since $za$ is a subword of~$u$ and by the choice of the letter $b$,
  the remainder of the left greedy occurrence of $za$ in~$uabw$ starts
  with a letter different from~$b$. In view of uniqueness of left
  basic factorizations in~\Om AV, it follows that $uabw\ne vabw$,
  which contradicts the assumption that $\lambda^K(u)=\lambda^K(v)$.
  Hence $u$ and $v$ have the same finite subwords.

  Suppose that there is some finite word that is not a subword
  of~$u$. Let $za$ be such a word of minimum length, with $a\in A$.
  Let $w$ be an arbitrary element of~$K$. From the hypothesis that
  $\lambda^K(u)=\lambda^K(v)$, we deduce that $uaw=vaw$. But, since
  $z$~is a subword of both $u$ and~$v$, while $za$~is not, in the left
  greedy occurrence of~$za$ in~$uaw$ and $vaw$, the indicated
  occurrences of~$a$ must be the chosen occurrences of the last letter
  of~$za$. Hence, we must have $u=v$. Thus, if $u\ne v$, then $u$ and
  $v$ both admit all finite words as subwords and so they belong
  to~$K$.
\end{proof}

Combining Proposition~\ref{p:lm-DS} with its dual and
Proposition~\ref{p:reduction}\eqref{item:reduction-1}, we deduce the
following result.

\begin{Thm}
  \label{t:wggm-DS}
  Let \pv V be a pseudovariety in the interval $[\pv J,\pv{DS}]$ that
  has unique left and right basic factorizations. Then \pv V~is
  WGGM.\qed
\end{Thm}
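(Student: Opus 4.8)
The plan is to read the theorem off directly from Proposition~\ref{p:lm-DS}, its left-right dual, and the weak reductivity of the minimum ideal provided by Proposition~\ref{p:reduction}\eqref{item:reduction-1}. Fix a non-singleton finite alphabet $A$, write $S=\Om AV$, and let $K$ be the minimum ideal of~$S$. To prove that $S$ is WGGM I must verify the defining condition: whenever $u,v\in S$ are distinct and are \emph{not} both in~$K$, then $\lambda^K(u)\ne\lambda^K(v)$ and $(u)\rho^K\ne(v)\rho^K$.

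First I would establish $\lambda^K(u)\ne\lambda^K(v)$ by contraposition. Since \pv V lies in the interval $[\pv J,\pv{DS}]$, the alphabet $A$ is non-singleton, and \pv V has unique left basic factorizations, Proposition~\ref{p:lm-DS} applies: the equality $\lambda^K(u)=\lambda^K(v)$ forces $u=v$ or $u,v\in K$. Under our standing assumptions neither alternative holds, so $\lambda^K(u)\ne\lambda^K(v)$. For the other inequality I would invoke the left-right dual of Proposition~\ref{p:lm-DS}. Here one first notes that the interval $[\pv J,\pv{DS}]$ is stable under reversal (both \pv J and \pv{DS} are closed under reversal), so the reversed pseudovariety $\pv V^\rho$ again lies in $[\pv J,\pv{DS}]$ and, since \pv V has unique right basic factorizations, $\pv V^\rho$ has unique left basic factorizations; applying Proposition~\ref{p:lm-DS} to $\pv V^\rho$ and translating back yields that $(u)\rho^K=(v)\rho^K$ forces $u=v$ or $u,v\in K$, whence $(u)\rho^K\ne(v)\rho^K$. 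This proves the WGGM condition for $S$, and since $A$ was an arbitrary non-singleton finite alphabet, \pv V is WGGM.

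The remaining ingredient is purely formal and accounts for the parenthetical clause in the definition of WGGM, namely that distinct $u,v$ lying both in~$K$ nevertheless satisfy $\omega^K(u)\ne\omega^K(v)$. This is exactly where Proposition~\ref{p:reduction}\eqref{item:reduction-1} enters: $K$ is a compact completely simple semigroup, hence weakly reductive, and the restriction of $\omega^K$ to~$K$ coincides with the canonical map $K\to\Omega(K)$, which is therefore injective. I do not expect any genuine mathematical obstacle in this theorem: all the substance sits inside Proposition~\ref{p:lm-DS}, and the only point requiring care is the bookkeeping of the duality --- matching the hypothesis of unique left basic factorizations with the conclusion about $\lambda^K$, the hypothesis of unique right basic factorizations with the dual conclusion about $\rho^K$, and checking that the hypotheses of Proposition~\ref{p:lm-DS} do transfer to~$\pv V^\rho$ so that its dual is legitimately available.
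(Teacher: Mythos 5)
Your proposal is correct and matches the paper's proof exactly: the paper derives Theorem~\ref{t:wggm-DS} by combining Proposition~\ref{p:lm-DS} with its left-right dual and Proposition~\ref{p:reduction}\eqref{item:reduction-1}. The duality bookkeeping you spell out (stability of $[\pv J,\pv{DS}]$ under reversal and matching of left/right basic factorizations with $\lambda^K$/$\rho^K$) is the intended content that the paper leaves implicit.
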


Combining Theorem~\ref{t:wggm-DS} and
Corollary~\ref{c:unique-lbf-egs}, we obtain the following important
examples of WGGM pseudovarieties.

\begin{Cor}
  \label{c:wggm-DS-egs}
  For an arbitrary pseudovariety of groups \pv H, the pseudovarieties
  $\pv{DO}\cap\bar{\pv H}$ and $\pv{DS}\cap\bar{\pv H}$ are WGGM.\qed
\end{Cor}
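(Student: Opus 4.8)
The plan is to deduce the statement directly from Theorem~\ref{t:wggm-DS}, which asserts that every pseudovariety lying in the interval $[\pv J,\pv{DS}]$ and having unique left and right basic factorizations is WGGM. Thus the whole proof reduces to checking, for each of the pseudovarieties $\pv V=\pv{DO}\cap\bar{\pv H}$ and $\pv V=\pv{DS}\cap\bar{\pv H}$, that (i) $\pv J\subseteq\pv V\subseteq\pv{DS}$ and (ii) $\pv V$ has unique left and right basic factorizations.

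For (i), the upper bound is immediate: $\pv{DO}\subseteq\pv{DS}$, and intersecting either $\pv{DO}$ or $\pv{DS}$ with $\bar{\pv H}$ only makes the pseudovariety smaller, so both $\pv V$ are contained in $\pv{DS}$. For the lower bound I would use the chain $\pv{Sl}\subseteq\pv J\subseteq\pv{DA}\subseteq\pv{DO}\subseteq\pv{DS}$ together with the fact that every finite $\Cl J$-trivial semigroup is aperiodic, hence has only trivial subgroups; since the trivial group belongs to every pseudovariety of groups $\pv H$, this gives $\pv J\subseteq\bar{\pv H}$. Combining, $\pv J\subseteq\pv{DO}\cap\bar{\pv H}$ and $\pv J\subseteq\pv{DS}\cap\bar{\pv H}$, as required.

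For (ii), there is nothing new to prove: this is precisely the content of Corollary~\ref{c:unique-lbf-egs}, which already records that $\pv{DO}\cap\bar{\pv H}$ and $\pv{DS}\cap\bar{\pv H}$ have unique left and right basic factorizations (obtained there via Proposition~\ref{p:unique-lbf}, using that these pseudovarieties are monoidal, contain $\pv{Sl}$, and satisfy $\pv D\malcev\pv V=\pv V$ and $\pv K\malcev\pv V=\pv V$). With both hypotheses of Theorem~\ref{t:wggm-DS} in hand, the conclusion follows at once. I do not expect any genuine obstacle here; the only point warranting a moment's reflection is the inclusion $\pv J\subseteq\bar{\pv H}$, which is immediate once one recalls that $\Cl J$-triviality forces aperiodicity.
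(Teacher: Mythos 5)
Your proof is correct and follows exactly the paper's route, namely combining Theorem~\ref{t:wggm-DS} with Corollary~\ref{c:unique-lbf-egs}. The only addition is that you spell out the containment $\pv J\subseteq\pv{DO}\cap\bar{\pv H}$ (resp.\ $\pv{DS}\cap\bar{\pv H}$) $\subseteq\pv{DS}$, which the paper leaves implicit; your justification of it is correct.
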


We conjecture that every pseudovariety of the form %
$\pv{DS}\cap\bar{\pv H}$, where \pv H is a nontrivial pseudovariety of
groups, is actually GGM.

Another interesting class of examples is obtained by combining
Theorem~\ref{t:wggm-DS} with Corollary~\ref{c:unique-bf-Cn}, which
leads to the following result.

\begin{Cor}
  \label{c:wggm-DSCn}
  The pseudovarieties $\pv{DS}\cap\pv C_n$~are WGGM.\qed
\end{Cor}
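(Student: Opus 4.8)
The plan is to deduce Corollary~\ref{c:wggm-DSCn} directly from Theorem~\ref{t:wggm-DS} by verifying its two hypotheses for each pseudovariety $\pv V=\pv{DS}\cap\pv C_n$: first, that $\pv V$ lies in the interval $[\pv J,\pv{DS}]$, and second, that $\pv V$ has unique left and right basic factorizations. The second hypothesis is already recorded in Corollary~\ref{c:unique-bf-Cn}, which states precisely that the pseudovarieties $\pv{DS}\cap\pv C_n$ have unique left and right basic factorizations, so only the containment $\pv J\subseteq\pv{DS}\cap\pv C_n\subseteq\pv{DS}$ remains to be checked.

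The upper bound $\pv{DS}\cap\pv C_n\subseteq\pv{DS}$ is immediate. For the lower bound, I would argue $\pv J\subseteq\pv C_n$ and $\pv J\subseteq\pv{DS}$; the latter is classical (every $\Cl J$-trivial semigroup has trivial, hence aperiodic, regular $\Cl D$-classes, which are in particular subsemigroups). For $\pv J\subseteq\pv C_n$ it suffices to treat $n=0$, since $\pv C_0=\pv A\subseteq\pv C_n$ for all $n$ by the recursive definition $\pv C_{n+1}=\pv C_n*\pv G*\pv A$; and $\pv J\subseteq\pv A$ is standard, as $\Cl J$-trivial semigroups are aperiodic. Hence $\pv J\subseteq\pv A=\pv C_0\subseteq\pv C_n$ and so $\pv J\subseteq\pv{DS}\cap\pv C_n$, placing $\pv V$ in the required interval.

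With both hypotheses of Theorem~\ref{t:wggm-DS} in hand, the conclusion that $\pv{DS}\cap\pv C_n$ is WGGM follows at once. There is no real obstacle here: the corollary is a routine instantiation, the only mild point being to confirm that the nameless lattice-theoretic inclusions invoked above ($\pv J\subseteq\pv A$, $\pv J\subseteq\pv{DS}$) are indeed available as background facts, which they are among the standard relations between the pseudovarieties listed in Subsection~\ref{sec:pvs-and-operations}. In the interest of brevity, one could simply write that the claim is immediate from Theorem~\ref{t:wggm-DS} and Corollary~\ref{c:unique-bf-Cn} upon noting $\pv J\subseteq\pv A\subseteq\pv C_n$.
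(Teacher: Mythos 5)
Your proposal is correct and takes essentially the same route as the paper, which simply states that the result follows by combining Theorem~\ref{t:wggm-DS} with Corollary~\ref{c:unique-bf-Cn}. The only thing you add is an explicit check of the containment $\pv J\subseteq\pv{DS}\cap\pv C_n\subseteq\pv{DS}$, which the paper leaves implicit; your justification of it (via $\pv J\subseteq\pv A=\pv C_0\subseteq\pv C_n$ and $\pv J\subseteq\pv{DS}$) is sound.
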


\section{GGM for subpseudovarieties of
  \texorpdfstring{\pv{CR}}{CR}}
\label{sec:CR}

This section is dedicated to proving GGM or its weakened versions for
various subpseudovarieties of~\pv{CR}.

\begin{Prop}
  \label{p:wggm-CR}
  Let $A$ be a non-singleton finite alphabet and let \pv V be a
  subpseudovariety of~\pv{CR} such that \Om AV has content, $0$, and
  $\bar0$ functions. Suppose further that at least one of the
  following conditions holds:
  \begin{enumerate}[(i)]
  \item\label{item:wggm-CR-1} $|A|\ge3$ and \pv V contains~\pv{RZ};
  \item\label{item:wggm-CR-2} the pseudovariety \pv V~contains some
    nontrivial group.
  \end{enumerate}
  If $u,v\in\Om AV$ are two distinct elements, then either
  $\lambda^K(u)\ne\lambda^K(v)$ or both $u$ and $v$ belong to the
  minimum ideal.
\end{Prop}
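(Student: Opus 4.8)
The plan is to assume $\lambda^K(u)=\lambda^K(v)$ with $u\neq v$ and to deduce that $c(u)=c(v)=A$, which already gives the conclusion: since $\pv V\subseteq\pv{CR}$ and $\pv{Sl}\subseteq\pv V$ (the latter being forced by $\Om AV$ having a content function), the semigroup $\Om AV$ is a semilattice of completely simple semigroups whose associated semilattice quotient is $c\colon\Om AV\to\Om A{Sl}$ (the lattice of nonempty subsets of $A$ under union, with minimum $A$), so its minimum ideal $K$ consists exactly of the elements of content $A$. All the work then consists of evaluating the hypothesis $uk=vk$ on conveniently chosen $k\in K$ --- always of the form $k=pz$ with $p$ a prescribed finite word and $z$ a fixed element of content $A$, so that $pz\in K$ --- and reading off information with the $0$ and $\bar 0$ functions and their iterates, which recover for each element the order in which the generators occur in it for the first time, together with the corresponding prefixes.

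First I would settle the case $c(u)=c(v)=:B$. If $B=A$ we are done, so assume $B\subsetneq A$ and pick $c\in A\setminus B$. Evaluating on $k=cz$ with $c(z)=A$, the first letter of $A\setminus B$ to appear in $uk=u\cdot cz$ is $c$, and it appears right after the prefix $u$; the same computation for $vk$ puts it right after $v$. Iterating the $0$ function on the common element $uk=vk$ extracts this prefix, whence $u=v$, a contradiction. So from now on $c(u)\neq c(v)$.

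Next I would pin down the shape of $u$ and $v$ by the same sort of extractions. For a letter $a\in c(v)\setminus c(u)$, evaluate on $k=az$: in $uk=u\cdot az$ the first occurrence of $a$ sits right after the prefix $u$ and is the $(|c(u)|+1)$-st letter to appear, while in $vk$ the analogous role is played by the prefix of $v$ preceding its first $a$; comparing the two first-occurrence orders forces that prefix to be $u$, so $v=uav_1$ and in particular $c(u)\subseteq c(v)$. Thus $c(v)\setminus c(u)\neq\emptyset$ already gives $c(u)\subsetneq c(v)$, so after possibly interchanging $u$ and $v$ we may assume $c(u)\subsetneq c(v)$. Counting the positions of letters in first-occurrence orders then rules out $|c(v)\setminus c(u)|\geq 2$ (two such letters could not both be immediately preceded by $u$) and rules out $c(v)\subsetneq A$ (a letter $b\in A\setminus c(v)$ would occur at position $|c(u)|+1$ on one side and $|c(v)|+1$ on the other). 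So we reach the decisive configuration: $c(v)=A$ (hence $v\in K$), $c(u)=A\setminus\{a\}$, $v=uav_1$, with $\bar 0(v)=a$ and $0(v)=u$.

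The last step is where hypotheses \eqref{item:wggm-CR-1} and \eqref{item:wggm-CR-2} enter, and I expect it to be the real point. For any letter $b\neq a$, evaluate on $k=baz$ with $c(z)=A$: then $0(uk)=u\cdot b$, because $a$ first appears in $k$ only after the initial $b$, whereas $0(vk)=0(v)=u$; since $uk=vk$ this yields $ub=u$ for every $b\in A\setminus\{a\}$. This is impossible. Under \eqref{item:wggm-CR-1} there are two distinct such letters $b,b'$, and the projection $\Om AV\to\Om A{RZ}\cong A$ (which exists because $\pv{RZ}\subseteq\pv V$ and sends every element to its last letter) separates $ub$ from $ub'$, contradicting $ub=u=ub'$. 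Under \eqref{item:wggm-CR-2}, $\pv V$ contains $\pv{Ab}_p$ for some prime $p$, and the homomorphism $\Om AV\to\mathbb Z/p\mathbb Z$ counting occurrences of a fixed letter $b\neq a$ modulo $p$ sends $ub$ and $u$ to distinct elements, again contradicting $ub=u$. Either alternative therefore refutes the configuration, completing the proof. The fussiest part is the first-occurrence bookkeeping in the third paragraph, but the genuine content is this last step, where the two hypotheses are precisely what is needed to make $ub=u$ untenable.
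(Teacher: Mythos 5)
Your proof is correct and follows essentially the same route as the paper's: identify the minimum ideal with the full-content elements, use the iterated $0$/$\bar 0$ functions to compare prefixes before first occurrences of suitably chosen letters in $uk=vk$, reduce to the configuration $v=uav_1$ with $c(u)=A\setminus\{a\}$ and $0(v)=u$, and then derive $ub=u$, which is refuted by the projection to $\Om A{RZ}$ under hypothesis (i) and by counting $b$ modulo $p$ under hypothesis (ii). The paper reaches the same configuration slightly more directly (first showing every letter occurs in $u$ or $v$, then extracting $0(u)=v$ in one step), but the mechanism and the role of the two hypotheses are identical.
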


\begin{proof}
  Recall that \pv{CR} is contained in~\pv{DS}. Thus, \pv V belongs to
  the interval $[\pv{Sl},\pv{DS}]$, and so the regular \Cl J-classes
  of~\Om AV are characterized by the content of their
  elements \cite[Theorem~8.1.7]{Almeida:1994a}. Since $\pv
  V\subseteq\pv{CR}$, there are no other \Cl J-classes. In particular,
  the minimum ideal $K$ consists precisely of the elements of full
  content. Suppose that $u,v\in\Om AV$ are distinct elements such that
  $\lambda^K(u)=\lambda^K(v)$. Let $w$ be an arbitrary element of~$K$.

  Suppose that there is some letter $a\in A\setminus(c(u)\cup c(v))$.
  From the equality $uaw=vaw$, applying the function $0$ sufficiently
  many times, we obtain $u=v$, in contradiction with the hypothesis.
  Hence, every letter from~$A$ occurs in either $u$ or~$v$.
  Assume that there is a letter $a$ that occurs in~$u$ but not
  in~$v$. Let $u=u_0au_1$ be a factorization of~$u$ such that $a\notin
  c(u_0)$. Then, from the equality $\lambda^K(u)=\lambda^K(v)$, we
  obtain $u_0au_1aw=uaw=vaw$, which entails $u_0=v$, since $a\notin
  c(v)$, whence $c(v)\subseteq c(u)$ and $c(u)=A$.

  Suppose first that \pv V contains some nontrivial group. Let $p$~be
  a prime such that $\pv{Ab}_p\subseteq\pv V$ and consider the natural
  projection $\pi:\Om AV\to\Om A{Ab}_p$, where %
  $\Om A{Ab}_p\simeq(\mathbb{Z}/p\mathbb{Z})^A$; the mapping $\pi$
  counts modulo $p$ the number of occurrences of each letter. Let $b$
  be a letter from~$c(v)$ and note that $\lambda^K(u)=\lambda^K(v)$
  also yields $u_0au_1baw=ubaw=vbaw$, which now entails $u_0=vb$,
  since $a\notin c(vb)$. Hence $\pi(v)=\pi(u_0)=\pi(vb)$, which is
  absurd since the $b$-components of $\pi(v)$ and $\pi(vb)$ are
  distinct.

  Consider finally the aperiodic case, where %
  $\pv{RZ}\subseteq\pv V\subseteq\pv{CR}\cap\pv A=\pv B$ and
  $|A|\ge3$. Choose $b\in A\setminus\{a,\mathrm{t_1}(u_0)\}$. Then,
  from the equality $\lambda^K(u)=\lambda^K(v)$, we obtain
  $u_0au_1baw=ubaw=vbaw$, whence $u_0=vb$. Hence, we have
  $\mathrm{t}_1(u_0)=b$, in contradiction with the choice of~$b$.
\end{proof}

Note that $\lambda^K(ab)=\lambda^K(a)$ in~$\Om{\{a,b\}}B$, which shows
that the restriction $|A|\ge3$ cannot be dropped from the hypothesis
of Proposition~\ref{p:wggm-CR} in the aperiodic case.

Combining Propositions~\ref{p:wggm-CR}
and~\ref{p:reduction}\eqref{item:reduction-1}, we obtain the following
result.

\begin{Thm}
  \label{t:weak-gm-CR}
  Let \pv V be a pseudovariety in the interval $[\pv{Sl},\pv{CR}]$,
  and suppose \pv V~is closed under Birget expansions. Then \pv V is
  almost WGGM. Moreover, if\/ \pv V contains some nontrivial
  group then \pv V~is WGGM.\qed
\end{Thm}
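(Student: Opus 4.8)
The plan is to read the theorem off Proposition~\ref{p:wggm-CR} together with its left-right dual, using weak reductivity of the minimum ideal to handle pairs of elements that both lie in it, and splitting the argument according to whether or not \pv V contains a nontrivial group.

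First I would check that, for every non-singleton finite alphabet $A$, the semigroup \Om AV has content, $0$, $\bar0$, $1$, and $\bar1$ functions, and that $\pv{LZ}\subseteq\pv V$ and $\pv{RZ}\subseteq\pv V$. Since \pv V contains \pv{Sl} and, being closed under Birget expansions, is closed under right Rhodes expansions, \cite[Proposition~3.5]{Almeida&Trotter:1999a} provides the content, $0$, and $\bar0$ functions; applying the same result to the dual pseudovariety $\pv V^{\rho}$, which again lies in $[\pv{Sl},\pv{CR}]$ (as \pv{Sl} and \pv{CR} are self-dual) and is closed under right Rhodes expansions (since \pv V is closed under left Rhodes expansions), yields the $1$ and $\bar1$ functions for \Om AV. Moreover, by~\cite{Reilly:1990} and the discussion in Subsection~\ref{sec:content-0-1}, closure under right, respectively left, Rhodes expansions is governed by the equalities $\pv{LZ}\malcev\pv V=\pv V$ and $\pv{RZ}\malcev\pv V=\pv V$, and since $\pv U\subseteq\pv U\malcev\pv W$ always holds, these force $\pv{LZ}\subseteq\pv V$ and $\pv{RZ}\subseteq\pv V$.

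I would then distinguish two cases. Suppose first that \pv V contains a nontrivial group. Let $A$ be any non-singleton finite alphabet, let $K$ be the minimum ideal of \Om AV, and let $u\ne v$ in \Om AV not both belong to $K$. Condition~\eqref{item:wggm-CR-2} of Proposition~\ref{p:wggm-CR} applies, giving $\lambda^K(u)\ne\lambda^K(v)$, and its left-right dual gives $(u)\rho^K\ne(v)\rho^K$; thus $(u,v)$ satisfies the defining condition of WGGM. If instead $u$ and $v$ are distinct elements of $K$, then, $K$ being a profinite and hence compact completely simple semigroup, it is weakly reductive by Proposition~\ref{p:reduction}\eqref{item:reduction-1}, so $\omega^K(u)\ne\omega^K(v)$. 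Therefore \Om AV is WGGM for every non-singleton $A$, so \pv V is WGGM, in particular almost WGGM.

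Finally, suppose \pv V contains no nontrivial group; then every member of \pv V is aperiodic, so $\pv V\subseteq\pv{CR}\cap\pv A=\pv B$. For every alphabet $A$ with $|A|\ge 3$, condition~\eqref{item:wggm-CR-1} of Proposition~\ref{p:wggm-CR} holds (as $\pv{RZ}\subseteq\pv V$), as does its left-right dual (as $\pv{LZ}\subseteq\pv V$), and repeating the argument of the previous paragraph shows that \Om AV is WGGM; since $|A|$ may be taken arbitrarily large, \pv V is almost WGGM. The one point I expect to require genuine care is the pair of inclusions $\pv{LZ},\pv{RZ}\subseteq\pv V$ derived above; everything else is a routine assembly of results already in place. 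Should the characterization in~\cite{Reilly:1990} be available only as a sufficient condition rather than as an equivalence, I would instead argue directly that the Birget expansion of any non-singleton member of \pv V already contains a two-element left-zero, respectively right-zero, subsemigroup, which is enough to conclude.
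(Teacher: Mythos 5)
Your proposal follows exactly the route the paper takes: the theorem is stated with a one-line justification, ``Combining Propositions~\ref{p:wggm-CR} and~\ref{p:reduction}\eqref{item:reduction-1}, we obtain the following result,'' and you have simply spelled out that combination (the case split on whether \pv V contains a nontrivial group, the use of the dual of Proposition~\ref{p:wggm-CR} for the~$\rho^K$ half, and weak reductivity of the minimum ideal for pairs inside~$K$). The one place you rightly flag as needing care---securing $\pv{LZ},\pv{RZ}\subseteq\pv V$ so that condition~\eqref{item:wggm-CR-1} of Proposition~\ref{p:wggm-CR} is available in the aperiodic case---is indeed left implicit in the paper, and your fallback argument (the Birget expansion of a nontrivial semilattice already contains two-element left- and right-zero subsemigroups) is a correct and sufficient way to close it.
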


Combining Theorem~\ref{t:weak-gm-CR} with
Proposition~\ref{p:Cn-properties}, we obtain the following family of
further examples of WGGM pseudovarieties.

\begin{Cor}
  \label{c:wggm-CRCn}
  For $n>0$, the pseudovarieties $\pv{CR}\cap\pv C_n$~are WGGM.\qed
\end{Cor}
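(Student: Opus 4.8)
The plan is to read off Corollary~\ref{c:wggm-CRCn} from Theorem~\ref{t:weak-gm-CR} by checking its three hypotheses for the pseudovariety $\pv V=\pv{CR}\cap\pv C_n$ (with $n>0$): that $\pv V$ lies in the interval $[\pv{Sl},\pv{CR}]$, that $\pv V$ is closed under Birget expansions, and that $\pv V$ contains a nontrivial group.

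The interval membership is routine: $\pv V\subseteq\pv{CR}$ is immediate, and $\pv{Sl}\subseteq\pv V$ because semilattices are both completely regular and aperiodic, whence $\pv{Sl}\subseteq\pv{CR}$ and $\pv{Sl}\subseteq\pv A=\pv C_0\subseteq\pv C_n$. For the presence of a nontrivial group, note that $\pv C_1=\pv A*\pv G*\pv A$ contains $\pv G$, so $\pv G\subseteq\pv C_n$ for every $n\ge1$, and since $\pv G\subseteq\pv{CR}$ we obtain $\pv G\subseteq\pv V$; hence the ``moreover'' clause of Theorem~\ref{t:weak-gm-CR} applies and delivers the full WGGM conclusion, not merely almost WGGM.

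The remaining task is to show that $\pv V$ is closed under Birget expansions, and I would handle the two intersectands separately. Since the right Rhodes expansion (cut to generators) of a finite semigroup with respect to a fixed generating set is a uniquely determined finite semigroup --- and likewise for the left Rhodes expansion and for the Birget expansion obtained by alternately iterating them --- it is enough to check that $\pv{CR}$ and $\pv C_n$ are each closed under right and under left Rhodes expansions. For $\pv C_n$ this is formal: by Proposition~\ref{p:Cn-properties} we have $\pv A\malcev\pv C_n=\pv C_n$, and since $\pv B\subseteq\pv A$ this yields $\pv B\malcev\pv C_n=\pv C_n$, hence $\pv{LZ}\malcev\pv C_n=\pv{RZ}\malcev\pv C_n=\pv C_n$, which by the sufficient conditions recalled in Subsection~\ref{sec:content-0-1} gives closure under right and left Rhodes expansions and therefore under Birget expansions. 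For $\pv{CR}$ one invokes the structural fact that the Rhodes expansion cut to generators of a completely regular semigroup is again completely regular --- this is in effect what makes $\pv{CR}$ have $0$, $\bar0$, $1$, and $\bar1$ functions, as in \cite{Almeida&Trotter:1999a}. Putting the two together: if $S\in\pv V$, its Birget expansion lies in both $\pv{CR}$ and $\pv C_n$, hence in $\pv V$.

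The main obstacle is the $\pv{CR}$ half of this last step. It cannot be settled by the convenient Mal'cev-product criterion used for $\pv C_n$: the pseudovariety $\pv B\malcev\pv{CR}$, indeed already $\pv{LZ}\malcev\pv{CR}$, strictly contains $\pv{CR}$ --- for instance the two-element null semigroup admits an $\pv{LZ}$-relational morphism onto $\mathbb{Z}/2\mathbb{Z}$ --- and for the same reason $\pv{CR}\cap\pv C_n$ is not itself a fixed point of $\pv{LZ}\malcev(\cdot)$. So the preservation of complete regularity under Rhodes expansions is the genuinely structural input the argument relies on, while everything else is formal manipulation with closure operators.
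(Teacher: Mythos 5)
Your proof is correct and follows exactly the route the paper intends: the corollary is obtained by applying Theorem~\ref{t:weak-gm-CR} to $\pv{CR}\cap\pv C_n$, with Proposition~\ref{p:Cn-properties} supplying the $\pv C_n$ half of the closure under Birget expansions and the presence of $\pv G\subseteq\pv C_n$ (for $n>0$) triggering the ``moreover'' clause. The only detail the paper leaves implicit is the closure of $\pv{CR}$ itself under Rhodes expansions, which you correctly identify as the genuinely structural input (it is the content of the cited work of Reilly) rather than a consequence of the Mal'cev-product criterion.
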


Theorem~\ref{t:weak-gm-CR} also yields that \pv B~is almost WGGM
while, for a nontrivial pseudovariety of groups~\pv H,
$\pv{CR}\cap\bar{\pv H}$~is WGGM. The remainder of this section is
dedicated to proving that $\pv{CR}\cap\bar{\pv H}$~is actually GGM,
with one exception, in which it is almost GGM.

The pseudoidentity problem for $\pv{CR}\cap\bar{\pv H}$ has been
solved in~\cite{Almeida&Trotter:2001}. The solution is in a sense
similar to Theorem~\ref{t:word-problem-for-V*Dn} but involving other
parameters. Two pseudowords must have the same content to be equal
over~$\pv{CR}\cap\bar{\pv H}$. The roles of $\mathrm{i}_n$ and
$\mathrm{t}_n$ are played by the pairs of functions $(0,\bar0)$ and
$(1,\bar1)$, while that of the function $\Phi_n$~is taken by the
profinite version of Ka\v dourek and Pol\'ak's characteristic function
\cite{Kadourek&Polak:1986}. For a word $w$, the \emph{characteristic
  sequence} $\chi(w)$ is the word that is obtained by reading from
left to right the maximal factors that miss exactly one letter
from~$w$. For pseudowords, the definition is technically complicated
and is made in terms of a pseudopath in a certain free profinite
category over a profinite graph with infinitely many vertices. In
fact, this poses in general delicate problems which were overlooked
in~\cite{Almeida&Trotter:2001}, as observed
in~\cite{Almeida&ACosta:2007a}, namely the free category generated by
the graph may not be dense in the free profinite category over the
same graph. However, using the techniques
of~\cite{Almeida&ACosta:2007a}, A. Costa and the first author have
been able to show that, due to the special nature of the graph, the
approach in~\cite{Almeida&Trotter:2001} works fine as the density
condition is fulfilled \cite{Almeida&Costa:2015a}.

The graph in question associated with a pseudovariety of groups \pv H,
denoted $\partial_X\pv H$, is similar to the de Bruijn graph, being
associated with a fixed subset $X$ of the alphabet $A$, containing at
least two letters. The edges are the pseudowords $w$ with content
contained in~$X$ and missing just one letter, where two edges are
identified if the pseudoidentity they determine is valid
in~$\pv{CR}\cap\bar{\pv H}$. The extremes of such an edge $w$ are the
pseudowords $0(w)$ and $1(w)$, missing exactly two letters from~$X$.
Let $[X]$ be the set of words in $A^+$ of content~$X$. The
characteristic sequence can be viewed as a function from $[X]$ to the
set of paths in the graph~$\partial_X\pv H$. It turns out that it
extends uniquely to a continuous function, which we denote $\chi_\pv
H$, from $\overline{[X]}$ to the set of pseudopaths of the same graph.
The following result provides a recursive criterion for the validity
of pseudoidentities in pseudovarieties of the form
$\pv{CR}\cap\bar{\pv H}$. The term ``recursive'' is used here in the
sense that the criterion for equality calls itself repeatedly on
pseudoidentities involving smaller contents.

\begin{Thm}[{\cite[Theorem~3.9]{Almeida&Trotter:2001}}]
  \label{t:wp-CRH}
  Let \pv H be a pseudovariety of groups and let $u,v\in\Om AS$. Then
  the pseudovariety $\pv{CR}\cap\bar{\pv H}$ satisfies the
  pseudoidentity $u=v$ if and only if each of the following conditions
  holds:
  \begin{enumerate}[(i)]
  \item\label{item:wp-CRH-1} $c(u)=c(v)$;
  \item\label{item:wp-CRH-2} the pseudoidentity $0(u)=0(v)$ holds
    in~$\pv{CR}\cap\bar{\pv H}$;
  \item\label{item:wp-CRH-3} the pseudoidentity $1(u)=1(v)$ holds
    in~$\pv{CR}\cap\bar{\pv H}$;
  \item\label{item:wp-CRH-4} either $|c(u)|=1$ and the pseudoidentity
    $u=v$ holds in~\pv H, or $|c(u)|>1$ and the pseudoidentity
    $\chi_\pv H(u)=\chi_\pv H(v)$ holds in~\pv H.
  \end{enumerate}
\end{Thm}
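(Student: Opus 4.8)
The plan is to prove the two implications of Theorem~\ref{t:wp-CRH} separately; necessity is routine once one knows that $c$, $0$, $1$ and $\chi_\pv H$ all descend to $\Om A{}(\pv{CR}\cap\bar{\pv H})$, and the substance lies in the converse, which I would establish by induction on the size of the common content.

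\emph{Necessity.} Suppose $\pv{CR}\cap\bar{\pv H}$ satisfies $u=v$. Since $\pv{Sl}\subseteq\pv{CR}\cap\bar{\pv H}$, this pseudovariety has a content function, which gives~\eqref{item:wp-CRH-1}. As recalled in Section~\ref{sec:content-0-1}, it also has $0$ and $1$ functions --- it lies in $[\pv{Sl},\pv{CR}]$ and is closed under Birget expansions, as noted in Section~\ref{sec:CR} --- so $0$ and $1$ descend to $\Om A{}(\pv{CR}\cap\bar{\pv H})$ and \eqref{item:wp-CRH-2}, \eqref{item:wp-CRH-3} follow. For~\eqref{item:wp-CRH-4}, when $|c(u)|=1$ the elements of that content form a single maximal subgroup, namely the free pro-\pv H group of rank~$1$, so equality there is exactly equality in~\pv H; when $|c(u)|>1$, one needs that the characteristic pseudopath is compatible with the defining pseudoidentities of $\pv{CR}\cap\bar{\pv H}$, i.e.\ that $\chi_\pv H$ descends to a well-defined function on $\overline{[c(u)]}$ modulo $\pv{CR}\cap\bar{\pv H}$ --- precisely the density point settled in~\cite{Almeida&Costa:2015a} following~\cite{Almeida&ACosta:2007a}.

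\emph{Sufficiency.} Induct on $n=|c(u)|=|c(v)|$. The base $n=1$ is the free pro-\pv H group of rank~$1$, where \eqref{item:wp-CRH-4} gives the conclusion at once. For $n>1$, put $X=c(u)=c(v)$. Since $\pv{CR}\cap\bar{\pv H}\subseteq\pv{DS}$, the regular \Cl J-class $J_X$ of elements of content $X$ in $\Om A{}(\pv{CR}\cap\bar{\pv H})$ is a subsemigroup, hence a profinite completely simple semigroup, say $\Cl M(I,G,\Lambda;P)$. In the left basic factorization $u=0(u)\,\bar0(u)\,u'$ the factor $0(u)\,\bar0(u)$ already has content $X$, so it lies in $J_X$; since $u\le_\Cl R 0(u)\,\bar0(u)$ and both belong to the single \Cl J-class $J_X$, stability of profinite semigroups gives $u\mathrel{\Cl R}0(u)\,\bar0(u)$, and dually $u\mathrel{\Cl L}\bar1(u)\,1(u)$. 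Applying the induction hypothesis to the smaller contents, \eqref{item:wp-CRH-2} becomes $0(u)=0(v)$ and \eqref{item:wp-CRH-3} becomes $1(u)=1(v)$ in $\pv{CR}\cap\bar{\pv H}$; together with $\bar0(u)=\bar0(v)$ and $\bar1(u)=\bar1(v)$ (which follow since $c(0(s))=c(s)\setminus\{\bar0(s)\}$, $c(1(s))=c(s)\setminus\{\bar1(s)\}$ and $c(u)=c(v)$), this yields $u\mathrel{\Cl R}v$ and $u\mathrel{\Cl L}v$, so $u$ and $v$ lie in a common maximal subgroup $G$ of $J_X$.

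It remains to separate elements inside $G$, and here \eqref{item:wp-CRH-4} is used for $n>1$. I expect the main obstacle to be the structural statement that makes this step work: $G$ is a continuous quotient of the free pro-\pv H group on the edge set of $\partial_X\pv H$, modulo the relations read off from the sandwich matrix $P$, and under this identification the $G$-coordinate of a pseudoword $w\in J_X$ is recovered from $\chi_\pv H(w)$ by replacing each edge label $z$ --- a pseudoword of content properly contained in $X$, on which the theorem already holds by induction --- with its image in the relevant group and composing along the pseudopath. Granting this, ``$\chi_\pv H(u)=\chi_\pv H(v)$ in~\pv H'' says precisely that $u$ and $v$ have the same $G$-coordinate, whence $u=v$. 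Making this rigorous requires checking that the free category on $\partial_X\pv H$ is dense in the free profinite category over it (so that the continuous extension $\chi_\pv H$ exists and the assertion ``holds in~\pv H'' for pseudopaths is meaningful), establishing the continuity of the resulting coordinatization of $J_X$, and confirming that the relations imposed by $P$ are exactly those required --- all of which is the content of~\cite{Almeida&Trotter:2001} as completed in~\cite{Almeida&Costa:2015a}.
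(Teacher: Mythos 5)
The paper does not actually prove Theorem~\ref{t:wp-CRH}; it is imported verbatim as \cite[Theorem~3.9]{Almeida&Trotter:2001}, with the present paper only discussing its status (including the density gap observed in \cite{Almeida&ACosta:2007a} and repaired in \cite{Almeida&Costa:2015a}). So there is no in-paper proof to compare your attempt against. That said, your reconstruction is consistent with the brief narrative the paper gives of the Almeida--Trotter solution: content plays the role of the length bound, the pairs $(0,\bar0)$ and $(1,\bar1)$ replace $\mathrm{i}_n$ and $\mathrm{t}_n$, the profinite characteristic function $\chi_\pv H$ replaces $\Phi_n$, and the criterion is recursive in the size of the content. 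You also correctly locate the real difficulty --- the well-definedness and continuity of $\chi_\pv H$ on profinite classes, i.e.\ the density of the free category in the free profinite category over $\partial_X\pv H$ --- and correctly attribute its resolution to the special structure of that graph. Two small quibbles: in the sufficiency direction you do not actually need the induction hypothesis to pass from \eqref{item:wp-CRH-2} and \eqref{item:wp-CRH-3} to equalities $0(u)=0(v)$ and $1(u)=1(v)$ in $\Om A{}(\pv{CR}\cap\bar{\pv H})$, since those conditions already state the equalities over $\pv{CR}\cap\bar{\pv H}$; the induction is really needed to make the edge identifications in $\partial_X\pv H$ (and hence $\chi_\pv H$) meaningful. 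And the heart of the matter --- that the $G$-coordinate in the Rees structure of $J_X$ is exactly recovered from $\chi_\pv H$ --- is the structural theorem of \cite{Almeida&Trotter:2001,Almeida&Costa:2015a} that you are (correctly) deferring to rather than reproving, so your text is a plausible roadmap rather than a self-contained argument.
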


We are interested in distinguishing elements of~\Om AS that,
projected in $\Om A{}(\pv{CR}\cap\bar{\pv H})$, fall in the same
subgroup of the minimum ideal, where $A$ is a non-singleton finite
alphabet. For such elements, conditions
\eqref{item:wp-CRH-1}--\eqref{item:wp-CRH-3} of Theorem~\ref{t:wp-CRH}
are automatically fulfilled. Thus, the distinction must be done
through the condition of the pseudoidentity $\chi(u)=\chi(v)$ failing
in~\pv H. We want to do it under minimal assumptions, namely that
the pseudovariety \pv H~is nontrivial, say it contains $\pv{Ab}_p$,
where $p$~is prime. Indeed, it suffices to show that the (profinite)
numbers of occurrences in the two pseudowords in question of some
maximal factor of content missing just one letter, modulo equality
over~$\pv{CR}\cap\bar{\pv H}$, can be distinguished modulo~$p$.
Alternatively, we may work directly %
in~$\Om A{}(\pv{CR}\cap\bar{\pv H})$, which avoids the need to
consider the identification over $\pv{CR}\cap\bar{\pv H}$ of maximal
factors missing just one letter.

\begin{Thm}
  \label{t:ggm-CRH}
  Let \pv H be a nontrivial pseudovariety of groups and let $A$~be a
  nonempty finite alphabet. Then the semigroup %
  $\Om A{}(\pv{CR}\cap\bar{\pv H})$ is GGM whenever at least one of
  the following conditions holds:
  \begin{enumerate}[(i)]
  \item\label{item:ggm-CRH-1} $|A|\ne2$;
  \item\label{item:ggm-CRH-2} $\pv H\ne\pv{Ab}_2$.
  \end{enumerate}
\end{Thm}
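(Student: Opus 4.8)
The plan is to prove that $\Om A{}(\pv{CR}\cap\bar{\pv H})$ is LM; since the pseudovariety $\pv{CR}\cap\bar{\pv H}$ is self-dual, RM, and hence GGM, will follow by the left--right duality. The case $|A|=1$ is trivial: because $x^{\omega+1}=x$ holds in $\pv{CR}$, the semigroup $\Om A{}(\pv{CR}\cap\bar{\pv H})$ is then the group $\Om AH$, which equals its own minimum ideal and is reductive. So assume $|A|\ge 2$ and let $K$ be the minimum ideal of $\Om A{}(\pv{CR}\cap\bar{\pv H})$; as $\pv{CR}\cap\bar{\pv H}\subseteq\pv{DS}$, the set $K$ consists exactly of the pseudowords of full content~$A$. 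Since $\pv H$ is nontrivial, $\pv{CR}\cap\bar{\pv H}$ contains a nontrivial group, so Proposition~\ref{p:wggm-CR} gives: if $u\neq v$ and $\lambda^K(u)=\lambda^K(v)$, then $u,v\in K$. Consequently $\Om A{}(\pv{CR}\cap\bar{\pv H})$ is LM exactly when the restriction of $\lambda^K$ to~$K$, which is the canonical map $K\to\Cl T_K^\ell$, is injective, that is, by Proposition~\ref{p:reduction}\eqref{item:reduction-2}, exactly when $K$ has plenty of torsion on the left. The whole statement therefore reduces to proving: \emph{for every pair of distinct $\Cl R$-equivalent idempotents $e,f$ of~$K$, there is an idempotent $g$ in the $\Cl L$-class of~$e$ with $fg\neq e$.}

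So fix distinct $\Cl R$-equivalent idempotents $e,f$ of~$K$ and let $g$ range over the idempotents $\Cl L$-equivalent to~$e$. Then $fg$ lies in the maximal subgroup $H_e$ of~$K$ at~$e$, so $fg$ and $e$ have the same (full) content; moreover $0(fg)=0(f)=0(e)$ — the first equality because $c(f)=A$, the second because $f\mathrel{\Cl R}e$ forces $fe=e$ and hence $0(e)=0(fe)=0(f)$ — and, dually, $1(fg)=1(g)=1(e)$, using $c(g)=A$ and $g\mathrel{\Cl L}e$. Thus conditions \eqref{item:wp-CRH-1}--\eqref{item:wp-CRH-3} of Theorem~\ref{t:wp-CRH} hold for the pair $(fg,e)$, and as $|c(fg)|=|A|\ge 2$, the last condition of that theorem tells us that $fg\neq e$ over $\pv{CR}\cap\bar{\pv H}$ as soon as the pseudoidentity $\chi_\pv H(fg)=\chi_\pv H(e)$ \emph{fails} in~$\pv H$, where $\chi_\pv H$ is the (profinite) characteristic function into pseudopaths of the graph $\partial_A\pv H$. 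Hence it suffices to choose an idempotent $g$, $\Cl L$-equivalent to~$e$, for which $\chi_\pv H(fg)$ and $\chi_\pv H(e)$ are not identified in~$\pv H$.

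For this, pick a prime $p$ with $\pv{Ab}_p\subseteq\pv H$. Since $e$ and $f$ are idempotents, $\chi_\pv H(e)$ is a closed pseudopath and, as $fg\in H_e$, the closed pseudopath $\chi_\pv H(fg)$ is obtained from $\chi_\pv H(e)$ by inserting the loop produced by detouring through the $\Cl R$-class of~$g$ and back. There being one such idempotent $g$ for each $\Cl R$-class of~$K$, one can choose it in an $\Cl R$-class distinct from that of~$e$ and so that the inserted loop runs through a prescribed edge~$\gamma$ of~$\partial_A\pv H$, namely a prescribed maximal factor of content missing exactly one letter; iterating the choice, the number of traversals of~$\gamma$ in $\chi_\pv H(fg)$ can be made to differ modulo~$p$ from that in $\chi_\pv H(e)$, and this discrepancy persists under the projection onto $\pv{Ab}_p\subseteq\pv H$. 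Equivalently and more concretely, one builds a continuous homomorphism of $\Om A{}(\pv{CR}\cap\bar{\pv H})$ onto a finite Rees matrix semigroup over~$\mathbb Z/p\mathbb Z$ — which belongs to $\pv{CR}\cap\bar{\pv H}$ by Proposition~\ref{p:bullet-group} — that counts occurrences of~$\gamma$ modulo~$p$ and thereby separates $fg$ from~$e$, much as $M(S,G,\xi)$ is used in the proof of Theorem~\ref{t:plenty-of-torsion}. Such an edge $\gamma$, and enough room for the construction, are available whenever $|A|\ge 3$ — then $\partial_A\pv H$ has edges whose content has size at least~$2$ — and also whenever $|A|=2$ but $\pv H$ properly contains $\pv{Ab}_2$, since then one may take $p$ odd or use the richer group. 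The single case in which neither mechanism is available is $|A|=2$ with $\pv H=\pv{Ab}_2$, which is exactly what hypotheses~\eqref{item:ggm-CRH-1} and~\eqref{item:ggm-CRH-2} rule out. So, under the stated hypotheses, $K$ has plenty of torsion on the left, $\Om A{}(\pv{CR}\cap\bar{\pv H})$ is LM, and hence GGM.

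The hard part will be the computation underlying the third paragraph: one must control how the characteristic pseudopath behaves under the product $f\cdot g$ — that is, decompose $\chi_\pv H(fg)$ in terms of $\chi_\pv H(f)$, $\chi_\pv H(g)$ and the junction between them — while working inside the free profinite category over the infinite-vertex graph $\partial_A\pv H$, where, as the text recalls, the free category need not be dense. Concretely this means: (i) describing the idempotents of~$K$ that are $\Cl L$-equivalent to~$e$ explicitly enough to select $g$ realizing a prescribed $\gamma$-loop, and (ii) checking the modular count, thereby pinpointing why the count cannot be steered when $|A|=2$ and $\pv H=\pv{Ab}_2$.
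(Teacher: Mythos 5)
Your reduction is exactly the paper's: pass to plenty of torsion on the left via Theorem~\ref{t:weak-gm-CR}, Proposition~\ref{p:reduction}\eqref{item:reduction-2}, and self-duality; fix distinct $\Cl R$-equivalent idempotents $e,f\in K$; observe that any idempotent $g\mathrel{\Cl L}e$ yields $fg\in H_e$ so that conditions \eqref{item:wp-CRH-1}--\eqref{item:wp-CRH-3} of Theorem~\ref{t:wp-CRH} are automatic for the pair $(fg,e)$; and conclude that the whole content lies in finding a $g$ with $\chi_\pv H(fg)\neq\chi_\pv H(e)$ over $\pv H$. That much is correct and matches the paper. But from that point on you have not given a proof, you have given a plan — and you say so yourself in the final paragraph.

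The gap is precisely the step you describe as ``the hard part''. You assert, heuristically, that $\chi_\pv H(fg)$ ``is obtained from $\chi_\pv H(e)$ by inserting the loop produced by detouring through the $\Cl R$-class of $g$ and back'', that one may choose $g$ so that the inserted loop ``runs through a prescribed edge $\gamma$'', and that by ``iterating the choice'' the number of traversals of $\gamma$ can be tuned modulo~$p$. None of this is established, and it is not true as baldly stated: $\chi_\pv H(fg)$ is not simply $\chi_\pv H(f)$ followed by $\chi_\pv H(g)$ with a loop spliced in, because the maximal factors missing one letter merge nontrivially across the junction of $f$ and $g$, and those mergers can create cancellations. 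The paper's proof has to control this explicitly. It replaces your arbitrary ``prescribed edge'' picture by a very specific choice — after fixing $a=\bar 1(e)$ and $x=(auf)^\omega$, it takes $g_v=(va1(e))^\omega$ for a word $v$ of content $A\setminus\{a\}$ (or a single letter) — and then computes, in two cases depending on whether $\bar1(f)=a$, the per-factor occurrence counts between consecutive occurrences of the letter $a$ (Tables~\ref{tab:1} and~\ref{tab:2}), finally checking that the row sums can be made to disagree modulo~$p$ after accounting for possible coincidences among $u,1(e),1(f),1(e)v,1(f)v$. That accounting is exactly where the single excluded case $|A|=2$, $\pv H=\pv{Ab}_2$ emerges: with a two-letter alphabet and $p=2$ there is not enough room to choose $v$ making the relevant rows distinct while keeping the totals unequal. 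Your sketch does not see this; it just declares that the exceptional case is ``exactly what the hypotheses rule out'' without deriving it.

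So: correct framing, correct identification of the reduction and of where the difficulty sits, but the theorem is not proved. To complete the argument along your lines you would have to (i) write down $e$ and $f$ in the idempotent form $(0(e)\bar0(e)\bar1(e)1(e))^\omega$ and $(0(e)\bar0(e)\bar1(f)1(f))^\omega$, (ii) produce an explicit $g$ of the form $(va1(e))^\omega$, (iii) compute the modular counts of maximal factors of content $A\setminus\{a\}$ in $xeg_v$ and $xfg_v$ by hand, handling the overlap at the junctions, and (iv) show the counts can be separated for every $(A,\pv H)$ outside the stated exception. That is the paper's proof; your proposal stops just before it begins.
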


\begin{proof}
  The case of a singleton alphabet $A$ is obvious, since then the
  semigroup $\Om A{}(\pv{CR}\cap\bar{\pv H})$ is a group. We therefore
  assume from hereon that $|A|\ge2$.
  
  In view of Theorem~\ref{t:weak-gm-CR},
  Proposition~\ref{p:reduction}\eqref{item:reduction-2}, and duality,
  it remains to show that, under the hypotheses \eqref{item:ggm-CRH-1}
  or~\eqref{item:ggm-CRH-2}, the minimum ideal $K$ of the
  semigroup~$\Om A{}(\pv{CR}\cap\bar{\pv H})$ has plenty of torsion on
  the left. So, suppose that $e,f\in K$ are distinct \Cl R-equivalent
  idempotents. Then we have $0(e)=0(f)$ and $\bar0(e)=\bar0(f)$ while
  $1(e)\ne1(f)$ or $\bar1(e)\ne\bar1(f)$. Note that
  $e=\bigl(0(e)\bar0(e)\bar1(e)1(e)\bigr)^\omega$ and
  $f=\bigl(0(e)\bar0(e)\bar1(f)1(f)\bigr)^\omega$, because each pair
  of idempotents in these equalities lie in the same \Cl H-class. We
  show that there is an idempotent $g$ from the \Cl L-class of~$e$
  such that $fg\ne eg$, which establishes that $K$ has plenty of
  torsion on the left.

  Let $x$~be an arbitrary element of~$\Om A{}(\pv{CR}\cap\bar{\pv
    H})$. Note that, if $xf$ is idempotent but $xe$ is not then, since
  they are \Cl R-equivalent by Green's Lemma, we obtain
  $xe(xe)^\omega=xe\ne(xe)^\omega=xf(xe)^\omega$ and so
  $g=(xe)^\omega$ has the desired property. Let $a=\bar1(e)$ and let
  $u$~be a word with $c(u)=A\setminus\{a\}$. Taking $x=(auf)^\omega$,
  for which $xf$ is idempotent, we conclude that we may assume that
  $xe$ is also idempotent. It follows that %
  \begin{equation}
    \label{eq:special-form-xe}
    xe=((auf)^\omega e)^\omega=(aua1(e))^\omega \  \text{and}\ 
    xf=(auf)^\omega=(au\bar1(f)1(f))^\omega .
  \end{equation}
  
  Suppose first that $\bar1(f)=a$. Then $1(e)$ and $1(f)$ must be
  distinct. We claim that it is possible to choose
  $v\in(A\setminus\{a\})^*$ such that, for
  $g_v=\bigl(va1(e)\bigr)^\omega$, we have $xeg_v\ne xfg_v$. Since
  $xfg_v$ and $xeg_v=xe$ lie in the same maximal subgroup of~$K$, to
  prove that they are distinct it suffices to show that there is some
  maximal factor $w$ that misses exactly the letter $a$ and such that
  the (profinite) numbers of times it appears in~$xfg_v$ and $xeg_v$
  are not congruent modulo~$p$. Because of the special 
  form~(\ref{eq:special-form-xe}) that the
  pseudowords $xe$  and $xf$ have, any two consecutive occurrences
  of~$a$ in $xeg_v$ and $xfg_v$ are
  separated by pseudowords of content $A\setminus\{a\}$. Thus, the
  factors in question are precisely those that appear between two
  consecutive occurrences of the letter~$a$, together with the factor
  after the last occurrence of~$a$. Table~\ref{tab:1} gives the number
  of occurrences of such factors, taken in the profinite completion of the
  additive group~$\mathbb{Z}$.
  \begin{table}[h]
    $$\begin{array}[t]{r|c|c}
      & %
      xeg_v=(aua1(e))^\omega(va1(e))^\omega %
      & %
      xfg_v=(aua1(f))^\omega(va1(e))^\omega %
      \\
      \hline
      u & 0 & \hphantom{-}0 \\
      1(e) & 0 & \hphantom{-}1 \\
      1(f) & 0 & -1 \\
      1(e)v & 0 & -1 \\
      1(f)v & 0 & \hphantom{-}1
    \end{array}
    $$
    \caption{}
    \label{tab:1}
  \end{table}

  Table~\ref{tab:1} does not take into account possible equalities
  between some of the elements in the first column, in which case the
  corresponding remainders of the rows should be summed. The possible
  equalities with $u$ may be ignored since the values in the
  corresponding row sum are the same. If we choose $v$ to be a
  letter then
  we guarantee the inequalities $1(e)v\ne1(e)$ and $1(f)v\ne1(f)$.
  Indeed,
  if $1(e)v=1(e)$ holds in $\pv{CR}\cap \pv{\bar H}$ then, in particular,
  $1(e)v=1(e)$ holds in $\pv H\subseteq \pv{CR}\cap \pv{\bar H}$.
  This gives, for a letter $v$, that $v=1$ holds in  $\pv H$.
  Hence $\pv H=\pv I$, a contradiction.
  
  In case $|A|=2$, so that $\pv H\ne\pv{Ab}_2$, let $v=b$ be the only
  letter in~$A\setminus\{a\}$. Then, since $1(e)\ne1(e)b,1(f)$, then
  the total for $1(e)$ in the column of Table~\ref{tab:1} headed by
  $xfg_v$ is either $1$ or~$2$, which is different from the null total
  corresponding to the other column.

  In case $|A|\ge3$, let $b=\mathrm{t}_1(1(e))$. If $b$~is also the
  last letter of~$1(f)$, and $c$~is a letter from~$A\setminus\{a,b\}$,
  then the three pseudowords $1(e),1(f),1(e)c$ are distinct, where the
  inequality $1(f)\ne1(e)c$ follows from the fact that the two sides
  end with different letters. Taking $v=c$, of the four elements
  $1(e),1(f),1(e)v,1(f)v$, at least one is not equal to any of the
  others and the corresponding row in Table~\ref{tab:1} shows that
  $xeg_v\ne xfg_v$. If $c=\mathrm{t}_1(1(f))\ne b$, then similarly the
  three pseudowords $1(e),1(f),1(f)c$ are distinct, and the same
  argument applies.
  
  It remains to consider the case where the letter $b=\bar1(f)$ is
  such that $b\ne a$, so that $a$~occurs in~$1(f)$. Note that there is
  a factorization $1(f)=(1(f))^{\omega+1}=w_0aw_1aw_2$, where $c(w_0)$
  and $c(w_2)$ are both contained in~$A\setminus\{a,\bar1(f)\}$.
  Proceeding as in the preceding case, we obtain Table~\ref{tab:2},
  provided we take $v$ such that $c(v)=A\setminus\{a\}$, where we take
  into account that the contribution of the factors in question that
  appear within $w_1$ is null because they appear $\omega$ times,
  while there are none within $w_2$ because $a,b\notin c(w_2)$.
  \begin{table}[h]
    $$\begin{array}[t]{r|c|c}
      & %
      xeg_v=(aua1(e))^\omega(va1(e))^\omega %
      & %
      xfg_v=(aubw_0aw_1aw_2)^\omega(va1(e))^\omega %
      \\
      \hline
      u & 0 & \hphantom{-}0 \\
      1(e) & 0 & \hphantom{-}1 \\
      1(e)v & 0 & -1 \\
      ubw_0 & 0 & \hphantom{-}0 \\
      w_2v & 0 & \hphantom{-}1
    \end{array}
    $$
    \caption{}
    \label{tab:2}
  \end{table}
  
  Since the numeric values of the sum of all the rows are distinct in
  every nontrivial cyclic group, we deduce that $xeg_v\ne xfg_v$,
  which completes the proof of the theorem.
\end{proof}

Using Theorem~\ref{t:wp-CRH}, one may check that the two elements
$(ab)^\omega$ and $(ab^2)^\omega$ of the minimum ideal %
of~$\Om{\{a,b\}}(\pv{CR}\cap\overline{\pv{Ab}_2})$ have the same image
under~$\lambda^K$.

The following result is less precise than Theorem~\ref{t:ggm-CRH} but
sufficient for the applications in Sections~\ref{sec:orderability}
and~\ref{sec:irreducibility}.

\begin{Cor}
  \label{c:ggm-CRH}
  For a nontrivial pseudovariety of groups \pv H, the pseudovariety
  $\pv{CR}\cap\bar{\pv H}$~is GGM, unless $\pv H=\pv{Ab}_2$, in which
  case it is almost GGM.\qed
\end{Cor}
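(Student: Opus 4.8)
The plan is to read the statement off from Theorem~\ref{t:ggm-CRH} together with the definitions of a \emph{GGM} and an \emph{almost GGM} pseudovariety. Recall that a pseudovariety $\pv V$ is GGM when $\Om AV$ is GGM for every finite non-singleton alphabet $A$, and almost GGM when $\Om AV$ is GGM for arbitrarily large finite alphabets $A$. So everything reduces to checking which values of $|A|$ are covered by the hypotheses \eqref{item:ggm-CRH-1} and \eqref{item:ggm-CRH-2} of Theorem~\ref{t:ggm-CRH}.

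First I would dispose of the generic case $\pv H\neq\pv{Ab}_2$. For any finite non-singleton alphabet $A$ we have $|A|\geq 2$, and hypothesis~\eqref{item:ggm-CRH-2} of Theorem~\ref{t:ggm-CRH} is satisfied; hence $\Om A{}(\pv{CR}\cap\bar{\pv H})$ is GGM. Since this holds for every such $A$, the pseudovariety $\pv{CR}\cap\bar{\pv H}$ is GGM. Next I would treat $\pv H=\pv{Ab}_2$: for every finite alphabet $A$ with $|A|\geq 3$ the hypothesis~\eqref{item:ggm-CRH-1} holds, so $\Om A{}(\pv{CR}\cap\overline{\pv{Ab}_2})$ is GGM; as such alphabets can be chosen arbitrarily large, $\pv{CR}\cap\overline{\pv{Ab}_2}$ is almost GGM.

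The only remaining point --- and the place where, if anywhere, there is something to say --- is that for $\pv H=\pv{Ab}_2$ one cannot improve \emph{almost GGM} to \emph{GGM}. This is witnessed by the two-letter case already noted after Theorem~\ref{t:ggm-CRH}: the distinct elements $(ab)^\omega$ and $(ab^2)^\omega$ of the minimum ideal $K$ of $\Om{\{a,b\}}(\pv{CR}\cap\overline{\pv{Ab}_2})$ have the same image under $\lambda^K$, so $\Om{\{a,b\}}(\pv{CR}\cap\overline{\pv{Ab}_2})$ is not even LM, let alone GGM; hence the pseudovariety fails to be GGM because the non-singleton alphabet $\{a,b\}$ already violates the defining condition. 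No genuine obstacle arises beyond Theorem~\ref{t:ggm-CRH} itself, which does all the work; the corollary is essentially a bookkeeping exercise over alphabet sizes.
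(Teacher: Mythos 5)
Your proposal is correct and takes essentially the same route as the paper, which marks this corollary with a~\qed{} as a direct reading-off from Theorem~\ref{t:ggm-CRH} plus the definitions of GGM and almost GGM pseudovarieties; you also correctly cite the remark after Theorem~\ref{t:ggm-CRH} to justify that $\pv{CR}\cap\overline{\pv{Ab}_2}$ genuinely fails to be GGM on two-letter alphabets, so only ``almost GGM'' can be claimed in that case.
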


\section{Orderability and order primitivity}
\label{sec:orderability}

A partial order on a set $S$ is said to be \emph{trivial} if it is the
equality relation on~$S$. By a \emph{(partially) ordered semigroup} we
mean a semigroup endowed with a stable partial ordering. A
\emph{pseudovariety of ordered semigroups} is a nonempty class of
finite ordered semigroups which is closed under taking images under
order-preserving homomorphisms, subsemigroups with the induced
ordering, and finite direct products (under the component-wise
ordering). When we talk about the pseudovariety of semigroups
generated by a class \Cl C of finite ordered semigroups, we mean the
pseudovariety of semigroups generated by the members of~\Cl C, for
which the order is ignored. On the other hand, every semigroup can be
viewed as an ordered semigroup for the trivial ordering, reduced to
equality. For a pseudovariety \pv V of semigroups, the pseudovariety
of ordered semigroups $\pv V_\mathrm{o}$ it generates consists
precisely of the members of~\pv V under all possible stable partial
orders. It is common practice in the literature to identify \pv V with
$\pv V_\mathrm{o}$.

From the point of view of the applications in computer science, the
interest in pseudovarieties of ordered semigroups stems from the fact
that the corresponding \emph{positive varieties} of regular languages
are defined similarly to Eilenberg's varieties of languages by
dropping the requirement of closure under
complementation from the definition of varieties of languages. This
has prompted the investigation of many pseudovarieties of ordered
semigroups and it is natural to ask what pseudovarieties of semigroups
they generate. The origin of the work reported in this paper lays
precisely at an
attempt to answer this kind of question. Other than the application
of some of the representation results from previous sections, this
section contains only elementary observations. Its main purpose is to
show that several familiar pseudovarieties cannot be obtained in that
way.

 We say that a topological semigroup is
\emph{orderable} if it admits a nontrivial closed stable partial
order. The following is a simple extension to the profinite case of
a well-known property of finite groups.

\begin{Lemma}
  \label{l:profinite-groups-unorderable}
  No profinite group is orderable.
\end{Lemma}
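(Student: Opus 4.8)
The plan is to reduce the profinite case to the well-known finite case. First I would recall that a profinite group $G$ is an inverse limit of its finite continuous quotients; equivalently, the continuous homomorphisms $G\to F$ onto finite groups separate the points of~$G$. So suppose, for contradiction, that $\leq$ is a nontrivial closed stable partial order on~$G$: there are distinct elements $a,b\in G$ with $a<b$ (in the sense $a\leq b$ and $a\ne b$).

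Next I would push this order down to a finite quotient. Since continuous homomorphisms onto finite groups separate points, pick a continuous surjective homomorphism $\varphi:G\to F$ onto a finite group~$F$ with $\varphi(a)\ne\varphi(b)$. The image order on~$F$, defined by $x\preccurlyeq y$ iff there exist $x'\in\varphi^{-1}(x)$, $y'\in\varphi^{-1}(y)$ with $x'\leq y'$, need not directly be a partial order, so instead I would argue at the level of the congruence: let $N=\ker\varphi$, a clopen normal subgroup. The key step is to observe that the closed stable preorder generated by $\leq$ together with the equivalences $g\sim gn$ for $n\in N$ is again stable, and that passing to $G/N$ one obtains a stable (pre)order on the finite group $F$ in which $\varphi(a)\ne\varphi(b)$ are comparable. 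A cleaner route: show directly that a nontrivial closed stable partial order on a profinite group forces, on some finite quotient, a nontrivial stable partial order — because closedness of $\leq$ in $G\times G$ means its complement is open, so the pair $(a,b)\notin{\leq}^{-1}$-type separation survives in a finite quotient. The main obstacle is precisely making this ``order descends to a finite quotient'' step precise; the comfortable way is to note that $\leq$ is a closed subsemigroup of $G\times G$ containing the diagonal, hence equals the intersection of the clopen subsemigroups of $G\times G$ containing it, and each such clopen subsemigroup, being a union of cosets of a clopen normal subgroup $N\times N$, yields a stable relation on the finite group $(G\times G)/(N\times N)=F\times F$ separating $\varphi(a)$ from $\varphi(b)$ and reflexive, i.e.\ a nontrivial stable preorder on~$F$.

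Finally I would invoke the finite case: a finite group admits no nontrivial stable partial order, and more generally no nontrivial stable preorder refining equality nontrivially, because if $x\preccurlyeq y$ with $x\ne y$ then, writing $c=x^{-1}y\ne 1$, stability gives $1\preccurlyeq c$, hence $1\preccurlyeq c\preccurlyeq c^2\preccurlyeq\cdots$ and, $c$ having finite order~$k$, also $c^k=1$, so $1\preccurlyeq c\preccurlyeq 1$; antisymmetry (or, in the preorder setting, the fact that the induced order on the quotient by the equivalence $u\approx v\iff u\preccurlyeq v\preccurlyeq u$ must be trivial on a group because that quotient is again a group with the same phenomenon) forces $c=1$, a contradiction. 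Transporting this back through $\varphi$ contradicts $\varphi(a)\ne\varphi(b)$. Hence no such $\leq$ exists, and $G$ is not orderable. I expect the descent-to-a-finite-quotient argument to be the only genuinely delicate point; everything else is the standard ``a group element of finite order comparable to the identity must equal the identity'' computation.
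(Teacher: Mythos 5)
Your plan --- reduce to the finite case via descent to a finite quotient --- has a genuine gap at the descent step. What you can legitimately extract is that $\leq$, being a closed subsemigroup of $G\times G$ containing the diagonal, is the intersection of its saturations $\leq_N$ by $N\times N$, for $N$ ranging over clopen normal subgroups of~$G$; choosing $N$ so that $(b,a)\notin\leq_N$ (possible, since $(b,a)\notin\leq$ and $\leq$ is closed), the induced relation on $F=G/N$ is reflexive, stable, and contains $(\varphi(a),\varphi(b))$ but not $(\varphi(b),\varphi(a))$. So far so good. But that relation need \emph{not} be transitive: saturating a transitive relation by a congruence does not in general yield a transitive relation, and nothing in your argument restores transitivity. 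Calling it a preorder is therefore unjustified, and your finite-case computation really uses transitivity when it passes from the chain $1\preccurlyeq c\preccurlyeq c^2\preccurlyeq\cdots\preccurlyeq c^k=1$ to $c\preccurlyeq 1$. Without that step the finite claim is simply false: on $\mathbb{Z}/3\mathbb{Z}$ the relation consisting of the diagonal together with the pairs $(0,1)$, $(1,2)$, $(2,0)$ is reflexive, stable (it is translation-invariant), antisymmetric, and nontrivial, yet fails transitivity and produces no contradiction. Your suggested alternative --- generate the stable preorder by $\leq$ together with the $N$-coset equivalence and then quotient --- does preserve transitivity but may destroy the separation: the generated preorder could be all of $G\times G$, and you give no reason why some $N$ avoids this.

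The paper does not descend at all; it argues directly in~$G$. If $1\le g$ with $g\ne1$, then stability and transitivity give $g\le g^{n!}$ for every $n$; since $\le$ is closed and $\lim g^{n!}=1$ in a profinite group, this yields $g\le1$ and hence $g=1$ by antisymmetry, after which stability shows $\le$ is trivial. That argument is shorter, uses closedness of $\le$ exactly once (at the limit), and sidesteps the question of how transitivity behaves under quotients --- the only topological ingredient is $g^{n!}\to1$, the profinite surrogate for ``$g$ has finite order.'' If you want to repair the descent route you would need to show that, for some clopen normal $N$, the transitive closure of $\leq_N$ still omits $(\varphi(b),\varphi(a))$, an additional compactness argument missing from your write-up.
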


\begin{proof}
  Let $G$ be a profinite group and let $\le$ be a closed stable
  partial order on~$G$. Let $g\in G$ be such that $1\le g$. By
  stability of the partial order, the relation $g^n\le g^{n+1}$ holds
  for every positive integer~$n$. Hence, the inequality $g\le g^n$
  holds for every positive integer~$n$. Considering in particular the
  relations $g\le g^{n!}$, we deduce that, since the order is closed
  and $\lim g^{n!}=1$, the relation $g\le 1$ also holds. Since $\le$
  is assumed to be a partial order, it follows that $g=1$. It follows
  that the relation $\le$ is trivial.
\end{proof}

In contrast with Lemma~\ref{l:profinite-groups-unorderable}, we have
the following simple observation.

\begin{Lemma}
  \label{l:Om1V-orderable}
  Let \pv V be a pseudovariety which is not contained in~\pv G. Then
  $\Om{\{a\}}V$~is orderable.
\end{Lemma}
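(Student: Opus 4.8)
The plan is to construct, directly, a nontrivial closed stable partial order on $S:=\Om{\{a\}}V$. The only property of $\pv V$ that enters is that $S$ is not a group, equivalently that $S$ properly contains its minimum ideal $K$; this is where the hypothesis is used, since a free one-generated profinite semigroup over $\pv V$ is a group precisely when $\pv V$ satisfies the pseudoidentity $x^{\omega+1}=x$. As $S=\overline{\langle a\rangle}$ is commutative and monogenic, it has a unique idempotent $e:=a^\omega$; this $e$ lies in $K$, which is consequently a profinite abelian group with identity $e$. Hence $ye\in K$ for every $y\in S$, and $ye=y$ if and only if $y\in K$.

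The order I would use is: for $x,y\in S$, set $x\le y$ iff $x=y$ or $x=ye$. Then four short verifications finish the argument. (i) It is a partial order: reflexivity is immediate; every strict comparison $x<y$ has the form $x=ye$ with $y\notin K$, so $x\in K$ while $y\notin K$, which makes antisymmetry clear and renders the only configuration that could threaten transitivity, a chain $x<y<z$, impossible (it would place $y$ both inside and outside $K$). (ii) It is stable: if $x=ye$ then, for any $w\in S$, commutativity gives $xw=yew=(yw)e$, so $xw\le yw$ straight from the definition, and similarly $wx\le wy$. (iii) It is closed: its graph is the union of the diagonal of $S\times S$ with the image of the continuous map $y\mapsto(ye,y)$, and both of these are compact, hence closed. (iv) It is nontrivial: any $y\in S\setminus K$ satisfies $ye\in K$, so $ye\neq y$, and $ye<y$.

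The one place requiring an idea is (ii), in view of the obstruction coming from Lemma~\ref{l:profinite-groups-unorderable}: any closed stable partial order on $S$ is trivial on the profinite group $K$, so all genuine comparisons must involve an element outside $K$, yet a product of such an element can fall into $K$. That is exactly why the obvious recipes fail --- pulling the ``zero below everything'' order back along the Rees quotient $S\to S/K$ breaks antisymmetry, and the divisibility preorder $\ge_{\Cl R}$ breaks stability. The relation above is essentially the unique remedy: a stable partial order admitting a comparison $k<y$ with $k\in K$ must have $kw=yw$ whenever $yw\in K$, and the choice $w=e$ forces $k=ye$; declaring precisely the comparisons $ye<y$ for $y\notin K$ is then the minimal consistent option, and the verifications (i)--(iv) show it indeed works.
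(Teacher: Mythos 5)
Your proof is correct and is essentially the paper's: the paper's order is $a^n\le a^{\omega+n}=a^ne$ for finite powers $a^n$, and since every element of $\Om{\{a\}}V$ outside the minimum ideal $K$ is a finite power of $a$, your relation $ye\le y$ is exactly the order-dual of that one, with the verifications (i)--(iv) spelled out rather than left to the reader. The one caveat applies equally to both arguments: the reduction to ``$S$ is not a group'' actually requires $\pv V\not\models x^{\omega+1}=x$, i.e.\ $\pv V\not\subseteq\pv{CR}$, which is strictly stronger than the stated hypothesis $\pv V\not\subseteq\pv G$ (for $\pv V=\pv{Sl}$ or $\pv V=\pv{CR}$ the one-generated free pro-$\pv V$ semigroup is a group, hence unorderable), so this gap lies in the lemma's statement rather than in your proof.
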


\begin{proof}
  Consider the relation $\le$ defined by $u\le v$ if either $u=v$, or
  $u=a^n$ and $v=a^{\omega+n}$, where $n$~is a positive integer. One
  can easily check that it is a closed stable partial order
  on~$\Om{\{a\}}V$. By hypothesis, it is nontrivial.
\end{proof}

For the sequel, we need the following simple auxiliary lemma.

\begin{Lemma}[{\cite[Lemma~4.6.23]{Rhodes&Steinberg:2009qt}}]
  \label{l:GGM-max-subgroups}
  Let $S$ be a nontrivial GGM profinite semigroup, with minimum ideal
  $K$. Then, given distinct elements $s,t\in S$, there exist $x,y\in
  K$ such that $xsy\ne xty$. In particular, the maximal subgroups
  of~$K$ are nontrivial.
\end{Lemma}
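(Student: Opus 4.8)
The plan is to unpack the definition of GGM and play the left faithfulness against the right one, exploiting the fact that the minimum ideal $K$, being an ideal of $S$, inherits the dual reductivity from $S$.

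First I record that $S$ being GGM means precisely that both $\lambda^K\colon S\to\Cl{T}_K^\ell$ and $\rho^K\colon S\to\Cl{T}_K^r$ are faithful. Take distinct elements $s,t\in S$. Faithfulness of $\lambda^K$ (the LM property) gives an element $y\in K$ with $\lambda^K_s(y)\ne\lambda^K_t(y)$, that is $sy\ne ty$; moreover $sy,ty\in K$ because $K$ is an ideal. Next I would observe that the restriction of $\rho^K$ to $K$ is exactly the canonical map $K\to\Cl{T}_K^r$ sending $u\in K$ to its inner right translation $\rho_u$ in the semigroup $K$, and this map is injective since $\rho^K$ is injective on $S\supseteq K$; in the terminology of Subsection~\ref{sec:translational-hull} this says that the completely simple semigroup $K$ is left reductive. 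Applying this to the distinct elements $sy$ and $ty$ of $K$, we obtain $x\in K$ with $x(sy)\ne x(ty)$, i.e.\ $xsy\ne xty$ with $x,y\in K$, which is the assertion.

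For the final statement, suppose the maximal subgroups of $K$ were trivial, so that $K$ is a rectangular band. For $x,y\in K$ and $s\in S$ we have $xsy=(xs)y$, where $xs\in K$ lies in the $\Cl R$-class of $x$, because $\rho^K_s$ is a right translation of $K$ and right translations preserve $\Cl R$-classes (by the dual of Proposition~\ref{p:TH-CS}\eqref{item:TH-CS-1}); since in a rectangular band the product of two elements depends only on the $\Cl R$-class of the first and the $\Cl L$-class of the second, the element $xsy$ does not depend on $s$. Hence $xsy=xty$ for all $x,y\in K$ and all $s,t\in S$, which, as $S$ is nontrivial, contradicts the first part of the statement. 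Therefore the maximal subgroups of $K$ are nontrivial.

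I do not expect a genuine obstacle: the argument is short, and the only point requiring care is the bookkeeping of sides — one must invoke the \emph{right} faithfulness (RM) in order to separate the \emph{left} multiples $x(sy)$ and $x(ty)$, which is legitimate precisely because $K\subseteq S$ forces $K$ to be left reductive. In the last paragraph one could equally well avoid Proposition~\ref{p:TH-CS} by using directly that $uzv=uv$ holds in every rectangular band together with the fact that $xs\in K$.
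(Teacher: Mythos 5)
Your proof is correct. The paper itself gives no proof of this lemma (it is cited as Lemma~4.6.23 of Rhodes and Steinberg), and your argument is the natural one: left faithfulness of $\omega^K$ produces $y\in K$ with $sy\ne ty$, injectivity of $\rho^K$ on $S$ restricts to left reductivity of the ideal~$K$ and produces $x\in K$ with $xsy\ne xty$, and the rectangular-band identity $uvw=uw$ applied to $x,sy,y\in K$ rules out the aperiodic case; the bookkeeping of sides is handled correctly throughout.
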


We say that the pseudovariety \pv V is \emph{almost unorderable} if
the semigroups \Om AV are unorderable for finite alphabets $A$ with
$|A|$ arbitrarily large. If \Om AV is unorderable for every finite
set~$A$ with at least two elements, then we say that \pv V~is
\emph{unorderable}. We say \pv V is \emph{strictly orderable} if, for
every finite nonempty set $A$, \Om AV admits a nontrivial closed
stable partial order. Thus, a strictly orderable pseudovariety is not
almost unorderable. We do not know if the converse is true.

The next proposition relates unorderability with the GGM property.

\begin{Prop}
  \label{p:GGM-unorderable}
  Let $S$ be a nontrivial GGM profinite semigroup. Then $S$~is
  unorderable.
\end{Prop}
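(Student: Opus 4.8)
The plan is to show directly that any closed stable partial order $\le$ on $S$ is trivial. Assume, towards a contradiction, that $\le$ is nontrivial, so that $s\le t$ for some distinct $s,t\in S$. Let $K$ be the minimum ideal of $S$, which is a profinite completely simple semigroup, presented as a Rees matrix semigroup $\Cl M(A,G,B;P)$ as in Proposition~\ref{p:TH-CS}. Since $S$ is nontrivial and GGM, Lemma~\ref{l:GGM-max-subgroups} provides elements $x,y\in K$ with $xsy\ne xty$. Because $\le$ is stable, left multiplication by $x$ followed by right multiplication by $y$ turns $s\le t$ into $xsy\le xty$.

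The key step is to observe that $xsy$ and $xty$ lie in a common maximal subgroup of $K$. Since $K$ is an ideal, $xs=(x)\rho^K_s\in K$, and by Proposition~\ref{p:TH-CS}\eqref{item:TH-CS-2} a right translation of $K$ fixes the first Rees coordinate, so $xs$ lies in the same \Cl R-class $R_x$ as $x$; multiplying on the right by $y\in K$ keeps us in $R_x$ while placing the result in the \Cl L-class $L_y$, by the rectangular structure of the completely simple semigroup $K$. Hence $xsy$ lies in the \Cl H-class $H=R_x\cap L_y$, a maximal subgroup of $K$, and the same argument gives $xty\in H$. Moreover, in the coordinates of Proposition~\ref{p:TH-CS} the subgroup $H$ is of the form $\{a_0\}\times G\times\{b_0\}$, a closed subset of $A\times G\times B$ homeomorphic to the profinite group $G$; in particular $H$ is a closed profinite group.

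Restricting $\le$ to $H$ yields a closed stable partial order on the profinite group $H$, which must be trivial by Lemma~\ref{l:profinite-groups-unorderable}. Then $xsy\le xty$ forces $xsy=xty$, contradicting the choice of $x$ and $y$. Therefore no two distinct elements of $S$ are $\le$-comparable, i.e.\ $\le$ is the equality relation, so $S$ is unorderable.

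The only point that needs some care is the middle paragraph: the assertion that $xsy$ and $xty$ necessarily fall into a single group \Cl H-class of $K$, and that this subgroup is closed so that Lemma~\ref{l:profinite-groups-unorderable} applies. Both are routine consequences of the description of profinite completely simple semigroups recalled in Proposition~\ref{p:TH-CS} together with the fact that $K$ is an ideal; everything else is immediate from the two lemmas already available.
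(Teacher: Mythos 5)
Your proof is correct and follows essentially the same route as the paper's: use Lemma~\ref{l:GGM-max-subgroups} to push a nontrivial comparability $s<t$ into $xsy<xty$ inside the minimum ideal, observe these two elements lie in a common maximal subgroup, and then invoke Lemma~\ref{l:profinite-groups-unorderable}. The only difference is that you spell out, via the Rees coordinates of Proposition~\ref{p:TH-CS}, why $xsy$ and $xty$ land in the same closed maximal subgroup, whereas the paper takes this as understood.
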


\begin{proof}
  Suppose that $\le$ is a closed stable partial order on~$S$ for which
  there are elements $s,t\in S$ such that $s<t$. By
  Lemma~\ref{l:GGM-max-subgroups}, the minimum ideal of~$S$ contains
  elements $x$ and $y$ such that $xsy\ne xty$. Since the relation
  $\le$~is stable, it follows that $xsy<xty$. Hence the induced order
  on the maximal subgroup $H$ of~$S$ containing both $xsy$ and $xty$
  is a nontrivial closed stable partial order on the profinite
  group~$H$, which contradicts
  Lemma~\ref{l:profinite-groups-unorderable}. Thus, $S$~is
  unorderable.
\end{proof}

The following is an immediate corollary of
Proposition~\ref{p:GGM-unorderable}.

\begin{Cor}
  \label{c:GGM-unorderable}
  \begin{enumerate}[(a)]
  \item\label{item:GGM-unorderable-1} Every GGM pseudovariety is
    unorderable.
  \item\label{item:GGM-unorderable-2} Every almost GGM pseudovariety is
    almost unorderable.\qed
  \end{enumerate}
\end{Cor}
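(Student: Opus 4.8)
The plan is to deduce both parts of the corollary directly from Proposition~\ref{p:GGM-unorderable}, the only subtlety being the degenerate situation in which $\Om AV$ is trivial (which happens precisely when $\pv V=\pv I$), since that proposition is stated only for nontrivial profinite semigroups.

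For part~\eqref{item:GGM-unorderable-1}, I would assume \pv V is GGM and fix a finite set $A$ with $|A|\ge2$. By definition of a GGM pseudovariety, $\Om AV$ is a GGM profinite semigroup. If $\Om AV$ is trivial, then its only closed stable partial order is equality, so it is unorderable; if $\Om AV$ is nontrivial, then Proposition~\ref{p:GGM-unorderable} applies and yields that $\Om AV$ is unorderable. Since $A$ was an arbitrary alphabet with at least two letters, this shows that \pv V is unorderable.

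For part~\eqref{item:GGM-unorderable-2}, I would assume \pv V is almost GGM, so that there are arbitrarily large finite alphabets $A$ for which $\Om AV$ is a GGM profinite semigroup; restricting to those with $|A|\ge2$, which still occur in arbitrarily large sizes, the dichotomy of the previous paragraph shows that each such $\Om AV$ is unorderable. Hence $\Om AV$ is unorderable for alphabets $A$ of arbitrarily large cardinality, which is exactly the assertion that \pv V is almost unorderable.

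There is essentially no obstacle here: the substantive content has already been absorbed into Proposition~\ref{p:GGM-unorderable} (which itself rests on Lemmas~\ref{l:profinite-groups-unorderable} and~\ref{l:GGM-max-subgroups}), and what remains is merely to unwind the definitions of (almost) GGM and (almost) unorderable pseudovarieties, together with the trivial observation that a one-element semigroup admits no nontrivial stable partial order.
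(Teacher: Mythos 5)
Your proposal is correct and follows essentially the same route as the paper, which treats the corollary as an immediate consequence of Proposition~\ref{p:GGM-unorderable} applied to each $\Om AV$. The only thing you add is the explicit handling of the degenerate case where $\Om AV$ is trivial (which arises only for $\pv V=\pv I$), and that observation is accurate: a one-element semigroup admits no nontrivial partial order.
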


Combining Corollary~\ref{c:GGM-unorderable} with results from other
sections,
we
obtain many familiar examples of unorderable pseudovarieties.

\begin{Cor}
  \label{c:gm-unorderable-egs}
  Let \pv H be a nontrivial pseudovariety of groups. Then the
  pseudovarieties \pv H, $\bar{\pv H}$,
  $\pv{CS}\cap\bar{\pv H}$ are unorderable. So is $\pv{CR}\cap\bar{\pv
    H}$, except in the case of $\pv H=\pv{Ab}_2$, for which it is
  almost unorderable.
\end{Cor}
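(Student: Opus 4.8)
The plan is to read the entire statement off Corollary~\ref{c:GGM-unorderable}: a GGM pseudovariety is unorderable and an almost GGM one is almost unorderable (both through Proposition~\ref{p:GGM-unorderable} and Lemma~\ref{l:profinite-groups-unorderable}). So for each of the four families it suffices to record its GGM status and quote the appropriate earlier result. For $\pv V=\pv H$ this is immediate: every pseudovariety of groups is GGM, since for a non-singleton~$A$ the semigroup $\Om AH$ is a profinite group, hence coincides with its own minimum ideal, on which the left and right regular representations are faithful; alternatively, $\Om AH$ is unorderable directly by Lemma~\ref{l:profinite-groups-unorderable}. For $\pv V=\bar{\pv H}$ with $\pv H$ nontrivial, the GGM property is Corollary~\ref{c:gm-barH}, and for $\pv V=\pv{CR}\cap\bar{\pv H}$ it is Corollary~\ref{c:ggm-CRH}, which yields GGM except when $\pv H=\pv{Ab}_2$, where it yields only almost GGM. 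Feeding these into Corollary~\ref{c:GGM-unorderable} gives the asserted (almost) unorderability in these three cases with no further work.

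The case $\pv V=\pv{CS}\cap\bar{\pv H}$ requires a genuine argument, since it escapes the results of Section~\ref{sec:CR}: Theorem~\ref{t:weak-gm-CR} and its corollaries presuppose $\pv{Sl}\subseteq\pv V$, whereas $\pv{Sl}\not\subseteq\pv{CS}$. I would instead argue directly that $\pv{CS}\cap\bar{\pv H}$ is GGM when $\pv H$ is nontrivial. Fix a non-singleton finite set~$A$. As $\pv{CS}$ is defined by $\op x(yx)^\omega=x\cl$, the semigroup $S=\Om A{}(\pv{CS}\cap\bar{\pv H})$ is completely simple and hence coincides with its own minimum ideal~$K$; consequently $\lambda^K$ and $\rho^K$ are merely the canonical maps $S\to\Cl{T}_S^{\ell}$ and $S\to\Cl{T}_S^{r}$, so $S$ is GGM exactly when both are injective, which by Proposition~\ref{p:reduction}\eqref{item:reduction-2} and its dual means that $S$ has plenty of torsion on the left and on the right.

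To check this I would write $S=\Cl M(A,G,A;P)$, where $G$ is a relatively free profinite group in~$\pv H$ and the index sets can be taken to be~$A$, because the natural quotients $S\to\Om A{LZ}$ and $S\to\Om A{RZ}$ onto the $|A|$-element free left- and right-zero semigroups index the \Cl R- and \Cl L-classes of~$S$. Normalizing a fixed $a_1\in A$ so that $P(a_1,i)=P(i,a_1)=1$ for all $i\in A$, plenty of torsion on the left amounts to the statement that, for all distinct $\lambda,\mu\in A$, the map $j\mapsto P(\mu,j)\,P(\lambda,j)^{-1}$ is non-constant. Evaluating at $j=a_1$, where the value is~$1$, and (assuming without loss of generality that $\lambda\neq a_1$) at $j=\lambda$, this reduces to the single inequality $P(\mu,\lambda)\neq P(\lambda,\lambda)$, which holds because the entries of~$P$ off the $a_1$-row and the $a_1$-column belong to a free profinite generating set of~$G$ while $\pv H$ is nontrivial, so that distinct such entries stay distinct and none of them equals~$1$; in particular this covers $\pv H=\pv{Ab}_2$ as well. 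The right-hand condition is dual. Hence $S$ is GGM for every non-singleton~$A$, so $\pv{CS}\cap\bar{\pv H}$ is GGM, and Corollary~\ref{c:GGM-unorderable}\eqref{item:GGM-unorderable-1} finishes this case.

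The one non-formal ingredient in the whole argument is this structural input for $\pv{CS}\cap\bar{\pv H}$ — the presentation of the relatively free profinite completely simple semigroup over $\bar{\pv H}$ as a Rees matrix semigroup whose normalized structure matrix has enough independent free entries; everything else is either quoted from earlier sections or is the short computation above. I therefore expect the main obstacle to be pinning down and citing that structural description precisely; granted it, plenty of torsion on each side is a one-line verification, and the remainder of the corollary is bookkeeping through Corollary~\ref{c:GGM-unorderable}.
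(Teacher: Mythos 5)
Your proof follows the paper's own argument exactly: the first three cases are reduced, as you do, to Lemma~\ref{l:profinite-groups-unorderable}, Corollary~\ref{c:gm-barH}, and Corollary~\ref{c:ggm-CRH} via Corollary~\ref{c:GGM-unorderable}, and for $\pv{CS}\cap\bar{\pv H}$ the paper likewise appeals to the structure theorem for $\Om A{}(\pv{CS}\cap\bar{\pv H})$ as a (profinite) Rees matrix semigroup, saying it is full of torsion and citing \cite{Almeida:1991d} for $\pv H=\pv G$ with the observation that the argument carries over. The only difference is cosmetic: you write out the normalized structure-matrix computation verifying plenty of torsion on each side, whereas the paper subsumes it in the citation.
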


\begin{proof}
  In each case, it suffices to justify that the pseudovariety in
  question is GGM, or almost GGM in the exceptional case. For \pv H,
  this is obvious, but the unorderability also follows directly from
  Lemma~\ref{l:profinite-groups-unorderable}. For $\bar{\pv H}$, the
  GGM property is given by Corollary~\ref{c:gm-barH}. For
  $\pv{CR}\cap\bar{\pv H}$, it suffices to invoke
  Corollary~\ref{c:ggm-CRH}. For $\pv{CS}\cap\bar{\pv H}$, the GGM
  property follows from the structure theorem for free profinite
  semigroups over this pseudovariety, which entails that they are full
  of torsion. The case of $\pv H=\pv G$ has been studied in detail
  in~\cite{Almeida:1991d} but the arguments and results apply equally
  well by replacing \pv G by a nontrivial pseudovariety of groups \pv
  H.
\end{proof}

It should be noted that there are also important pseudovarieties which
are strictly orderable. Such an example is given by the pseudovariety
\pv J. The following is an easy consequence of the results
of~\cite[Section~8.2]{Almeida:1994a}.

\begin{Prop}
  \label{p:J-orderable}
  The pseudovariety \pv J is strictly orderable.
\end{Prop}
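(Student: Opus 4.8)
The plan is to produce, for every nonempty finite set $A$, an explicit nontrivial closed stable partial order on $\Om AJ$, built around the zero element of this semigroup. As follows from the description of $\Om AJ$ in \cite[Section~8.2]{Almeida:1994a}, the semigroup $\Om AJ$ is \Cl J-trivial and has a zero element $0$ (for instance, $0=(a_1\cdots a_k)^\omega$, where $A=\{a_1,\dots,a_k\}$). Moreover $\Om AJ$ is non-trivial --- in fact infinite, since $\pv J$ is not locally finite. These are the only facts about $\Om AJ$ that the argument will use.

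I would then define a relation $\le$ on $\Om AJ$ by setting $u\le v$ if and only if $u=v$ or $u=0$; in other words, $0$ lies below every element and no other strict inequality holds. Reflexivity, antisymmetry, and transitivity are immediate. Stability holds because $0$ is absorbing: if $u=v$ there is nothing to check, and if $u=0$ then $uw=0=wu$ for every $w\in\Om AJ$, so $uw\le vw$ and $wu\le wv$ hold trivially. The relation is closed because, as a subset of the compact Hausdorff space $\Om AJ\times\Om AJ$, it is the union of the diagonal (closed, by the Hausdorff property) with $\{0\}\times\Om AJ$ (closed, as a product of closed sets). Finally, $\le$ is nontrivial: since $\Om AJ$ is non-trivial there is some $u\ne 0$, and then $0\le u$ while $0\ne u$. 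This yields a nontrivial closed stable partial order on $\Om AJ$ for each nonempty finite $A$, which is precisely the strict orderability of $\pv J$.

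I do not foresee any real obstacle here; the only input from earlier work is the description of $\Om AJ$ in \cite[Section~8.2]{Almeida:1994a}, which supplies the zero and the non-triviality, and everything else is a routine verification. Note that this construction deliberately avoids the GGM-style machinery used elsewhere in the section, and it has to: the minimum ideal of $\Om AJ$ is trivial --- it is exactly $\{0\}$ --- so the obstruction to orderability exploited in Proposition~\ref{p:GGM-unorderable} is simply not available for $\pv J$. Instead, it is precisely the element $0$ that makes the minimum ideal trivial that we use to build a nontrivial order.
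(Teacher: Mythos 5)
Your proof is correct, but it takes a genuinely different route from the paper. The paper orders $\Om AJ$ by \emph{subword inclusion}: $u\le v$ iff every finite subword of $u$ is a subword of $v$. That this is a closed stable quasi-order is routine, but antisymmetry requires the structure theorem for $\Om AJ$ (\cite[Theorem~8.2.8]{Almeida:1994a}), which says two elements of $\Om AJ$ with the same finite subwords are equal. Your order is much cruder: $u\le v$ iff $u=v$ or $u=0$. It only needs the facts that $\Om AJ$ is a non-trivial compact Hausdorff semigroup and that its minimum ideal is a singleton $\{0\}$ (so $0$ is an absorbing zero), and all the verifications you list (stability via $0w=w0=0$, closedness as $\Delta\cup(\{0\}\times\Om AJ)$, non-triviality since $|\Om AJ|>1$) are correct. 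So your argument is more elementary and would apply to any pseudovariety whose finitely generated free pro-$\pv V$ semigroups are non-trivial and have a zero. What the paper's finer order buys is that it is the \emph{natural} order underlying the fact, discussed right before this proposition, that $\pv J$ is not order primitive: the subword order is precisely what is intrinsic to Simon's theorem and the $\op xy\le y,\ yx\le y\cl$ presentation, whereas your order, while correct, carries no such information. For the purposes of proving strict orderability alone, however, your argument is complete and actually requires less machinery.
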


\begin{proof}
  Let $A$ be a finite nonempty set. For $u,v\in\Om AJ$, let $u\le v$
  if every (finite) subword of~$u$ is also a subword of~$v$. It is
  routine to check that $\le$ is a closed stable quasi-order on~\Om
  AJ. By \cite[Theorem~8.2.8]{Almeida:1994a}, it is a partial order.
\end{proof}

There is a connection between orderability and pseudovarieties of
ordered semigroups that we proceed to analyze.

We say that a pseudovariety \pv V of semigroups is \emph{order
  primitive} if there is no pseudovariety of ordered semigroups
properly contained in~\pv V that generates \pv V as a pseudovariety of
semigroups.

One of the formulations of Simon's characterization of piecewise
testable languages \cite{Simon:1975} is the theorem of Straubing and
Th\'erien \cite{Straubing&Therien:1988a} stating that the pseudovariety
of all finite \Cl J-trivial monoids is generated by the pseudovariety
consisting of all finite ordered monoids satisfying the inequality
$x\le1$. It is easy to deduce that \pv J is generated by the
pseudovariety of all finite ordered semigroups satisfying the
inequalities $xy\le y$ and $yx\le y$. Hence, \pv J is not order
primitive.

\begin{Thm}
  \label{t:almost-unorderable-implies-order-primitive}
  Every almost unorderable pseudovariety of semigroups is order
  primitive.
\end{Thm}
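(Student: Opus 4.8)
The plan is to establish the contrapositive: if $\pv V$ is not order primitive, then $\Om AV$ is orderable for arbitrarily large finite sets $A$. So suppose $\Cl W$ is a pseudovariety of ordered semigroups properly contained in $\pv V$ (that is, in $\pv V_\mathrm{o}$) which nevertheless generates $\pv V$ as a pseudovariety of semigroups. By the Reiterman-type theorem for pseudovarieties of ordered semigroups \cite{Molchanov:1994,Pin&Weil:1996b}, the strict inclusion $\Cl W\subsetneq\pv V_\mathrm{o}$ produces a pseudoinequality $u\le v$, with $u,v\in\Om XS$ for some finite set $X$, that is valid in $\Cl W$ but not in $\pv V_\mathrm{o}$. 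Since the equality relation is a stable partial order on every member of $\pv V$, a pseudoinequality holds in $\pv V_\mathrm{o}$ exactly when its two sides already coincide in the free pro-$\pv V$ semigroup $\Om XV$; thus its failure in $\pv V_\mathrm{o}$ means precisely that the images $\bar u,\bar v$ of $u,v$ in $\Om XV$ are distinct.

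Next I would, for each finite set $B$ containing $X$, equip $\Om BV$ with the ``$\Cl W$-order'' $\preceq$ defined by declaring $a\preceq b$ when $\psi(a)\le\psi(b)$ holds in $T$ for every continuous homomorphism $\psi\colon\Om BV\to T$ onto a finite ordered semigroup $(T,\le)$ in $\Cl W$; such homomorphisms are abundant because, $T$ lying in $\pv V$, every continuous homomorphism $\Om BS\to T$ factors through $\Om BV$. It is immediate that $\preceq$ is a closed, stable preorder. It is antisymmetric, hence a partial order, for the following reason. Suppose $a\preceq b$ and $b\preceq a$; then $\psi(a)=\psi(b)$ for every $\psi$ as above. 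Now any finite $B$-generated semigroup $S\in\pv V$ divides a finite product $\prod_iT_i$ of members of $\Cl W$, say $S$ is a quotient of some $B$-generated subsemigroup $R\le\prod_iT_i$; giving $\prod_iT_i$ the componentwise order and $R$ the induced order puts $R$ in $\Cl W$, and the canonical projection $\Om BV\to S$ then factors as $\Om BV\to R\to S$. Hence $a$ and $b$ have the same image in every finite quotient of $\Om BV$, so $a=b$. (Equivalently, one is checking here that $(\Om BV,\preceq)$ is the relatively free pro-$\Cl W$ ordered semigroup on $B$, whose underlying semigroup is $\Om BV$ precisely because $\Cl W\subseteq\pv V_\mathrm{o}$ and $\Cl W$ generates $\pv V$.)

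It remains to see that, when $X\subseteq B$, the partial order $\preceq$ on $\Om BV$ is nontrivial. The natural map $\iota\colon\Om XV\to\Om BV$ is injective — indeed it is split by the continuous homomorphism $\Om BV\to\Om XV$ that collapses the letters of $B\setminus X$ onto a fixed letter of $X$ — and for every $\psi\colon\Om BV\to(T,\le)$ as above, the composite $\psi\circ\iota$ is again a continuous homomorphism of the required kind; so validity of $u\le v$ in $\Cl W$ gives $\psi(\iota(\bar u))\le\psi(\iota(\bar v))$ for all such $\psi$, that is $\iota(\bar u)\preceq\iota(\bar v)$, while $\iota(\bar u)\ne\iota(\bar v)$ by injectivity of $\iota$ together with $\bar u\ne\bar v$. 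Thus $\Om BV$ is orderable for every finite $B\supseteq X$, in particular for $B$ of arbitrarily large cardinality, contradicting the assumption that $\pv V$ is almost unorderable. The only substantive ingredients are the Reiterman-type theorem in the ordered setting and the antisymmetry of $\preceq$ (equivalently, the identification of $(\Om BV,\preceq)$ with the free pro-$\Cl W$ ordered semigroup); everything else is formal.
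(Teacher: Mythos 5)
Your proof is correct and uses the same central mechanism as the paper's: the quasi-order $\preceq$ on $\Om BV$ induced by a subpseudovariety of ordered semigroups, together with the Reiterman-type theorem for ordered semigroups. The difference is purely one of logical direction — you argue by contrapositive, assuming that a proper $\Cl W$ generates $\pv V$, deducing that $\preceq$ is antisymmetric (via the dividing-a-product-of-members-of-$\Cl W$ argument) and hence a nontrivial closed stable partial order on $\Om BV$ for all large $B$, contradicting almost unorderability; the paper instead starts from an arbitrary proper $\pv U\subsetneq\pv V_\mathrm{o}$, deduces from unorderability that $\preceq$ must \emph{fail} antisymmetry, and reads off a pseudoidentity $w=z$ with $w\ne z$ in $\Om AV$ valid in $\pv U$ but not in $\pv V$, showing directly that $\pv U$ generates a proper subpseudovariety. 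Both are the same argument seen from the two ends; your version makes explicit the splitting embedding $\iota\colon\Om XV\to\Om BV$ needed to transport the distinguishing inequality to larger alphabets, a step the paper compresses into ``we may assume that $\Om AV$ is unorderable.''
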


\begin{proof}
  Let \pv V be an almost unorderable pseudovariety of semigroups and
  let \pv U be a pseudovariety of ordered semigroups properly
  contained in~\pv V. By the version of Reiterman's Theorem for
  pseudovarieties of ordered semigroups
  \cite{Molchanov:1994,Pin&Weil:1996b}, there is a finite alphabet $A$
  and there are distinct $u,v\in\Om AV$ such that \pv U satisfies the
  inequality $u\le v$. Since \pv V~is almost unorderable, we may
  assume that \Om AV~is unorderable.

  Consider the relation $\preceq$ on~\Om AV defined by $w\preceq z$ if
  \pv U~satisfies the inequality $w\le z$. Note that it is a closed
  stable quasi-order on~\Om AV. Since \Om AV~is unorderable, it
  follows that $\preceq$ fails the only missing property to be a
  closed stable partial order, namely anti-symmetry. Hence, there are
  distinct elements $w,z\in\Om AV$ such that $w\preceq z$ and
  $z\preceq w$, that is \pv U~satisfies the pseudoidentity $w=z$,
  which fails in~\pv V. Thus, \pv U generates a proper
  subpseudovariety of semigroups of~\pv V.
\end{proof}

Note that the two-element left-zero semigroup, with the trivial order,
generates \pv{LZ} as a pseudovariety of ordered semigroups. A
pseudovariety \pv V of ordered semigroups that generates \pv{LZ}, as a
pseudovariety of semigroups, must contain a two-element left-zero
semigroup, with some stable partial order. Since the product of two
copies of this semigroup contains the two element left-zero semigroup
with the trivial order, we deduce that $\pv V=\pv{LZ}$, and so
\pv{LZ}~is order primitive. Note that every partial order on a
left-zero semigroup is stable. Hence \pv{LZ}~is strictly orderable,
which shows that the converse of
Theorem~\ref{t:almost-unorderable-implies-order-primitive} is false.

Combining Theorem~\ref{t:almost-unorderable-implies-order-primitive}
with Corollary~\ref{c:gm-unorderable-egs}, we obtain the following
result.

\begin{Cor}
  \label{c:order-primitive-egs}
  Let \pv H be a nontrivial pseudovariety of groups. Then the
  pseudovarieties of the form \pv H, $\bar{\pv H}$,
  $\pv{CR}\cap\bar{\pv H}$, and $\pv{CS}\cap\bar{\pv H}$, are order
  primitive.\qed
\end{Cor}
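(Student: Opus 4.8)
The plan is to derive the statement directly from the two results cited immediately before it, namely Corollary~\ref{c:gm-unorderable-egs} and Theorem~\ref{t:almost-unorderable-implies-order-primitive}. First I would recall that Corollary~\ref{c:gm-unorderable-egs} already carries out the substantive work: for a nontrivial pseudovariety of groups \pv H, it asserts that each of \pv H, $\bar{\pv H}$, $\pv{CS}\cap\bar{\pv H}$, and $\pv{CR}\cap\bar{\pv H}$ is unorderable, the sole exception being $\pv{CR}\cap\overline{\pv{Ab}_2}$, which is asserted to be almost unorderable. Since a pseudovariety \pv V for which \Om AV is unorderable for every finite set~$A$ with $|A|\ge2$ is, a fortiori, almost unorderable — there are certainly arbitrarily large such alphabets — all four pseudovarieties in question, the exceptional one included, are almost unorderable.

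It then suffices to apply Theorem~\ref{t:almost-unorderable-implies-order-primitive}, which states that every almost unorderable pseudovariety of semigroups is order primitive. Applying it to each of the listed pseudovarieties yields the conclusion.

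There is no genuine obstacle in this argument; the only point meriting a word of care is the exceptional case $\pv H=\pv{Ab}_2$. Here one uses that Theorem~\ref{t:almost-unorderable-implies-order-primitive} is formulated under the weaker hypothesis of almost unorderability rather than full unorderability, so that $\pv{CR}\cap\overline{\pv{Ab}_2}$ is handled on exactly the same footing as the remaining pseudovarieties, and the statement is uniform over all choices of the nontrivial pseudovariety of groups~\pv H.
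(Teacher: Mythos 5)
Your proof is correct and coincides with the paper's own argument: the paper derives Corollary~\ref{c:order-primitive-egs} precisely by combining Corollary~\ref{c:gm-unorderable-egs} with Theorem~\ref{t:almost-unorderable-implies-order-primitive}. Your additional remark that unorderability implies almost unorderability, so that the exceptional case $\pv H=\pv{Ab}_2$ is handled uniformly, is exactly the point that makes this reduction go through.
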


A stronger result for the pseudovarieties $\bar{\pv H}$,
$\pv{CR}\cap\bar{\pv H}$ is obtained in
Section~\ref{sec:irreducibility}, which includes the pseudovariety \pv
A. The structure of the lattice of varieties of ordered bands has been
completely determined by Ku\v ril \cite{Kuril:2015}. The main
ingredient is to show that every variety of ordered bands that is not
a variety of bands is actually a variety of ordered normal bands,
which have been completely identified by Emery \cite{Emery:1999}. One
may easily deduce that the pseudovariety $\pv B=\pv{CR}\cap\pv A$ is
order primitive. For $\pv{RB}=\pv{CS}\cap\pv A$, one can easily deduce
that it is order primitive from the discussion above concerning
\pv{LZ} and the dual result for~\pv{RZ}.

\section{Join irreducibility}
\label{sec:irreducibility}

Following \cite[Definition~6.1.5]{Rhodes&Steinberg:2009qt}, we say
that an element $s$ of a lattice is \emph{strictly finite join
  irreducible (sfji)} if, whenever $s=t\vee u$, either $s=t$ or $s=u$;
the element $s$~is \emph{finite join irreducible (fji)} if, whenever
$s\le t\vee u$, either $s\le t$ or $s\le u$. Note that fji implies
sfji.

The element $s$ of a lattice is \emph{meet-distributive} if the
equality $s\wedge(t\vee u)=(s\wedge t)\vee(s\wedge u)$ holds for all
$t$ and $u$ in the lattice. Note that every sfji meet-distributive
element of a lattice is fji.

The lattices of concern in this paper, which are both complete, are
$\Cl L(\pv S)$, of all pseudovarieties of semigroups, and $\Cl L_o(\pv
S)$, of all pseudovarieties of ordered semigroups, both lattices
ordered by inclusion. One can easily check that $\Cl L(\pv S)$ is a
complete sublattice of~$\Cl L_o(\pv S)$. The above lattice notions are
always to be understood here with respect to one of these lattices. Of
course, for an element of~$\Cl L(\pv S)$, being sfji or fji with
respect to~$\Cl L_o(\pv S)$ are stronger properties than their
counterparts within the lattice $\Cl L(\pv S)$. An example of an sfji
pseudovariety which is not fji in~$\Cl L(\pv S)$ can be found
in~\cite[Proposition~7.3.22]{Rhodes&Steinberg:2009qt}.

The following two theorems open a second path to applications of the
representation results of the preceding sections.

\begin{Thm}
  \label{t:fji}
  Let \pv V be an almost WGGM pseudovariety of semigroups such that at
  least one of the following conditions holds:
  \begin{enumerate}[(i)]
  \item\label{item:fji-1} $\pv V\bullet\pv{Sl}=\pv V$;
  \item\label{item:fji-2} \pv V contains~\pv{Sl} and $\pv
    V\bullet\pv{Ab}_p=\pv V$ for some prime $p$.
  \end{enumerate}
  Then \pv V is fji in the lattice $\Cl L_o(\pv S)$.
\end{Thm}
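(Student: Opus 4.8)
The plan is to prove the statement in contrapositive form: supposing that, as pseudovarieties of ordered semigroups, neither $\pv W_1$ nor $\pv W_2$ contains $\pv V$, I would produce a single pseudoinequality valid in both $\pv W_1$ and $\pv W_2$ but failing in $\pv V$, which gives $\pv V\nsubseteq\pv W_1\vee\pv W_2$ in $\Cl L_o(\pv S)$. First I would translate the hypotheses using the version of Reiterman's theorem for pseudovarieties of ordered semigroups together with the fact that, viewed in $\Cl L_o(\pv S)$, $\pv V$ satisfies a pseudoinequality $p\le q$ if and only if it satisfies the pseudoidentity $p=q$: for each $i$ this yields a finite alphabet and a pseudoinequality $p_i\le q_i$ that holds in $\pv W_i$ while $p_i\ne q_i$ in the free pro-$\pv V$ semigroup over that alphabet. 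Since $\pv V$ is almost WGGM, I would then fix a common finite alphabet $A$, large enough that $\Om AV$ is WGGM, and transfer both witnesses to~$A$, using that the free pro-$\pv V$ semigroup over a subalphabet is a retract of $\Om AV$, so that $p_i\ne q_i$ persists in $\Om AV$.

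The heart of the argument, and where conditions \eqref{item:fji-1} and \eqref{item:fji-2} enter, is the combination of the two witnesses into one. Over an alphabet enlarging $A$ by fresh markers, I would use the semigroup construction $M(S,T,f)$ of Subsection~\ref{sec:construction}, taking $T$ to be a two-element semilattice in case \eqref{item:fji-1} and the group $\mathbb{Z}/p\mathbb{Z}$ in case \eqref{item:fji-2}, to build pseudowords $P$ and $Q$ with the property that, under every continuous homomorphism into a member of~$\pv W_i$, the pair $(P,Q)$ reduces to an order-monotone context around $(p_i,q_i)$; this forces $\pv W_i$ to satisfy $P\le Q$, hence $\pv W_1\vee\pv W_2$ to satisfy it as well. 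To see that $P\ne Q$ in the corresponding free pro-$\pv V$ semigroup I would push the distinctness into the minimum ideal $K$: since that semigroup is WGGM it embeds into $\Omega(K)$ by Theorem~\ref{t:TH-wggm}, so it suffices to produce an element of $K$ on which $P$ and $Q$ act differently on one side. This last point is arranged exactly as in the proof of Theorem~\ref{t:plenty-of-torsion}: adjoining a marker and then, in case \eqref{item:fji-2}, detecting a discrepancy in the number of occurrences of a suitable factor modulo~$p$, or, in case \eqref{item:fji-1}, detecting the presence of a marked factor, inside a Rees-matrix image which the hypothesis $\pv V\bullet\pv{Ab}_p=\pv V$ together with $\pv{Sl}\subseteq\pv V$, respectively $\pv V\bullet\pv{Sl}=\pv V$, keeps inside $\pv V$. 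This combination technique refines the one used in \cite{Almeida&Klima:2011a}.

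I expect the main obstacle to be the simultaneous control of the combined pseudoinequality: the pair $(P,Q)$ must degenerate to an order-consequence of $p_i\le q_i$ in $\pv W_i$ \emph{for each} $i$, yet its two sides must stay distinct in a free pro-$\pv V$ semigroup; these requirements are in tension, since shrinking $P$ and $Q$ enough to make them comparable in both $\pv W_i$ tends to identify them over $\pv V$ as well. The closure hypotheses \eqref{item:fji-1} and \eqref{item:fji-2} are exactly the tool that breaks the tension, because they allow a semilattice coordinate, respectively a cyclic-group coordinate, to be inserted as a marker without leaving $\pv V$; that coordinate records inside $\pv V$ precisely the information that collapses in $\pv W_1\vee\pv W_2$. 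Once the combination lemma is established, the theorem is immediate.
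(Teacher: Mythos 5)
Your overall strategy matches the paper's: pass to the contrapositive via the ordered version of Reiterman's theorem, use almost~WGGM to place both failing pseudoidentities $u_1=u_2$ and $w_1=w_2$ over a common alphabet where the relatively free profinite semigroup is WGGM, use WGGM to push the pair of witnesses into the minimum ideal, and then detect a failure of a combined inequality in~$\pv V$ through a semigroup $M(S,T,\xi)$ with $T=U_1$ in case~\eqref{item:fji-1} or $T=\mathbb{Z}/p\mathbb{Z}$ in case~\eqref{item:fji-2}. Unfortunately, the step that you yourself flag as ``the heart of the argument'' and ``the main obstacle'' is exactly the step you do not carry out, and it is where all the technical work of the paper lives.

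Concretely, the paper first produces pseudowords $u,e,w$ lying in a common regular $\Cl D$-class of $\Om AS$ (with $e=e^2$, $u\mathrel{\Cl R}e\mathrel{\Cl L}w$), so that $\pv U\models u\le e$, $\pv W\models w\le e$, while $\pi(u)\ne\pi(e)\ne\pi(w)$ in $\Om AV$. The combined witness is then the very specific inequality over $C=A\cup\{y\}$
\begin{equation*}
  y(uy)^{\omega-1}\,wuy\,(wy)^{\omega-1}\ \le\ y(uy)^\omega\,(ey)^{\omega-1}\,(wy)^\omega,
\end{equation*}
and the failure of this inequality in $\pv V$ is verified by a direct computation in $M(S,T,\xi)$, not by an argument of the same type as Theorem~\ref{t:plenty-of-torsion}. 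Your reference there is actually misplaced: that theorem's proof relies on the hypotheses $\pv V*\pv D=\pv V$ and content/$0$/$\bar0$ functions and works through the de~Bruijn encoding $\Phi_n^{\pv V}$, none of which is available or needed in Theorem~\ref{t:fji}. Citing it as the source of the distinctness verification suggests the wrong mechanism.

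Finally, you pass over the genuinely delicate subcase. Under hypothesis~\eqref{item:fji-1} alone (no nontrivial group coordinate available), it can happen that $\pi(wu)=\pi(e)$, in which case $\pi(u),\pi(e),\pi(w)$ all fall into a single maximal subgroup $H$ of the minimum ideal of $\Om AV$, and the $M(S,U_1,\xi)$ computation on the inequality above does not settle matters directly. The paper then shows that the quasi-order induced on $H$ by $\preceq_{\pv U}$ (and $\preceq_{\pv W}$) descends to a closed stable partial order on a profinite group quotient, invokes Lemma~\ref{l:profinite-groups-unorderable} to make that order trivial, deduces that $\pv U\models u=e$ and $\pv W\models w=e$, and only then derives a contradiction from the simpler pseudoidentity $(uy)^\omega(wy)^\omega=(uy)^\omega(ey)^\omega(wy)^\omega$ via $M(S,U_1,\xi)$. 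This branch of the argument is essential precisely because it is what saves the aperiodic case (e.g.\ $\pv V=\pv A$), and your sketch neither anticipates nor addresses it.
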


\begin{proof}
  Let \pv U and \pv W be pseudovarieties of ordered semigroups and
  suppose that \pv V is contained in $\pv U\vee\pv W$ but in neither
  \pv U nor~\pv W. Since pseudovarieties of ordered semigroups are
  defined by inequalities, there is a finite alphabet $A$ and there
  are pseudowords $u_1,u_2,w_1,w_2\in\Om AS$ such that the inequality
  $u_1\le u_2$ holds in~\pv U, $w_1\le w_2$ holds in~\pv W, and both
  inequalities (and therefore also the pseudoidentities $u_1=u_2$ and
  $w_1=w_2$) fail in~\pv V.

  Without loss of generality, we may assume that the sets $c(u_1)\cup
  c(u_2)$ and $c(w_1)\cup c(w_2)$ are disjoint and do not contain the
  letter~$z\in A$ and, furthermore, that \Om AV~is WGGM: otherwise, we
  rewrite one of the pseudoidentities in a new, disjoint, alphabet,
  and add it to~$A$ together with enough extra letters, including~$z$,
  using the hypothesis that \pv V is almost WGGM. Let
  $B=A\setminus\{z\}$ and let $\pi:\Om AS\to\Om AV$ be the natural
  continuous homomorphism, mapping free generators to themselves.

  Since the pseudoidentity $u_1=u_2$ fails in~\pv V, there exists a
  continuous homomorphism $\varphi:\Om BS\to S$ into a semigroup
  $S\in\pv V$ such that $\varphi(u_1)\ne\varphi(u_2)$.
  Let $U_1$~be
  the two-element semilattice, $\xi:S^1\to U_1$ be the mapping that
  sends $\varphi(u_1)$ to~$0$ and every other element to~$1$, and
  $\psi$~be the extension of~$\varphi$ to a continuous homomorphism
  $\Om AS\to M(S,U_1,\xi)$ that maps $z$ to~$(1,1,1)$. Then
  $\psi(zu_1z)=(1,0,1)\ne(1,1,1)=\psi(zu_2z)$ and so the
  pseudoidentity $zu_1z=zu_2z$ fails in~$\pv V\bullet\pv{Sl}$.
  Similarly, simply replacing $U_1$ by the additive group
  $\mathbb{Z}/p\mathbb{Z}$, the pseudoidentity $zu_1z=zu_2z$ fails
  in~$\pv V\bullet\pv{Ab}_p$.
  Thus, under the hypotheses of the theorem, the pseudoidentity
  $zu_1z=zu_2z$ fails in~\pv V, and the same argument and conclusion
  applies to the pseudoidentity $zw_1z=zw_2z$.

  Because they all have proper content, none of the pseudowords
  $\pi(zu_1z)$, $\pi(zu_2z)$, $\pi(zw_1z)$, $\pi(zw_2z)$ belongs to
  the minimum ideal $K$ of~\Om AV. Since \Om AV~is WGGM and $\pi$ maps
  the minimum ideal $I$ of~\Om AS onto~$K$
  \cite[Lemma~4.6.10]{Rhodes&Steinberg:2009qt}, there exist $s,t\in I$
  such that $\pi(szu_1z)\ne\pi(szu_2z)$ and
  $\pi(zw_1zt)\ne\pi(zw_2zt)$. Let $u_i'=szu_iz$ and $w_i'=zw_izt$ for
  $(i=1,2)$. Note that the relations $u_1'\mathrel{\Cl R}u_2'$ %
  and $w_1'\mathrel{\Cl L}w_2'$ hold. Moreover, by Green's Lemma, the
  inequalities $w_2'u_1'\le w_2'u_2'$ and $w_1'u_2'\le w_2'u_2'$ are
  also nontrivial in~\pv V, while they remain valid respectively
  in~\pv U and~\pv W. Let $v=w_2'u_2'$. Furthermore, multiplying
  $w_2'u_1'\le v$ on the left by~$v^{\omega-1}$ and $w_1'u_2'\le v$
  on the right by $v^{\omega-1}$ we obtain the pseudowords
  \begin{align}
    \label{eq:fji-u}
    u & %
    = v^{\omega-1}w_2'u_1' %
    = (zw_2ztszu_2z)^{\omega-1}zw_2ztszu_1z
    \\
    \label{eq:fji-v}
    e & %
    = v^\omega %
     =(zw_2ztszu_2z)^\omega
    \\
    \label{eq:fji-w}
    w & %
    = w_1'u_2'v^{\omega-1} %
    = zw_1ztszu_2z(zw_2ztszu_2z)^{\omega-1}
  \end{align}
  such that the following conditions hold:
  \begin{align}
    \label{eq:fji-1}
    & %
    u\mathrel{\Cl R}e\mathrel{\Cl L}w,\
    e^2=e,\\
    \label{eq:fji-2}
    & %
    \pv U\models u\le e,\ %
    \pv W\models w\le e,\\
    \label{eq:fji-3}
    & %
    \pi(u)\ne\pi(e)\ne\pi(w).
  \end{align}
  
  We claim that, under the assumption that the
  condition~\eqref{item:fji-2} holds, so does the following:
  \begin{equation}
    \label{eq:fji-3b}
    \pi(wu)\ne\pi(e).
  \end{equation}
  To prove the claim, consider a prime~$p$ such that $\pv
  V\bullet\pv{Ab}_p=\pv V$. By the choice of the pseudowords
  $u_1,u_2,w_1,w_2$, there exists a continuous homomorphism
  $\varphi:\Om BS\to S$ into a semigroup $S\in\pv V$ such that
  \begin{equation}
    \label{eq:fji-3c}
    \varphi(w_1)\notin\{\varphi(u_1),\varphi(u_2),\varphi(w_2)\}.
  \end{equation}
  Note that, since \pv V contains the semilattice $U_1$, whence the
  semigroup $S\times U_1$, we may assume that $\varphi(w_1)\ne1$. Let
  $\xi:S^1\to\mathbb{Z}/p\mathbb{Z}$ map $\varphi(w_1)$ to~$1$ and
  every other element to~$0$ and let $\psi:\Om AS\to
  M(S,\mathbb{Z}/p\mathbb{Z},\xi)$ be the extension of~$\varphi$ to a
  continuous homomorphism that maps $z$ to~$(1,0,1)$. Since $\psi(z)$
  is an idempotent and $u,w,e$ start and end with~$z$, the values of
  $wu$ and~$e$ under~$\psi$ are both of the form $(1,g,1)$. Since
  $\xi\bigl(\varphi(w_1)\bigr)=1$, $\psi(zw_1z)$ is~$(1,1,1)$ while,
  by~\eqref{eq:fji-3c}, we have $\psi(zu_iz)=\psi(zw_2z)=(1,0,1)$
  ($i=1,2$). Let $\psi(ztsz)=(1,h,1)$. Then, taking into account the
  expressions~\eqref{eq:fji-u}--\eqref{eq:fji-w} and the fact that
  $\psi$ is a continuous homomorphism, we may compute
  $$\psi(wu) %
  = (1,1,1)(1,h,1)^\omega %
  = (1,1,1) %
  \ne (1,0,1) %
  = (1,h,1)^\omega %
  = \psi(e). %
  $$
  This establishes the claim since $M(S,\mathbb{Z}/p\mathbb{Z},\xi)$
  belongs to~$\pv V\bullet\pv{Ab}_p$ and, therefore, to~\pv V.

  Consider next the following inequality, where $y$~is a new letter:
  \begin{align}
    \label{eq:fji-a}
    y(uy)^{\omega-1} wuy (wy)^{\omega-1} %
    \le %
    y(uy)^\omega (ey)^{\omega-1} (wy)^\omega.
  \end{align}
  Let $C=A\cup\{y\}$. We claim that \eqref{eq:fji-a} holds in both \pv
  U and \pv W. Since the arguments for the two pseudovarieties are
  dual, we treat only the case of~\pv U. In view of~\eqref{eq:fji-2},
  \pv U~satisfies the inequality $wu\le we$ and thus also $wu\le w$
  because $we=w$ by~\eqref{eq:fji-1}. Hence, \pv
  U~satisfies the following inequalities:
  \begin{equation*}
    y(uy)^{\omega-1} wuy (wy)^{\omega-1}
    \le %
    y(uy)^\omega(uy)^{\omega-1} wy (wy)^{\omega-1}
    \mathrel{\mathop\le\limits_{{}_{(\ref{eq:fji-2})}}} %
    y(uy)^\omega(ey)^{\omega-1} (wy)^\omega,
  \end{equation*}
  which establishes that \eqref{eq:fji-a}~holds in~\pv U. We will
  reach a contradiction by showing that \eqref{eq:fji-a} does not hold
  in~\pv V, which is contrary to the assumption that $\pv V\subseteq\pv
  U\vee\pv W$.
  
  Suppose first that condition~\eqref{eq:fji-3b} holds, which we have
  not proved under the hypothesis~\eqref{item:fji-1}. Taking also into
  account~\eqref{eq:fji-3}, it follows that there is a continuous
  homomorphism $\varphi:\Om AS\to S$ into a semigroup $S\in\pv V$ such
  that $\varphi(e)\notin\{\varphi(u),\varphi(w),\varphi(wu)\}$.
  Again, we may assume that $\varphi(e)\ne 1$.

  Let the function $\xi:S^1\to U_1$ map $\varphi(e)$ to~$0$ and every
  other element to~$1$. Consider the extension of~$\varphi$ to a
  continuous homomorphism $\psi:\Om CS\to M(S,U_1,\xi)$ that sends $y$
  to~$(1,1,1)$. Then we may compute
  \begin{align*}
    \psi\bigl(y(uy)^{\omega-1} wuy (wy)^{\omega-1}\bigr) %
    &= %
    (1,\xi(\varphi(u))\xi(\varphi(wu))\xi(\varphi(w)),1) %
    = (1,1,1), %
    \\
    \psi\bigl(y(uy)^\omega (ey)^{\omega-1} (wy)^\omega\bigr) %
    &= %
    (1,\xi(\varphi(u))\xi(\varphi(e))\xi(\varphi(w)),1) %
    = (1,0,1).
  \end{align*}
  Since $M(S,U_1,\xi)\in\pv V\bullet\pv{Sl}$, under the hypothesis
  that the condition \eqref{item:fji-1}~holds we deduce that the
  inequality \eqref{eq:fji-a} is not valid in~\pv V.

  On the other hand, if $p$~is a prime such that %
  $\pv V\bullet\pv{Ab}_p=\pv V$, then we consider the additive group
  $\mathbb{Z}/p\mathbb{Z}$ and the mapping
  $\delta:S^1\to\mathbb{Z}/p\mathbb{Z}$ that sends $\varphi(e)$ to~$1$
  and every other element to~$0$. Now, for the extension of~$\varphi$
  to a continuous homomorphism $\chi:\Om CS\to
  M(S,\mathbb{Z}/p\mathbb{Z},\delta)$ that sends $y$ to~$(1,0,1)$, we
  may compute
  \begin{align*}
    \nonumber
    \chi\bigl(y(uy)^{\omega-1} wuy (wy)^{\omega-1}\bigr) %
    &= %
    (1,-\delta(u)+\delta(wu)-\delta(w),1) %
    = (1,0,1), %
    \\
    \label{eq:fji-4}
    \chi\bigl(y(uy)^\omega (ey)^{\omega-1} (wy)^\omega\bigr) %
    &= %
    (1,-\delta(e),1) %
    = (1,-1,1).
  \end{align*}
  Since %
  $M(S,\mathbb{Z}/p\mathbb{Z},\delta)\in\pv V\bullet\pv{Ab}_p=\pv V$,
  it follows that the inequality \eqref{eq:fji-a}~is not valid in~\pv
  V, which again contradicts the assumption that $\pv V\subseteq\pv
  U\vee\pv W$.

  It remains to treat the case where $\pi(wu)=\pi(e)$ under the
  hypothesis~\eqref{item:fji-1}. Then the set
  $\pi\bigl(\{e,u,w,wu\}\bigr)$ is contained in a maximal subgroup
  of~$K$, which must therefore be nontrivial.\footnote{The reader
    interested only in the applications in
    Corollary~\ref{c:gm-unorderable-egs} and its sequel may skip the
    remainder of the proof, since the single application for which
    only the hypothesis \eqref{item:fji-1} can be used that we have in
    mind is the pseudovariety \pv A.
    } In this case,
  we may further replace $u$ by $ue$ and $w$ by~$ew$ without affecting
  any of the conditions \eqref{eq:fji-1}--\eqref{eq:fji-3} and so we
  may assume that the pseudowords $e,u,w$ lie in the same subgroup $H$
  of~$I$. Since $\pi(wu)=\pi(e)$, then $\pi(w)=\pi(u^{\omega-1})$ and
  we could replace $u$ by $u^{\omega-1}$ without affecting the
  conditions \eqref{eq:fji-1}--\eqref{eq:fji-3}. Thus, we may assume
  that $\pi(u)=\pi(w)$.

  For a pseudovariety \pv X, consider the relation on~\Om AS defined by
  $$p\preceq_\pv Xq %
  \quad\mbox{if } %
  \pv X\models p\le q.$$
  Note that $\preceq_\pv X$ is a stable quasi-order. The induced
  relation $\preceq$ on the profinite group~$H$ is in fact a
  nontrivial closed stable quasi-order. The binary relation given by
  ${\equiv}={\preceq}\cap{\succeq}$ is therefore a closed congruence
  on the profinite group $H$ and $\preceq$ induces a closed stable
  partial order on the quotient $H/{\equiv}$, which is a profinite
  group under the quotient topology. By
  Lemma~\ref{l:profinite-groups-unorderable}, it follows that the
  induced order on the group $H/{\equiv}$ is trivial, that is the
  relation $\preceq$ on~$H$ is the congruence~$\equiv$. Thus, we must
  have $\pv U\models u=e$ and $\pv W\models w=e$.

  From the conclusion of the preceding paragraph, we deduce that $\pv
  U\vee\pv W$, and therefore also \pv V, satisfies the pseudoidentity
  \begin{equation}
    \label{eq:fji-b}
    (uy)^\omega(wy)^\omega=(uy)^\omega(ey)^\omega(wy)^\omega.
  \end{equation}
  However, a calculation in the semigroup $M(S,U_1,\xi)$ considered
  above shows that $\psi\bigl((uy)^\omega(vy)^\omega\bigr)=(\varphi(u),1,1)$,
  while $\psi\bigl((uy)^\omega(ey)^\omega(vy)^\omega\bigr)=(\varphi(u),0,1)$,
  so that the pseudoidentity \eqref{eq:fji-b}~fails in~$\pv
  V\bullet\pv{Sl}$. Hence, under the hypothesis that \pv V~satisfies
  condition~\eqref{item:fji-1}, we obtain a contradiction, which
  completes the proof of Theorem~\ref{t:fji}.
\end{proof}

As examples of application of Theorem~\ref{t:fji}, taking into account
the previous WGGM results (namely Corollaries~\ref{c:barH-wggm},
\ref{c:wggm-DS-egs}, and Theorem~\ref{t:weak-gm-CR}) and
Proposition~\ref{p:bullet-group}, we obtain the join irreducibility of
many familiar pseudovarieties.

\begin{Cor}
  \label{c:fji-egs}
  The pseudovarieties \pv A, $\bar{\pv H}$, $\pv{DS}\cap\bar{\pv H}$,
  and $\pv{CR}\cap\bar{\pv H}$ are fji in the lattice $\Cl L_o(\pv S)$
  for every nontrivial pseudovariety of groups \pv H.\qed
\end{Cor}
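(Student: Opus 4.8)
The plan is to deduce the corollary directly from Theorem~\ref{t:fji}: for each of the four pseudovarieties $\pv V$ listed, it suffices to check that $\pv V$ is almost WGGM and that it satisfies at least one of the two closure conditions \eqref{item:fji-1}, \eqref{item:fji-2} of that theorem. The WGGM side is already available from the earlier sections. Indeed, $\bar{\pv H}$, and in particular $\pv A=\bar{\pv I}$, is WGGM by Corollary~\ref{c:barH-wggm}; $\pv{DS}\cap\bar{\pv H}$ is WGGM by Corollary~\ref{c:wggm-DS-egs}; and $\pv{CR}\cap\bar{\pv H}$ is GGM, hence WGGM, or almost GGM when $\pv H=\pv{Ab}_2$, by Corollary~\ref{c:ggm-CRH}. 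In every case this gives the property of being almost WGGM, which is all that Theorem~\ref{t:fji} requires.

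For the closure condition, fix a prime $p$ with $\pv{Ab}_p\subseteq\pv H$, which exists because $\pv H$ is nontrivial. Each of $\bar{\pv H}$, $\pv{DS}\cap\bar{\pv H}$ and $\pv{CR}\cap\bar{\pv H}$ contains $\pv{Sl}$, since semilattices are aperiodic with only trivial subgroups and lie trivially in $\pv{DS}$ and $\pv{CR}$. For these three pseudovarieties I would verify condition~\eqref{item:fji-2}, that is, $\pv V\bullet\pv{Ab}_p=\pv V$; the inclusion $\pv V\subseteq\pv V\bullet\pv{Ab}_p$ being automatic, only $\pv V\bullet\pv{Ab}_p\subseteq\pv V$ is at issue. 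For $\pv V=\bar{\pv H}$ this follows from $\bar{\pv H}\bullet\bar{\pv H}=\bar{\pv H}$ (Corollary~\ref{c:closure-under-Rees-matrix-extensions}), the monotonicity of $\bullet$ in its second argument, and the inclusion $\pv{Ab}_p\subseteq\bar{\pv H}$. For $\pv V=\pv{DS}\cap\bar{\pv H}$ and $\pv V=\pv{CR}\cap\bar{\pv H}$, Proposition~\ref{p:bullet-group} gives $\pv V\bullet\pv H\subseteq\pv V$, whence $\pv V\bullet\pv{Ab}_p\subseteq\pv V\bullet\pv H\subseteq\pv V$ by monotonicity.

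The pseudovariety $\pv A$ must be handled separately, since condition~\eqref{item:fji-2} is unavailable: one always has $\pv{Ab}_p\subseteq\pv A\bullet\pv{Ab}_p$, and $\pv{Ab}_p$ is not aperiodic. Instead I would verify condition~\eqref{item:fji-1}, namely $\pv A\bullet\pv{Sl}=\pv A$, for which again only $\pv A\bullet\pv{Sl}\subseteq\pv A$ requires an argument. This follows from the fact that every subgroup of $M(S,T,f)$ embeds into a subgroup of $S$ or of $T$ \cite[Lemma~3.1]{Almeida&Klima:2011a}: when $S\in\pv A$ and $T\in\pv{Sl}$, both having only trivial subgroups, so does $M(S,T,f)$, which is therefore aperiodic. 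With these verifications in hand, Theorem~\ref{t:fji} applies to each of $\pv A$, $\bar{\pv H}$, $\pv{DS}\cap\bar{\pv H}$ and $\pv{CR}\cap\bar{\pv H}$, and yields that each is fji in $\Cl L_o(\pv S)$.

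I do not expect a genuine obstacle here: the argument is just a matter of feeding the right earlier results into Theorem~\ref{t:fji}. The only point demanding care is that $\pv A$, being aperiodic, cannot use the abelian-group condition \eqref{item:fji-2} and must go through the semilattice condition \eqref{item:fji-1}, which is precisely the branch of the proof of Theorem~\ref{t:fji} that carries its extra argument in the final paragraph, whereas the other three pseudovarieties all satisfy \eqref{item:fji-2} via Proposition~\ref{p:bullet-group}.
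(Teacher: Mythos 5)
Your proposal is correct and follows essentially the same route the paper has in mind: read off the required almost-WGGM property from the earlier results (Corollary~\ref{c:barH-wggm} for $\pv A$ and $\bar{\pv H}$, Corollary~\ref{c:wggm-DS-egs} for $\pv{DS}\cap\bar{\pv H}$, and Theorem~\ref{t:weak-gm-CR} or Corollary~\ref{c:ggm-CRH} for $\pv{CR}\cap\bar{\pv H}$), verify the appropriate $\bullet$-closure condition (Corollary~\ref{c:closure-under-Rees-matrix-extensions} for $\bar{\pv H}$ and $\pv A$, Proposition~\ref{p:bullet-group} for the $\pv{DS}$ and $\pv{CR}$ cases), and invoke Theorem~\ref{t:fji}. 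You also correctly isolate the one genuinely delicate point, namely that $\pv A$ must go through condition~\eqref{item:fji-1} rather than~\eqref{item:fji-2}, which is precisely the reason the proof of Theorem~\ref{t:fji} carries its extra final-paragraph argument.
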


That the pseudovarieties of the form $\bar{\pv H}$ are sfji in $\Cl
L(\pv S)$ for \pv H an extension closed pseudovariety of groups was
first proved in~\cite{Margolis&Sapir&Weil:1995}; this is in fact
deduced from the stronger property that such a pseudovariety $\bar{\pv
  H}$ cannot be expressed as a Mal'cev product of proper
subpseudovarieties, which also entails the similar property for
semidirect product.
In~\cite[Corollary~7.4.23]{Rhodes&Steinberg:2009qt}, it was proved the
finite join irreducibility in $\Cl L(\pv S)$ of the pseudovarieties of
the forms $\bar{\pv H}$, $\pv{DS}\cap\bar{\pv H}$, and
$\pv{CR}\cap\bar{\pv H}$, where \pv H~is a pseudovariety of groups
containing some non-nilpotent group. In~\cite{Almeida&Klima:2011a}, we
improved the results from~\cite{Margolis&Sapir&Weil:1995} by showing
that, for an arbitrary pseudovariety of groups \pv H, if a pseudovariety
of the form $\bar{\pv H}$ can be covered by a Mal'cev product of
pseudovarieties then at least one of them must contain~$\bar{\pv H}$,
which again entails
the similar property for semidirect product and join.

Another sufficient condition for join irreducibility is provided by
the following result.

\begin{Thm}
  \label{t:fji-equidivisible}
  Let \pv V be a pseudovariety closed under concatenation that contains
  some nontrivial group. Then \pv V is fji in the lattice $\Cl L_o(\pv
  S)$.
\end{Thm}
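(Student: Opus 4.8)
The plan is to derive the statement from Theorem~\ref{t:fji}: I check that a pseudovariety \pv V which is closed under concatenation and contains a nontrivial group is almost WGGM and satisfies one of the conditions \eqref{item:fji-1}, \eqref{item:fji-2} of that theorem. That \pv V is WGGM, hence in particular almost WGGM, is exactly Corollary~\ref{c:wggm-AmV}. Closure under concatenation means $\pv A\malcev\pv V=\pv V$; since the trivial pseudovariety is contained in \pv V and every aperiodic semigroup admits an \pv A-homomorphism onto the trivial semigroup, we get $\pv A\subseteq\pv A\malcev\pv V=\pv V$, and in particular $\pv{Sl}\subseteq\pv V$. Moreover, since \pv V contains a nontrivial finite group, it contains a cyclic subgroup of some prime order $p$, whence $\pv{Ab}_p\subseteq\pv V$. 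It therefore remains to verify condition~\eqref{item:fji-2}, that is, the equality $\pv V\bullet\pv{Ab}_p=\pv V$.

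As $\pv V\subseteq\pv V\bullet\pv{Ab}_p$ always holds, the task reduces to showing that $M(S,G,f)\in\pv V$ whenever $S\in\pv V$, $G\in\pv{Ab}_p$, and $f\colon S^1\to G$. Here the minimum ideal of $M(S,G,f)$ is the completely simple semigroup $I=S^1\times G\times S^1$, isomorphic to the Rees matrix semigroup $\Cl M(S^1,G,S^1;P)$ with $P(s_2,s_1')=f(s_2s_1')$ and structure group $G$; its Rees quotient $M(S,G,f)/I$ is isomorphic to $S^0$, which belongs to \pv V (for instance because the identity relation on $S$ extends to an \pv A-relational morphism $S^0\to S$, with the fibre over each idempotent being $\{e,0\}$, hence aperiodic); and every subgroup of $M(S,G,f)$ is isomorphic to a subgroup of $S$ or of $G$, so it lies in \pv V. The plan is to realize $M(S,G,f)$ as a divisor of a Mal'cev product $\pv U\malcev\pv W$ with $\pv U\subseteq\pv A$ (in fact $\pv U=\pv{LI}$ should work) and $\pv W\subseteq\pv V\vee\pv{Ab}_p=\pv V$, by producing an explicit $\pv U$-relational morphism: the adjoined copy of $S$ multiplies into the first coordinate of $I$ from the left and into the last coordinate from the right, leaving the other untouched, while the inner $G$-coordinate is only twisted by $f$-values, so that recording the two outer coordinates together with the inner one up to each $\Cl H$-class yields a relation into a member of \pv V whose fibres over idempotents are locally trivial. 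Since $\pv U\subseteq\pv A$, this gives $M(S,G,f)\in\pv U\malcev\pv W\subseteq\pv A\malcev\pv V=\pv V$, as required. (This is in the spirit of Proposition~\ref{p:bullet-group} and Corollary~\ref{c:closure-under-Rees-matrix-extensions}, which settle related special cases; an essentially equivalent route is to argue directly that \pv V, being closed under concatenation, is closed under the ideal extension exhibited above, once one knows that $I\in\pv V$.)

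I expect the main obstacle to be precisely this last step --- that $M(S,G,f)$ stays inside \pv V --- since neither ideal extensions nor the operation $\variable\bullet\variable$ is a priori respected by the lattice operations on pseudovarieties; the argument must exploit both the specific structure of the construction $M(S,T,f)$ (the new element of $S$ acts by a one-sided translation on each of the two outer coordinates of $I$, and nowhere else, while the inner group coordinate only picks up $f$-values) and the full force of the closure $\pv A\malcev\pv V=\pv V$, equivalently $\pv{LI}\malcev\pv V=\pv V$ and $\pv{RB}\malcev\pv V=\pv V$. Once $\pv V\bullet\pv{Ab}_p=\pv V$ is established, condition~\eqref{item:fji-2} of Theorem~\ref{t:fji} holds, and that theorem yields that \pv V is fji in $\Cl L_o(\pv S)$.
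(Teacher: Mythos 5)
Your preliminaries are sound: $\pv V$ is WGGM by Corollary~\ref{c:wggm-AmV}, hence almost WGGM; $\pv{Sl}\subseteq\pv A\subseteq\pv A\malcev\pv V=\pv V$; and $\pv{Ab}_p\subseteq\pv V$ for some prime~$p$ because $\pv V$ contains a nontrivial group. But the whole reduction to Theorem~\ref{t:fji} then rests on the equality $\pv V\bullet\pv{Ab}_p=\pv V$, and this step is not proved --- as you yourself acknowledge. The sketched Mal'cev-product decomposition of $M(S,G,f)$ never specifies the target semigroup $W$, nor verifies that $W\in\pv V$ with aperiodic (or locally trivial) fibres over idempotents; and the remark that the inner group coordinate ``is only twisted by $f$-values'' does not by itself produce such a morphism, since any $\pv A$-morphism out of $M(S,G,f)$ must be injective on the subgroup $G$, so one still has to build inside $\pv V$ a target that carries both $G$ and the two outer $S^1$-coordinates compatibly. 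There is in fact no reason to expect $\pv V\bullet\pv{Ab}_p=\pv V$ to hold for an arbitrary pseudovariety closed under concatenation: the paper's fixed points of $\variable\bullet\pv{Ab}_p$ are established only for $\bar{\pv H}$ (Corollary~\ref{c:closure-under-Rees-matrix-extensions}) and for subpseudovarieties of $\pv{DS}\cap\bar{\pv H}$ and $\pv{CR}\cap\bar{\pv H}$ (Proposition~\ref{p:bullet-group}). If your equality held under the stated hypotheses, Theorem~\ref{t:fji-equidivisible} would be a trivial special case of Theorem~\ref{t:fji} --- yet the paper needs it precisely to reach pseudovarieties such as $\pv C_n\cap\bar{\pv H}$ (Corollary~\ref{c:Cn-fji}), for which no instance of $\pv V\bullet\pv{Ab}_p=\pv V$ is claimed.

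The paper's proof goes a genuinely different way: instead of verifying a hypothesis of Theorem~\ref{t:fji}, it re-runs that theorem's argument and, at each of the three places where a semigroup $M(S,U_1,\xi)$ or $M(S,\mathbb{Z}/p\mathbb{Z},\delta)$ must be placed in $\pv V$, it substitutes an argument based on equidivisibility of $\Om AV$, which closure under concatenation guarantees via~\cite[Lemma~4.8]{Almeida&ACosta:2007a}. Concretely, the failure in $\pv V$ of $zu_1z=zu_2z$ and of $zw_1z=zw_2z$, and the key inequality~\eqref{eq:fji-3b}, are read off by matching the occurrences of the fresh letter $z$ under equidivisibility; and the failure of~\eqref{eq:fji-a} in $\pv V$ is obtained by comparing two factorisations of the same element of $\Om CV$, using equidivisibility together with openness of multiplication. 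To repair your proposal you would either have to actually prove $\pv V\bullet\pv{Ab}_p=\pv V$ under closure under concatenation --- a substantial and, as far as this paper records, unknown claim --- or abandon the reduction and make these equidivisibility substitutions directly.
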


\begin{proof}
  We first note that \pv V~is WGGM by Corollary~\ref{c:wggm-AmV}. The
  proof now follows basically the same steps as the above proof of
  Theorem~\ref{t:fji}, with appropriate modifications when the closure
  properties assumed in the hypothesis of Theorem~\ref{t:fji} are
  invoked. We therefore adopt the same notation without further
  comment.

  The first modification concerns the justification of the fact that
  the pseudoidentities $zu_1z=zu_2z$ and $zw_1z=zw_2z$ fail in~\pv V,
  which is immediate from the hypothesis that $u_1=u_2$ and $w_1=w_2$
  fail in~\pv V, taking into account that \pv V~is equidivisible.

  The second modification which is needed is to justify the
  inequality~\eqref{eq:fji-3b}. This
  is done again by invoking equidivisibility of~\pv V and observing
  that $wu$ admits $zw_1z$ as a prefix, whereas $zw_2z$ is a prefix
  of~$e$, where $z$~is a letter that does not occur in $w_1$ and $w_2$.

  Finally, it remains to show that the hypothesis that the inequality
  \eqref{eq:fji-a} holds in~\pv V leads to a contradiction. Since \pv
  V~is a pseudovariety of semigroups, that hypothesis means that we
  have the following two factorizations of the same element of~\Om CV,
  where we write $\bar r$ for $\pi(r)$ with $r\in\Om AS$:
  \begin{equation}
    \label{eq:fji-equidivisible-1}
    y(\bar uy)^{\omega-1}\cdot \bar w\bar uy (\bar wy)^{\omega-1} %
    = %
    y(\bar uy)^\omega \cdot(\bar ey)^{\omega-1} (\bar wy)^\omega.
  \end{equation}
  By equidivisibility of~\Om CV, there is some $q\in(\Om CV)^1$ such
  that one of the following conditions holds:
  \begin{align}
    \label{eq:fji-equidivisible-2}
    &y(\bar uy)^{\omega-1}q=y(\bar uy)^\omega %
    \quad\mbox{and}\quad %
    \bar w\bar uy(\bar wy)^{\omega-1} %
    = %
    q(\bar ey)^{\omega-1} (\bar wy)^\omega,
    \\
    \label{eq:fji-equidivisible-3}
    &y(\bar uy)^{\omega-1}=y(\bar uy)^\omega q%
    \quad\mbox{and}\quad %
    q\bar w\bar uy(\bar wy)^{\omega-1} %
    = %
    (\bar ey)^{\omega-1} (\bar wy)^\omega.
  \end{align}

  By hypothesis, \pv V~contains some additive group of the form
  $\mathbb{Z}/p\mathbb{Z}$, where $p$~is a prime. Let
  $m\in\{0,1,\ldots,p-1\}$ be such that $\varphi(q)=m$,
  where $\varphi:\Om CV\to\mathbb{Z}/p\mathbb{Z}$ is the unique
  continuous homomorphism that maps $y$ to~$1$ and every other
  element of~$C$ to~$0$. From the first equalities
  in~\eqref{eq:fji-equidivisible-2}
  and~\eqref{eq:fji-equidivisible-3}, we deduce that we must have,
  respectively, $m=1$ and $m=p-1$. In particular, $y$ occurs at least
  once in~$q$. Consider the unique factorization of the form
  $q=q_0yq_1$, where $y\notin c(q_0)$ and $q_0,q_1\in(\Om CV)^1$,
  where existence follows from compactness, and uniqueness from
  equidivisibility. From the second equalities
  in~\eqref{eq:fji-equidivisible-2} and~\eqref{eq:fji-equidivisible-3}
  and equidivisibility of~\Om CV, we deduce, respectively, that
  $q_0=\bar w\bar u$ and $q_0=\bar e$.

  Suppose first that the equalities~\eqref{eq:fji-equidivisible-2}
  hold. Consider the sequence of pseudowords %
  $\bigl(y(\bar uy)^{n!}\bigr)_n$, which converges %
  to~$y(\bar uy)^\omega=y(\bar uy)^{\omega-1}\cdot q_0y\cdot q_1$.
  Since the multiplication in~\Om CV is an open mapping and \Om CV~is
  compact, and taking again into account that \Om CV~is equidivisible,
  there are sequences of positive integers $(j_i)_i$, $(k_i)_i$, and
  $(\ell_i)_i$ such that %
  $\lim y(\bar uy)^{j_i}=y(\bar uy)^{\omega-1}$, %
  $\lim (\bar uy)^{k_i}=q_0y$, %
  $\lim(\bar uy)^{\ell_i}=q_1$, %
  and $(j_i+k_i+\ell_i)_i$ is a strictly increasing sequence of
  factorials. Since $y$~does not occur in~$q_0$, it follows that %
  $q_0=\bar u$. Since $\bar w\bar u=q_0$, by the preceding paragraph,
  we obtain $\bar w\bar u=\bar u$, which contradicts~\eqref{eq:fji-3}
  by Green's Lemma. The case where the
  equalities~\eqref{eq:fji-equidivisible-3} hold is handled similarly.
\end{proof}

Theorem~\ref{t:fji-equidivisible} applies in particular to
pseudovarieties of the form~$\bar{\pv H}$, with \pv H a nontrivial
pseudovariety of groups, but the conclusion is already part of
Corollary~\ref{c:fji-egs}. A new result is obtained by combining
Theorem~\ref{t:fji-equidivisible} with
Proposition~\ref{p:Cn-properties} and Corollary~\ref{c:wggm-Cn}, the
case $n=0$ being given by Corollary~\ref{c:fji-egs}.

\begin{Cor}
  \label{c:Cn-fji}
  For every pseudovariety of groups \pv H, the pseudovarieties %
  $\pv C_n\cap\bar{\pv H}$ are fji in the lattice %
  $\Cl L_0(\pv S)$.\qed
\end{Cor}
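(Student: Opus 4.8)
The plan is to invoke Theorem~\ref{t:fji-equidivisible} in the generic situation and to fall back on Corollary~\ref{c:fji-egs} in the two degenerate cases where $\pv C_n\cap\bar{\pv H}$ collapses to~$\pv A$. First I would dispose of those degenerate cases. If $n=0$, then $\pv C_0=\pv A$, and since every aperiodic semigroup has only trivial subgroups while the trivial group belongs to every pseudovariety of groups, we have $\pv A\subseteq\bar{\pv H}$, so that $\pv C_0\cap\bar{\pv H}=\pv A$. If instead $\pv H$ is trivial, then $\bar{\pv H}=\pv A=\pv C_0\subseteq\pv C_n$, so again $\pv C_n\cap\bar{\pv H}=\pv A$. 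In both situations the desired conclusion is exactly Corollary~\ref{c:fji-egs}. Hence I may henceforth assume $n\geq1$ and $\pv H$ nontrivial, and it remains to check that $\pv V=\pv C_n\cap\bar{\pv H}$ satisfies the two hypotheses of Theorem~\ref{t:fji-equidivisible}: closure under concatenation, and containment of a nontrivial group.

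For closure under concatenation I would use that, by the discussion in Subsection~\ref{sec:equidivisibility}, a pseudovariety is closed under concatenation exactly when it is a fixed point of the operator $\pv A\malcev\variable$, together with the fact recorded in Subsection~\ref{sec:pvs-and-operations} that, since $\pv A\malcev\pv A=\pv A$, the set of such fixed points is a complete meet-subsemilattice of $\Cl L(\pv S)$. By Proposition~\ref{p:Cn-properties}, both $\pv C_n$ and $\bar{\pv H}$ are fixed points of $\pv A\malcev\variable$; hence so is their intersection $\pv C_n\cap\bar{\pv H}$, which is therefore closed under concatenation. For the existence of a nontrivial group in $\pv C_n\cap\bar{\pv H}$, I would observe that the recursive definition $\pv C_1=\pv C_0*\pv G*\pv A$ yields $\pv G\subseteq\pv C_1\subseteq\pv C_n$ (each factor of a semidirect product embeds into it upon taking the other factor trivial), whence $\pv C_n\cap\bar{\pv H}\supseteq\pv G\cap\bar{\pv H}=\pv H$, which is nontrivial by assumption. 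Theorem~\ref{t:fji-equidivisible} then delivers that $\pv C_n\cap\bar{\pv H}$ is fji in $\Cl L_o(\pv S)$. (The WGGM input to that theorem is the one recorded in Corollary~\ref{c:wggm-Cn}, but it is in any case automatic from Corollary~\ref{c:wggm-AmV} once closure under concatenation is known.)

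Since the whole argument is an assembly of already established closure properties, I do not anticipate a real obstacle. The one point that requires care is the preliminary case split: one must not forget that for $n=0$, or for trivial $\pv H$, the pseudovariety $\pv C_n\cap\bar{\pv H}$ reduces to $\pv A$, which contains no nontrivial group, so Theorem~\ref{t:fji-equidivisible} does not cover it and one has to appeal to Corollary~\ref{c:fji-egs} there instead.
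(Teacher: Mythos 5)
Your proposal is correct and takes essentially the same route as the paper: invoke Theorem~\ref{t:fji-equidivisible} in the generic case (after checking closure under concatenation via the complete meet-subsemilattice of fixed points of $\pv A\malcev\variable$, and containment of a nontrivial group via $\pv H\subseteq\pv G\subseteq\pv C_1$), and fall back on Corollary~\ref{c:fji-egs} when the pseudovariety collapses to $\pv A$. You are in fact slightly more careful than the paper's one-line text, which flags only $n=0$ as the degenerate case and overlooks that for $n\ge1$ with $\pv H=\pv I$ one also has $\pv C_n\cap\bar{\pv H}=\pv A$, so that Theorem~\ref{t:fji-equidivisible} does not apply directly; your explicit case split handles this.
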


Corollary~\ref{c:Cn-fji} implies, in particular, that the complexity
pseudovarieties $\pv C_n$ are fji in the lattice $\Cl L(\pv S)$, which
solves the first part of \cite[Problem~43]{Rhodes&Steinberg:2009qt}.

We conclude with a connection between sfji and order primitivity. For
a pseudovariety \pv U of order semigroups, its \emph{order dual} is
the pseudovariety $\pv U^d$ consisting of the ordered semigroups
$(S,{\le})$ such that $(S,{\ge})$ belongs to~\pv U.

\begin{Lemma}[\cite{Pin&Weil:1994c}]
  \label{l:pw}
  Let \pv V be a pseudovariety of semigroups and let \pv U be a
  pseudovariety of ordered semigroups contained in~\pv V. Then \pv U
  generates \pv V as a pseudovariety of semigroups if and only if\/ %
  $\pv V=\pv U\vee\pv U^d$.
\end{Lemma}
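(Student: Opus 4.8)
The plan is to prove the following sharper statement and then read off the lemma from it: \emph{for every pseudovariety $\pv U$ of ordered semigroups, if $\pv W$ denotes the pseudovariety of semigroups generated by~$\pv U$ (that is, by the members of $\pv U$ with the orders forgotten), then $\pv W_{\mathrm{o}}=\pv U\vee\pv U^d$ in~$\Cl L_o(\pv S)$.} Granting this, the lemma is immediate. On the one hand, if $\pv U$ generates $\pv V$ as a pseudovariety of semigroups, then $\pv W=\pv V$, whence $\pv V=\pv V_{\mathrm{o}}=\pv W_{\mathrm{o}}=\pv U\vee\pv U^d$, using the standing identification of $\pv V$ with~$\pv V_{\mathrm{o}}$. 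On the other hand, if $\pv V=\pv U\vee\pv U^d=\pv W_{\mathrm{o}}$, then, since the underlying semigroups of the members of $\pv W_{\mathrm{o}}$ are exactly the members of~$\pv W$, while those of $\pv V_{\mathrm{o}}$ are exactly the members of~$\pv V$, we obtain $\pv W=\pv V$, i.e.\ $\pv U$ generates $\pv V$ as a pseudovariety of semigroups.

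To prove the displayed equality I would invoke the version of Reiterman's theorem for pseudovarieties of ordered semigroups \cite{Molchanov:1994,Pin&Weil:1996b}: two pseudovarieties of ordered semigroups coincide as soon as they satisfy exactly the same pseudoinequalities $u\le v$, with $u,v$ ranging over the free profinite semigroups~$\Om AS$. It therefore suffices to check that $\pv W_{\mathrm{o}}$ and $\pv U\vee\pv U^d$ satisfy the same pseudoinequalities, which is an elementary computation on each side. Since $\pv W_{\mathrm{o}}$ contains every trivially ordered semigroup of~$\pv W$ and the pseudoidentity $u=v$ forces the pseudoinequality $u\le v$, one has $\pv W_{\mathrm{o}}\models u\le v$ if and only if $\pv W\models u=v$; and, as a pseudoidentity is preserved by division and finite products, $\pv W\models u=v$ if and only if the underlying semigroup of every member $(S,{\le})$ of~$\pv U$ satisfies $u=v$, which happens precisely when $\pv U\models u\le v$ and $\pv U\models v\le u$. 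For the other side, a pseudoinequality holds in the join $\pv U\vee\pv U^d$ if and only if it holds in both $\pv U$ and~$\pv U^d$, pseudoinequalities being preserved by the class operators defining pseudovarieties of ordered semigroups; and $\pv U^d\models u\le v$ is equivalent to $\pv U\models v\le u$ straight from the definition of the order dual, because reversing the order turns a member of $\pv U^d$ into a member of $\pv U$ and reverses the pseudoinequality. Hence both $\pv W_{\mathrm{o}}$ and $\pv U\vee\pv U^d$ satisfy precisely the pseudoinequalities $u\le v$ for which $\pv U\models u\le v$ and $\pv U\models v\le u$, and the equality follows.

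I do not expect a genuine obstacle here; the one point requiring care rather than ingenuity is the bookkeeping with the two standard identifications in play — that of a pseudovariety of semigroups $\pv V$ with the pseudovariety $\pv V_{\mathrm{o}}$ of ordered semigroups it generates, and that of a pseudovariety of ordered semigroups with the pseudovariety of semigroups obtained by ignoring the orders — together with the precise meaning of satisfaction of a pseudoinequality in an order-dual pseudovariety.
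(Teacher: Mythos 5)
The paper does not prove this lemma: it is quoted with a citation to Pin and Weil and no argument is supplied, so there is no ``paper's proof'' to compare against. Your argument is correct and self-contained. The reduction to showing $\pv W_{\mathrm o}=\pv U\vee\pv U^d$ (where $\pv W$ is the pseudovariety of semigroups generated by the underlying semigroups of~$\pv U$), and then reading the two directions of the lemma off the identification of a semigroup pseudovariety $\pv V$ with $\pv V_{\mathrm o}$, is sound. The verification via the ordered version of Reiterman's theorem is likewise correct: $\pv W_{\mathrm o}\models u\le v$ iff $\pv W\models u=v$ (using that $\pv W_{\mathrm o}$ contains the trivially ordered members of~$\pv W$, which convert the inequality into an equality, and conversely that $u=v$ implies $u\le v$ under any stable order), this in turn holds iff every underlying semigroup in $\pv U$ satisfies $u=v$, i.e.\ iff $\pv U\models u\le v$ and $\pv U\models v\le u$; and $\pv U\vee\pv U^d\models u\le v$ iff both $\pv U\models u\le v$ and $\pv U^d\models u\le v$, the latter being equivalent to $\pv U\models v\le u$ by unwinding the definition of order dual. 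All steps check out.
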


The following is an immediate application of Lemma~\ref{l:pw}.

\begin{Prop}
  \label{p:sfji-vs-order-primitive}
  Every pseudovariety of semigroups which is sfji in the lattice %
  $\Cl L_o(\pv S)$ is order primitive.\qed
\end{Prop}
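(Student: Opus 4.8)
The plan is to derive order primitivity of $\pv V$ directly from its strict finite join irreducibility in $\Cl L_o(\pv S)$ by feeding the Pin--Weil criterion (Lemma~\ref{l:pw}) into the definition of sfji. Throughout, write $\pv V_\mathrm{o}$ for the pseudovariety of ordered semigroups generated by $\pv V$, namely all members of $\pv V$ under all stable partial orders, with which $\pv V$ is identified as an element of $\Cl L_o(\pv S)$; thus "sfji in $\Cl L_o(\pv S)$" and "order primitive" are both statements about $\pv V_\mathrm{o}$. The one structural fact I would record first is that $\pv V_\mathrm{o}$ is \emph{self-dual}: if $(S,{\le})\in\pv V_\mathrm{o}$ then $S\in\pv V$ and the reverse relation $\ge$ is again a stable partial order on $S$, so $(S,{\ge})\in\pv V_\mathrm{o}$; hence $\pv V_\mathrm{o}^d=\pv V_\mathrm{o}$. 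Equivalently, the order-dual operation $\pv U\mapsto\pv U^d$ is an involutory automorphism of $\Cl L_o(\pv S)$ fixing $\pv V_\mathrm{o}$.

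Next I would carry out the main argument. Let $\pv U$ be a pseudovariety of ordered semigroups with $\pv U\subseteq\pv V_\mathrm{o}$ that generates $\pv V$ as a pseudovariety of semigroups. By Lemma~\ref{l:pw} this is equivalent to $\pv V_\mathrm{o}=\pv U\vee\pv U^d$ in $\Cl L_o(\pv S)$. Since $\pv V_\mathrm{o}$ is sfji, it follows that $\pv V_\mathrm{o}=\pv U$ or $\pv V_\mathrm{o}=\pv U^d$. In the first case we are done. In the second case, applying $(\cdot)^d$ and using the involution property together with self-duality of $\pv V_\mathrm{o}$ gives $\pv U=\pv U^{dd}=\pv V_\mathrm{o}^d=\pv V_\mathrm{o}$. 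Hence in either case $\pv U=\pv V_\mathrm{o}$, so no \emph{proper} subpseudovariety of ordered semigroups of $\pv V$ generates $\pv V$; that is, $\pv V$ is order primitive.

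I expect essentially no obstacle: the whole content is packaged into Lemma~\ref{l:pw}, and the only point requiring a moment's care is the self-duality of $\pv V_\mathrm{o}$ (and, correspondingly, that $\pv U\mapsto\pv U^d$ is an involution of $\Cl L_o(\pv S)$), which is immediate from the definitions. The one bookkeeping hazard worth flagging is to keep the distinction between $\pv V$ viewed in $\Cl L(\pv S)$ and as the element $\pv V_\mathrm{o}$ of $\Cl L_o(\pv S)$, since both hypothesis and conclusion implicitly refer to the latter.
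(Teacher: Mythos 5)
Your proof is correct and spells out exactly the argument the paper leaves implicit when it calls the proposition "an immediate application of Lemma~\ref{l:pw}": apply the Pin--Weil criterion, invoke sfji to force $\pv V_\mathrm{o}\in\{\pv U,\pv U^d\}$, and use self-duality of $\pv V_\mathrm{o}$ to collapse the second case to the first. This is the intended route, just made explicit.
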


Combining Proposition~\ref{p:sfji-vs-order-primitive} with
Corollaries~\ref{c:fji-egs} and~\ref{c:Cn-fji}, we obtain the
following result.

\begin{Cor}
  \label{c:order-primitive-egs-bis}
  Let \pv H be a nontrivial pseudovariety of groups. Then the
  pseudovarieties \pv A, $\pv C_n\cap\bar{\pv H}$, $\bar{\pv H}$,
  $\pv{DS}\cap\bar{\pv H}$, and $\pv{CR}\cap\bar{\pv H}$ are order
  primitive.\qed
\end{Cor}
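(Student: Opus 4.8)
The plan is to derive this corollary immediately from the join-irreducibility results already established, together with the elementary link between strict finite join irreducibility and order primitivity. First I would recall the two implications noted at the start of Section~\ref{sec:irreducibility}: that fji implies sfji in any lattice, and that, by Proposition~\ref{p:sfji-vs-order-primitive}, every pseudovariety of semigroups that is sfji in the lattice $\Cl L_o(\pv S)$ is order primitive. Thus it suffices to check that each of the pseudovarieties in the list is fji in $\Cl L_o(\pv S)$.

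For \pv A, $\bar{\pv H}$, $\pv{DS}\cap\bar{\pv H}$, and $\pv{CR}\cap\bar{\pv H}$, with \pv H nontrivial, this is exactly the content of Corollary~\ref{c:fji-egs}. For $\pv C_n\cap\bar{\pv H}$ it is Corollary~\ref{c:Cn-fji}, which is stated for an arbitrary pseudovariety of groups \pv H and so in particular covers the nontrivial ones considered here. Chaining fji $\Rightarrow$ sfji $\Rightarrow$ order primitive through Proposition~\ref{p:sfji-vs-order-primitive} then yields the assertion for every pseudovariety in the list.

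There is no genuine obstacle left at this stage: all the hard work---the translational representation machinery of Sections~\ref{sec:wggm-sufficient-conditions}--\ref{sec:CR} used to prove the (almost) WGGM property, the synthesis/Rees-matrix closure conditions $\pv V\bullet\pv{Sl}=\pv V$ and $\pv V\bullet\pv{Ab}_p=\pv V$ feeding Theorem~\ref{t:fji}, and the equidivisibility argument of Theorem~\ref{t:fji-equidivisible}---has already been carried out inside the corollaries we are invoking. The only point worth a moment's care is to confirm that the hypotheses of Corollaries~\ref{c:fji-egs} and~\ref{c:Cn-fji} are met by precisely the pseudovarieties listed: Corollary~\ref{c:fji-egs} carries the standing assumption that \pv H is nontrivial, matching our hypothesis, while Corollary~\ref{c:Cn-fji} imposes no nontriviality on \pv H, so it applies a fortiori. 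Hence the proof reduces to a single application of Proposition~\ref{p:sfji-vs-order-primitive} to the outputs of those two corollaries.
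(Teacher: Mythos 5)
Your proof is correct and matches the paper's argument exactly: the paper derives the corollary by combining Proposition~\ref{p:sfji-vs-order-primitive} (via the observation that fji implies sfji) with Corollaries~\ref{c:fji-egs} and~\ref{c:Cn-fji}, precisely as you do.
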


Interesting related pseudovarieties which do not fall in the realm of
application of Theorem~\ref{t:fji} are those of the form
$\pv{DO}\cap\bar{\pv H}$, in particular $\pv{DA}=\pv{DO}\cap\pv
A=\pv{DS}\cap\pv A$. We do not know whether \pv{DA} is fji or at least
sfji, within $\Cl L_o(\pv S)$ or even within $\Cl L(\pv S)$, or order
primitive. On the other hand, the pseudovariety $\pv
B=\pv{CR}\cap\bar{\pv H}$~is well known to be sfji in $\Cl L(\pv S)$,
a result that is an immediate consequence of the structure of the
lattice of varieties of~bands (see for instance
\cite[Section~5.5]{Almeida:1994a} for a diagram of the lattice and
bibliographic references). Taking into account that \pv B~is meet
distributive in $\Cl L(\pv S)$ \cite{Reilly&Zhang:1997}, it follows
that \pv B~is fji. By the discussion at the end of
Section~\ref{sec:orderability}, it follows that \pv B is also sfji
in~$\Cl L_o(\pv S)$.

\begin{table}[t]
  \centering
  \begin{tabular}{l||cc|cc|c|cc|}
    & & 
    & \multicolumn{2}{c|}{in $\Cl L(\pv S)$}
    & order- %
    & \multicolumn{2}{c|}{in $\Cl L_0(\pv S)$} \\
    \cline{4-5}\cline{7-8}
    pseudovariety
    & WGGM & GGM & sfji               & fji              & primitive %
     & sfji               & fji             \\
    \hline
    \pv A & Y & N & Y & Y & Y & Y & Y \\
    $\bar{\pv H}$ ($\pv I\ne\pv H\subseteq\pv G$)
    & Y & Y & Y & Y & Y & Y & Y \\
    $\pv C_n\cap\bar{\pv H}$ ($\pv I\ne\pv H\subseteq\pv G$)
    & Y & ? & Y & Y & Y & Y & Y \\
    $\pv{DS}\cap\bar{\pv H}$ ($\pv I\ne\pv H\subseteq\pv G$)
    & Y & ? & Y & Y & Y & Y & Y \\
    $\pv{DO}\cap\bar{\pv H}$ ($\pv H\subseteq\pv G$)
    & Y & N & ? & ? & ? & ? & ? \\
    $\pv{DS}\cap\pv C_n$ ($n\ge1$)
    & Y & ? & ? & ? & ? & ? & ? \\
    \pv B
    & almost & N & Y & Y & Y & Y & ? \\
    $\pv{CR}\cap\bar{\pv H}$ ($\pv I\ne\pv H\subseteq\pv G$)
    & Y & almost & Y & Y & Y & Y & Y \\
    $\pv{CR}\cap\pv C_n$ ($n\ge1$)
    & Y & ? & ? & ? & ? & ? & ? \\
    \multicolumn{7}{c}{}
  \end{tabular}
  \caption{Summary of results and open problems}
  \label{tab:summary}
\end{table}
Table~\ref{tab:summary} summarizes the results and questions about the
various pseudovarieties of concern in this paper. For each pair
pseudovariety, property, Y/N indicates whether or not the
pseudovariety enjoys the property, a question mark indicates that the
answer is presently unknown to the authors, and the word \emph{almost}
has the technical meaning introduced in Section~\ref{sec:actions}.

\subsection*{Acknowledgments}

The first author was partially supported by CMUP (UID/MAT/00144/2013),
which is funded by FCT (Portugal) with national (MEC) and European
structural funds through the programs FEDER, under the partnership
agreement PT2020. This work has been carried out partly during the
first author's sabbatical visit to the Department of Mathematics and
Statistics of Masaryk University, whose hospitality and support is
hereby gratefully acknowledged.

The second author was partially supported by the Grant 15-02862S of
the Grant Agency of the Czech Republic.

\providecommand{\bysame}{\leavevmode\hbox to3em{\hrulefill}\thinspace}
\providecommand{\MR}{\relax\ifhmode\unskip\space\fi MR }
\providecommand{\MRhref}[2]{%
  \href{http://www.ams.org/mathscinet-getitem?mr=#1}{#2}
}
\providecommand{\href}[2]{#2}


\begin{thebibliography}{10}

\bibitem{Albert&Baldinger&Rhodes:1992}
D.~Albert, R.~Baldinger, and J.~Rhodes, \emph{The identity problem for finite
  semigroups (the undecidability of)}, J. Symbolic Logic \textbf{57} (1992),
  179--192.

\bibitem{Almeida:1991d}
J.~Almeida, \emph{On finite simple semigroups}, Proc. Edinburgh Math. Soc.
  \textbf{34} (1991), 205--215.

\bibitem{Almeida:1994a}
\bysame, \emph{Finite semigroups and universal algebra}, World Scientific,
  Singapore, 1995, {E}nglish translation.

\bibitem{Almeida:1996c}
\bysame, \emph{A syntactical proof of locality of {DA}}, Int. J. Algebra
  Comput. \textbf{6} (1996), 165--177.

\bibitem{Almeida&ACosta:2007a}
J.~Almeida and A.~Costa, \emph{Infinite-vertex free profinite semigroupoids and
  symbolic dynamics}, J. Pure Appl. Algebra \textbf{213} (2009), 605--631.

\bibitem{Almeida&Costa:2015hb}
\bysame, \emph{Handbook of {A}uto{M}ath{A}}, ch.~Profinite topologies, European
  Math. Soc. Publ. House, 2015, To appear.

\bibitem{Almeida&Costa:2015a}
\bysame, \emph{A note on pseudovarieties of completely regular semigroups},
  Bull. Austral. Math. Soc. (2015), Published online,
  doi:10.1017/S0004972715000532.

\bibitem{Almeida&Klima:2011a}
J.~Almeida and O.~Kl{\'\i}ma, \emph{On the irreducibility of pseudovarieties of
  semigroups}, Tech. Report CMUP 2011-33, Masaryk Univ. and Univ. Porto, 2011.

\bibitem{Almeida&Trotter:1999a}
J.~Almeida and P.~G. Trotter, \emph{Hyperdecidability of pseudovarieties of
  orthogroups}, Glasgow Math. J. \textbf{43} (2001), 67--83.

\bibitem{Almeida&Trotter:2001}
\bysame, \emph{The pseudoidentity problem and reducibility for completely
  regular semigroups}, Bull. Austral. Math. Soc. \textbf{63} (2001), 407--433.

\bibitem{Almeida&Volkov:2006}
J.~Almeida and M.~V. Volkov, \emph{Subword complexity of profinite words and
  subgroups of free profinite semigroups}, Int. J. Algebra Comput. \textbf{16}
  (2006), 221--258.

\bibitem{Auinger&Steinberg:2001b}
K.~Auinger and B.~Steinberg, \emph{On the extension problem for partial
  permutations}, Proc. Amer. Math. Soc. \textbf{131} (2003), 2693--2703.

\bibitem{Birget:1984}
J.-C. Birget, \emph{Iteration of expansions---unambiguous semigroups}, J. Pure
  Appl. Algebra \textbf{34} (1984), 1--55.

\bibitem{Birget&Rhodes:1984}
J.-C. Birget and J.~Rhodes, \emph{Almost finite expansions of arbitrary
  semigroups}, J. Pure Appl. Algebra \textbf{32} (1984), 239--287.

\bibitem{Carruth&Hildebrant&Koch:1986}
J.~H. Carruth, J.~A. Hildebrant, and R.~J. Koch, \emph{The theory of
  topological semigroups. {V}ol. 2}, Pure and Applied Mathematics, no. 100,
  Marcel Dekker, New York, 1986.

\bibitem{Chaubard&Pin&Straubing:2006}
L.~Chaubard, J.-E. Pin, and H.~Straubing, \emph{Actions, wreath products of
  \protect{$\mathcal C$}-varieties and concatenation product}, Theor. Comp.
  Sci. \textbf{356} (2006), 73--89.

\bibitem{Diekert&Kufleitner&Weil:2011}
V.~Diekert, M.~Kufleitner, and P.~Weil, \emph{Star-free languages are
  {C}hurch-{R}osser congruential}, Theor. Comp. Sci. (2012), 129--135.

\bibitem{Eilenberg:1976}
S.~Eilenberg, \emph{Automata, languages and machines}, vol.~B, Academic Press,
  New York, 1976.

\bibitem{Emery:1999}
S.~J. Emery, \emph{Varieties and pseudovarieties of ordered normal bands},
  Semigroup Forum \textbf{58} (1999), 348--366.

\bibitem{Engelking:1989}
R.~Engelking, \emph{General topology}, Sigma Series in Pure Mathematics, no.~6,
  Heldermann Verlag Berlin, 1989, Revised and completed edition.

\bibitem{Henckell&Margolis&Pin&Rhodes:1991}
K.~Henckell, S.~Margolis, J.-E. Pin, and J.~Rhodes, \emph{Ash's type {II}
  theorem, profinite topology and {M}alcev products. {P}art {I}}, Int. J.
  Algebra Comput. \textbf{1} (1991), 411--436.

\bibitem{Hildebrant&Lawson&Yeager:1976}
J.~A. Hildebrant, J.~D. Lawson, and D.~P. Yeager, \emph{The translational hull
  of a topological semigroup}, Trans. Amer. Math. Soc. \textbf{221} (1976),
  251--280.

\bibitem{Kadourek&Polak:1986}
J.~Ka\v{d}ourek and L.~Pol\'ak, \emph{On the word problem for free completely
  regular semigroups}, Semigroup Forum \textbf{34} (1986), 127--138.

\bibitem{Krohn&Rhodes&Tilson:1968}
K.~Krohn, J.~Rhodes, and B.~Tilson, \emph{Lectures on the algebraic theory of
  finite semigroups and finite-state machines}, Algebraic Theory of Machines,
  Languages, and Semigroups (New York) (M.~A. Arbib, ed.), Academic Press,
  1968, Chapter 6 with M.~A. Arbib.

\bibitem{Kuril:2015}
M.~Ku{\v{r}}il, \emph{On varieties of ordered semigroups}, Semigroup Forum
  \textbf{90} (2015), 475--490.

\bibitem{Margolis&Sapir&Weil:1995}
S.~Margolis, M.~Sapir, and P.~Weil, \emph{Irreducibility of certain
  pseudovarieties}, Comm. Algebra \textbf{26} (1998), 779--792.

\bibitem{McKnight&Storey:1969}
J.~D. McKnight, Jr. and A.~J. Storey, \emph{Equidivisible semigroups}, J.
  Algebra \textbf{12} (1969), 24--48.

\bibitem{Molchanov:1994}
V.~A. Molchanov, \emph{Nonstandard characterization of pseudovarieties},
  Algebra Universalis \textbf{33} (1995), 533--547.

\bibitem{Numakura:1957}
K.~Numakura, \emph{Theorems on compact totally disconnetced semigroups and
  lattices}, Proc. Amer. Math. Soc. \textbf{8} (1957), 623--626.

\bibitem{Pin:1980b}
J.-E. Pin, \emph{Propri\'et\'es syntactiques du produit non ambigu}, Automata,
  languages and programming ({P}roc. {S}eventh {I}nternat. {C}olloq.,
  {N}oordwijkerhout, 1980) (Berlin), Lect. Notes in Comput. Sci., vol.~85,
  Springer, 1980, pp.~483--499.

\bibitem{Pin:1986;bk}
\bysame, \emph{Varieties of formal languages}, Plenum, London, 1986, English
  translation.

\bibitem{Pin:1995a}
\bysame, \emph{A variety theorem without complementation}, Russian Math. (Iz.
  VUZ) \textbf{39} (1995), 80--90.

\bibitem{Pin&Weil:1996b}
J.-E. Pin and P.~Weil, \emph{A {R}eiterman theorem for pseudovarieties of
  finite first-order structures}, Algebra Universalis \textbf{35} (1996),
  577--595.

\bibitem{Pin&Weil:1994c}
\bysame, \emph{Polynomial closure and unambiguous product}, Theory Comput.
  Syst. \textbf{30} (1997), 383--422.

\bibitem{Reilly:1990}
N.~R. Reilly, \emph{The {R}hodes expansion and free objects in varieties of
  completely regular semigroups}, J. Pure Appl. Algebra \textbf{69} (1990),
  89--109.

\bibitem{Reilly&Zhang:1997}
N.~R. Reilly and S.~Zhang, \emph{Complete endomorphisms of the lattice of
  pseudovarieties of finite semigroups}, Bull. Amer. Math. Soc. \textbf{55}
  (1997), 207--218.

\bibitem{Reiterman:1982}
J.~Reiterman, \emph{The {B}irkhoff theorem for finite algebras}, Algebra
  Universalis \textbf{14} (1982), 1--10.

\bibitem{Rhodes:1986}
J.~Rhodes, \emph{Infinite iteration of matrix semigroups, {II}. {S}tructure
  theorem for arbitrary semigroups up to aperiodic morphism}, J. Algebra
  \textbf{100} (1986), 25--137.

\bibitem{Rhodes&Allen:1973}
J.~Rhodes and D.~Allen~Jr., \emph{Synthesis of the classical and modern theory
  of finite semigroups}, Adv. in Math. \textbf{11} (1973), 238--266.

\bibitem{Rhodes&Steinberg:2004}
J.~Rhodes and B.~Steinberg, \emph{Join irreducible pseudovarieties, group
  mapping, and {K}ov\'acs-{N}ewman semigroups}, L{ATIN} 2004: {T}heoretical
  informatics, Lecture Notes in Comput. Sci., vol. 2976, Springer, Berlin,
  2004, pp.~279--291.

\bibitem{Rhodes&Steinberg:2009qt}
\bysame, \emph{The $q$-theory of finite semigroups}, Springer Monographs in
  Mathematics, Springer, 2009.

\bibitem{Simon:1975}
I.~Simon, \emph{Piecewise testable events}, Proc. 2nd GI Conf. (Berlin), Lect.
  Notes in Comput. Sci., vol.~33, Springer, 1975, pp.~214--222.

\bibitem{Straubing:1979a}
H.~Straubing, \emph{Aperiodic homomorphisms and the concatenation product of
  recognizable sets}, J. Pure Appl. Algebra \textbf{15} (1979), 319--327.

\bibitem{Straubing:1985}
\bysame, \emph{Finite semigroup varieties of the form ${V}*{D}$}, J. Pure Appl.
  Algebra \textbf{36} (1985), 53--94.

\bibitem{Straubing&Therien:1988a}
H.~Straubing and D.~Thérien, \emph{Partially ordered monoids and a theorem of
  {I}. {S}imon}, J. Algebra \textbf{119} (1988), 393--399.

\end{thebibliography}
\end{document}